\newtheorem{theorem}{Theorem}[section]
\newtheorem*{theorem*}{Theorem}
\newtheorem{lemma}[theorem]{Lemma}
\newtheorem{proposition}[theorem]{Proposition}
\newtheorem{corollary}[theorem]{Corollary}
\newtheorem{conjecture}[theorem]{Conjecture}
\newtheorem{example}[theorem]{Example}
\numberwithin{equation}{section}
\theoremstyle{definition}
\newtheorem{definition}[theorem]{Definition}
\newtheorem{remark}[theorem]{Remark}
\theoremstyle{plain}
\newcommand{\R}{\mathbb{R}}
\newcommand{\Z}{\mathbb{Z}}
\DeclareMathOperator{\rev}{rev}
\newcommand{\tcb}[1]{\textcolor{blue}{#1}}
\newcommand{\cF}{\mathcal{F}}
\DeclareMathOperator{\cpc}{Cap}
\DeclareMathOperator{\CT}{CT}
\DeclareMathOperator{\vol}{vol}
\DeclareMathOperator{\relint}{rel\,int}
\DeclareMathOperator{\aff}{aff}
\DeclareMathOperator{\perm}{perm}
\title{Capacity bounds on integral flows \\ and the Kostant partition function}
\author[Leake]{Jonathan Leake}
\address[J. Leake]{Department of Combinatorics and Optimization, University of Waterloo, Canada}
\email{jonathan@jleake.com}
\urladdr{https://www.jleake.com}
\author[Morales]{Alejandro H. Morales}
\address[A. H. Morales]{LACIM, Département de Mathématiques, Universit\'e du Qu\'ebec \`a Montr\'eal, Canada} 
\address{Department of Mathematics and Statistics, UMass Amherst, Amherst, U.S.A.}
\email{morales\_borrero.alejandro@uqam.ca}
\urladdr{https://ahmorales.combinatoria.co}
\date{June 2024}
\begin{document}

\begin{abstract}
    The type $A$ Kostant partition function is an important combinatorial object with various applications: it counts integer flows on the complete directed graph, computes Hilbert series of spaces of diagonal harmonics, and can be used to compute weight and tensor product multiplicities of representations. In this paper we study asymptotics of the Kostant partition function, improving on various previously known lower bounds and settling conjectures of O'Neill and Yip.  Our methods build upon recent results and techniques of Br\"and\'en-Leake-Pak, who used Lorentzian polynomials and Gurvits' capacity method to bound the number of lattice points of transportation and flow polytopes. Finally, we also give new two-sided bounds using the Lidskii formulas from subdivisions of flow polytopes.
\end{abstract}

\maketitle

\section{Introduction}

Integer flows on networks are very important objects in optimization, combinatorics, and representation theory. In the latter context, the number of integer flows on a directed complete graph is also known in Lie theory as the {\em Kostant partition function}.  Many important quantities in representation theory like weight multiplicities (like {\em Kostka numbers}) and tensor product multiplicities (like the {\em Littlewood--Richardson coefficients}) can be expressed in terms of this function \cite{Humphreys}. In this paper, we give new lower bounds on the Kostant vector partition function that improve on previously known bounds.
To do this, we utilize recent lower bounds on the number of {\em contingency tables} given in \cite{BLP}.
Contingency tables are the lattice points of {\em transportation polytopes}, and since flow polytopes can be seen as faces of transportation polytopes, we are able to adapt the results of \cite{BLP} to our context.
The bounds of \cite{BLP} come via lower bounds on the coefficients of certain (denormalized) {\em Lorentzian polynomials} \cite{BH,AGSVI,G09} and their associated generating series, in terms of their {\em capacity} \cite{G06}.
Our main contribution is then new explicit estimates of the capacity of these generating series via an associated {\em flow entropy} quantity, which lead to our new lower bounds.

Let $\bm{N}=(N_0,\ldots,N_{n-1},-\sum_i N_i)$ where $N_i \in \mathbb{Z}$ and let $G$ be a directed acyclic connected graph with vertices $\{0,1,\ldots,n\}$. We denote by $\mathcal{F}_{G}(\bm{N})$ the flow polytope of $G$ with netflow $\bm{N}$.  When $G$ is the complete graph $k_{n+1}$, we denote by the flow polytope by $\mathcal{F}_n(\bm{N}):=\mathcal{F}_{k_{n+1}}(\bm{N})$. We are interested in the number $K_n(\bm{N})$ of lattice points of the flow polytope $\mathcal{F}_n(\bm{N})$, i.e. the number of integer flows of the complete graph $k_{n+1}$ with netflow $\bm{N}$. This is called the {\em Kostant vector partition function} since it has an interpretation in the representation theory of Lie algebras: it is the number of ways of writing $\bm{N}$ as an $\mathbb{N}$-combination of the vectors $\bm{e}_i-\bm{e}_j$ for $1\leq i <j \leq n+1$, the type $A$ positive roots.  

The function $K_n(\bm{N})$ is a piecewise-polynomial function on the parameters $N_i$\footnote{Note that this does not imply polynomial bounds on $K_n(\bm{N})$, since the total degree of the polynomials can depend on the length $n$ of the vector (see Section~\ref{sec: lidskii}).} \cite{SturmVPF} with a complex chamber structure \cite{BMP,BBCV}. Moreover, computing the number of lattice points of $\mathcal{F}_G(\bm{N})$ in general is a
computationally hard
problem \cite{B-SDLV}, and there are special cases that are important and give
surprising answers. We list a few of these, see Section~\ref{sec: previous bounds} for more details.
\begin{itemize}
\item[(i)] $K_{n}(1,0,\ldots,0,-1)=2^{n-1}$, however no general formula is known for $a_n(t):=K_{n}(t,0,\ldots,0,-t)$ which is the Ehrhart polynomial of the flow polytope $\mathcal{F}_{n}(t,0,\ldots,0,-t)$. 
Chan--Robbins--Yuen \cite{CRY} showed that for $t \in \mathbb{N}$, the  sequence $(a_n(t))_{n\geq 0}$  satisfies a linear recurrence of order $p(t)$, the number of integer partitions of $t$. 
Moreover, the following bounds are known for $a_n(t)$ 
\begin{equation}
    \label{eq:bound Ehrhart CRY}
\prod_{1\leq i<j\leq n} \frac{2t+i+j-1}{i+j-1} \geq a_n(t) \geq (t+1)^{n-1}.
\end{equation}
The lower bound follows from elementary methods, and improving this bound with more interesting techniques has proven elusive. The upper bound is more subtle and it follows from containment of  $\mathcal{F}_n(1,0,\ldots,0,-1)$ in another polytope related to \emph{alternating sign matrices} \cite{MMS}.
\item[(ii)] $b_n:=K_{n}(1,2,\ldots,n,-\binom{n+1}{2}) = \prod_{i=1}^n C_i$ where $C_i = \frac{1}{i+1}\binom{2i}{i}$ is the $i$th Catalan number. This
  happens to be the volume of  the $\binom{n}{2}$-dimensional polytope $\mathcal{F}_{n}(1,0,\ldots,0,-1)$ \cite{CRY,BV,Z} and thus leading term in $t$ of $\binom{n}{2}!\cdot a_n(t)$.
\item[(iii)] $c_n:=K_{n}(1,1,\ldots,1,-n)$, this value counts the number of $n\times n$ {\em Tesler
  matrices} \cite{HagTesler,AGHRS} which are of interest in the study of the space of {\em diagonal harmonics} $DH_n$ which has dimension $(n+1)^{n-1}$ (see \cite{Hagbook}), the number of rooted forests on vertices $[n]=\{1,2\ldots,n\}$. Indeed, there is a formula of Haglund  \cite{HagTesler} for the Hilbert series of this space as an alternating sum over the integer flows counted in  $b_n$. No simple formula is known for $b_n$ but from the connection to $DH_n$ and computational evidence \cite[\href{https://oeis.org/A008608}{A008608}]{oeis} it is expected that eventually $c_n> (n+1)^{n-1}$. This is a special case of \cite[Conj. 6.5]{JON}.
 In an effort to show this lower bound, 
  O'Neill \cite{JON} found the following bounds for $c_n$,
 \begin{equation} \label{eq: bounds tesler jason}
2^{\binom{n-2}{2}-1}\cdot 3^n \geq c_n \geq (2n-3)!!.
\end{equation}
Note that $(2n-1)!! \sim \sqrt{2} \cdot  (2/e)^n n^n$. In 2016 Pak (private communication) asked
  whether $c_n$ is $e^{\Theta(n^2)}$ and in 2019 Yip  conjectured (private communication) that for all $n\geq 0$, $c_n$ is at least as big as the number $f_n \sim e^{1/2}\cdot n^{n-2}$ of forests on vertices $[n]$ \cite[\href{http://oeis.org/A001858}{A001858}]{oeis}, which is also the number of lattice points of the permutahedron $\Pi_n$.
  \item[(iv)] $d_n:=K_n(2\rho)$ where $2\rho = (n,n-2,n-4,\ldots,-n+4,-n+2,-n)$ is the sum of all positive roots in type $A_n$ \cite[\href{https://oeis.org/A214808}{A214808}]{oeis}. The quantity $d_n$ gives the dimension of the zero weight space of a certain \emph{Verma module} \cite{Humphreys} and the problem of giving bounds for this quantity was raised in \cite{mathOverflow}. O'Neill obtained in \cite{JON2} the following bound for $K_n(\rho)$ when $n=2k+1$,
  \begin{equation} \label{eq: bound jason rho t=1}
  d_n \geq 3^{k^2-k-1}.
  \end{equation}
  He also gave a bound for $K_n(t\cdot 2\rho)$ (see Proposition~\ref{prop:bounds jason 2rho}).
\end{itemize}

These cases suggest studying the asymptotic behavior of the Kostant partition function. In this paper we obtain the following improvements for the all cases mentioned above\footnote{After the paper was finished, Anne Dranowski kindly informed us that in a subsequent preprint \cite[Thm. 2.5, Cor. 2.7]{BBM} of Ivan Balashov, Constantine Bulavenko, and Yaroslav Molybog, show with different methods that $\log a_n(t) \sim C_1 n\sqrt{t}$ and  $\log c_n \sim C_2 n^{3/2}$  for  universal constants $C_1$ and $C_2$ (June 13, 2024, personal communication).}. 

\begin{theorem} \label{thm:intro-t00-case}
    Fix an integer $t > 0$ and let $\bm{N} = (t,0,0,\ldots,0,-t)$. Then for $a_n(t):=K_n(\bm{N})$ we have 
    \[
       \log_2 a_n(t) \geq \frac{n}{2}  \log_2^2 t - O(n \log_2 t).
    \]
    The big-O notation is with respect to $n$ (with parameter $t$ fixed), and the implied constant is independent of $n$ and $t$.
\end{theorem}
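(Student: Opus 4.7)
The plan is to follow the Lorentzian-polynomial capacity method developed by Br\"and\'en--Leake--Pak \cite{BLP} for contingency tables, adapted to the flow polytope $\mathcal{F}_n(\bm{N})$ with $\bm{N} = (t,0,\ldots,0,-t)$.

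The first step is to realize $a_n(t)$ as the coefficient of a specific monomial in a denormalized Lorentzian polynomial $P_n^t(x)$ in edge variables $x_{ij}$ of $K_{n+1}$. Since flow polytopes with fixed netflow admit an encoding closely parallel to that of transportation polytopes (e.g.\ by splitting each vertex into its in- and out-copies to produce a bipartite margin structure), the BLP Lorentzian machinery should transfer, so that $a_n(t)$ is expressible as the desired coefficient up to an explicit combinatorial factor.

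Next, the Gurvits capacity inequality in the denormalized Lorentzian form of \cite{BLP} yields
\[
\log_2 a_n(t) \;\geq\; \log_2 \mathrm{cap}_\alpha(P_n^t) \;-\; O(n\log_2 t),
\]
where the additive error collects the Stirling-type normalization and any combinatorial correction that arises in Step~1. To lower bound the capacity I would then evaluate $P_n^t$ at the positive real point corresponding to a carefully chosen feasible \emph{test flow} $f^*$. The crux of the argument is the choice of $f^*$. I would use a \emph{multi-scale dyadic flow}: decompose $t$ into $\lceil \log_2 t \rceil$ layers of sizes $\approx t,\, t/2,\, t/4,\, \ldots,\, 1$, and route each layer across a suitable collection of the $2^{n-1}$ monotone paths in $K_{n+1}$. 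This arranges the edge values $(f^*_{ij})$ to span $\Theta(\log_2 t)$ distinct dyadic scales. Evaluating the capacity at this flow produces an entropy-like sum in which one factor of $\log_2 t$ comes from the number of scales and a second factor from the logarithmic dynamic range of the edge values within each scale, while the factor $n/2$ comes from the typical length of a monotone path in $K_{n+1}$. Summing these contributions yields $\frac{n}{2}\log_2^2 t$ at leading order, with the lower-order corrections absorbed into the $O(n\log_2 t)$ error from the normalization.

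\textbf{Main obstacle.} The decisive and most delicate step is the construction and analysis of $f^*$. A single-scale uniform test flow would only reproduce the elementary $n\log_2 t$ scaling of $(t+1)^{n-1}$, so the multi-scale structure is precisely what drives the quadratic $\log_2^2 t$ improvement. Verifying that the BLP capacity bound combined with the dyadic flow entropy produces exactly the constant $n/2$ (rather than a smaller constant coming from interactions between layers), and that all lower-order terms from both the capacity evaluation and the normalization fit within $O(n\log_2 t)$ uniformly in $n$, is the main calculation that I would have to grind through.
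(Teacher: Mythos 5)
Your proposal takes essentially the same route as the paper: use the Br\"and\'en--Leake--Pak capacity bound for flows (the paper's Theorem~\ref{thm:BLP-flow-bound}), convert the capacity infimum into a flow-entropy supremum by convex duality (the paper's Proposition~\ref{prop:cap_sup} and Theorem~\ref{thm:matrix_lower_bound}), and then lower bound the entropy by plugging in one well-chosen point of $\mathcal{F}_n(t,0,\ldots,0,-t)$ whose edge values span $\Theta(\log_2 t)$ dyadic scales. The one place where the paper is materially more concrete is the choice of test flow: rather than constructing a layered routing of dyadic pieces along paths (which you would still need to verify is feasible and to analyze), the paper simply takes the barycenter of the $2^{n-1}$ vertices of the CRY polytope, which it computes in closed form in Proposition~\ref{prop: center of mass CRY netflow} and which \emph{automatically} has the dyadic structure $t\cdot 2^{-k}$ you want. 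Your heuristic for the constant is also slightly off: in the actual computation, each scale $2^{-k}$ appears on about $n+2-k\approx n$ edges and contributes roughly $\log_2(et)-k$ bits per edge, so the leading term is $n\sum_{k=1}^{\lfloor\log_2(et)\rfloor}(\log_2(et)-k)\approx n\cdot\tfrac{1}{2}\log_2^2 t$. The $n$ comes from the multiplicity of each scale, and the $\tfrac12$ comes from the triangular sum $\sum_{k\le L}(L-k)\approx L^2/2$, not from the "typical length of a monotone path." The dominant factors you identify are therefore correct in magnitude but attributed to slightly the wrong sources; the barycenter-of-vertices heuristic both sidesteps the feasibility question and makes the bookkeeping clean.
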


This improves over the bound $\log_2 a_n(t) \geq n \log_2(t+1) - O(\log_2 t)$ from the lower bound in \eqref{eq:bound Ehrhart CRY} by an extra factor of $\frac{1}{2} \log_2 t$. 

For the Tesler case $\bm{N}=(1,1,\ldots,1,-n)$, we have the following lower bound.

\begin{theorem}
    Let $\bm{N}=(1,1,\ldots,1,-n)$. Then for $c_n:=K_n(\bm{N})$ we have
    \[
        \log c_n \geq \frac{n}{4}  \log^2 n - O(n \log n).
    \]
    Furthermore, $c_n \geq (n+1)^{n-1}$ for $n\geq 3000$.
\end{theorem}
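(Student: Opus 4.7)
The plan is to apply the general capacity-based lower bound for Kostant partition functions developed in the preceding sections of the paper, specialized to the netflow $\bm{N}=(1,1,\ldots,1,-n)$. That bound says, roughly, that $\log K_n(\bm{N}) \geq H(\bm{f}) - \mathrm{err}(n,\bm{N})$, where $\bm{f}=(f_{ij})_{0 \le i < j \le n}$ is any feasible real flow on $k_{n+1}$ with netflow $\bm{N}$, $H$ is the flow-entropy functional coming from the Lorentzian generating series, and the error is the cost of passing from capacity to a single coefficient (of order $O(n\log n)$ in our regime). The strategy is to instantiate this bound with a carefully chosen ansatz $\bm{f}$ whose entropy achieves the leading term $\tfrac{n}{4}\log^2 n$.

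The first step is to write $H$ for the Tesler netflow. At each interior vertex $i \in \{1,\ldots,n-1\}$ the in-flow $I_i = \sum_{k<i} f_{ki}$ and out-flow $I_i + 1 = \sum_{j>i} f_{ij}$ are tied by conservation, and each edge $(i,j)$ contributes an $x\log x$-type term of the form $(1+f_{ij})\log(1+f_{ij}) - f_{ij}\log f_{ij}$. I would then try a self-similar ansatz $f_{ij} = g(j-i, n-i)$ that routes each unit of supply across all downstream edges with weights biased by path length, so that conservation at each interior vertex reduces to a one-dimensional recursion. Plugging into $H$ turns the entropy estimate into a double sum approximated by $\int_0^1\!\!\int_0^x \log(1/(x-y))\,dy\,dx$-type integrals that evaluate to $\tfrac{n}{4}\log^2 n$, with lower-order pieces absorbed into the $O(n\log n)$ error.

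The main obstacle I anticipate is picking an ansatz that genuinely realizes the constant $\tfrac14$. A naive uniform forward flow $f_{ij} = S_i/(n-i)$ (with $S_i$ satisfying the conservation recurrence $S_i = 1 + \sum_{k<i} S_k/(n-k)$) assigns most edges a flow of order $1/n$, each contributing $\tfrac{\log n}{n}$ to $H$, for a total of only $\tfrac{n}{2}\log n$; this is a full $\log n$ factor short of the target. The extra $\log n$ has to come from flows that vary across geometric length scales $1, 2, 4,\ldots, n$, so that the entropy sum picks up an extra $\log n$ from the logarithmically many scales. A good guide is the Br\"and\'en--Leake--Pak argument for transportation polytopes: the flow polytope $\mathcal{F}_n(1,1,\ldots,1,-n)$ is a face of an explicit $(n+1)\times(n+1)$ transportation polytope (the Tesler correspondence), and restricting the BLP-optimal doubly stochastic-type matrix to that face provides a concrete candidate whose entropy I would then compute directly.

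For the quantitative claim $c_n \geq (n+1)^{n-1}$ when $n \geq 3000$, I would track the implicit constants in every step above to get $\log c_n \geq \tfrac{n}{4}\log^2 n - C n\log n - C' n$ with explicit $C,C'$. Combining this with $\log\bigl((n+1)^{n-1}\bigr) \leq n\log n + n$, the claim reduces to the scalar inequality $\tfrac{1}{4}\log^2 n \geq (C+1)\log n + (C'+1)$, which kicks in once $\log n$ exceeds $2(C+1) + O(1)$. If the ansatz-analysis yields $C$ of modest size (in the range $2$–$3$), the cutoff lands near $\log n \approx 8$, i.e.\ $n \approx 3000$; a brief numerical check at the boundary may be needed to confirm the stated threshold exactly.
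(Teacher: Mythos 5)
Your overall framework — invoke the capacity-based lower bound (the paper's main technical estimate, Theorem~\ref{main-tech-entropy} in conjunction with Theorem~\ref{thm:matrix_lower_bound}) with a well-chosen feasible flow and then compute its flow entropy — is exactly the paper's approach, and your rough account of the explicit threshold $n \geq 3000$ is directionally consistent with the paper's final reduction to $\tfrac{n+1}{4}\log^2(n+1) \geq 2(n+1)\log(n+1)$, which holds for $n \geq e^8 - 1 \approx 2980$.

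However, the central heuristic step in your proposal is wrong, and it sends you down an unnecessary path. You dismiss the uniform forward flow $f_{ij} = S_i/(n-i)$ by estimating its flow entropy at only $\tfrac{n}{2}\log n$ and concluding that a multi-scale self-similar ansatz is required. That flow is precisely the average-of-vertices point computed in Proposition~\ref{prop: center of mass positive netflow}: one gets $S_i = \tfrac{n+1}{n-i+1}$ and $c_{n-i} = \tfrac{n+1}{(n-i)(n-i+1)}$, which is exactly the $\bm f$ the paper uses, and it does realize the target. The relevant weighted entropy sum is $\sum_{k=1}^n k\, h(c_k)$ with $c_k = \tfrac{n+1}{k(k+1)}$, and the contribution from the range $k \gtrsim \sqrt n$ (where $c_k \lesssim 1$, so $k\,h(c_k) \approx \tfrac{n}{k}\log\tfrac{k^2}{n}$) is a harmonic-log sum,
\[
\sum_{k\gtrsim\sqrt{n}}^n \frac{n}{k}\log\frac{k^2}{n} = \frac{n}{4}\log^2 n + O(n\log n),
\]
which the paper controls with its unimodal-sum lemma (Lemma~\ref{lem:unimodal-sum}) applied to $f(t)=\tfrac{n+1}{t}\log\tfrac{t^2}{n+1}$. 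Your back-of-envelope estimate, \emph{``most edges have flow $\sim 1/n$, each contributing $\tfrac{\log n}{n}$''}, ignores that the per-edge entropy $h(c_k)$ ranges from $\Theta(\tfrac{\log n}{n})$ down near $k\approx n$ up to $\Theta(1)$ at $k\approx\sqrt n$; the logarithmic variation of $h(c_k)$ across the scale window $k\in[\sqrt n,n]$ is exactly what contributes the extra factor of $\log n$. There is no missing $\log n$ to recover, no need for a self-similar ansatz, and no need to import a ``BLP-optimal doubly-stochastic matrix'' restricted to the face; doing so would make the explicit constants you need for the $n\geq 3000$ claim considerably harder to extract.
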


This bound beats (asymptotically) all previously known bounds \eqref{eq: bounds tesler jason} and proves Yip's and O'Neill's conjectures mentioned above, for large enough $n$, since $f_n \leq (n+1)^{n-1}$. This bound is the first improvement beyond $(n+1)^{n-1}$ towards answering  Pak's question.

As a comparison, in the case $\bm{N}=(1,2,\ldots,n,-\binom{n+1}{2})$ where $b_n$ has a closed formula and thus  $\log b_n = n^2 \log2 - O(n\log n)$ \cite[Lemma 7.4]{MoralesShi}, our methods give the lower bound $\log b_n \geq \frac12 n^2 - O(n \log n)$. For all polynomial growth cases $N_k\geq a\cdot k^p$ we also obtain a general bound. (We also obtain better bounds for more specific cases; see Section \ref{sec:flow-counting-lower-bounds}.)

\begin{theorem} \label{thm:intro-main-pos-flow-vec}
    Fix $a > 0$ and $p \geq 0$ and suppose $N_k \geq a \cdot k^p$ for all $0 \leq k \leq n-1$. Then
    \[
        \log K_{n}({\bm{N}}) \,\gtrsim\,
        \begin{cases}
            n^2 \log n \cdot \left(\frac{p-1}{2}\right) & p > 1 \\
            n^2 \cdot \left(\frac{1}{2} \log\left(\frac{a}{2}\right) + 2 - 2\log 2\right), & p = 1, a > 2 \\
            n^2 \cdot \left(a - a \log 2)\right), & p = 1, a \leq 2 \\
            n^{p+1} \log^2 n \cdot \left(\frac{a(1-p)^2}{4(p+1)}\right) & p < 1
        \end{cases}.
    \]
    The $\gtrsim$ symbol means the expressions given above essentially give the leading term of the actual lower bounds we obtain. See Theorem \ref{thm:general_positive_lower_bound} for the formal statement of this result.
\end{theorem}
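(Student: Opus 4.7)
My plan is to apply the capacity lower bound for $K_n(\bm{N})$ established earlier in the paper, and then carefully choose an evaluation point for each of the four parameter regimes in order to produce the claimed asymptotics.

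First, I would invoke the general capacity lower bound of the form $\log K_n(\bm{N}) \geq \log \cpc_{\bm{N}}(F_n) - E_n$, where $F_n$ is the Lorentzian generating series for flows on $k_{n+1}$ and $E_n$ is an error term (of lower order than the main term in each regime) coming from the normalization factors in the BLP-style inequality. Since capacity is an infimum over positive evaluations, any single choice $\bm{x}>0$ yields the explicit bound $\log F_n(\bm{x}) - \sum_i N_i \log x_i$. In the flow entropy framework developed earlier, choosing $\bm{x}$ is equivalent to prescribing a fractional flow $\phi = (\phi_{ij})_{i<j}$ on $k_{n+1}$ satisfying the netflow constraints, and the bound becomes, up to the error term, a sum of local entropies of $\phi$.

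Second, for each regime I would construct a tailored fractional flow $\phi$ and evaluate the resulting entropy. For $p>1$, where $\bm{N}$ is concentrated near the sink, I would route most of the flow through ``heavy'' chords adjacent to the last few vertices, picking up the $\log n$ factor from the large netflow values $N_k \gtrsim n^p$. For $p = 1$, $\phi$ is essentially a uniform flow along the natural diagonals; the threshold $a=2$ is a crossover between two qualitatively different optimal flows (one concentrated on short edges, one on long edges), so the proof splits into subcases and takes the larger of the two resulting bounds. For $p<1$, a more spread-out flow weighted by a slowly growing factor is needed to capture the $\log^2 n$ behavior.

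Third, once the ansatz is fixed, the resulting sums approximate Riemann integrals that can be evaluated by standard calculus and Stirling asymptotics, and the leading constants $(p-1)/2$, $\tfrac{1}{2}\log(a/2)+2-2\log 2$, $a - a\log 2$, and $a(1-p)^2/(4(p+1))$ all arise directly from these integrals. The main obstacle is the second step: finding an ansatz simple enough to evaluate cleanly yet sharp enough to produce the stated leading constants in every regime. In particular, handling the phase transition at $p=1$, $a=2$ requires explicitly comparing two natural flow ansätze and verifying that their bounds cross exactly at $a=2$; the remaining steps are then mechanical given the framework established in earlier sections.
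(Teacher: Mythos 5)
Your proposal correctly identifies the overall framework — lower bound the capacity by a single evaluation, rewrite that evaluation in terms of a chosen fractional flow and its entropy, then sum-to-integral — but it is missing the key concrete input, and the regime-by-regime story you tell does not actually match where the constants come from.

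The paper does \emph{not} build a different flow ansatz for each of the four regimes. It fixes a single choice for \emph{all} regimes: the uniform average of the vertices of $\mathcal{F}_n(\bm{N})$, computed explicitly in Proposition~\ref{prop: center of mass positive netflow}. That choice yields the matrix with constant rows $c_k = \frac{N_{n-k}}{k+1}+\frac{s_{n-k}}{k(k+1)}$, and Theorem~\ref{thm:general_lower_bound} packages the resulting entropy bound into a clean product $\prod_k \bigl(\tfrac{(c_k+1)^{c_k+1}}{c_k^{c_k}}\bigr)^{k-1}$. All four regimes in Theorem~\ref{thm:general_positive_lower_bound} then come from plugging $N_k \geq a k^p$ into \emph{that same} $c_k$ and running asymptotics: the splits at $p=1$ and at $a=2$ are purely features of the Euler--Maclaurin analysis of one fixed sum, not of a competition between two ansätze. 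In particular, the $a=2$ threshold arises because the bound $c_k' = \frac{a(n^2-k^2)}{2k(k+1)}$ is massaged via $\xi=\sqrt{1-2/a}$ when $a>2$, which becomes unavailable when $a\le 2$ and forces a different estimate of the same quantity — not a comparison of ``short-edge'' vs.\ ``long-edge'' flows as you suggest. Your descriptions (``route through heavy chords near the sink,'' ``spread-out flow with a slowly growing weight'') are not pinned down enough to know they reproduce the stated constants $\tfrac{p-1}{2}$, $\tfrac{1}{2}\log(a/2)+2-2\log 2$, $a-a\log 2$, $\tfrac{a(1-p)^2}{4(p+1)}$, and since those constants are exactly the output of the vertex-average choice rather than a tight asymptotic, a different ansatz would in general give a different theorem. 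The missing idea, then, is the identification of the vertex average as the flow to feed into the entropy bound; once you have that, the regime split is bookkeeping, but without it the proof does not go through as written.
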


The phase transitions in the above lower bounds are possibly interesting, but we cannot tell whether or not they are artifacts of our proof strategy. See Section \ref{sec:fin-rem-poly-growth} for further discussion.

The final case we discuss is that of $\bm{N} = t \cdot 2\rho$. This case does not fit with the previous cases in the sense that some entries of $\bm{N}$ are negative, but we are still able to apply our methods to achieve the following lower bound.

\begin{theorem}
    Fix an integer $t > 0$ and let $\bm{N} = t \cdot 2\rho(n) = t \cdot (n, n-2, n-4, \ldots, -n+2, -n)$. Then for $d_n(t):=K_n(\bm{N})$ we have
    \[
        \log d_n(t) \geq \frac{n^2}{2} \log \left(\frac{(1+t)^{1+t}}{t^t}\right) - O(n \log(nt)).
    \]
    The implied constant is independent of $t$.
\end{theorem}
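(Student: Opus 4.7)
The plan is to specialize the general capacity-via-flow-entropy lower bound developed earlier in the paper to the Kostant partition function of the complete graph $k_{n+1}$ with netflow $\bm{N}=t\cdot 2\rho$. Recall $K_n(\bm{N})$ is the coefficient $[\prod_k x_k^{N_k}]$ of the generating series $\prod_{0\le i<j\le n}(1-x_ix_j^{-1})^{-1}$, and the paper's main capacity bound converts this coefficient estimate into an inequality of the form
\[
\log K_n(\bm{N}) \;\ge\; \sum_{i<j}\Bigl((1+f_{ij})\log(1+f_{ij}) - f_{ij}\log f_{ij}\Bigr) \;-\; O\bigl(n\log(n\|\bm{N}\|_\infty)\bigr),
\]
valid for any nonnegative fractional flow $f=(f_{ij})_{i<j}$ on $k_{n+1}$ realizing the netflow $\bm{N}$. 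The summand is exactly the entropy of the geometric distribution on $\Z_{\ge 0}$ with mean $f_{ij}$, arising from optimizing the scaling of each edge factor of the generating series.

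The key input specific to this theorem is to exhibit a good fractional flow. The observation is that the \emph{uniform} assignment $f_{ij}\equiv t$ on every edge of $k_{n+1}$ is itself a realizing flow for $\bm{N}=t\cdot 2\rho$: at vertex $k\in\{0,1,\ldots,n\}$, the balance equals $(n-k)t - kt = t(n-2k) = t\cdot (2\rho)_k$. Plugging this into the entropy formula, each of the $\binom{n+1}{2}$ edges contributes the identical quantity $(1+t)\log(1+t)-t\log t = \log\bigl((1+t)^{1+t}/t^t\bigr)$, so the total is
\[
\binom{n+1}{2}\log\!\Bigl(\tfrac{(1+t)^{1+t}}{t^t}\Bigr) \;=\; \tfrac{n^2}{2}\log\!\Bigl(\tfrac{(1+t)^{1+t}}{t^t}\Bigr) \;+\; O\bigl(n\log(1+t)\bigr),
\]
and the $O(n\log(1+t))$ term can be absorbed into the error $O(n\log(nt))$, yielding the claimed bound.

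The main obstacle I foresee concerns the applicability of the general capacity bound in this sign-indefinite regime. Unlike the preceding theorems in the paper (where $\bm{N}$ has at most one negative entry), $t\cdot 2\rho$ has roughly half of its entries negative, so either the paper's general lower bound must be formulated to cover such netflows directly, or one invokes the translation invariance $x_k\mapsto c\,x_k$ of the generating series (valid because $\sum_k N_k=0$) to shift $\bm{N}$ to a positive reference vector before applying a positive-netflow version. A secondary concern is that a naive capacity error estimate scales with $\|\bm{N}\|_1=\Theta(tn^2)$ rather than with the claimed $n\log(nt)$; the refined Lorentzian-polynomial estimates in the paper (one Stirling-type correction per vertex rather than per unit of flow) are what make the error term uniformly $O(n\log(nt))$ with constant independent of $t$. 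Assuming those refinements carry over, the rest of the proof is the direct calculation above.
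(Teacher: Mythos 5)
Your proposal is correct and takes essentially the same route as the paper: the paper also selects the uniform flow $f_{ij}\equiv t$ (the matrix $M$ with all above-antidiagonal entries $t$ and subdiagonal entries $tk(n-k)$) and plugs it into Theorem~\ref{thm:matrix_lower_bound}, obtaining the same leading term $\binom{n+1}{2}\log\bigl((1+t)^{1+t}/t^t\bigr)$. Both of the concerns you flag resolve cleanly: Theorem~\ref{thm:matrix_lower_bound} requires only that $\mathcal{F}_n(\bm{N})\neq\emptyset$, which holds since every partial sum $s_k=t(k+1)(n-k)$ is positive (so no translation trick is needed despite the negative entries of $\bm{N}$), and the capacity-correction factor contributes only $O\bigl(\sum_k\log(1+s_k)\bigr)=O(n\log(nt))$ with constant independent of $t$, because the penalty is one Stirling-type term per row/column sum rather than one per unit of flow.
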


This bound improves over the results of O'Neill for all integers $t > 0$ (see above and Proposition \ref{prop:bounds jason 2rho}), including the important case of $t=1$ where we improve the leading-term constant for $\log d_n = \log d_n(1)$ from $\frac{\ln 3}{4}$ to $\ln 2$.

Finally, we prove bounds in various other specific cases using the same methods, and these are collected in Section \ref{sec:flow-counting-lower-bounds}.

\subsection*{Methodology}

To state our main result we need the following notation. Given some flow $(f_{ij})_{0 \leq i < j \leq n} = \bm{f} \in \mathcal{F}_n(\bm{N})$ and letting
\(
    h(t) := (t+1) \log (t+1) - t \log t,
\)
we define the \emph{flow entropy} of $\bm{f}$ via
\begin{equation} \label{eq:flow-entropy}
    \mathcal{H}(\bm{f}) := \sum_{0 \leq i < j \leq n} h(f_{ij}) + \sum_{0 < j < n} h\left(\sum_{0 \leq i < j} (N_i - f_{ij})\right).
\end{equation}
Note that $\mathcal{H}(\bm{f})$ is a concave function on $\mathcal{F}_n(\bm{N})$. With this, we can now state the main technical result we use to prove our bounds.

\begin{theorem}[{flow version of \cite[Lemma 2.2]{Bar10}, \cite[Lemma 5]{Bar12}, \cite[Thm. 3.2]{BLP}}] \label{main-tech-entropy}
    Let $\bm{N} = (N_0,N_1,\ldots,N_n)$ be an integer vector such that $\mathcal{F}_n(\bm{N})$ is non-empty, and let $s_k = \sum_{j=0}^k N_j$ for all $k$. Letting $K_n(\bm{N})$ denote the number of integer points of $\mathcal{F}_n(\bm{N})$, we have
    \[
        \sup_{\bm{f} \in \mathcal{F}_n(\bm{N})} e^{\mathcal{H}(\bm{f})} \,\geq\, K_n(\bm{N}) \,\geq\, \frac{\max_{0 \leq k \leq n-1}\left\{e^{h(s_k)}\right\}}{\left(\prod_{k=0}^{n-1} e^{h(s_k)}\right)^2} \sup_{\bm{f} \in \mathcal{F}_n(\bm{N})} e^{\mathcal{H}(\bm{f})}.
    \]
\end{theorem}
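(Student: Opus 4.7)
The plan is to adapt the maximum-entropy strategy of Barvinok \cite{Bar10, Bar12} and its refinement by BLP \cite[Thm. 3.2]{BLP} from contingency tables to flows on $k_{n+1}$. My central tool is a product distribution $\bm{X} = (X_{ij})_{i<j}$ on non-negative integer edge-flow vectors, where each $X_{ij}$ is independently geometric with mean $f^\star_{ij}$ and $\bm{f}^\star$ attains $\sup_{\bm{f} \in \mathcal{F}_n(\bm{N})} \mathcal{H}(\bm{f})$; to match the second sum in \eqref{eq:flow-entropy}, I include extra independent geometric ``skip-flow'' variables $Y_j$ with mean $\sigma_j^\star := s_{j-1} - \sum_{i<j} f^\star_{ij}$ at each internal vertex $0 < j < n$. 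Since the entropy of a geometric law with mean $t$ is exactly $h(t) = (t+1)\log(t+1) - t\log t$, the joint entropy of $(\bm{X}, \bm{Y})$ equals $\mathcal{H}(\bm{f}^\star)$; more importantly, the KKT conditions for $\bm{f}^\star$ force the log-probability $\log\Pr[\bm{X} = \bm{f},\ \bm{Y} = \bm{\sigma}(\bm{f})]$ to be \emph{constant}, with common value $-\mathcal{H}(\bm{f}^\star)$, across all integer $\bm{f}$ satisfying flow conservation (these being the directions along which the Lagrange multipliers annihilate the gradient).

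Armed with this uniformity, both bounds follow from the identity
\[
    K_n(\bm{N}) \cdot e^{-\mathcal{H}(\bm{f}^\star)} \,=\, \Pr\left[\bm{X} \text{ satisfies all } n-1 \text{ internal cut-flow constraints}\right].
\]
The upper bound $K_n(\bm{N}) \leq \sup_{\bm{f}} e^{\mathcal{H}(\bm{f})}$ is immediate since the right-hand side is at most $1$. For the lower bound, I would bound the cut-probability from below by a local central limit theorem (or equivalently, via BLP's Lorentzian-polynomial capacity sandwich): under the chosen geometric means, the cut flow at level $k$ has mean $s_k$ and standard deviation of order $e^{h(s_k)/2}$, so a valid LCLT gives $\Pr[\text{cut } k \text{ hits target}] \gtrsim e^{-h(s_k)}$. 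Two such factors per cut (one for the window size, one for the pointwise density) accumulate to $\prod_k e^{-2h(s_k)}$ in the denominator, while the $\max_k e^{h(s_k)}$ in the numerator arises from a BLP-style sharpening that disposes of one cut via direct conditioning rather than LCLT.

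The main obstacle I expect is justifying the cut-probability estimate uniformly in all parameters and producing the sharpened $\max_k$ factor. Unlike contingency tables, whose row and column constraints decouple into two independent families, the flow constraints here are nested in the sense that each edge variable $X_{ij}$ contributes to every cut at levels $i < k \leq j$, which introduces substantial correlation between cuts. The cleanest way past this is likely via the BLP Lorentzian-polynomial framework: build the normalized flow generating polynomial level-by-level from elementary Lorentzian multinomial factors (one per cut), invoke that coordinate projection preserves the Lorentzian property, and apply Gurvits' capacity sandwich to the resulting polynomial. That reduction automatically absorbs the correlations into the capacity computation, so the entire correction factor reduces to the explicit combinatorial data $\{s_k\}_{k=0}^{n-1}$, exactly as claimed.
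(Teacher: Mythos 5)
Your proposal reconstructs the Barvinok-style max-entropy/probabilistic route (geometric product measure calibrated at $\bm{f}^\star$, constant log-probability on the affine hull of $\phi(\mathcal{F}_n(\bm{N}))$ via stationarity of $\mathcal{H}$), which is genuinely different from what the paper does. The paper bypasses the probabilistic model entirely: it takes the BLP capacity sandwich as a black box (Theorem~\ref{thm:BLP-flow-bound}) and proves, as the real new content, the identity $\cpc_{\bm\alpha,\bm\beta}(\Phi') = \sup_{\bm{f} \in \mathcal{F}_n(\bm{N})} e^{\mathcal{H}(\bm{f})}$ by applying the infimal-convolution theorem for Fenchel conjugates (Rockafellar, Theorem~16.4) to split $-\sum_{i+j\le n}\log(1-e^{x_i+y_j})$ into summands whose conjugates are the elementary $c \mapsto \log\bigl(c^c/(c+1)^{c+1}\bigr)$. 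Your upper bound (probability is at most $1$) and the paper's (capacity dominates any coefficient) are both one-line arguments and equally fine.

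There is, however, a real gap on the lower-bound side. You correctly diagnose that the LCLT route suffers from heavy cross-cut correlations (each $X_{ij}$ contributes to every cut with $i < k \le j$), and you retreat to ``invoke the BLP Lorentzian-polynomial capacity sandwich.'' But BLP's Theorem~2.1 bounds $K_n(\bm{N})$ from below in terms of $\cpc_{\bm\alpha,\bm\beta}(\Phi')$, which is an \emph{infimum}, not the entropy \emph{supremum} appearing in the statement. To land on the stated inequality you still need the bridge $\cpc_{\bm\alpha,\bm\beta}(\Phi') = \sup_{\bm{f}} e^{\mathcal{H}(\bm{f})}$. Your KKT observation is morally equivalent to this bridge --- it is the reason the Lagrange dual of the entropy-maximization problem recovers the capacity --- but you state it only as ``the log-probability is constant'' and never convert it into the required equality of the infimum and the supremum. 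That conversion is not automatic: one must verify that strong duality holds (or, equivalently, that the infimal convolution attains its infimum and splits the conjugate), which is precisely what Theorem~\ref{thm:conjugate_split} delivers in the paper. As written, your argument establishes the upper bound cleanly but leaves the lower bound resting on an unstated duality, so the proposal is incomplete rather than wrong.
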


Results similar to Theorem~\ref{main-tech-entropy} have appeared in various contexts before, as suggested by the cited references. That said, we give a new and streamlined proof technique for this fact in Section~\ref{sec:dual-capacity}.

Using Theorem \ref{main-tech-entropy}, we can obtain explicit lower bounds on $K_n(\bm{N})$ for a given flow vector $\bm{N}$ by computing
\begin{equation} \label{eq:flow-entr-expr}
    \frac{\max_{0 \leq k \leq n-1}\left\{e^{h(s_k)}\right\}}{\left(\prod_{k=0}^{n-1} e^{h(s_k)}\right)^2} \cdot e^{\mathcal{H}(\bm{f})}
\end{equation}
for a well-chosen $\bm{f} \in \mathcal{F}_n(\bm{N})$. The $\bm{f}^\star$ which optimizes $\mathcal{H}$ has no closed-form formula in general, and thus some heuristic must be used to choose $\bm{f}$ which yields good bounds. The potential problem with this approach is that asymptotic formulas for $K_n(\bm{N})$ may have phase transitions (see \cite{LP22,DLP20} for examples of this in the context of contingency tables); that is, it is possible that similar values of $\bm{N}$ can lead to different asymptotics. This means that a too-simple heuristic leading to a general formula for a lower bound on $K_n(\bm{N})$ is unlikely to give a high quality bound. 

With this in mind, we devise a heuristic for choosing $\bm{f}$ which is complicated enough to hopefully allow for good bounds, but simple enough to be applicable to a wide range of values of $\bm{N}$. Specifically, we choose $\bm{f}$ to be the average of the vertices of $\mathcal{F}_n(\bm{N})$. On the one hand, counting and computing the average of the vertices of a given flow polytope can be non-trivial in general. On the other, we demonstrate the quality of this choice by proving lower bounds in various cases which are asymptotically better than all previously known bounds.

Finally, once we have our choice of $\bm{f}$, we extract explicit asymptotics from the flow entropy expression (\ref{eq:flow-entr-expr}) evaluated at $\bm{f}$. This last step, while elementary, requires some not-so-trivial analysis of the entropy function via the Euler-Maclaurin formula.

\begin{figure}
\[
\includegraphics{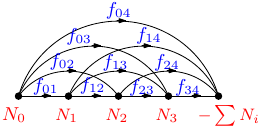} 
\quad 
\raisebox{20pt}{$\to$} 
\quad  
\includegraphics[]{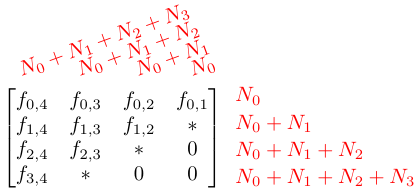}
\]
\caption{Representation of an integer flow of a complete graph and a transportation matrix. The entries marked with $*$ are determined from the rest.}
\end{figure}

\subsection*{A result for intuition}

Theorem \ref{main-tech-entropy} above suggests that a well-chosen $\bm{f} \in \mathcal{F}_n(\bm{N})$ can produce good lower bounds on $K_n(\bm{N})$, and the improved bounds we are able to prove in this paper perhaps demonstrate this for some particular cases. We now state a result which demonstrates this more generally, and offers some more formal evidence why we expect the ideas of this paper to yield good bounds on the number of flows (see Section \ref{sec:asymptotics-max-flow-entropy} for the proof).

\begin{theorem} \label{thm:matrix-choice-asymptotics}
    Let $\bm{N}^* = (N_0,N_1,N_2,\ldots)$ be an infinite sequence of positive integers which has at most polynomial growth, and let $K_n(\bm{N}) = K_n(N_0,\ldots,N_{n-1},-\sum_i N_i)$ and $\mathcal{F}_n(\bm{N}) = \mathcal{F}_n(N_0,\ldots,N_{n-1},-\sum_i N_i)$ for all $n$. Then the maximum flow entropy asymptotically approximates $\log K_n(\bm{N})$. That is, as $n \to \infty$ we have
    \[
        \frac{\log K_n(\bm{N})}{\sup_{\bm{f} \in \mathcal{F}_n(\bm{N})} \mathcal{H}(\bm{f})} \to 1.
    \]
    
\end{theorem}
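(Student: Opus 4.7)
The plan is to turn the two-sided bound in Theorem \ref{main-tech-entropy} directly into the asymptotic statement, reducing the question to showing that the combinatorial error factor is of smaller order than the max-entropy quantity itself. Setting $M_n := \sup_{\bm{f} \in \mathcal{F}_n(\bm{N})} \mathcal{H}(\bm{f})$ and $E_n := \sum_{k=0}^{n-1} h(s_k)$, taking logarithms in Theorem \ref{main-tech-entropy} yields
\[
    0 \;\leq\; M_n - \log K_n(\bm{N}) \;\leq\; 2 E_n - \max_{0 \leq k \leq n-1} h(s_k) \;\leq\; 2 E_n.
\]
Dividing by $M_n$ (positive for large $n$), the claim reduces to showing $E_n / M_n \to 0$.

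Bounding $E_n$ from above is routine. A one-line calculation from $\log(1 + 1/t) \leq 1/t$ gives $h(t) \leq \log(t+1) + 1$, and the at-most-polynomial-growth hypothesis forces $s_k = O(k^{p+1})$ for some $p$. Hence $h(s_k) = O(\log k)$ and $E_n = O(n \log n)$.

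The substantive step is a matching super-$n \log n$ lower bound on $M_n$. The key input is that $\bm{N}^*$ consists of \emph{positive} integers: translating by the explicit non-negative integer flow defined by $g_{i,n} := N_i - 1$ (and $g_{ij} := 0$ otherwise) gives an injection from integer flows of the Tesler vector $(1,1,\ldots,1,-n)$ into integer flows of $\bm{N}$, so $K_n(\bm{N}) \geq c_n$. Combining this with the Tesler lower bound $\log c_n \geq (n/4) \log^2 n - O(n \log n)$ proved elsewhere in the paper and with the upper half $M_n \geq \log K_n(\bm{N})$ of Theorem \ref{main-tech-entropy} yields $M_n = \Omega(n \log^2 n)$. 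Thus $E_n / M_n = O(1/\log n) \to 0$ and the theorem follows. The main obstacle is precisely this lower bound: the naive choice of flow (route each $N_i$ straight to vertex $n$) gives entropy only $\Theta(n \log n)$, exactly matching the error term and so failing to force the ratio to $1$; the super-linear-log-squared Tesler bound, which relies on the more delicate averaging-over-vertices construction featured in the paper's main results, is what provides the necessary slack.
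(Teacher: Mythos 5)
Your proof is correct and follows essentially the same route as the paper's: bound the error term $E_n=\sum_k h(s_k)$ by $O(n\log n)$ using polynomial growth, then beat it with the super-$n\log n$ Tesler lower bound $\log c_n \geq \frac{n}{4}\log^2 n - O(n\log n)$ pushed through $K_n(\bm{N})\geq c_n$. The only cosmetic difference is that you exhibit the shifting injection $g_{i,n}=N_i-1$ explicitly, while the paper invokes Corollary~\ref{cor:kpf inequality term-wise dominance}, which is the same idea packaged as a lemma.
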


That is, there is some choice of flows (dependent on $n$) which produces the correct asymptotics in the $\log$. This perhaps makes the problem simpler for the positive polynomial growth case of Theorem \ref{thm:matrix-choice-asymptotics}: instead of counting lattice points of polytopes, we just need to find choices of (not necessarily integer) flows with high entropy. We also remark that $\mathcal{H}(\bm{f})$ in Theorem \ref{thm:matrix-choice-asymptotics} can be replaced by the more standard geometric entropy: $\mathcal{H}_g(\bm{f}) = \sum_{0 \leq i < j \leq n} h(f_{ij})$.

\subsection*{Bounds for other regimes}

Finally, in the results above we mainly consider the individual entries of the flow vector $\bm{N}$ to be constant with respect to $n$. For example, in the case of $a_n(t)$ in Theorem~\ref{thm:intro-t00-case}, we fix $t$ and bound the asymptotics with respect to $n$. However, there are other regimes where bounds are desirable; for example, $t$ may itself be a function of $n$.

To handle cases like this, we use a different technique. Specifically, we use a positive formula for $K_n(\bm{N})$ called the {\em Lidskii formula} \cite{BV,MM18}  coming from the theory of flow polytopes and related to mixed volumes to give bounds for $a_n(t)$ for $t$ much larger than $n$. 

\begin{theorem} \label{thm:bound kpf using Lidskii}
For $t\geq \frac{n^3}{2}$ we have that 
\[
(n-1)!\cdot \binom{t+n-1}{\binom{n}{2}} \prod_{i=0}^{n-2} C_i \,\geq\,   a_n(t)  \,\geq\, \binom{t+n-1}{\binom{n}{2}} \prod_{i=0}^{n-2} C_i. 
\]
\end{theorem}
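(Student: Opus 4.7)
The strategy is to invoke the \emph{Lidskii formula} for flow polytopes of the complete graph from \cite{BV,MM18}. For the complete graph $k_{n+1}$, this formula expresses $K_n(\bm{N})$ as a positive sum indexed by certain weak compositions $\bm{j} = (j_0,\ldots,j_{n-1})$ of $\binom{n}{2}$ of the form
\[
K_n(\bm{N}) \,=\, \sum_{\bm{j}} \prod_{i=0}^{n-1} \binom{s_i + c_i(\bm{j})}{j_i} \cdot K_n(\bm{j}-\bm{d}),
\]
where $s_i = N_0+\cdots+N_i$, the integer shifts $c_i(\bm{j})$ have magnitude $O(n)$, and the ``base'' evaluations $K_n(\bm{j}-\bm{d})$ combine, in the complete-graph case, into the Catalan product $\prod_{i=0}^{n-2} C_i$. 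The first step is to specialize this identity to $\bm{N} = (t,0,\ldots,0,-t)$, for which every partial sum $s_i = t$ for $0 \leq i \leq n-1$; each binomial factor therefore collapses to $\binom{t + c_i(\bm{j})}{j_i}$.

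\emph{Lower bound.} Every summand in the Lidskii expansion is a non-negative integer, so it suffices to retain a single distinguished term. The extremal composition $\bm{j}^\ast$ concentrating all mass $\binom{n}{2}$ on a single coordinate produces the binomial product $\binom{t+n-1}{\binom{n}{2}}$ (with the Catalan product pulled out as a prefactor). Discarding the remaining non-negative summands yields the desired lower bound immediately.

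\emph{Upper bound.} The plan is to bound each summand by the leading one and then count the admissible compositions. Rewriting each binomial coefficient as a ratio of Pochhammer symbols, $\binom{t+c_i(\bm{j})}{j_i} = (t+c_i(\bm{j}))(t+c_i(\bm{j})-1)\cdots(t+c_i(\bm{j})-j_i+1)/j_i!$, and invoking $t \geq n^3/2 \gg \binom{n}{2} \cdot n$ to absorb all $O(n)$ shifts, one obtains a uniform estimate of the form $\prod_{i=0}^{n-1} \binom{t + c_i(\bm{j})}{j_i} \leq \binom{t+n-1}{\binom{n}{2}}$ valid for every admissible $\bm{j}$ in the Lidskii support. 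Since the number of admissible compositions for the complete graph is at most $(n-1)!$ (a standard count of staircase tuples $j_i \leq n-1-i$ within the Baldoni--Vergne framework), summing gives the claimed upper bound.

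The principal technical obstacle is the precise quantitative comparison of an arbitrary Lidskii summand with the leading one. While the leading-order asymptotics as $t \to \infty$ make the inequality $\prod_i \binom{t + c_i(\bm{j})}{j_i} \leq \binom{t+n-1}{\binom{n}{2}}$ very plausible, a careful term-by-term argument requires expanding the Pochhammer symbols and exploiting $t \geq n^3/2$ to absorb each of the small shifts uniformly across all admissible $\bm{j}$. The remaining pieces---fixing the precise statement of the Lidskii formula for this specialization and identifying both the leading summand and the size of the Lidskii support---are essentially bookkeeping within the Baldoni--Vergne/Mészáros--Morales framework.
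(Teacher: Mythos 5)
The overall strategy—specialize the Lidskii formula to $\bm{N} = (t,0,\ldots,0,-t)$, keep the leading term for the lower bound, and bound the number of nonvanishing terms for the upper bound—is the paper's strategy, and your count of admissible compositions as roughly $(n-1)!$ is on target. But there are two genuine gaps.

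First, your description of the specialized Lidskii summand is wrong in a way that undermines the rest. The binomial factors are $\binom{N_i+n-1-i}{j_i}$, so for $\bm{N}=(t,0,\ldots,0,-t)$ only the first one depends on $t$: the product is $\binom{t+n-1}{j_0}\binom{n-2}{j_1}\cdots\binom{0}{j_{n-1}}$. Your claim that ``each binomial factor collapses to $\binom{t+c_i(\bm{j})}{j_i}$'' is not what happens, and the subsequent ``absorb all $O(n)$ shifts uniformly'' plan is aimed at the wrong target. The actual inequality to prove is $\binom{t+n-1}{\binom{n}{2}}\geq \binom{t+n-1}{j_0}\binom{n-2}{j_1}\cdots\binom{0}{j_{n-1}}$ for every admissible $\bm{j}$; it hinges on comparing the gain from increasing the single large binomial's bottom index against the loss from the small $\binom{n-1-i}{j_i}$ factors, which is where $t \geq n^3/2$ is genuinely used (the paper's Proposition~\ref{prop: max j case tCRY} does this step-by-step with a factor-by-factor comparison). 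Your Pochhammer framing could in principle be made rigorous, but as written it doesn't see that $n-1$ of the $n$ binomials are bounded by constants independent of $t$.

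Second, and more seriously, you never establish $K_n(\bm{j}-\bm{\delta}) \leq \prod_{i=0}^{n-2} C_i$ for all admissible $\bm{j}$. You assert that ``the base evaluations $K_n(\bm{j}-\bm{d})$ combine, in the complete-graph case, into the Catalan product,'' but that is false: $K_n(\bm{j}-\bm{\delta})$ equals the Catalan product only for $\bm{j}^\ast = (\binom{n}{2},0,\ldots,0)$. For general $\bm{j}$ it is a mixed volume $V(P_0^{j_0},\ldots,P_{n-1}^{j_{n-1}})$, and the paper's argument (Theorem~\ref{cor: kpf are mixed volumes} plus monotonicity of mixed volumes, using that each $P_i$ is a translate of a face of $P_0$) is precisely what shows the $\bm{j}^\ast$ term is maximal among the Kostant factors. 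Without this, the upper bound does not go through: bounding the number of terms by $(n-1)!$ is useless unless you have also bounded each summand by the leading one, and the Kostant factor is half of that comparison. This mixed-volume step is the key lemma your proposal is missing.
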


These concrete bounds are reasonable compared to the leading coefficient $\prod_{i=1}^{n-2} C_i/\binom{n}{2}!$ in $t$ of $a_n(t)$, the volume of the polytope $\mathcal{F}_n(1,0,\ldots,0,-1)$ \cite{Z}.

The techniques used for these bounds do not fit directly into the overarching methodology discussed above. That said, we include them anyway for completeness, and due to the connection between the Lidskii formula and mixed volumes. Mixed volumes are the coefficients of volume polynomials, which are Lorentzian (see, e.g., \cite{BH}), and thus there may be some further connection between this formula and the entropy-based methodology discussed above. We leave this to future work.

\subsection*{Structure of the paper}

The paper is organized as follows. Section~\ref{sec:background} has background on flow polytopes,  bounds, capacity method on contingency tables, and asymptotics of entropy related functions. 
Section~\ref{sec:dual-capacity} gives our proof of Theorem~\ref{main-tech-entropy}.
Section~\ref{sec:avg-of-vertices} has details on the averages of vertices of flow polytopes.
Section~\ref{sec:flow-counting-lower-bounds} computes the concrete asymptotic lower bounds for all the cases we consider.
Section~\ref{sec: lidskii} gives bounds on the Kostant partition function using the Lidskii formula from the theory of flow polytopes.
Section~\ref{sec:final remarks} has final remarks.
Details of the asymptotic analysis are in the Appendix~\ref{sec:bounds_asymptotics}.

\section{Background and combinatorial/geometric bounds} \label{sec:background}

\subsection{Transportation and flow polytopes} \label{sec:flow poly}

A {\em polytope} $P\subset \mathbb{R}^m$ is a convex hull of finitely many points or alternatively a bounded intersection of finitely many half spaces. The polytopes we consider are {\em integral}, i.e. its vertices have integer coordinates. Two polytopes $P\subset \mathbb{R}^m$ and $Q\subseteq \mathbb{R}^k$ are {\em integrally equivalent} if there is an affine transformation $\varphi:\mathbb{R}^m\to \mathbb{R}^k$ such that restricted to $P$ and $\mathbb{Z}^m \cap \aff(P)$ gives a bijection $Q$ and to $\mathbb{Z}^k \cap \aff(Q)$. Next, we define flow polytopes and transportation polytopes.

\begin{definition}[Transportation polytopes]
Let $\bm{\alpha}=(\alpha_0,\ldots,\alpha_{m-1})$ and $\bm{\beta}=(\beta_0,\ldots,\beta_{n-1})$ be vectors in $\mathbb{Z}_{\geq 0}^n$, the {\em transportation polytope} $\mathcal{T}(\bm{\alpha},\bm{\beta})$ is the set of all $m\times n$ matrices $M=(m_{i,j})$ with nonnegative real entries with row sums and column sums $\bm{\alpha}$ and $\bm{\beta}$. The lattice points of $\mathcal{T}(\bm{\alpha},\bm{\beta})$ are called {\em contingency tables} and we denote the number of such tables by $\CT(\bm{\alpha},\bm{\beta})$.
\end{definition}

The generating function of $\CT(\bm{\alpha},\bm{\beta})$ has the following closed form.
\begin{equation}
\Phi(\bm{x},\bm{y}) :=\sum_{\bm{\alpha},\bm{\beta}} \CT(\bm{\alpha},\bm{\beta})\, \bm{x}^{\bm{\alpha}} \bm{y}^{\bm{\beta}} \,=\, \prod_{i=0}^{m-1} \prod_{j=0}^{n-1} \frac{1}{1-x_iy_j},   
\end{equation}

\begin{definition}[flows and flow polytope]
Given a  a directed acyclic graph $G$ with vertices $\{0,1,\ldots,n\}$ and $m$ edges, and a vector $\bm{N}=(N_0,\ldots,N_{n-1},-\sum_i N_i) \in \mathbb{Z}^{n+1}$, an {\em $\bm{N}$-flow} on $G$ is a tuple $(f_{e})_{e\in E(G)}$ in $\mathbb{R}_{\geq 0}^m$ of values assigned to each edge such that the {\em netflow} on vertex $i$ is $N_i$:
\[
\sum_{e=(i,j) \in E(G)} f_e \quad -\quad  \sum_{e=(k,i) \in E(G)} f_e = N_i.
\]
The {\em flow polytope} $\mathcal{F}_G(\bm{N})$ is the set of $\bm{N}$-flows on $G$. 
\end{definition}

When $G$ is the complete graph $k_{n+1}$, for brevity we denote by $\mathcal{F}_n(\bm{N})$ the flow polytope $\mathcal{F}_G(\bm{N})$. We denote by $K_n(\bm{N}):=\#(\mathcal{F}_n(\bm{N}) \cap \mathbb{Z}^{\binom{n+1}{2}})$ the number of lattice points of $\mathcal{F}(\bm{N})$, which counts the number of integer flows on $k_{n+1}$ with netflow $\bm{N}$. This is called {\em Kostant's vector partition function} since it also counts the number of ways   of writing $\bm{N}$ as a combination of the positive type-$A$ roots $\bm{e}_i-\bm{e}_j$ corresponding to each edge $(i,j)$ where $\bm{e}_i$ is a standard basis vector.

The generating function of $K_n(\bm{N})$ for $\bm{N}\in \mathbb{Z}^{n+1}$ has the following closed form.
\begin{equation} \label{eq:gen series kpf}
\Psi(\bm{z}) :=\sum_{\bm{N}} K_n(\bm{N}) \, \bm{z}^{\bm{N}} \,=\, \prod_{0\leq i<j \leq n} \frac{1}{1-z_iz_j^{-1}}.    
\end{equation}

Flow polytopes can be viewed as faces of transportation polytopes as follows (see \cite[\S 1.3]{MMR}). Given a flow vector $\bm{N}$, define $\bm\alpha = (s_0,s_1,\ldots,s_{n-1})$ and $\bm\beta = \rev(\bm\alpha)=(s_{n-1},\ldots,s_1,s_0)$ where $s_k = \sum_{j=0}^k N_j$ as above. Define a linear injection $\phi: \mathcal{F}_n(\bm{N}) \to \mathcal{T}(\bm\alpha,\bm\beta)$ via
    \begin{equation}\label{eq: def injection flows to transportation matrix}
        \phi: (f_{ij})_{0 \leq i < j \leq n} \mapsto
        \begin{bmatrix}
            f_{0,n} & f_{0,n-1} & f_{0,n-2} & f_{0,n-3} & \cdots & f_{0,3} & f_{0,2} & f_{0,1} \\
            f_{1,n} & f_{1,n-1} & f_{1,n-2} & f_{1,n-3} & \cdots & f_{1,3} & f_{1,2} & \tcb{g_{n-1}} \\
            f_{2,n} & f_{2,n-1} & f_{2,n-2} & f_{2,n-3} & \cdots & f_{2,3} & \tcb{g_{n-2}} & 0 \\
            f_{3,n} & f_{3,n-1} & f_{3,n-2} & f_{3,n-3} & \cdots & \tcb{g_{n-3}} & 0 & 0 \\
            \vdots & \vdots & \vdots & \vdots & \ddots & \vdots & \vdots & \vdots \\
            f_{n-3,n} & f_{n-3,n-1} & f_{n-3,n-2} & \tcb{g_3} & \cdots & 0 & 0 & 0 \\
            f_{n-2,n} & f_{n-2,n-1} & \tcb{g_2} & 0 & \cdots & 0 & 0 & 0 \\
            f_{n-1,n} & \tcb{g_1} & 0 & 0 & \cdots & 0 & 0 & 0 \\
        \end{bmatrix},
    \end{equation}
    where the {\em subdiagonal entries} $\tcb{g_j}$ are chosen so that the row sums and column sums are equal to the entries of $\bm\alpha$ and $\bm\beta$. (Note that these entries are given precisely by $g_j:=\sum_{0 \leq i < j} (N_i - f_{ij})$ for $0 < j < n$). The image of $\phi$ is the set matrices in $\mathcal{T}(\bm\alpha,\bm\beta)$ which have $0$ in all entries of the bottom-right corner of the matrix as specified in the definition of $\phi$. The set $\phi (\mathcal{F}_n(\bm{N}))$ is a face of $\mathcal{T}(\bm{\alpha},\bm{\beta})$ (see \cite[Prop. 1.5]{MMR}). 
    
    By abuse of notation, we will refer to a flow $(f_{ij})$ in $\mathcal{F}_n(\bm{N})$ and its image $\phi(f_{ij})$ in $\mathcal{T}(\bm{\alpha},\bm{\beta})$ interchangeably. 

\begin{proposition}[{\cite[Prop. 1.5]{MMR}}] \label{prop: flow is face of trans poly}
Given $\bm{N}=(N_0,\ldots,N_{n-1},-\sum_i N_i) \in \mathbb{Z}^{n+1}$, and $\bm{\alpha}$, $\bm{\beta}$, and $\phi$ be defined as above, then  $\phi$  is an integral equivalence between $\mathcal{F}_n(\bm{N})$ and a face of $\mathcal{T}(\bm{\alpha},\bm{\beta})$.
\end{proposition}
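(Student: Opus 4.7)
The plan is to split the claim into three checks: that $\phi$ lands inside $\mathcal{T}(\bm{\alpha},\bm{\beta})$, that the image is a face of $\mathcal{T}(\bm{\alpha},\bm{\beta})$, and that $\phi$ together with its inverse is defined over $\mathbb{Z}$. Since $\phi$ is given by an explicit formula, the substantive content of the proof is a dictionary between flow conservation at a vertex $r$ of $k_{n+1}$ and the row-sum/column-sum constraints at the anti-diagonal position occupied by a $g$-entry. Managing the anti-diagonal bookkeeping so that every $g_j$ is associated with the right pair of row- and column-sum conditions is the main piece of work; the individual verifications are then short.

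First I would check that $\phi(\bm{f})$ lies in $\mathcal{T}(\bm{\alpha},\bm{\beta})$. Nonnegativity of the $f_{ij}$-entries is immediate from $\bm{f}$ being a flow, while nonnegativity of each subdiagonal entry $g_j$ follows by applying flow conservation to the source set $\{0,1,\dots,j-1\}$: the total outflow from this set equals $s_{j-1}$, and the quantity $\sum_{i<j}f_{ij}$ is a nonnegative partial sum of that outflow, so $g_j = s_{j-1}-\sum_{i<j}f_{ij}\ge 0$. The column-sum condition holds by the defining formula for $g_j$, which was chosen precisely so that the column containing it, together with the $f_{ij}$-entries above it and the zeros below it, adds up to the prescribed $\beta$-entry. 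The row-sum condition then follows from flow balance at the vertex read off from the $g$-entry in that row: combining the outflow along the row with the $g$-formula produces a telescoping identity equal to $s_r=\alpha_r$.

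Next I would identify the image as a face. The map $\phi$ places explicit zeros in the $\binom{n-1}{2}$ positions strictly below the anti-diagonal of $g$'s; since every matrix coordinate on $\mathcal{T}(\bm{\alpha},\bm{\beta})$ is a nonnegative entry, pinning any collection of them to their minimum value $0$ cuts out a face $F$ of the transportation polytope. To show $\phi(\mathcal{F}_n(\bm{N}))=F$, I would construct the inverse $\phi^{-1}\colon F\to \mathcal{F}_n(\bm{N})$ by reading off $(f_{ij})$ from the entries of a given $M\in F$ above the anti-diagonal, and then verify that the row- and column-sum constraints on $M$ force the extracted tuple to satisfy flow balance at every vertex of $k_{n+1}$. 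This is the same telescoping calculation as above, run in reverse: equating the two expressions for the $g$-entry obtained from the row-sum and column-sum equations of $M$ at position $(r,n-r)$ yields exactly the vertex-$r$ flow balance equation.

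Finally, integrality is automatic. The map $\phi$ is affine linear over $\mathbb{Z}$, since each $g_j$ is an integer combination of the $N_i$ and the $f_{ij}$, and the inverse is a coordinate projection of the matrix onto the entries above the anti-diagonal, which is also defined over $\mathbb{Z}$. Hence $\phi$ restricts to a bijection on lattice points, giving the desired integral equivalence between $\mathcal{F}_n(\bm{N})$ and the face $F\subseteq \mathcal{T}(\bm{\alpha},\bm{\beta})$.
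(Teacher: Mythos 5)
The paper does not actually prove Proposition~\ref{prop: flow is face of trans poly}; it cites \cite[Prop.~1.5]{MMR} and offers only a one-line remark that the image of $\phi$ is a face. So there is no in-paper argument to compare against. Judged on its own, your proof is correct and is essentially the standard argument one would give, and I expect it matches the cited source in spirit.

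Checking the substantive steps: nonnegativity of the $f_{ij}$ is immediate; nonnegativity of the subdiagonal entry sitting in column $c$ (row $n-c$) follows, as you say, from summing flow conservation over the source set $\{0,\ldots,n-c-1\}$ (there is no inflow to this set since $k_{n+1}$ is oriented by increasing label), so that the total outflow $s_{n-c-1}$ dominates the sub-sum $\sum_{r<n-c}f_{r,n-c}$, giving a nonnegative difference. Since $g$ is defined to match the column sum, the column constraint is automatic; the row constraint then falls out because row~$i$ sum $=\sum_{k>i}f_{ik}+g$ and column~$(n-i)$ sum $=\sum_{r<i}f_{ri}+g$ differ by exactly $\sum_{k>i}f_{ik}-\sum_{r<i}f_{ri}=N_i=s_i-s_{i-1}$, which is precisely the discrepancy between $\alpha_i$ and $\beta_{n-i}$. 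Your inverse argument runs this same computation backward, and the face claim is a standard fact about fixing nonnegativity-constrained coordinates at $0$. Integrality is clear since both $\phi$ and its inverse are integer-affine (the inverse is a coordinate projection onto the entries on or above the anti-diagonal, together with the row-$0$ entries). One cosmetic caveat: the paper's parenthetical formula $g_j=\sum_{0\le i<j}(N_i-f_{ij})$ is not consistent with the labeling $g_{n-i}$ in the displayed matrix (it corresponds to labeling by row rather than by column), and your write-up inherits this ambiguity; this does not affect the validity of the argument since the multiset of subdiagonal values is unchanged, but it is worth being explicit about which entry you are calling $g_j$ when you run the row/column bookkeeping.
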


\subsection{Special examples of flow polytopes}

The following are two examples of flow polytopes that will be of interest and we give their vertex description.

\subsubsection{CRY polytope}

For $\bm{N}=(1,0,\ldots,0,-1)$, the polytope $\mathcal{F}_n(\bm{N})$ is called the {\em {\em Chan-Robbins-Yuen} polytope} \cite{CRY}. This polytope has dimension $\binom{n}{2}$ and $2^{n-1}$ vertices \cite{CRY}. The vertices can be described as follows: they correspond to unit flows along paths on $k_{n+1}$ from the source $0$ to the sink $n+1$. These paths are completely determined by their support on internal vertices in $\{1,\ldots,n-1\}$.   We translate the description of these vertices in the transportation polytope.

\begin{proposition}[] \label{lemma:char vertices CRY}
    For $\bm{N}=(1,0,\ldots,0,-1)$, the vertices of $\mathcal{F}_n(\bm{N})$ are determined by binary strings in the sub-diagonal: $(g_1,\ldots,g_{n-1})\in \{0,1\}^{n-1}$. In particular there are $2^{n-1}$ vertices.

Given a binary string $\bm{g}=(g_1,\ldots,g_{n-1})$, the corresponding  vertex $\bm{g} \mapsto X=(x_{ij})$  given by 
\begin{equation} \label{eq:vertices cry case}
x_{i,j} = \begin{cases}
    (1-g_i)(1-g_{n-j})\prod_{k=i+1}^{n-j-1} g_k & \text{if } i<n-j \\
    g_i & \text{if } i=n-j \\
    0 &\text{otherwise}, 
    \end{cases}
\end{equation}
where $g_0=g_n=0$.
\end{proposition}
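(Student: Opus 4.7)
The plan is to reduce the problem to parameterizing directed paths from $0$ to $n$ in $k_{n+1}$, and then to compute how each such path is expressed in the matrix picture via the injection $\phi$ of Proposition~\ref{prop: flow is face of trans poly}. The first step is to observe that any integer flow in $\mathcal{F}_n(1,0,\ldots,0,-1)$ must be the $\{0,1\}$-indicator of a directed path from $0$ to $n$: since the total outflow from $0$ is $1$ and all edge-flows are nonnegative integers, a unique edge $(0,i_1)$ carries unit flow; conservation at $i_1$ then forces a unique edge out of $i_1$ to carry unit flow; iterating, one reaches the sink $n$. Such a path is determined by its set of internal vertices $S\subseteq\{1,\ldots,n-1\}$. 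Combined with the fact that all vertices of $\mathcal{F}_n(\bm{N})$ are integral (e.g.\ by total unimodularity of the incidence matrix of $k_{n+1}$, and proved for this polytope in~\cite{CRY}), this already gives the count $2^{n-1}$.

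Next I would make the bijection explicit: to a binary string $\bm{g}=(g_1,\ldots,g_{n-1})$ I associate the path whose internal vertex set is $S=\{i:g_i=0\}$, so that $g_i=1$ encodes that vertex $i$ is skipped by the path. To verify the formula for $x_{i,j}$ on the resulting vertex, I analyze the three regions of the matrix $\phi(\bm f)$ separately. For $i<n-j$, the matrix entry is the flow value $f_{i,n-j}$, which equals $1$ iff the edge $(i,n-j)$ is traversed by the path. Using the boundary convention $g_0=g_n=0$, this occurs precisely when $i$ and $n-j$ both lie on the path and no vertex strictly between them does, giving $(1-g_i)(1-g_{n-j})\prod_{k=i+1}^{n-j-1}g_k=1$, as claimed.

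For the anti-subdiagonal case $i=n-j$, the entry is the slack introduced in~\eqref{eq: def injection flows to transportation matrix}, which is determined by the requirement that column $n-i$ of the matrix sum to $s_{i-1}=1$. The other entries in that column are the flows $f_{k,i}$ for $k<i$, whose sum equals the inflow to vertex $i$, namely $1$ when $i\in S$ and $0$ otherwise. Hence the slack equals $1-(1-g_i)=g_i$, matching the formula, and the remaining entries with $i>n-j$ vanish by the construction of $\phi$. The one point requiring care is the bookkeeping between the indexing of the subdiagonal slacks in~\eqref{eq: def injection flows to transportation matrix} and the string $(g_1,\ldots,g_{n-1})$ of the proposition; the combinatorial reading above, that $g_i$ records whether vertex $i$ is skipped, reconciles the two conventions, after which the verification is a routine case check.
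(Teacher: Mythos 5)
Your proof is correct, and it supplies exactly the verification the paper leaves implicit: the paper only cites~\cite{CRY} for the path-vertex description and the count $2^{n-1}$ and says that it ``translates'' this to the transportation-matrix picture, without writing out the translation. Your three-case check of~\eqref{eq:vertices cry case} against the injection~\eqref{eq: def injection flows to transportation matrix} is the natural way to carry out that translation and it holds up: the reduction of integer flows with $\bm{N}=(1,0,\dots,0,-1)$ to unit path flows, the integrality of vertices by total unimodularity, the reading $g_i=1$ iff vertex $i$ is skipped, the column-sum computation for the anti-subdiagonal slack, and the vanishing of the lower-triangular block are all correct. Two small remarks. First, you implicitly use that every $\{0,1\}$ path flow is a vertex (not merely that every vertex is a path flow); this follows since a $0/1$ point of a polytope cannot be a nontrivial convex combination, but it is worth stating explicitly rather than only citing~\cite{CRY}. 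Second, you are right to flag the indexing of the slacks: in the displayed matrix of~\eqref{eq: def injection flows to transportation matrix} the slack at position $(i,n-i)$ is labelled $g_{n-i}$, whereas the proposition's string $(g_1,\dots,g_{n-1})$ indexes the slack at $(i,n-i)$ by $g_i$; your from-scratch column-sum derivation of the slack value avoids relying on the paper's inline formula $g_j=\sum_{0\le i<j}(N_i-f_{ij})$, which is consistent with the proposition's reindexed convention but not with the literal placement of the labels in the displayed matrix, so your approach of computing the slack directly is the safe one.
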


Since $\mathcal{F}_n(1,0,\ldots,-1)$ is a $0/1$ polytope, its lattice points are its vertices and so $K_n(1,0,\ldots,0,-1)=2^{n-1}$.

\subsubsection{Generalized Tesler polytope} For ($\bm{N}=(N_0,\ldots,N_{n-1},-\sum_{i=0}^{n-1} N_i)$ where $N_i >0$) $\bm{N}=(1,\ldots,1,-n)$, the polytope $\mathcal{F}_n(\bm{N})$ is called the {\em (generalized) Tesler polytope} \cite{MMR}. This polytope has dimension $\binom{n}{2}$, is simple, and has $n!$ vertices. The vertices can be characterized as follows.

\begin{theorem}[{\cite[Thm. 2.5 \& Cor. 2.6]{MMR}}] \label{lemma:char vertices all ones}
    For $\bm{N}=(N_0,\ldots,N_{n-1},-\sum_{i=0}^{n-1} N_i)$ where $N_i>0$, the vertices of $\mathcal{F}(\bm{N})$ are characterized by flows whose associated matrix has exactly one nonzero upper triangular entry in each row. In particular there are $n!$ vertices.

Given a choice $p$ of an upper triangular entry in each row, the corresponding vertex $p\mapsto X=(x_{ij})$ is defined by 
\begin{equation} \label{eq: vert}
x_{i,j} = \begin{cases}
    N_{i} + \sum_{r=0}^{i-1} x_{r,n-i}  &\text{ if } p_{i,j}=1\\
    0 &\text{ otherwise}.
\end{cases}
\end{equation}
and the sub-diagonal term is $x_{i,n-i}=\sum_{r=0}^{i-1} (N_r-x_{r,n-i})$ for $i=1,\ldots,n-1$.
\end{theorem}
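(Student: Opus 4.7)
The plan is to identify each vertex of $\mathcal{F}_n(\bm{N})$ with a spanning tree of $k_{n+1}$ of a particular form—namely one in which, rooted at the sink $n$, every non-sink vertex has a $T$-neighbor of larger index (its parent)—count those trees, and then read the vertex coordinates off flow conservation. Such trees are in bijection with functions $p\colon\{0,\dots,n-1\}\to\{1,\dots,n\}$ with $p(i)>i$, giving the desired count of $n!$.

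First I would note that flow conservation at the $n+1$ vertices of $k_{n+1}$ yields $n$ independent linear equations (their sum vanishes because $\sum_i N_i=0$), so $\mathcal{F}_n(\bm{N})$ has affine dimension $\binom{n+1}{2}-n=\binom{n}{2}$. Consequently, any vertex must saturate $\binom{n}{2}$ of the inequalities $f_{ij}\geq 0$, and its support consists of at most $n$ edges that are linearly independent in the incidence matrix, i.e.\ form a spanning forest $F$ of $k_{n+1}$. Because every $N_i>0$ for $i<n$ while $N_n<0$, no connected component of $F$ disjoint from the sink $n$ can have zero net flow; hence $F$ must in fact be a spanning tree $T$ containing $n$.

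Second I would determine which spanning trees $T$ yield a nonnegative flow. For each edge $(i,j)\in T$ with $i<j$, removing it splits $T$ into the subtree $T_i$ containing $i$ and its complement, and flow conservation forces $f_{ij}=\sum_{k\in T_i}N_k$. Nonnegativity requires the sink $n$ to lie outside $T_i$; equivalently, when $T$ is rooted at $n$, every edge runs from a smaller child up to a larger parent. Thus each non-sink vertex $i$ has out-degree exactly one in $T$, pointing to a unique $p(i)>i$, and the admissible trees biject with parent functions of the stated form. Moreover, strict positivity $N_i>0$ guarantees that every tree edge carries strictly positive flow, so $T$ is exactly the support of the resulting flow and distinct choices of $p$ give distinct vertices.

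Finally, the coordinates come from flow conservation at each source $i<n$: the only outgoing edge of $i$ in $T$ is $(i,p(i))$, hence $f_{i,p(i)}=N_i+\sum_{r<i}f_{r,i}$ while every other $f_{ij}$ with $j>i$ vanishes. Under the map $\phi$ of Section~\ref{sec:flow poly} the entry $f_{r,i}$ sits in row $r$, column $n-i$ of the transportation matrix, so this recursion is exactly the displayed formula $x_{i,j}=N_i+\sum_{r=0}^{i-1}x_{r,n-i}$ on the unique upper triangular entry of row $i$; the subdiagonal term $x_{i,n-i}$ is then forced by the column-sum condition and equals the $g_i$ introduced by $\phi$. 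The step I expect to be the most delicate is the nondegeneracy argument in the second paragraph—ensuring that the support of a vertex is \emph{exactly} $T$ (with no tree edge carrying zero flow) and that distinct parent functions give distinct vertices—both of which rely crucially on the strict positivity of each $N_i$ for $i<n$, a hypothesis that distinguishes the generalized Tesler regime from the CRY case of Proposition~\ref{lemma:char vertices CRY}.
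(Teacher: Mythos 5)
Your proof is correct, and it follows the standard approach for characterizing vertices of flow polytopes (support is a spanning tree; admissibility of the tree is forced by nonnegativity; coordinates fall out of flow conservation at each source). Since the paper cites this result from [MMR] without reproducing a proof, there is no internal proof to compare against; what you have written is a reasonable self-contained derivation. Three small points worth tightening if you were to polish this into a complete write-up.

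First, in the dimension count you assert that the flow-conservation system contributes $n$ independent equations. That is true because $k_{n+1}$ is connected (the vertex-edge incidence matrix of a connected graph on $n+1$ vertices has rank $n$); it would be worth saying a word about this, and also that the polytope is full-dimensional inside the affine slice $\{A\bm f=\bm N\}$, which follows because $N_i>0$ for $i<n$ produces a strictly positive flow (so all inequalities $f_{ij}\geq 0$ can be strict simultaneously).

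Second, you prove the forward direction carefully: every vertex has support an ``increasing'' spanning tree. The converse — that every parent function $p$ with $p(i)>i$ actually produces a \emph{vertex}, not merely a point of the polytope — is asserted via ``distinct choices of $p$ give distinct vertices'' but not argued. The missing half-sentence is: given such a $T=T_p$, the flow supported on $T$ is the unique point of $\mathcal{F}_n(\bm N)$ with $f_e=0$ for $e\notin T$, because the incidence matrix restricted to the edges of a spanning tree has full column rank $n$; hence this point is a face of dimension $0$, i.e.\ a vertex. Strict positivity of all tree-edge flows (which you do note) then shows the supports, and hence the vertices, are pairwise distinct.

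Third, in the nonnegativity analysis, the statement ``nonnegativity requires $n$ to lie outside $T_i$'' relies on the observation that if $n\in T_i$ and $T_i$ is a proper nonempty subset then $\sum_{k\in T_i}N_k=-\sum_{k\notin T_i}N_k<0$ because every $N_k$ with $k\neq n$ is strictly positive. You state the needed hypotheses in the previous paragraph, so the logic is there, but the one-line verification is worth writing explicitly since this is exactly where $N_i>0$ for \emph{all} $i<n$ (as opposed to merely $N_0>0$, as in the CRY case) is used.

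With these minor additions the argument is complete and matches the intended content of the cited result.
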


\begin{example}
The CRY polytope $\mathcal{F}_3(1,0,0,-1)$ is $3$-dimensional with vertices/lattice points represented by the following matrices:
\[
\begin{bmatrix}
0 & 0 & 1 \\
0 & 1 & \tcb{0}\\
1 & \tcb{0} & 
\end{bmatrix}, \quad \begin{bmatrix}
1 & 0 & 0 \\
0 & 0 &\tcb{1}\\
0 & \tcb{1} & 
\end{bmatrix}, \quad \begin{bmatrix}
0 & 1 & 0 \\
0 & 0 &\tcb{1}\\
1 & \tcb{0} & 
\end{bmatrix}, \quad \begin{bmatrix}
0 & 0 & 1 \\
1 & 0 &\tcb{0}\\
0 & \tcb{1} & 
\end{bmatrix}.
\]
The Tesler polytope $\mathcal{F}_3(1,1,1,-3)$ is $3$-dimensional with the following seven lattice points of which the first six are its vertices:
\[
\begin{bmatrix}
0 & 0 & 1 \\
0 & 2 & \tcb{0}\\
3 & \tcb{0} & 
\end{bmatrix},  \begin{bmatrix}
0 & 0 & 1 \\
2 & 0 & \tcb{0}\\
1 & \tcb{2} & 
\end{bmatrix},\begin{bmatrix}
0 & 1 & 0 \\
0 & 1 & \tcb{1}\\
3 & \tcb{0} & 
\end{bmatrix},\begin{bmatrix}
0 & 1 & 0 \\
1 & 0 & \tcb{1}\\
2 & \tcb{1} & 
\end{bmatrix},\begin{bmatrix}
1 & 0 & 0 \\
0 & 1 & \tcb{1}\\
2 & \tcb{1} & 
\end{bmatrix},\begin{bmatrix}
1 & 0 & 0 \\
1 & 0 & \tcb{1}\\
1 & \tcb{2} & 
\end{bmatrix}, \begin{bmatrix}
0 & 0 & 1 \\
1 & 1 & \tcb{0}\\
2 & \tcb{1} & 
\end{bmatrix}.
\]
See Figure~\ref{fig:cryandtesler}.
\end{example}

\begin{figure}
\begin{subfigure}[b]{0.45\textwidth}
    \centering
   \raisebox{25pt}{ \includegraphics[scale=0.75]{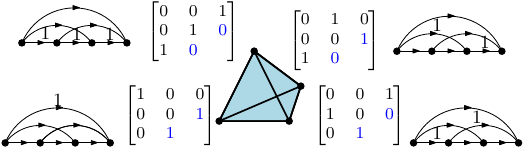}}
    \caption{}
    \label{fig:cry}
\end{subfigure}
\begin{subfigure}[b]{0.5\textwidth}
    \includegraphics[scale=0.7]{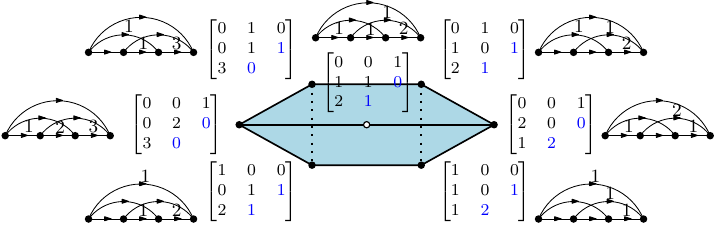}
    \caption{}
    \label{fig:tesler}
\end{subfigure}
\caption{The $3$-dimensional (a) CRY and (b) Tesler polytopes.}
    \label{fig:cryandtesler}
\end{figure}

\subsection{Previous bounds on lattice points of flow polytopes} \label{sec: previous bounds}

In this section we collect previous results and questions about bounds on $K_n(\bm{N})$. See Table~\ref{table: values kpf}.

\begin{proposition}[{e.g. \cite[\S 2.1]{JON}}] \label{prop:generating algorithm}
Let $\bm{N}=(N_0,\ldots,N_{n-1},-\sum_{i=0}^{n-1} N_i)$, then
\begin{equation} \label{eq:gen algorithm}
K_n(N_0,\ldots,N_{n-1},-\sum_i N_i) = \sum_{f} (f_{0,n-1}+1)(f_{1,n-1}+1)\cdots (f_{n-2,n-1}+1),
\end{equation}
where the sum is over integer flows in $\mathcal{F}_{n-1}(N_0,\ldots,N_{n-2},-\sum_i N_i)$. 
\end{proposition}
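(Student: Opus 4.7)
The plan is to establish the identity via an explicit ``merge'' map that contracts the edge $(n-1,n)$ of $k_{n+1}$, identifying the last two vertices. Define
\[
    \psi\colon \mathcal{F}_n(\bm N)\cap\mathbb Z^{\binom{n+1}{2}} \longrightarrow \mathcal F_{n-1}\bigl(N_0,\ldots,N_{n-2},-\textstyle\sum_{i=0}^{n-2}N_i\bigr)\cap\mathbb Z^{\binom{n}{2}}
\]
on integer flows by $\psi(f)_{ij}=f_{ij}$ for $0\leq i<j\leq n-2$, and $\psi(f)_{i,n-1}=f_{i,n-1}+f_{i,n}$ for $0\leq i\leq n-2$. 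The first step is to verify that $\psi(f)$ is a valid integer flow with the claimed netflow. For $i\leq n-2$, the netflow at vertex $i$ is unchanged since we have only combined two of its outgoing edges into one. The netflow at the new sink $n-1$ equals $-\sum_{i\leq n-2}(f_{i,n-1}+f_{i,n})$, and using conservation at vertices $n-1$ and $n$ in the original graph $k_{n+1}$ this simplifies to $-\sum_{i=0}^{n-2}N_i$, as required.

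The second (main) step is to enumerate the fiber $\psi^{-1}(f')$ for a fixed integer flow $f'$ in the target polytope. The coordinates of $f$ with $j\leq n-2$ must equal those of $f'$; meanwhile, for each $i\leq n-2$ the value $f'_{i,n-1}$ must be split as $f_{i,n-1}+f_{i,n}$ into two nonnegative integers, giving exactly $f'_{i,n-1}+1$ choices. The remaining coordinate $f_{n-1,n}$ is then forced by conservation at vertex $n-1$ of $k_{n+1}$:
\[
    f_{n-1,n} \;=\; N_{n-1} + \sum_{i=0}^{n-2} f_{i,n-1}.
\]
Throughout the paper the proposition is applied with $N_{n-1}\geq 0$ (and more generally whenever this holds), so the right-hand side is automatically a nonnegative integer for every choice of the $f_{i,n-1}$, and every splitting yields a valid preimage. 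Hence $|\psi^{-1}(f')|=\prod_{i=0}^{n-2}(f'_{i,n-1}+1)$, and summing over all integer $f'$ yields the claimed identity.

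The one mild subtlety is ensuring $f_{n-1,n}\geq 0$; this can equivalently be read off the transportation-matrix picture of \eqref{eq: def injection flows to transportation matrix}, where $\psi$ corresponds to collapsing the last two columns of the matrix and the nonnegativity condition translates into the subdiagonal entry $g_1$ remaining nonnegative. No calculation here is difficult --- the content of the result is really just a carefully arranged fiber count for the contraction $\psi$.
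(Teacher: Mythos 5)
Your argument is correct. The paper does not supply its own proof of this proposition---it is stated with a citation to O'Neill \cite[\S 2.1]{JON}---so there is no in-paper proof to compare against; the edge-contraction map $\psi$ together with the fiber count $\prod_{i=0}^{n-2}(f'_{i,n-1}+1)$ gives a clean, self-contained derivation, and you correctly isolate the only nontrivial point, namely that $f_{n-1,n}=N_{n-1}+\sum_{i\le n-2}f_{i,n-1}\ge 0$ hinges on the (implicit) standing hypothesis $N_{n-1}\ge 0$, which holds wherever the paper invokes the proposition. One small imprecision in your closing aside: with the conventions of \eqref{eq: def injection flows to transportation matrix}, the flows into the two merged sinks $n-1$ and $n$ occupy the \emph{first} two columns of the transportation matrix, not the last two, and the constraint $f_{n-1,n}\ge 0$ is the nonnegativity of the $(n-1,0)$ entry of the matrix rather than of a subdiagonal entry (the subdiagonal entry in row $n-1$ equals $s_{n-1}-f_{n-1,n}$ and governs the opposite bound $f_{n-1,n}\le s_{n-1}$). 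Since your main fiber count does not rely on this remark, the proof stands.
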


Given weak compositions $\bm{N}'=(N_0,\ldots,N_{n-1})$ and $\bm{M}'=(M_0,\ldots,M_{n-1})$, we say that $\bm{N}'$ {\em dominates} $\bm{M}'$ if $\sum_{i=0}^k N_i \geq \sum_{i=0}^k M_i$ for every $k=0,\ldots,n-1$ and denote it by $\bm{N}'\unrhd \bm{M}'$.

\begin{proposition} \label{prop:kpf inequality dominance}
Let $\bm{N}=(N_0,\ldots,N_{n-1},-\sum_i N_i)$ and $\bm{M}=(M_0,\ldots,M_{n-1},-\sum_i M_i)$ where $N_i$ and $M_i$ are in $\mathbb{Z}_{\geq 0}$ such that $(N_0,\ldots,N_{n-1}) \unrhd (M_0,\ldots,M_{n-1})$, then $K_n(\bm{N}) \geq K_n(\bm{M})$.    
\end{proposition}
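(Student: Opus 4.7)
The plan is to produce an explicit injection
\[
\Phi\colon \mathcal{F}_n(\bm{M}) \cap \Z^{\binom{n+1}{2}} \hookrightarrow \mathcal{F}_n(\bm{N}) \cap \Z^{\binom{n+1}{2}}
\]
that acts by translation by a fixed \emph{difference flow}. The Lie-theoretic picture makes this natural: the inequality $K_n(\bm{N})\geq K_n(\bm{M})$ should arise by adding to each $\bm{M}$-flow a canonical expansion of $\bm{N}-\bm{M}$ as a nonnegative integer combination of positive roots.

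First, I will set $\bm{D}:=\bm{N}-\bm{M}\in\Z^{n+1}$. Both $\bm{N}$ and $\bm{M}$ have coordinate sum $0$, and the dominance hypothesis says exactly that
\[
\sigma_k \;:=\; \sum_{j=0}^{k} D_j \;\geq\; 0 \qquad \text{for } 0\leq k\leq n-1.
\]
Setting $c_i:=\sigma_i$ for $0\leq i\leq n-1$, a telescoping identity yields
\[
\bm{D} \;=\; \sum_{i=0}^{n-1} c_i\,(\bm{e}_i-\bm{e}_{i+1}),
\]
with nonnegative integer coefficients. The simple positive roots $\bm{e}_i-\bm{e}_{i+1}$ are doing the work here: they form a unimodular triangular basis that converts the dominance assumption into positivity of the $c_i$.

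Next, I will let $\bm{g}$ be the flow on $k_{n+1}$ that places value $c_i$ on each simple edge $(i,i+1)$ and value $0$ on every other edge. Then $\bm{g}$ is entrywise nonnegative and integer-valued, and its netflow at vertex $k$ is $c_k-c_{k-1}=D_k$ (with the convention $c_{-1}=c_n=0$), so $\bm{g}$ has netflow $\bm{D}$. Defining $\Phi(\bm{f}):=\bm{f}+\bm{g}$, I will verify that for every integer $\bm{M}$-flow $\bm{f}$ the image $\Phi(\bm{f})$ is entrywise nonnegative, integer-valued, and has netflow $\bm{M}+\bm{D}=\bm{N}$; hence $\Phi(\bm{f})\in\mathcal{F}_n(\bm{N})\cap\Z^{\binom{n+1}{2}}$. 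Injectivity is immediate because $\Phi$ is a translation, so $K_n(\bm{M})\leq K_n(\bm{N})$.

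No serious obstacle is expected. The one substantive point is that dominance is precisely the condition making the coefficients $c_i$ nonnegative; anything weaker would not obviously yield a translation-style injection, and indeed the monotonicity statement can fail outside the dominance cone.
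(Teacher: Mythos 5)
Your proposal is correct and is essentially the same as the paper's proof: your coefficients $c_i=\sum_{j=0}^i (N_j-M_j)$ coincide with the paper's $\epsilon_i$, and your translation by the flow $\bm{g}$ supported on the simple edges $(i,i+1)$ is exactly the paper's map $f'_{i,i+1}=f_{i,i+1}+\epsilon_i$, $f'_{ij}=f_{ij}$ otherwise. The Lie-theoretic framing (expanding $\bm{N}-\bm{M}$ in simple roots) is a nice way to motivate why dominance is precisely the right hypothesis, but the underlying injection is identical.
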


\begin{proof}
Let $\epsilon_i = (\sum_{j=0}^i N_j)-(\sum_{j=0}^i M_j) \geq 0$ for $i=0,\ldots,n-1$, $\bm\alpha=(s_0,s_1,\ldots,s_{n-1})$  and $\bm\beta=(s_{n-1},\ldots,s_1,s_0)$ ($\bm\alpha'=(s'_0,s'_1,\ldots,s'_{n-1})$ and $\bm\beta'=(s'_{n-1},\ldots,s'_1,s'_0)$, respectively) where $s_k=\sum_{j=0}^k N_j$ (for $s'_k=\sum_{j=0}^k M_j$).
Given an integer flow $(f_{ij})$ of $\mathcal{F}_n(\bm{M})$ viewed as a lattice point $\phi(f_{ij})$ in $\mathcal{T}(\bm\alpha,\bm\beta)$, let $(f'_{ij})$ be defined as 
\[
f'_{ij} = \begin{cases}
f_{ij}+\epsilon_i &\text{ if } j=i+1,\\
f_{ij} &\text{ otherwise}.
\end{cases}
\]
Then $(f'_{ij})$ is an integer flow in $\mathcal{F}_n(\bm{N})$, i.e. $\phi(f'_{ij})$ is a lattice point in $\mathcal{T}(\bm\alpha',\bm\beta')$. This map is injective and therefore $K_n(\bm{M}) \leq K_n(\bm{N})$, as desired.
\end{proof}

In particular, the previous result implies a similar inequality when $\bm{N}
'$ is term-wise larger than $\bm{M}'$.

\begin{corollary} \label{cor:kpf inequality term-wise dominance}
Let $\bm{N}=(N_0,\ldots,N_{n-1},-\sum_i N_i)$ and $\bm{M}=(M_0,\ldots,M_{n-1},-\sum_i M_i)$ where $N_i$ and $M_i$ are in $\mathbb{Z}_{\geq 0}$ such that $N_i\geq M_i$ then $K_n(\bm{N}) \geq K_n(\bm{M})$. 
\end{corollary}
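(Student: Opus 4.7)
The plan is to observe that the term-wise inequality hypothesis is strictly stronger than the dominance hypothesis of Proposition~\ref{prop:kpf inequality dominance}, so the corollary reduces to a trivial application of that proposition. Specifically, if $N_i \geq M_i$ for all $0 \leq i \leq n-1$, then for each $k$ in that range we may sum these inequalities from $i=0$ to $i=k$ to obtain
\[
\sum_{i=0}^{k} N_i \;\geq\; \sum_{i=0}^{k} M_i,
\]
which is precisely the statement that $(N_0, \ldots, N_{n-1}) \unrhd (M_0, \ldots, M_{n-1})$ in the dominance order.

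Having verified dominance, I would simply invoke Proposition~\ref{prop:kpf inequality dominance} to conclude $K_n(\bm{N}) \geq K_n(\bm{M})$. There is no real obstacle here: the argument is a two-line deduction, and the entire combinatorial content (the injection on integer flows obtained by shifting mass into the super-diagonal entries $f_{i,i+1}$) has already been carried out in the proof of the proposition. The only thing worth noting explicitly is that the nonnegativity assumption $N_i, M_i \in \mathbb{Z}_{\geq 0}$ is preserved under the hypothesis, which is needed so that the corresponding flow polytopes are non-empty and the proposition applies.
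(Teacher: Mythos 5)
Your argument is exactly the paper's: term-wise inequality implies the dominance hypothesis of Proposition~\ref{prop:kpf inequality dominance} by partial summation, and the corollary then follows immediately. This matches the paper's (one-line) derivation.
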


\subsubsection{The cases with  closed formulas}

The following cases of $K_n(\bm{N})$ have closed formulas coming from a certain constant term identity due to Zeilberger \cite{Z}, that is a variation of the {\em Morris constant term identity} related to the Selberg integral. For self contained proofs of these product formulas see \cite{BVMorris,MoralesShi}. Let $C_n:=\frac{1}{n+1}\binom{2n}{n}$ denote the $n$th Catalan number and let 
\[
F(t,n):=\prod_{1\leq i<j\leq n} \frac{2t+i+j-1}{i+j-1},
\]
which counts the number of {\em plane partitions} of shape $\delta_n=(n-1,n-2,\ldots,1)$ with entries at most $t$ in $\mathbb{Z}_{\geq 0}$ \cite{proctor}.

\begin{theorem}[{\cite[Prop. 26]{BV} \cite[Eq. (8)]{Mproduct}}] \label{thm:case M_n(t,0,0)}
For a nonnegative integer $t$ we have that 
\begin{equation} \label{eq:case M_n(t,0,0)}
    K_n{(t,t+1,t+2,\ldots,t+n-1,-nt-{\textstyle \binom{n}{2}})} = C_1C_2\cdots C_{n-1} \cdot F(t,n).
\end{equation}
\end{theorem}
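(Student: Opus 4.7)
The plan is to extract $K_n(\bm N)$ from the generating function $\Psi(\bm z)$ in \eqref{eq:gen series kpf} as a constant term and then evaluate it via the \emph{Morris constant term identity}, a Selberg-type residue formula. The arithmetic-progression structure of the flow vector $\bm N = (t, t+1, \ldots, t+n-1, -nt-\binom{n}{2})$ is precisely what creates a Vandermonde-like factor in the integrand that matches the Morris kernel; without this shift by $(0,1,\ldots,n-1)$ the constant term would not admit a product evaluation.

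Concretely, I would proceed as follows. First, use \eqref{eq:gen series kpf} to write
\[
K_n(\bm N) \;=\; \mathrm{CT}_{z_0,\ldots,z_{n-1}}\Bigl[\,z_0^{-t}\, z_1^{-(t+1)}\cdots z_{n-1}^{-(t+n-1)}\prod_{0\le i<j\le n}\frac{1}{1-z_i z_j^{-1}}\Bigr]_{z_n = 1}.
\]
Second, after the change of variables $z_i = x_{n-i}$ and symmetrizing the factors $(1-z_iz_j^{-1})^{-1}$, the integrand takes a Morris-symmetric form
\[
\prod_{i=1}^n x_i^{-a-(i-1)}\,(1-x_i)^{-b}\,\prod_{1\le i<j\le n}\Bigl(1 - \frac{x_i}{x_j}\Bigr)^{-1}\Bigl(1-\frac{x_j}{x_i}\Bigr)^{-1},
\]
with parameters $a, b$ determined by $t$ and by the contribution of the $z_n=1$ factors. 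Third, apply Morris's closed-form evaluation to obtain a product of Gamma function ratios. Fourth, simplify this product via standard factorial identities to recognize it as $\prod_{i=1}^{n-1} C_i \cdot F(t,n)$: the Catalan product corresponds to the ``Vandermonde part'' of the evaluation (equal to the volume of the CRY polytope as in \cite{Z}), and $F(t,n)$ corresponds to the $t$-dependent part, which is the known plane-partition product.

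The main obstacle is the bookkeeping in the second step: aligning the exponents produced by $\bm N$ with the Morris parameters $a, b$, and rewriting the asymmetric product $\prod_{i<j}(1-z_iz_j^{-1})^{-1}$ in the symmetric form required by Morris. Once this setup is correct the remaining manipulations reduce to Gamma function algebra, which has already been carried out in the cited references \cite{BVMorris, MoralesShi}. An alternative route would be to apply the Lidskii formula (see Section~\ref{sec: lidskii}), which expresses $K_n(\bm N)$ as a positive sum over weak compositions; for this arithmetic-progression $\bm N$ the sum should telescope to the right-hand side, but verifying the telescoping ultimately reduces to the same Morris/Selberg identity and so does not avoid the core difficulty.
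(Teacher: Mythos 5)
Your proposal follows essentially the same route that the paper itself points to: the paper does not give its own proof of this identity but cites it as Proposition~26 of Baldoni--Vergne and Equation~(8) of \cite{Mproduct}, noting in the surrounding text that the formula "comes from a certain constant term identity due to Zeilberger, a variation of the Morris constant term identity related to the Selberg integral," with self-contained proofs in \cite{BVMorris, MoralesShi}. Your constant-term extraction from $\Psi(\bm z)$ in \eqref{eq:gen series kpf}, reduction to Morris form via change of variables, evaluation as a Gamma-ratio product, and final factorization into $\prod_i C_i$ and $F(t,n)$ is exactly the argument those references carry out.
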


\begin{remark}
By a result of Postnikov--Stanley (unpublished) and Baldoni--Vergne \cite{BV}, we have the following relation between the normalized volume of $\mathcal{F}_n(1,0,\ldots,0,-1)$ equals a value of $K_n(\cdot)$,
\begin{equation} \label{eq:prod cat case}
\vol \mathcal{F}_{n+1}(1,0,\ldots,0,-1) = K_n(0,1,2,\ldots,n-2,-{\textstyle \binom{n-1}{2}})=C_1C_2\cdots C_{n-1},
\end{equation}
where the second equality follows by setting $t=1$ in \eqref{eq:case M_n(t,0,0)}. Moreover, the leading term in $t$ of \eqref{eq:case M_n(t,0,0)} gives the normalized volume of  $\mathcal{F}_n(1,1,\ldots,1,-n)$ \cite[Theorem 1.8]{MMR}, \cite[Lemma 2.1]{ZhouLuFu}.
\[
\vol \mathcal{F}_{n+1}(1,1,\ldots,1,-n) = C_1C_2\cdots C_{n-1} \cdot f^{\delta_n},
\]
where $f^{\delta_n}=\binom{n}{2}! 2^{\binom{n}{2}}/\prod_{i=1}^n i!$ is the number of Standard Young tableaux of shape $\delta_n=(n-1,n-2,\ldots,1)$.
\end{remark}

The next two results collect asymptotics and bounds for the special cases and the pieces of the product on the RHS of \eqref{eq:case M_n(t,0,0)}

\begin{proposition}[{\cite[Lemma 7.4]{MoralesShi}}] \label{prop:asympt delta}
    \[
\log K_n{(0,1,2,\ldots,n-1,-{\textstyle \binom{n}{2}})}  = \log C_1C_2\cdots C_{n-1} = n^2 \log 2 - \frac32 n \log n + O(n).
\]
\end{proposition}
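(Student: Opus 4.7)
The plan is to derive the identity $\log K_n(0,1,\ldots,n-1,-\binom{n}{2}) = \log(C_1 C_2 \cdots C_{n-1})$ by specializing the closed form in Theorem~\ref{thm:case M_n(t,0,0)}, and then to extract the asymptotic via Stirling's formula. Setting $t=0$ in equation~\eqref{eq:case M_n(t,0,0)} gives
\[
K_n(0,1,2,\ldots,n-1,-\tbinom{n}{2}) \;=\; C_1 C_2 \cdots C_{n-1} \cdot F(0,n),
\]
and since $F(0,n)$ counts plane partitions of shape $\delta_n$ whose entries lie in $\{0,1,\ldots,0\} = \{0\}$, there is exactly one such (the all-zero tableau), so $F(0,n)=1$. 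This establishes the first equality; note that $t=0$ is permitted in Theorem~\ref{thm:case M_n(t,0,0)}, but one could alternatively take the leading term in $t$ and compare, or use Proposition~\ref{prop:kpf inequality dominance} together with the $t=1$ value as a sanity check.

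For the asymptotics, I would apply the classical Stirling expansion to the Catalan numbers, which gives
\[
\log C_i \;=\; i \log 4 \;-\; \tfrac{3}{2}\log i \;-\; \tfrac{1}{2}\log \pi \;+\; O(1/i).
\]
Summing this from $i=1$ to $n-1$, the first term contributes $2 \log 2 \cdot \binom{n}{2} = n^2 \log 2 - n \log 2$, handling the leading quadratic part. The second term contributes $-\tfrac{3}{2}\log((n-1)!)$, which by another application of Stirling equals $-\tfrac{3}{2} n \log n + \tfrac{3}{2} n + O(\log n)$, producing the advertised $-\tfrac{3}{2} n \log n$ subleading term. The remaining contributions, namely $-\tfrac{(n-1)}{2}\log \pi$ and the accumulated $O(1/i)$ errors, are both $O(n)$.

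Collecting these estimates yields
\[
\sum_{i=1}^{n-1} \log C_i \;=\; n^2 \log 2 \;-\; \tfrac{3}{2} n \log n \;+\; O(n),
\]
which is precisely the stated asymptotic. There is no substantive obstacle here: the argument is a direct specialization of a known product formula followed by a routine Stirling calculation, and the main bookkeeping task is verifying that all subleading terms below $n \log n$ are correctly absorbed into the $O(n)$ error.
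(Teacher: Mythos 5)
Your proof is correct. The paper itself does not give a proof of this proposition; it cites \cite[Lemma 7.4]{MoralesShi} and leaves the argument to that reference. Your two steps are sound: setting $t=0$ in Theorem~\ref{thm:case M_n(t,0,0)} is valid since that theorem allows any nonnegative integer $t$, and indeed $F(0,n)=\prod_{1\leq i<j\leq n}\frac{i+j-1}{i+j-1}=1$, so the first equality follows. The Stirling expansion $\log C_i = i\log 4 - \tfrac{3}{2}\log i - \tfrac{1}{2}\log\pi + O(1/i)$ is correct, and summing it as you do gives $n^2\log 2 - n\log 2$ from the linear term, $-\tfrac{3}{2}\log((n-1)!) = -\tfrac{3}{2}n\log n + O(n)$ from the logarithmic term, and $O(n)$ from the constant and error tails, yielding the stated asymptotic. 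This is almost certainly the same routine computation performed in the cited reference. One small caveat: the alternative you mention in passing (``take the leading term in $t$ and compare'') would give the volume of the polytope, which is the leading Ehrhart coefficient, not the value of the polynomial at $t=0$; this does not recover the first equality, so the direct substitution $t=0$ is the right move and the parenthetical alternative should be dropped.
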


\begin{proposition}[{\cite[Proposition 7.5]{MoralesShi}}] \label{prop:asympt n+delta}
\[
\begin{split}
    \log K_n{(n,n+1,n+2,\ldots,2n-2,-n^2-{\textstyle \binom{n-1}{2}})} &= \log(C_1C_2\cdots C_{n-1} F(n,n)) \\
        &= \left(9\log2 - \frac92 \log 3\right)n^2  + O(n\log n).
\end{split}
\]     
\end{proposition}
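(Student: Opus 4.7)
The plan is to combine the exact product formula of Theorem~\ref{thm:case M_n(t,0,0)} with a Riemann sum estimate of $\log F(n,n)$. The first equality in the statement is the specialization $t = n$ of Theorem~\ref{thm:case M_n(t,0,0)} (modulo a minor shift in the indexing convention for $K_n$), and by Proposition~\ref{prop:asympt delta} the Catalan prefactor contributes $\log(C_1 \cdots C_{n-1}) = n^2 \log 2 + O(n \log n)$. Thus it remains to show that
\[
    \log F(n,n) = \Bigl(8 \log 2 - \tfrac{9}{2} \log 3\Bigr) n^2 + O(n \log n).
\]

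Toward this, dividing numerator and denominator of each factor of $F(n,n)$ by $i+j-1$ gives
\[
    \log F(n,n) = \sum_{1 \leq i < j \leq n} \log\Bigl(1 + \frac{2n}{i+j-1}\Bigr),
\]
which I interpret as a Riemann sum for $n^2 J$, where
\[
    J := \iint_{0 < x < y < 1} \log\Bigl(1 + \frac{2}{x+y}\Bigr)\, dx\, dy,
\]
under the substitution $(x,y) = (i/n, j/n)$. The integrand has only an integrable logarithmic singularity at the origin, so a slicewise Euler--Maclaurin estimate on the bulk region, combined with a direct $O(\log n)$-per-term bound on the $O(n)$ terms with small $i+j$, yields $\log F(n,n) = n^2 J + O(n \log n)$.

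To evaluate $J$, change variables via $u = x+y$ at fixed $y$ and exchange the order of integration, obtaining
\[
    J = \int_0^1 \frac{u}{2} \log\Bigl(1 + \frac{2}{u}\Bigr) du + \int_1^2 \Bigl(1 - \frac{u}{2}\Bigr) \log\Bigl(1 + \frac{2}{u}\Bigr) du.
\]
Writing $\log(1 + 2/u) = \log(u+2) - \log u$ (and, for the second piece, substituting $v = 2 - u$ to get $\log((4-v)/(2-v))$) reduces each integral to elementary antiderivatives of $u \log u$, $u \log(u+2)$, and $v \log(2-v)$ via integration by parts. Summing these produces $J = 8 \log 2 - \tfrac{9}{2} \log 3$, and adding this to the $n^2 \log 2$ from the Catalan product yields the claimed leading coefficient $9 \log 2 - \tfrac{9}{2} \log 3$.

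The main obstacle is controlling the Riemann sum error: because $\log(1 + 2n/(i+j-1))$ is as large as $\log(2n)$ when $i+j$ is small, the boundary rows must be handled by an explicit $O(\log n)$-per-term bound rather than the bulk smoothness estimate, and one must also absorb the $i+j$ versus $i+j-1$ shift to arrive at an error of $O(n \log n)$ rather than a larger polynomial in $n$.
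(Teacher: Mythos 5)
Your proof is correct, and since the paper states this proposition by citation to \cite{MoralesShi} rather than proving it, you have supplied a genuine self-contained derivation. Your integral evaluates correctly: the triangle-density change of variables gives
\[
J = \int_0^1 \tfrac{u}{2}\log\bigl(1+\tfrac{2}{u}\bigr)\,du + \int_1^2 \bigl(1-\tfrac{u}{2}\bigr)\log\bigl(1+\tfrac{2}{u}\bigr)\,du,
\]
and carrying out the elementary antiderivatives indeed yields $J = 8\log 2 - \tfrac{9}{2}\log 3$; adding $n^2\log 2$ from Proposition~\ref{prop:asympt delta} recovers the stated constant. Your handling of the Riemann-sum error is the right concern and the split you describe (bulk slice-wise Euler--Maclaurin plus a per-term $O(\log n)$ bound on the $O(n)$ small-$(i+j)$ terms) does yield $O(n\log n)$.

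That said, the route is genuinely different from the shortest one available inside this paper. Proposition~\ref{prop:F-f-bound} already gives a uniform two-sided estimate $0 \geq \log F(t,n) - (n+t)^2 f\bigl(\tfrac{t}{n+t}\bigr) \geq -2(n+t)$; specializing $t=n$ gives $\log F(n,n) = 4n^2 f(\tfrac12) + O(n)$, and a direct evaluation $f(\tfrac12) = 2\log 2 - \tfrac98\log 3$ recovers exactly $8\log 2 - \tfrac92\log 3$ with error only $O(n)$, avoiding the singularity analysis entirely. Combining with Proposition~\ref{prop:asympt delta} then gives the result with error $O(n\log n)$ (the $O(n\log n)$ coming solely from the Catalan product). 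Your Riemann-sum approach buys self-containment and transparency about where the constant comes from; the route via Proposition~\ref{prop:F-f-bound} buys a cleaner and tighter error for $\log F(n,n)$ at the cost of black-boxing that estimate. Both are valid.

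One small point worth flagging: your remark about ``a minor shift in the indexing convention'' glosses over the fact that the net-flow vector displayed in the proposition ($\ldots, 2n-2, -n^2 - \binom{n-1}{2}$) does not exactly match the $t=n$ specialization of Theorem~\ref{thm:case M_n(t,0,0)} (which gives $\ldots, 2n-1, -n^2 - \binom{n}{2}$). This is almost certainly an indexing artifact from \cite{MoralesShi} and harmless for the leading-order asymptotics, but if you were writing this up carefully you would want to state precisely which of the two you mean and note that the discrepancy does not affect the $n^2$ coefficient.
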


\begin{proposition}[{\cite[Proposition 3.1]{MPPSchub}}] \label{prop:F-f-bound}
For all integers $n$ and $t$ we have that
\[
0\,\geq\, \log F(t,n)  - (n+t)^2f(t/(n+t))\,\geq\, - 2(t+n),
\]
where $f(x)=x^2\log x - \frac12 (1-x)^2 \log(1-x) - \frac12 (1+x)^2\log(1+x) + 2x\log 2$.
\end{proposition}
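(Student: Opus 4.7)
My plan is to take logarithms and compare the resulting sum to a double integral that evaluates in closed form to $(n+t)^2 f(t/(n+t))$. Writing $\log F(t,n) = \sum_{1 \le i < j \le n} h(i+j-1)$ where $h(u) := \log(2t+u) - \log u$, I note that $h$ is strictly convex, strictly decreasing, and nonnegative on $(0,\infty)$. I will compare this sum to $I := \iint_T h(x+y)\,dx\,dy$ where $T := \{(x,y) : 0 \le x \le y \le n\}$.

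First I would evaluate $I$ in closed form. The substitution $(u,v) = (x+y,\, y-x)$ (Jacobian $1/2$) reduces $I$ to the one-variable integral $I = \tfrac12\int_0^{2n} \min(u,\,2n-u)\,h(u)\,du$, which I split at $u = n$. Using the antiderivative $\int u\log u\,du = \tfrac{u^2}{2}\log u - \tfrac{u^2}{4}$ and its $\log(2t+u)$ analogue, one computes $I$ explicitly; a tedious but routine algebraic simplification then verifies $I = (n+t)^2 f(t/(n+t))$.

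For the upper bound, convexity of $h$ makes $(x,y) \mapsto h(x+y)$ convex, so Jensen's inequality on each unit square $Q_{ij} := [i-1,i] \times [j-1,j]$ (whose centroid is $(i-\tfrac12,\,j-\tfrac12)$) gives $h(i+j-1) \le \iint_{Q_{ij}} h(x+y)\,dx\,dy$. The squares $Q_{ij}$ are pairwise disjoint, and their union $R$ lies in $T$ because $x \le i \le j - 1 \le y$ on $Q_{ij}$. Since $h \ge 0$, summing yields $\log F(t,n) \le \iint_R h \le I$. For the lower bound, the same Jensen estimate gives $I - \log F(t,n) = \iint_{T \setminus R} h(x+y)\,dx\,dy + \Delta$, where $\Delta \ge 0$ aggregates the Jensen gaps. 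The region $T \setminus R$ consists of the $n$ small diagonal triangles $\{k-1 \le x \le y \le k\}$, $k = 1, \dots, n$; on each with $k \ge 2$ we have $h(x+y) \le h(2k-2) = \log(1 + t/(k-1))$, so the total contribution from $k \ge 2$ is at most $\tfrac12 \sum_{k=2}^n \log(1 + t/(k-1)) = \tfrac12 \log \binom{t+n-1}{n-1} \le \tfrac{\log 2}{2}(t+n-1)$, using $\binom{a}{b} \le 2^a$. The $k=1$ piece is integrable despite the singularity $h(u) \sim -\log u$ at the origin, contributing only $O(\log t)$; and a Taylor expansion will bound $\Delta \le \tfrac{1}{12} \sum_{i<j}\bigl[(i+j-1)^{-2} - (2t+i+j-1)^{-2}\bigr] = O(\log n)$. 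These three pieces should combine to at most $2(n+t)$.

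The main obstacle will be achieving the sharp linear error $2(n+t)$ rather than the naive $O((n+t)\log(n+t))$ one would get by bounding $h$ by its supremum on $T \setminus R$. The key trick is that the diagonal excess integrals telescope into a logarithm of a binomial coefficient, which then admits the clean combinatorial bound $\binom{a}{b} \le 2^a$. A secondary subtlety is handling the near-origin singularity in the $k = 1$ triangle, but the integrability of $u \log u$ ensures only a lower-order correction.
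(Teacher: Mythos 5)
The paper does not prove this proposition; it simply cites \cite[Prop.~3.1]{MPPSchub}, so there is no internal proof for me to compare against. Judged on its own, your plan is sound and almost certainly in the same spirit as the cited source: expand $\log F(t,n) = \sum_{1\le i<j\le n} h(i+j-1)$ with $h(u) = \log(2t+u)-\log u$ convex decreasing, compare to $I=\iint_T h(x+y)\,dx\,dy$, and use Jensen on unit squares for both directions. I checked the closed-form claim $I=(n+t)^2 f(t/(n+t))$ by the substitution $(u,v)=(x+y,y-x)$ and the antiderivatives you indicate, and it does come out exactly; the coefficients of $\log t$, $\log n$, $\log(2t+n)$, $\log(t+n)$, and $\log 2$ all match. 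The upper bound $\log F \le I$ via Jensen and $Q_{ij}\subseteq T$, $h\ge 0$ is clean.

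Two small points deserve attention on the lower bound side. First, the Jensen-gap estimate on $Q_{ij}$ should invoke $\max_{Q_{ij}} h''$, which since $h''(u)=u^{-2}-(2t+u)^{-2}$ is decreasing equals $h''(i+j-2)$, not $h''(i+j-1)$ at the centroid; writing $\Delta\le\frac{1}{12}\sum_{i<j} h''(i+j-1)$ is not quite a valid bound, though replacing $i+j-1$ by $i+j-2$ preserves the $O(\log n)$ conclusion. Second, ``these three pieces should combine to at most $2(n+t)$'' is left as a heuristic; since the dominant term $\tfrac{\log 2}{2}\log\binom{t+n-1}{n-1}\le\tfrac{\log 2}{2}(t+n-1)$ has coefficient $\approx 0.35$ and the remaining pieces (the $k=1$ triangle, which evaluates to $\tfrac12\log(2t+2)+\tfrac34-\log 2$ by direct computation, and $\Delta = O(\log n)$) are logarithmic, the total is indeed well under $2(t+n)$ for all $t,n\ge 1$, but a complete writeup should exhibit the explicit constants. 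With those two points tightened, this is a valid proof.
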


\subsubsection{The CRY case  $\bm{N}=(t,0,\ldots,0,-t)$}

\begin{proposition}[{upper bound from \cite{MMS}}] \label{prop: previous bounds CRY case}
For $\bm{N}=(t,0,\ldots,0,-t)$ we have that 
\[
F(t,n) \geq K_n(\bm{N}) \geq (t+1)^{n-1}.
\]
\end{proposition}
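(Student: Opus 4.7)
My plan is to handle the two inequalities with quite different strategies. For the lower bound $K_n(t,0,\ldots,0,-t) \geq (t+1)^{n-1}$, I will use induction on $n$ combined with the recursive formula in Proposition~\ref{prop:generating algorithm}. The base case $n=2$ is immediate: integer flows in $\mathcal{F}_2(t,0,-t)$ are parametrized by $f_{0,1} \in \{0,1,\ldots,t\}$ (which forces $f_{1,2} = f_{0,1}$ and $f_{0,2} = t - f_{0,1}$), so $K_2(t,0,-t) = t+1$.

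For the inductive step, Proposition~\ref{prop:generating algorithm} specializes to
\[
    K_n(t,0,\ldots,0,-t) \,=\, \sum_{\bm{f}} \prod_{i=0}^{n-2}(f_{i,n-1}+1),
\]
where $\bm{f}$ ranges over integer flows in $\mathcal{F}_{n-1}(t,0,\ldots,0,-t)$. Since vertex $n-1$ is the sink of the smaller complete graph, conservation of flow forces $\sum_{i=0}^{n-2} f_{i,n-1} = t$. The elementary inequality $\prod_i (1+a_i) \geq 1 + \sum_i a_i$ for nonnegative integers $a_i$ then implies each summand is at least $t+1$, giving
\[
    K_n(t,0,\ldots,0,-t) \,\geq\, (t+1) \cdot K_{n-1}(t,0,\ldots,0,-t) \,\geq\, (t+1)^{n-1}
\]
by the inductive hypothesis.

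For the upper bound $F(t,n) \geq K_n(t,0,\ldots,0,-t)$, my plan is to invoke directly the result of \cite{MMS} (as indicated in the statement). The strategy there exhibits an integral containment of the CRY polytope $\mathcal{F}_n(1,0,\ldots,0,-1)$ inside a polytope related to alternating sign matrices whose $t$-fold dilation has lattice points in bijection with plane partitions of staircase shape $\delta_n = (n-1,n-2,\ldots,1)$ bounded above by $t$; these are counted by $F(t,n)$. Since $\mathcal{F}_n(t,0,\ldots,0,-t) = t \cdot \mathcal{F}_n(1,0,\ldots,0,-1)$ and integral containment is preserved under dilation, the claimed bound follows.

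The bottleneck is really the upper bound: the containment in \cite{MMS} rests on a nontrivial combinatorial argument about ASMs and staircase plane partitions, and seems hard to recover by the elementary methods used for the lower bound. For the purposes of this proposition, however, it suffices to cite that work.
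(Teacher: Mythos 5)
Your proof is correct and follows essentially the same route as the paper: both derive the lower bound by applying Proposition~\ref{prop:generating algorithm}, observing that every term in the resulting sum is at least $t+1$ (the paper asserts the minimum occurs at the column $(0,\ldots,0,t)$, while you give the cleaner justification $\prod_i(1+a_i)\geq 1+\sum_i a_i$), and iterating $K_n\geq (t+1)K_{n-1}$; for the upper bound both defer to \cite[Cor. 5.8]{MMS}.
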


\begin{proof}
To show the lower bound we use Proposition~\ref{prop:generating algorithm} and the fact that the smallest product in the sum on the RHS of \eqref{eq:gen algorithm} occurs when the first column is $(f_{0,n-1},\ldots,f_{n-2,n-1})$ is $(0,\ldots,0,t)$ to conclude that $K_n(t,0,\ldots,0,-t)\geq (t+1) \cdot K_{n-1}(t,0,\ldots,0,-t)$. The upper bound comes from \cite[Cor. 5.8]{MMS}.  
\end{proof}

\begin{theorem}[{Chan--Robbins--Yuen \cite[Thm. 1]{CRY}}]
Fix an integer $t>0$ and let $\bm{N}=(t,0,\ldots,0,-t)$,  then $a_n(t)=K_n(t,0^{n-1},-t)$ satisfies a linear recurrence of order $p(t)$, the number of integer partitions of $t$. In particular $a_n(t) \sim c(t)^n$ for some constant $c(t)$.
\end{theorem}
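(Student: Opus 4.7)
I plan to prove both conclusions via a transfer matrix on the finite state space of partitions of $t$. For an integer flow $f \in \mathcal{F}_n(t, 0^{n-1}, -t) \cap \mathbb{Z}^{\binom{n+1}{2}}$, define its \emph{cross-section profile} at level $i$ (for $0 \le i \le n-1$) as the tuple $r^{(i)} = (r^{(i)}_0, \ldots, r^{(i)}_i)$ with $r^{(i)}_a := \sum_{b>i} f_{a,b}$. Flow conservation across the cut between $\{0,\ldots,i\}$ and $\{i+1,\ldots,n\}$ forces $\sum_a r^{(i)}_a = t$, so each profile is a weak composition of $t$ into $i+1$ parts. The initial profile is $r^{(0)} = (t)$, the terminal profile is the composition $(f_{0,n}, \ldots, f_{n-1,n})$, and a transition $r^{(i)} \to r^{(i+1)}$ is given by any choice $f_{a,i+1} \in \{0, \ldots, r^{(i)}_a\}$ for $a \le i$, which then sets $r^{(i+1)}_a = r^{(i)}_a - f_{a,i+1}$ and appends $r^{(i+1)}_{i+1} = \sum_a f_{a,i+1}$. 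Integer flows correspond bijectively to valid profile sequences.

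Two structural observations collapse the state space. First, an entry $r^{(i)}_a = 0$ forces $f_{a,i+1} = 0$ and contributes nothing to any future transition. Second, the remaining out-edges from every source $a \le i$ target the same set $\{i+1,\ldots,n\}$, so source positions are exchangeable: the number of completions of a partial profile depends only on the multi-set $\{r^{(i)}_a\}$. Consequently, the only relevant datum of a profile is the partition $\mu \vdash t$ formed by its positive entries. Letting $E(\mu, m)$ denote the number of completions over $m$ further transitions starting from any profile whose positive entries form $\mu$, one obtains a non-negative integer transition matrix $N$ indexed by partitions of $t$ satisfying
\[
E(\mu, m) = \sum_{\mu' \vdash t} N(\mu, \mu')\, E(\mu', m-1), \quad E(\mu, 0) = 1, \quad a_n(t) = E((t), n-1).
\]
Since $N$ is of size $p(t) \times p(t)$, Cayley--Hamilton applied to $N$ immediately yields a linear recurrence with constant coefficients of order at most $p(t)$ satisfied by $a_n(t)$.

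For the asymptotic $a_n(t) \sim c(t)^n$ (interpreted as $a_n(t)^{1/n} \to c(t)$), I would set $c(t) := \rho(N)$, the spectral radius of $N$. Since $N$ has non-negative entries and every partition of $t$ is reachable from $(t)$ by a sequence of transitions (for instance, by successively splitting off a single unit of flow at each step), Perron--Frobenius applied to the strongly-connected component containing $(t)$ gives $a_n(t)^{1/n} \to \rho(N)$.

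The step that needs the most care is verifying that $N(\mu, \mu')$ truly depends only on $\mu$ and $\mu'$, not on the specific labeled profile representing $\mu$. This reduces to the observation that the multi-set of positive entries of the new profile---the positive values among the $r^{(i)}_a - f_{a,i+1}$ together with the single new entry $\sum_a f_{a,i+1}$---is a symmetric function of the choices $(f_{a,i+1})$ indexed by the positions with positive $r$-value, and hence is unaffected by which specific $a$-labels those positive entries occupy. The remaining work would be explicit bookkeeping to write $N(\mu,\mu')$ as a sum over compositions of the parts of $\mu$, and a mild check of irreducibility to justify the Perron--Frobenius step.
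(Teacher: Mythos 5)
The paper does not itself prove this statement; it cites Chan--Robbins--Yuen \cite{CRY}, whose original argument is essentially the same transfer-matrix construction you give. Your proof is correct: the key exchangeability observation (that the transition count depends only on the multiset of positive cross-section entries, hence only on a partition of $t$) is exactly the right reduction, and Cayley--Hamilton applied to the resulting $p(t)\times p(t)$ matrix $N$ gives the order-$p(t)$ recurrence for $a_n(t)=E((t),n-1)$. One small sharpening of the Perron--Frobenius step you flag as remaining work: irreducibility alone is not enough to conclude $a_n(t)^{1/n}\to\rho(N)$; you also need aperiodicity. Both are cheap here --- every state $\mu$ transitions to $(t)$ in one step by taking $f_a=r_a$ for each positive part, giving irreducibility together with your observation that $(t)$ reaches every $\mu$, and $N\big((t),(t)\big)\geq 1$ (take all $f_a=0$) supplies a self-loop, so the chain is aperiodic. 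Primitivity then in fact yields the stronger $a_n(t)\sim C\cdot\rho(N)^n$ for a positive constant $C$, which is the intended reading of the theorem's ``$\sim$.''
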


\subsubsection{The Tesler case $\bm{N}=(t,t,\ldots,t,-nt)$}

\begin{proposition}
For $\bm{N}=(t,t,\ldots,t,-nt)$ we have that 
\[
(t+1)^{\binom{n}{2}}\,\geq\, K_n(\bm{N}) \,\geq\, \prod_{i=1}^{n-1} (it+1).
\]
\end{proposition}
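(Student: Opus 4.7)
My plan is to prove both inequalities simultaneously by induction on $n$, using the recursion in Proposition~\ref{prop:generating algorithm}. The base case $n=1$ is trivial: $\mathcal{F}_1(t,-t)$ has a single lattice point with $f_{0,1}=t$, so $K_1=1$, matching both $(t+1)^0$ and the empty product. For the inductive step, Proposition~\ref{prop:generating algorithm} gives
\[
    K_n(t,\ldots,t,-nt) \;=\; \sum_{g} \prod_{i=0}^{n-2}(g_{i,n-1}+1),
\]
where the sum ranges over integer flows $g$ in the reduced polytope $\mathcal{F}_{n-1}(t,\ldots,t,-(n-1)t)$. The crucial observation, coming from conservation at the sink vertex $n-1$ of the reduced graph, is that every such $g$ satisfies the constraint $\sum_{i=0}^{n-2} g_{i,n-1} = (n-1)t$.

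For the upper bound, I would apply AM--GM to the fixed-sum product of $n-1$ nonnegative integers:
\[
    \prod_{i=0}^{n-2}(g_{i,n-1}+1) \;\leq\; \left(\frac{(n-1)t+(n-1)}{n-1}\right)^{n-1} \;=\; (t+1)^{n-1}.
\]
Combining this uniform bound with the inductive hypothesis $K_{n-1}(t,\ldots,t,-(n-1)t)\leq (t+1)^{\binom{n-1}{2}}$ yields
\[
    K_n \;\leq\; (t+1)^{n-1} \cdot (t+1)^{\binom{n-1}{2}} \;=\; (t+1)^{\binom{n}{2}}.
\]

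For the lower bound, I would instead use the elementary inequality $\prod_i (x_i+1) \geq 1+\sum_i x_i$ for $x_i \geq 0$ (which is immediate from the expansion $\prod(x_i+1)=\sum_S \prod_{i\in S} x_i$). Applied term-by-term to the recursion together with the conservation identity gives
\[
    \prod_{i=0}^{n-2}(g_{i,n-1}+1) \;\geq\; 1+(n-1)t,
\]
and pairing this with the inductive lower bound $K_{n-1}\geq \prod_{i=1}^{n-2}(it+1)$ produces $K_n \geq ((n-1)t+1)\cdot \prod_{i=1}^{n-2}(it+1) = \prod_{i=1}^{n-1}(it+1)$.

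I do not expect a serious obstacle here. The only substantive step is recognizing that the recursion reduces the sum to a polytope whose sink-inflow is pinned, so that the power-mean inequality on one side and the Bernoulli-type lower bound on the other both hit their optimum uniformly. A minor care point is to confirm the form of the reduced polytope: in Proposition~\ref{prop:generating algorithm} the summation is over $\mathcal{F}_{n-1}(N_0,\ldots,N_{n-2},-\sum_{i=0}^{n-2}N_i)$, which in our symmetric case is precisely $\mathcal{F}_{n-1}(t,\ldots,t,-(n-1)t)$, preserving the Tesler shape and thus allowing the induction to close.
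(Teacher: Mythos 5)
Your proof is correct and follows essentially the same route as the paper: both apply the recursion of Proposition~\ref{prop:generating algorithm} to reduce to $\mathcal{F}_{n-1}(t,\ldots,t,-(n-1)t)$ and then bound each summand $\prod_i(g_{i,n-1}+1)$ above and below, with induction closing both inequalities. Your AM--GM/Bernoulli justification makes explicit what the paper merely asserts (that the extremal columns are $(t,\ldots,t)$ and $(0,\ldots,0,(n-1)t)$), and in particular your derivation of $(t+1)^{n-1}$ as the per-term upper bound corrects what is evidently a typographical $t^{n-1}$ in the paper's displayed inequality.
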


\begin{proof}
To show the lower bound we use Proposition~\ref{prop:generating algorithm} and the fact that the smallest and biggest product in the sum on the RHS of \eqref{eq:gen algorithm} occurs when the first column is $(f_{0,n-1},\ldots,f_{n-2,n-1})$ is $(0,\ldots,0,(n-1)t)$ and $(t,\ldots,t)$, respectively. This implies that
\[
t^{n-1}  K_{n-1}(t,t,\ldots,t,-(n-1)t) \,\geq\, K_n(t,t,\ldots,t,-nt) \,\geq\, ((n-1)t+1) \cdot K_{n-1}(t,t,\ldots,t,-(n-1)t).
\]
\end{proof}

A more careful analysis of Proposition~\ref{prop:generating algorithm} was performed by O'Neill to improve the bounds for the case of $\bm{N}=(1,1,\ldots,1,-n)$.

\begin{proposition}[{O'Neill \cite{JON}}]
 \[
2^{\binom{n-2}{2}-1}\cdot 3^n\,\geq\, K_n{(1,1,\ldots,1,-n)} \,\geq\, (2n-3)!! .
\]   
\end{proposition}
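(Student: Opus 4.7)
The plan is to prove both inequalities by iterating Proposition~\ref{prop:generating algorithm} and carefully analyzing the resulting sum. With $c_n := K_n(1^n,-n)$, the generating algorithm writes
\[
c_n \,=\, \sum_{\bm{k}} F(\bm{k}) \prod_{i=0}^{n-2}(k_i+1),
\]
where $\bm{k} = (f_{0,n-1},\ldots,f_{n-2,n-1})$ ranges over weak compositions of $n-1$ into $n-1$ parts, $F(\bm{k}) = K_{n-2}(1-k_0,\ldots,1-k_{n-2})$ counts sub-flow extensions, and $F(\bm{k})>0$ exactly when $\bm{k}$ is \emph{admissible} in the sense that $\sum_{j\leq \ell} k_j \leq \ell+1$ for every $\ell$. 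Both bounds then reduce to a discrete optimization identifying the extremizers of $\prod(k_i+1)$ on this admissible slice.

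For the lower bound, I would prove $c_n \geq (2n-3)\,c_{n-1}$ for $n \geq 3$ by induction from the base $c_2 = 2$, which iterates to $c_n \geq 2\cdot(2n-3)!!$. The driving observations are: (a) the unique admissible $\bm{k}$ with a single nonzero entry is $(0,\ldots,0,n-1)$, contributing $n\cdot c_{n-2}$ to the sum; and (b) any admissible $\bm{k}$ with $\geq 2$ nonzero entries satisfies $\prod(k_i+1)\geq 2(n-1)$, since among compositions with $r\geq 2$ nonzero parts the product is minimized at the most concentrated distribution $(n-r,1,\ldots,1)$ giving $(n-r+1)\cdot 2^{r-1}$, which is itself minimized at $r=2$. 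Combining yields
\[
c_n \,\geq\, n\cdot c_{n-2} + 2(n-1)(c_{n-1} - c_{n-2}) \,=\, (2n-2)c_{n-1} - (n-2)c_{n-2}.
\]
Applying the same generating algorithm at level $n-1$ with the uniform lower bound $\prod\geq n-1$ (attained at the analogous single-nonzero extremizer $(0,\ldots,0,n-2)$) yields the auxiliary estimate $c_{n-1}\geq (n-1)\,c_{n-2}$, and substituting closes the inductive step.

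For the upper bound I would mirror this analysis using maximum products: (a$'$) by AM-GM the unique $\bm{k}$ attaining the maximum $\prod = 2^{n-1}$ is $(1,\ldots,1)$, which is admissible and has $F = K_{n-2}(0,\ldots,0) = 1$; and (b$'$) every other admissible $\bm{k}$ satisfies $\prod(k_i+1)\leq 3\cdot 2^{n-3}$, the second-largest value, attained by any single swap $(1,1)\mapsto(0,2)$. These give $c_n \leq 2^{n-1} + 3\cdot 2^{n-3}(c_{n-1}-1) = 3\cdot 2^{n-3}\,c_{n-1} + 2^{n-3}$. Since the target $U_n := 2^{\binom{n-2}{2}-1}\cdot 3^n$ satisfies $U_n = 3\cdot 2^{n-3}\,U_{n-1}$ exactly, setting $\epsilon_n := U_n - c_n$ reduces the induction to $\epsilon_n \geq 2^{n-3}(3\epsilon_{n-1}-1)$; the base case $\epsilon_2 = 5/2$ together with the self-preserving invariant $\epsilon_n \geq 1/2$ for $n \geq 3$ then gives $c_n \leq U_n$ for all $n$.

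The main technical hurdle is the discrete extremal analysis behind (b) and (b$'$): one must identify the min and max of $\prod(k_i+1)$ on the slice $\sum k_i = n-1$ with a prescribed number of nonzero parts, and then verify either that the relevant extremizers are themselves admissible (as for (a$'$) and for the upper-bound near-extremizers, where placing the ``0'' before the ``2'' restores admissibility) or that the extremal value is attained by an admissible composition even when the global extremizer is not (as for the lower bound, where $(n-2,1,0,\ldots)$ fails admissibility but $(0,\ldots,0,1,n-2)$ achieves the same value). Once these product bounds are in place, both recursions are routine.
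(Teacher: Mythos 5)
Your proposal is correct; note, however, that the paper does not supply its own proof of this proposition --- it only cites O'Neill \cite{JON} --- so there is no in-paper argument to compare against. The surrounding prose does say that O'Neill's bound comes from ``a more careful analysis of Proposition~\ref{prop:generating algorithm}'', which is precisely the device your proof iterates, so your route is in the same family as the cited one. I verified the extremal claims: the admissibility criterion $\sum_{j\le\ell} k_j \le \ell+1$ correctly characterizes $F(\bm{k})>0$; the unique admissible single-support composition of $n-1$ is $(0,\ldots,0,n-1)$ with $F = c_{n-2}$ and product $n$; among compositions with at least two nonzero parts the minimum product is $2(n-1)$ (at $r=2$ in your $(n-r+1)2^{r-1}$ analysis, which is increasing in $r$); and for any $\bm{k}\neq(1,\ldots,1)$ the product is at most $3\cdot 2^{n-3}$ (the local move replacing a $(0,m)$ pair by $(1,m-1)$ scales the product by at least $4/3$). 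These make the lower-bound recursion $c_n \ge (2n-3)c_{n-1}$ close (via the auxiliary $c_{n-1}\ge(n-1)c_{n-2}$) and the upper-bound recursion $\epsilon_n \ge 2^{n-3}(3\epsilon_{n-1}-1)$ close from the base $\epsilon_2 = 5/2$; the small cases $c_2=2$, $c_3=7$, $c_4=40$, $c_5=357$ are consistent with both. One minor framing point: your two extremal product inequalities hold for all weak compositions of $n-1$, not only admissible ones, so admissibility is only needed to pin down the unique contributing compositions $(0,\ldots,0,n-1)$ and $(1,\ldots,1)$ and their $F$-values --- which you handle correctly.
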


Let $\Pi_n=\textup{conv}(w (0,1,\ldots,n-1) \mid w \in \mathfrak{S}_n\}$ be the {\em classical permutahedron}. In 2019, Yip (private communication) asked whether the Tesler polytope $\mathcal{F}_n(\bm{1})$ projects to the permutahedron and conjectured the following weaker statement. Recall that $f_n \sim e^{1/2}\cdot n^{n-2}$ is the number of forests  with vertices $[n]$ \cite[{\href{http://oeis.org/A001858}{A001858}}]{oeis}. This is also the number of lattice points of  $\Pi_n$.

\begin{conjecture}[Yip] \label{thm:Yip's conjecture}
For $\bm{N}=(1,1,\ldots,1,-n)$, we have that $K_n(\bm{N}) \geq f_n$.
\end{conjecture}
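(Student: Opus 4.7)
The plan is to attack Yip's conjecture via the entropy method summarized in Theorem~\ref{main-tech-entropy}, and to handle the finitely many small $n$ by direct computation. Concretely, to bound $K_n(\bm{1},-n)$ from below I would choose a flow $\bm{f}^\ast \in \mathcal{F}_n(\bm{N})$, plug it into the inequality
\[
    K_n(\bm{N}) \,\geq\, \frac{\max_{0 \leq k \leq n-1} e^{h(s_k)}}{\left(\prod_{k=0}^{n-1} e^{h(s_k)}\right)^{2}} \, e^{\mathcal{H}(\bm{f}^\ast)},
\]
and then compare the resulting asymptotic to $f_n \sim e^{1/2}\, n^{n-2}$.

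For the choice of $\bm{f}^\ast$, I would follow the heuristic described in the paper's methodology section and take the average of the $n!$ vertices of the Tesler polytope $\mathcal{F}_n(1,1,\ldots,1,-n)$ as described in Theorem~\ref{lemma:char vertices all ones}. Because the vertex set is indexed by choosing one upper-triangular entry per row, the average entries $\bar{x}_{ij} = \mathbb{E}[x_{ij}]$ can be written as a double sum over the probability that a uniformly chosen vertex has its distinguished entry at position $(i,j)$, weighted by the closed form for $x_{ij}$ in \eqref{eq: vert}. Here $s_k = k+1$ for $k < n$, so the entropy correction factor
\(
   \prod_{k=0}^{n-1} e^{h(s_k)}
\)
is tame: $\log \prod_k e^{h(k+1)} = O(n \log n)$. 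The bulk of the bound therefore comes from plugging these averaged entries $\bar{f}_{ij}$ into
\(
\mathcal{H}(\bm{f}) = \sum_{i<j} h(f_{ij}) + \sum_j h(g_j).
\)
Based on the scaling of the expected entries (which grow roughly like $\log n$ across the rows and like $j$ down the subdiagonal), I anticipate that a careful Euler--Maclaurin-style estimate recovers the bound $\log c_n \geq \tfrac{n}{4} \log^2 n - O(n \log n)$ already stated in the paper.

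Since $\log f_n = (n-2)\log n + O(n)$, for all sufficiently large $n$ the leading $\tfrac{n}{4}\log^2 n$ term dominates and the conjecture follows. More precisely, together with the paper's sharper finite bookkeeping one gets $c_n \geq (n+1)^{n-1} \geq f_n$ for $n \geq 3000$. To close the conjecture for \emph{all} $n$, the plan is to verify $c_n \geq f_n$ directly for $n \leq 3000$ using Proposition~\ref{prop:generating algorithm} recursively, or by consulting the OEIS values for $c_n$ \cite{oeis} and comparing them to $f_n$; both sequences are easy to enumerate in this range.

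The main obstacle I expect is the averaging computation: while the combinatorial description of the vertices in Theorem~\ref{lemma:char vertices all ones} is explicit, the subdiagonal entries $g_j$ depend recursively on the upper-triangular entries, so extracting a clean closed form for $\mathbb{E}[x_{ij}]$ (and in particular getting the leading constant $\tfrac{1}{4}$ correct rather than some weaker constant) requires carefully tracking the telescoping that occurs when one averages over all $n!$ choices of one distinguished entry per row. A secondary obstacle is the finite verification: if pushing the computational check up to $n = 3000$ is impractical, one would need to either tighten the lower bound (improving the implied $O(n \log n)$ error term so that the crossover $n_0$ shrinks dramatically) or find a monotonicity/induction argument showing that once $c_n \geq f_n$ holds for some moderate $n_0$, it propagates for all $n \geq n_0$ via Proposition~\ref{prop:generating algorithm} combined with the corresponding recursion for $f_n$.
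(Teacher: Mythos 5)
You have correctly identified the paper's method for large $n$: apply the entropy lower bound of Theorem~\ref{main-tech-entropy} to the flow $\bm{f}^*$ obtained by averaging the $n!$ vertices of the Tesler polytope (Proposition~\ref{prop: center of mass positive netflow}, giving $c_k = \frac{n+1}{k(k+1)}$), then estimate the resulting products via Lemma~\ref{lem:unimodal-sum}. This is precisely the proof of Theorem~\ref{thm:in-section-tesler}, which produces $\log c_n \geq \frac{n}{4}\log^2 n - O(n\log n)$ and, after explicit bookkeeping, $c_n \geq (n+1)^{n-1} \geq f_n$ for $n \geq 3000$. The crucial point you blur, though, is that the statement you are trying to prove is labeled a \emph{conjecture} in the paper precisely because the paper does \emph{not} establish it for all $n$; the final remarks (Section~\ref{sec:PS polytope}) say explicitly that Yip's conjecture is settled only for large enough $n$.

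The genuine gap is in your plan to close the range $n \leq 3000$ by direct computation. The assertion that ``both sequences are easy to enumerate in this range'' is false. The quantity $c_n$ counts $n\times n$ Tesler matrices and grows super-exponentially (at least $(2n-3)!!$, cf.\ \eqref{eq: bounds tesler jason}), so $c_{50}$ already has over a hundred digits; the OEIS entry \href{https://oeis.org/A008608}{A008608} records only the first dozen or so terms, and Proposition~\ref{prop:generating algorithm} is not an efficient recursion — it sums over all integer flows in $\mathcal{F}_{n-1}(\bm{N})$, of which there are $c_{n-1}$, so the work explodes rather than shrinks. Consequently neither the paper nor your proposal proves the conjecture for $n < 3000$. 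Your fallback — a monotonicity/induction argument propagating $c_n \geq f_n$ past some moderate $n_0$ by comparing the recursion of Proposition~\ref{prop:generating algorithm} with one for $f_n$ — is a reasonable direction, but it is not in the paper and not carried out in your proposal; until someone does it, Conjecture~\ref{thm:Yip's conjecture} remains open in full generality.
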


Next, we give some other bounds for  the number of lattice points of dilations of the Tesler polytope.

\begin{proposition} \label{prop:bound t to t+i}
For $t\geq 0$ then 
\begin{equation} \label{eq: simple geom bound}
C_1C_2\cdots C_{n-1} \cdot F(t,n) \geq K_n{(t,t,\ldots,t,-nt)} .
\end{equation} 
\end{proposition}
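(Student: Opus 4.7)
The plan is to recognize the left-hand side $C_1C_2\cdots C_{n-1}\cdot F(t,n)$ as an instance of the Kostant partition function via the Baldoni--Vergne/Morris product formula from Theorem~\ref{thm:case M_n(t,0,0)}, and then compare the two netflow vectors using the monotonicity already established in Corollary~\ref{cor:kpf inequality term-wise dominance}.

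Concretely, by Theorem~\ref{thm:case M_n(t,0,0)} we have
\[
    C_1 C_2 \cdots C_{n-1}\cdot F(t,n) \;=\; K_n\bigl(t,\,t+1,\,t+2,\,\ldots,\,t+n-1,\,-nt-\tbinom{n}{2}\bigr).
\]
So it suffices to prove
\[
    K_n\bigl(t,t+1,\ldots,t+n-1,-nt-\tbinom{n}{2}\bigr) \;\geq\; K_n\bigl(t,t,\ldots,t,-nt\bigr).
\]
Setting $\bm{N}' = (t,t+1,\ldots,t+n-1)$ and $\bm{M}' = (t,t,\ldots,t)$, we clearly have $N'_i = t+i \geq t = M'_i$ entrywise for all $0 \leq i \leq n-1$, and both $\bm{N}'$ and $\bm{M}'$ have nonnegative entries. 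Hence Corollary~\ref{cor:kpf inequality term-wise dominance} applies and yields exactly the desired inequality. Combining these two facts finishes the argument.

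There is essentially no obstacle here: the proof is a one-line application of monotonicity (Corollary~\ref{cor:kpf inequality term-wise dominance}) once one notices that the product $C_1\cdots C_{n-1}\cdot F(t,n)$ is the closed form for a Kostant partition function with the shifted netflow vector $(t,t+1,\ldots,t+n-1,-nt-\binom{n}{2})$. Alternatively, one could avoid Corollary~\ref{cor:kpf inequality term-wise dominance} and argue directly via the injection used in the proof of Proposition~\ref{prop:kpf inequality dominance}: given an integer flow with netflow $(t,t,\ldots,t,-nt)$, add $i$ units to the edge from $i-1$ to $i$ for each $1 \leq i \leq n-1$ to produce a distinct integer flow with the shifted netflow, which gives an injection proving the same comparison.
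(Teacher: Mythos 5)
Your main argument is correct and is essentially the paper's own proof: apply the product formula of Theorem~\ref{thm:case M_n(t,0,0)} to identify $C_1\cdots C_{n-1}\cdot F(t,n)$ with $K_n(t,t+1,\ldots,t+n-1,-nt-\binom{n}{2})$, then invoke Corollary~\ref{cor:kpf inequality term-wise dominance} for the term-wise comparison $(t,t+1,\ldots,t+n-1) \geq (t,t,\ldots,t)$. One small slip in your side remark about the direct injection: to shift the netflow from $(t,\ldots,t,-nt)$ to $(t,t+1,\ldots,t+n-1,-nt-\binom{n}{2})$ you should add $\epsilon_{i-1}=\binom{i}{2}$ (not $i$) units to edge $(i-1,i)$ for $1\leq i\leq n$, matching the quantities $\epsilon_i=\sum_{j\leq i}(N_j-M_j)$ used in the proof of Proposition~\ref{prop:kpf inequality dominance}; adding $i$ units would change the netflow at vertex $0$ by $+1$ rather than $0$.
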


\begin{proof}
By Corollary~\ref{cor:kpf inequality term-wise dominance} we have that 
\[
 K_n{(t,t+1,t+2,\ldots,t+n-1,-nt-{\textstyle \binom{n}{2}})}\geq K_n{(t,t,\ldots,t,-nt)}.
\]
The upper bound follows by the product formula in \eqref{eq:case M_n(t,0,0)} for the  LHS above. 

\end{proof}

\begin{proposition}
\label{prop:bound t to t-n+i}
For $t\geq n-1$ we have that 
\[
    K_n{(t,t,\ldots,t,-nt)} \geq  C_1C_2\cdots C_{n-1}\cdot  F(t-n+1,n). 
    \]
\end{proposition}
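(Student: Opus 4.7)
The plan is to compare $\bm{N} = (t,t,\ldots,t,-nt)$ against a well-chosen ``staircase'' flow vector whose Kostant partition function is given exactly by the product formula of Theorem~\ref{thm:case M_n(t,0,0)}, and then apply the dominance inequality of Proposition~\ref{prop:kpf inequality dominance}. Specifically, I would take
\[
\bm{M} \,=\, \bigl(t-n+1,\,t-n+2,\,\ldots,\,t,\, -(nt - {\textstyle\binom{n}{2}})\bigr).
\]
The hypothesis $t \geq n-1$ is exactly what is needed to ensure that the smallest entry $t-n+1$ of the positive part is a nonnegative integer, so that Theorem~\ref{thm:case M_n(t,0,0)} applies with parameter $t-n+1$ in place of $t$. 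This yields
\[
K_n(\bm{M}) \,=\, C_1 C_2 \cdots C_{n-1} \cdot F(t-n+1,n),
\]
which is precisely the claimed lower bound.

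The remaining task is to verify that the positive part of $\bm{N}$ dominates the positive part of $\bm{M}$ in the sense required by Proposition~\ref{prop:kpf inequality dominance}. A direct computation of partial sums gives, for each $0 \leq k \leq n-1$,
\[
\sum_{j=0}^k t \;-\; \sum_{j=0}^k (t-n+1+j) \,=\, (k+1)(n-1) - \binom{k+1}{2} \,=\, (k+1)\!\left(n-1-\tfrac{k}{2}\right),
\]
which is nonnegative throughout the range $0 \leq k \leq n-1$ (in fact up to $k = 2(n-1)$). Hence $(t,t,\ldots,t) \unrhd (t-n+1,\ldots,t)$, and Proposition~\ref{prop:kpf inequality dominance} gives $K_n(\bm{N}) \geq K_n(\bm{M})$, completing the proof when combined with the formula above.

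There is no real obstacle here beyond the bookkeeping: the argument is essentially identifying the correct comparison vector so that Theorem~\ref{thm:case M_n(t,0,0)} can be applied and then confirming the dominance. It is worth noting that the hypothesis $t \geq n-1$ arises naturally and is tight for this method, since any smaller shift would force a negative entry in $\bm{M}$ and take us outside the scope of both Theorem~\ref{thm:case M_n(t,0,0)} and Proposition~\ref{prop:kpf inequality dominance}. The complementary upper bound of Proposition~\ref{prop:bound t to t+i} was obtained by the mirror-image argument (using the staircase $(t,t+1,\ldots,t+n-1)$ which dominates $(t,\ldots,t)$), so this lower bound neatly sandwiches $K_n(t,\ldots,t,-nt)$ between the two product-formula values $C_1\cdots C_{n-1}\,F(t-n+1,n)$ and $C_1\cdots C_{n-1}\,F(t,n)$.
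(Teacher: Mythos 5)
Your proof is correct and follows essentially the same route as the paper's: compare $(t,\ldots,t)$ against the staircase $(t-n+1,\ldots,t)$ via dominance and then apply the product formula of Theorem~\ref{thm:case M_n(t,0,0)} with parameter $t-n+1$. The only minor difference is that the paper invokes the simpler termwise Corollary~\ref{cor:kpf inequality term-wise dominance} (since $t - (t-n+1+i) = n-1-i \geq 0$ for $i \le n-1$), whereas you verify the partial-sum dominance of Proposition~\ref{prop:kpf inequality dominance} directly, which works but is a bit more bookkeeping than needed.
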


\begin{proof}
When $t\geq n-1$, by Corollary~\ref{cor:kpf inequality term-wise dominance} we have that 
\[
K_n{(t,t,\ldots,t,-nt)} \geq K_n{(t-n+1,t-n+2,\ldots,t-1,t,-nt+{\textstyle \binom{n}{2}})}.
\]
The lower bound follows by the product formula in \eqref{eq:case M_n(t,0,0)} for the  RHS above evaluating $t=t-n+1$. 
\end{proof}

\begin{corollary}
\[
n^2(9\log2 - \frac92 \log 3)  + O(n\log n)\geq \log K_n{(n,n,\ldots,n,-n^2)} \geq n^2\log 2  - \frac32 n \log n + O(n).
\]
\end{corollary}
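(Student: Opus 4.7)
The plan is to apply Propositions \ref{prop:bound t to t+i} and \ref{prop:bound t to t-n+i} at $t=n$, sandwiching $K_n(n,n,\ldots,n,-n^2)$ between two quantities whose asymptotics are already known from Propositions \ref{prop:asympt delta} and \ref{prop:asympt n+delta}.

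For the upper bound, I would specialize Proposition \ref{prop:bound t to t+i} to $t=n$, yielding
\[
C_1C_2\cdots C_{n-1}\cdot F(n,n) \;\geq\; K_n(n,n,\ldots,n,-n^2).
\]
Taking logarithms and invoking Proposition \ref{prop:asympt n+delta}, which identifies $\log(C_1\cdots C_{n-1}\cdot F(n,n))$ with $K_n(n,n+1,\ldots,2n-2,-n^2-\binom{n-1}{2})$ and evaluates it asymptotically as $(9\log 2 - \tfrac{9}{2}\log 3)n^2 + O(n\log n)$, immediately delivers the claimed upper bound.

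For the lower bound, since $n \geq n-1$, the hypothesis of Proposition \ref{prop:bound t to t-n+i} is satisfied at $t=n$, giving
\[
K_n(n,n,\ldots,n,-n^2) \;\geq\; C_1C_2\cdots C_{n-1}\cdot F(n-n+1,n) \;=\; C_1C_2\cdots C_{n-1}\cdot F(1,n).
\]
Since every factor of $F(t,n)=\prod_{1\leq i<j\leq n}\frac{2t+i+j-1}{i+j-1}$ is at least $1$ when $t\geq 0$, we have $F(1,n)\geq 1$, so the bound simplifies to $K_n(n,\ldots,n,-n^2)\geq C_1\cdots C_{n-1}$. Taking logarithms and invoking Proposition \ref{prop:asympt delta} then yields $\log K_n(n,\ldots,n,-n^2) \geq n^2\log 2 - \tfrac{3}{2}n\log n + O(n)$, completing the proof.

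There is essentially no obstacle here: the corollary is a direct specialization of the two preceding propositions combined with the cited asymptotics. The only minor point worth checking is that the hypothesis $t\geq n-1$ of Proposition \ref{prop:bound t to t-n+i} is met at $t=n$, and that the trivial bound $F(1,n)\geq 1$ suffices (a sharper estimate would only improve the constant in the lower bound, which the statement does not require).
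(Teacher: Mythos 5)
Your argument is correct and follows exactly the route the paper intends: sandwich $K_n(n,\ldots,n,-n^2)$ between $C_1\cdots C_{n-1}F(n,n)$ and $C_1\cdots C_{n-1}$ via Propositions \ref{prop:bound t to t+i} and \ref{prop:bound t to t-n+i}, then invoke Propositions \ref{prop:asympt n+delta} and \ref{prop:asympt delta}. The only discrepancy is that the paper's proof text says ``for $t=n-1$,'' which is inconsistent with the displayed inequality $C_1\cdots C_{n-1}F(n,n)\geq K_n(n,\ldots,n,-n^2)\geq C_1\cdots C_{n-1}$ — that display corresponds to $t=n$, as you use (plus the observation $F(1,n)\geq 1$), so you have in effect silently corrected a small typo in the paper.
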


\begin{proof}
By Propositions~\ref{prop:bound t to t+i},~\ref{prop:bound t to t-n+i} for $t=n-1$ we obtain that 
\[
C_1C_2\cdots C_{n-1}\cdot F(n,n)\geq K_n(n,n,\ldots,n,-n^2) \geq C_1C_2\cdots C_{n-1}.
\]
Taking the log and using the bounds in Propositions~\ref{prop:asympt delta}, \ref{prop:asympt n+delta} give the result.
\end{proof}

\begin{table}
\begin{center}
\begin{tabular}{ll} \hline
$\bm{N}$ & $K_n(\bm{N})$ \\ \hline 
$(t,0,\cdots,0,-t)$ & no closed formula \\
$(1,1,\cdots,1,-n)$  & no closed formula \\
$(0,1,2,\cdots,(n-1),-\binom{n}{2})$ & $C_1\cdots,C_{n-1}$ \\
$(t,t+1,\ldots,t+n-1,-nt-\binom{n}{2})$ & $C_1\cdots C_{n-1}\cdot F(t,n)$ \\ 
$(n,n-2,n-4,\ldots,-n+4,-n+2,-n)$ & no closed formula \\
\hline
\end{tabular}
\end{center}
\caption{Summary of values of $K_n(\bm{N})$.}
\label{table: values kpf}
\end{table}

\subsubsection{The case of $2\rho$} \label{subsection: case rho}

We next consider the case where $\bm{N} = 2\rho(n)$ is the sum of all the positive roots of the type $A_n$ root system, given by $e_i - e_j$ for $0 \leq i < j \leq n$. The quantity $K_n(\bm{N})$ gives the dimension of the zero weight space of a certain \emph{Verma module} \cite{Humphreys}. More concretely, we have
\[
    2\rho := (n,n-2,n-4,\ldots,-n+4,-n+2,-n).
\]
A post in \cite{mathOverflow} raised the question of studying bounds for $K_n(2\rho)$. In an unpublished report \cite{JON2}, O'Neill using the techniques in \cite[\S 5]{JON} obtained the following bounds for $K_n(2\rho)$ and $K_n(t\cdot 2\rho)$.

\begin{proposition}[\cite{JON2}] \label{prop:bounds jason 2rho}
For a nonnegative odd integer $n=2k+1$ and $t>1$ we have that 
\[
K_{n}(2\rho) \geq 3^{k^2-k-1}, \qquad K_{n}(t\cdot 2 \rho) \geq \left(t+\frac12\right)^{k^2}.
\]
\end{proposition}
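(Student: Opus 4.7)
The plan is to iteratively apply Proposition~\ref{prop:generating algorithm} to build a telescoping lower bound. Each application of the recursion gives
\[
K_m(\bm{N}) \;=\; \sum_{f \in \mathcal{F}_{m-1}(\bm{N}')} \prod_{i=0}^{m-2}(f_{i,m-1}+1),
\]
where $\bm{N}' = (N_0,\ldots,N_{m-2}, N_{m-1}+N_m)$ is the modified netflow obtained by peeling off vertex $m$. Retaining the contribution of a single well-chosen flow $f^\star \in \mathcal{F}_{m-1}(\bm{N}')$ gives the elementary bound $K_m(\bm{N}) \geq \prod_{i=0}^{m-2}(f^\star_{i,m-1}+1)$, and this is the primitive inequality I would drive through the iteration.

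For $\bm{N} = t\cdot 2\rho$ with $n = 2k+1$, the source entries $N_i = t(n-2i)$ for $i \leq k$ are positive and the sink entries $N_j = -t(2j-n)$ for $j > k$ are negative, so at each peeling there is enough room to choose a canonical $f^\star$ whose last-column entries are close to $t$. If the $\approx 2k-j$ relevant factors at iteration $j$ can each be made at least $t+\tfrac{1}{2}$ on average (with one entry absorbing the residual needed to match the column sum $-N_m^{(m)}$), then the total contribution after $k$ iterations telescopes to $(t+\tfrac{1}{2})^{k^2}$ up to lower-order terms, proving the second inequality. For the $t=1$ bound, I would apply the same method but choose $f^\star$ so that as many last-column entries as possible equal $2$, giving factors of $3$; a careful count of how many such entries are available across all iterations, together with a small boundary correction in the final iterations, is what produces the exponent $k^2-k-1$ rather than $k^2-k$.

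The main obstacle is the bookkeeping of how the netflow $\bm{N}^{(m)}$ evolves: because only the last two entries collapse into one at each step, a negative value accumulates in the last coordinate, and the canonical $f^\star$ must be checked to be feasible against this growing sink demand. I expect the antisymmetric structure of $2\rho$ (so that $-\operatorname{rev}(2\rho) = 2\rho$) to give enough slack for the iteration to proceed, but verifying the per-step contributions and confirming that the $-1$ shift in the exponent $k^2-k-1$ is exactly the boundary effect of the last iteration (rather than an artifact of a weaker bound) is the delicate part, and would likely require a direct case analysis of the first few small values of $k$ to pin down the correct canonical flow template.
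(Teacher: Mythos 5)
The paper does not prove this proposition: both bounds are attributed to O'Neill's unpublished report~\cite{JON2}, with only the remark that they were obtained by extending the techniques of~\cite[\S 5]{JON}. So your attempt has to be judged on its own terms, and on those terms there are two gaps that are more serious than bookkeeping.

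First, Proposition~\ref{prop:generating algorithm} is only valid when $N_{m-1} \geq 0$, and this is violated for $2\rho$ at every stage of your iteration. The identity comes from a projection $f \mapsto g$ (with $g_{i,m-1} = f_{i,m-1} + f_{i,m}$) whose fibre over a given $g$ has size $\prod_i(g_{i,m-1}+1)$ only when the hidden constraint $f_{m-1,m} = N_{m-1} + \sum_i f_{i,m-1} \geq 0$ is automatic, i.e.\ when $N_{m-1} \geq 0$. For $\bm{N} = t\cdot 2\rho$ one has $N_{n-1} = -t(n-2) < 0$, and the accumulating negative last entry you describe stays negative throughout, so the right-hand side of~\eqref{eq:gen algorithm} strictly over-counts $K_m$ at each step; retaining a single term of an over-counting sum is not a valid lower bound. (A quick sanity check: $K_2(1,-1,0) = 1$, but the formula in~\eqref{eq:gen algorithm} gives $\sum_{g}(g_{0,1}+1) = 2$.) Second, even where the recursion is valid, your ``primitive inequality'' $K_m(\bm{N}) \geq \prod_{i=0}^{m-2}(f^\star_{i,m-1}+1)$ keeps one term of the sum and leaves no residual $K_{m-1}$ on the right, so there is nothing left to iterate; it is a one-shot bound with only $m-1$ factors and cannot by itself produce an exponent of order $k^2$. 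The iterated bounds elsewhere in the paper (e.g.\ in the proof of Proposition~\ref{prop: previous bounds CRY case}) instead use $K_m \geq c_m \cdot K_{m-1}(\bm{N}')$, where $c_m$ lower bounds the product over \emph{all} flows in $\mathcal{F}_{m-1}(\bm{N}')$, and then telescope. Getting either claimed exponent would require a corrected recursion valid for negative intermediate netflows (or a reordering that keeps them non-negative, which the anti-symmetry $-\rev(2\rho) = 2\rho$ alone does not provide), or else a direct injective construction of that many integer flows.
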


By taking the logs of the results above we immediately obtain the following results.

\begin{corollary} \label{cor:bounds Jason phi}
Fix an integer $t > 1$ and let $\bm{N}=2\rho(n)$. We have that
\[
    \log K_n(2\rho(n)) \geq \frac{1}{4} n^2 \log 3 - O(n),
\]
and 
\[
    \log K_n(t\cdot 2 \rho(n))\geq \frac{1}{4} (n-1)^2 \log\left(t+\frac{1}{2}\right).
\]
\end{corollary}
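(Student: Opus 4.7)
The plan is essentially just to take logarithms in Proposition~\ref{prop:bounds jason 2rho} and re-express the exponents as functions of $n$ using the substitution $k = (n-1)/2$ (valid for odd $n = 2k+1$, which is the setting of that proposition). No additional techniques are required; the corollary is a direct rewriting.

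I would handle the second inequality first, since it is cleaner. Starting from $K_n(t \cdot 2\rho(n)) \geq (t + \tfrac{1}{2})^{k^2}$, take logs to obtain
\[
    \log K_n(t \cdot 2\rho(n)) \,\geq\, k^2 \log\!\left(t + \tfrac{1}{2}\right).
\]
Then substitute $k = (n-1)/2$, so that $k^2 = (n-1)^2/4$, yielding the stated bound exactly with no error terms to absorb.

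For the first inequality, begin with $K_n(2\rho(n)) \geq 3^{k^2 - k - 1}$ and take logs to obtain $(k^2 - k - 1)\log 3$. Substituting $k = (n-1)/2$ gives
\[
    k^2 - k - 1 \,=\, \frac{(n-1)^2}{4} - \frac{n-1}{2} - 1 \,=\, \frac{n^2}{4} \,-\, \frac{n}{2} \,-\, \frac{n-1}{2} \,-\, \frac{3}{4},
\]
so the leading term is $\tfrac{1}{4} n^2 \log 3$ and all remaining contributions are linear in $n$, which is precisely the $O(n)$ error term claimed in the statement.

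There is essentially no obstacle: the only bookkeeping item is to verify that the lower-order terms from expanding $(k^2 - k - 1)$ in terms of $n$ are genuinely $O(n)$ rather than, say, $O(n\log n)$, which is immediate from the expansion above. Implicit in the statement is that $n$ is taken to be odd so that Proposition~\ref{prop:bounds jason 2rho} applies directly; this matches the framing of the preceding text (\emph{``By taking the logs of the results above we immediately obtain the following results''}).
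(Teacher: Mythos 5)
Your proof is correct and is exactly the approach the paper intends: the text preceding the corollary says "By taking the logs of the results above we immediately obtain the following results," and you carry out precisely that substitution $k = (n-1)/2$ with the bookkeeping needed to confirm the error term is $O(n)$. Nothing more is required.
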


\subsection{Polynomial capacity and log-concave polynomials}

Polynomial capacity, originally defined by Gurvits \cite{G06}, is typically defined as follows. Note that here we extend the definition to multivariate power series.

\begin{definition}[Polynomial and power series capacity] \label{def:cap}
    Given a polynomial $p \in \R[\bm{x}] = \R[x_1,\ldots,x_n]$ or power series $p \in \R[[\bm{x}]] = \R[[x_1,\ldots,x_n]]$ with non-negative coefficients and any $\bm\alpha \in \R_{\geq 0}^n$, we define
    \[
        \cpc_{\bm\alpha}(p) = \inf_{\bm{x} > 0} \frac{p(\bm{x})}{\bm{x}^{\bm\alpha}} = \inf_{x_1,\ldots,x_n > 0} \frac{p(x_1,\ldots,x_n)}{x_1^{\alpha_1} \cdots x_n^{\alpha_n}}.
    \]
    This equivalently defined as
    \[
        \log\cpc_{\bm\alpha}(p) = \inf_{\bm{y} \in \R^n} \left[\log p(e^{\bm{y}}) - \langle \bm{y}, \bm\alpha \rangle\right],
    \]
    where $\langle \cdot, \cdot \rangle$ is the usual dot product.
\end{definition}

The typical use of capacity is to approximate or bound the coefficients of certain polynomials or power series. For example, the following bound follows immediately from the definition.

\begin{lemma}
    Given a polynomial or power series $p \in \R_{\geq 0}[\bm{x}] = \R_{\geq 0}[x_1,\ldots,x_n]$ with non-negative coefficients and any $\bm\alpha \in \Z_{\geq 0}^n$, we have
    \[
        \cpc_{\bm\alpha}(p) \geq [\bm{x}^{\bm\alpha}]\, p(\bm{x}),
    \]
    where $[\bm{x}^{\bm\alpha}]\, p(\bm{x})$ denotes the coefficient of $\bm{x}^{\bm\alpha}$ in $p$.
\end{lemma}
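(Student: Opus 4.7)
The plan is to prove this directly from the definition of capacity, using only the non-negativity of the coefficients of $p$. The key observation is that the capacity is an infimum of a ratio in which the denominator is precisely the monomial $\bm{x}^{\bm\alpha}$ whose coefficient we are trying to bound, so a crude term-by-term lower bound on $p(\bm{x})$ suffices.

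Concretely, I would begin by writing $p(\bm{x}) = \sum_{\bm\beta} c_{\bm\beta}\, \bm{x}^{\bm\beta}$ with $c_{\bm\beta} \geq 0$ for every index $\bm\beta$ appearing in the (possibly infinite) support of $p$. Fix any $\bm{x} \in \R_{>0}^n$. Since all terms are non-negative and $\bm{x}^{\bm\beta} > 0$, the full sum is at least any single term:
\[
    p(\bm{x}) \;\geq\; c_{\bm\alpha}\, \bm{x}^{\bm\alpha} \;=\; \bigl([\bm{x}^{\bm\alpha}]\, p(\bm{x})\bigr)\, \bm{x}^{\bm\alpha}.
\]
Dividing through by $\bm{x}^{\bm\alpha} > 0$ gives $p(\bm{x})/\bm{x}^{\bm\alpha} \geq [\bm{x}^{\bm\alpha}]\, p(\bm{x})$ for every positive $\bm{x}$. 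Taking the infimum over $\bm{x} \in \R_{>0}^n$ on the left-hand side, and noting that the right-hand side does not depend on $\bm{x}$, yields
\[
    \cpc_{\bm\alpha}(p) \;=\; \inf_{\bm{x} > 0} \frac{p(\bm{x})}{\bm{x}^{\bm\alpha}} \;\geq\; [\bm{x}^{\bm\alpha}]\, p(\bm{x}),
\]
which is the desired inequality.

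The only subtlety worth noting is that in the power series case one must justify that $p(\bm{x})$ makes sense as a (possibly infinite) non-negative quantity at $\bm{x} > 0$; the inequality $p(\bm{x}) \geq c_{\bm\alpha}\, \bm{x}^{\bm\alpha}$ holds regardless, since partial sums of non-negative terms already exceed $c_{\bm\alpha}\, \bm{x}^{\bm\alpha}$, and the infimum defining $\cpc_{\bm\alpha}(p)$ is in any case taken in $[0,\infty]$. There is no genuine obstacle here: the lemma is essentially a restatement of the definition of capacity once one observes that a single monomial summand underestimates a sum of non-negative monomials.
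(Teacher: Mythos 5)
Your proof is correct and is exactly the observation the paper alludes to when it says the bound "follows immediately from the definition": a single non-negative monomial term underestimates the full sum $p(\bm{x})$ at any $\bm{x}>0$, and dividing by $\bm{x}^{\bm\alpha}$ before taking the infimum gives the inequality. The paper omits the argument entirely, so there is nothing to compare beyond noting that you have spelled out the intended one-line proof, including the harmless convergence caveat for power series.
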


Lower bounds in terms of the capacity, on the other hand, are harder to prove. In fact, one should not expect such lower bounds in general; e.g., $[\bm{x}^{\bm{1}}]\, p(\bm{x}) = 0$ and $\cpc_{\bm{1}}(p) > 0$ for $p(\bm{x}) = x_1^n + \cdots + x_n^n$. Thus lower bounds are typically only proven for certain classes of polynomials and power series. The most common classes are real stable polynomials, Lorentzian polynomials (also known as completely log-concave and strongly log-concave \cite{BH,AGSVI,G09}), and most recently denormalized Lorentzian polynomials. Such bounds have been applied to various quantities, such as: the permanent, the mixed discriminant, and the mixed volume \cite{G06,G06MD,G09MV}; quantities related to matroids, like the number of bases of a matroid and the intersection of two matroids \cite{SV17,AO17,AGSVI,AGSVII}; the number of matchings of a bipartite graph \cite{GL21}; and the number of contingency tables \cite{Bar09,G15,BLP}. We do not explicitly use results of definitions regarding these polynomial classes, but instead refer the interested reader to the above references.

\subsection{Bounds on contingency tables}

Given vectors $\bm\alpha \in \Z_{\geq 0}^m$ and $\bm\beta \in \Z_{\geq 0}^n$, recall that a contingency table is an $m \times n$ matrix $M \in \Z_{\geq 0}^{m \times n}$ with non-negative integer entries, for which the row sums of $M$ and the column sums of $M$ are given by the entries of $\bm\alpha$ and $\bm\beta$ respectively. (Recall also that we index the rows and columns starting with $0$.) As discussed above, the set of all contingency tables has a nice generating function with coefficients indexed by $\bm\alpha$ and $\bm\beta$, given by
\[
    \Phi(\bm{x},\bm{y}) \,=\, \sum_{\bm\alpha,\bm\beta} \CT(\bm\alpha,\bm\beta) \, \bm{x}^{\bm\alpha} \bm{y}^{\bm\beta} \,=\, \prod_{i=0}^{m-1} \prod_{j=0}^{n-1} \frac{1}{1-x_iy_j}.
\]
Lower bounds on $\CT(\bm\alpha,\bm\beta)$ in terms of the capacity of $\Phi(\bm{x},\bm{y})$ were obtained in \cite{BLP}, which improved upon previous bounds of \cite{Bar09,G15}. The bound for general contingency tables is given as follows.

\begin{theorem}[\cite{BLP}, Thm. 2.1]
    Given $\bm\alpha \in \Z_{\geq 0}^m$ and $\bm\beta \in \Z_{\geq 0}^n$, we have
    \[
        \cpc_{\bm\alpha,\bm\beta}(\Phi) \,\geq\, \CT(\bm\alpha,\bm\beta) \,\geq\, \left[\prod_{i=1}^{m-1} \frac{\alpha_i^{\alpha_i}}{(\alpha_i+1)^{\alpha_i+1}} \prod_{j=0}^{n-1} \frac{\beta_j^{\beta_j}}{(\beta_j+1)^{\beta_j+1}}\right] \, \cpc_{\bm\alpha,\bm\beta}(\Phi).
    \]
    Note that $i,j$ starting from $1,0$ respectively is not a typo, and in fact we can replace the products above by a product over any subset of all but one of the factors.
\end{theorem}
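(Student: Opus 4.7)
The upper bound $\cpc_{\bm\alpha,\bm\beta}(\Phi) \geq \CT(\bm\alpha,\bm\beta)$ is immediate from the capacity definition together with non-negativity of coefficients, exactly as in the lemma stated just above the theorem: evaluating the defining infimum at any positive point, every monomial in the expansion of $\Phi(\bm x,\bm y)/(\bm x^{\bm\alpha}\bm y^{\bm\beta})$ contributes a non-negative term, and in particular the monomial $\bm x^{\bm\alpha}\bm y^{\bm\beta}$ contributes the constant $\CT(\bm\alpha,\bm\beta)$.

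For the lower bound the plan is to exhibit $\Phi(\bm x,\bm y) = \prod_{i,j}\frac{1}{1-x_i y_j}$ as a denormalized Lorentzian power series, and then extract the coefficient of $\bm x^{\bm\alpha}\bm y^{\bm\beta}$ iteratively, one variable at a time, using a capacity-preserving univariate lemma at each step. Each factor $\tfrac{1}{1-x_i y_j} = \sum_{k\ge 0}(x_iy_j)^k$ is a denormalized Lorentzian series in the pair $(x_i,y_j)$, and products of such series remain denormalized Lorentzian, so $\Phi$ itself sits in this class. Because $\Phi$ is a formal series of unbounded degree rather than a polynomial, I would first truncate each factor at a large enough degree so that the coefficient of $\bm x^{\bm\alpha}\bm y^{\bm\beta}$ is unchanged, apply the polynomial theory, and then observe that truncation can only decrease the capacity, so the bound transfers to $\Phi$.

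The central univariate tool is a Stirling-type coefficient-extraction estimate: for a denormalized Lorentzian polynomial $q(x,\bm z)$ and integer $\alpha\ge 1$, the coefficient $[x^\alpha]\,q(x,\bm z) \in \R_{\ge 0}[\bm z]$ is again denormalized Lorentzian, and its capacity satisfies
\[
\cpc_{\bm\gamma}\!\bigl([x^\alpha]\,q(x,\bm z)\bigr) \,\ge\, \frac{\alpha^\alpha}{(\alpha+1)^{\alpha+1}}\cdot \cpc_{(\alpha,\bm\gamma)}(q).
\]
Applying this lemma once for each of the variables $x_0,\dots,x_{m-1},y_0,\dots,y_{n-1}$ \emph{except one} produces the product of correction factors stated in the theorem. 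For the single remaining variable, one invokes the naive bound $\cpc_\alpha(q)\ge [x^\alpha]q$ directly on the univariate series that is left, with no Stirling penalty—this is precisely why any one of the $m+n$ factors may be omitted from the product, giving the "replace by a product over any subset of all but one" flexibility.

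The main obstacle is establishing the univariate Stirling extraction, along with verifying that the denormalized Lorentzian class is preserved under coefficient extraction. The Stirling estimate itself reduces to a one-variable optimization problem: among all log-concave non-negative sequences $(c_k)_{k\ge 0}$ of fixed capacity $\inf_{x>0}\sum_k c_k x^{k-\alpha}$, one needs to identify the extremal sequence that minimizes $c_\alpha/\cpc_\alpha$, and show by an explicit Lagrange-multiplier computation that the minimizing ratio is exactly $\alpha^\alpha/(\alpha+1)^{\alpha+1}$. Once this univariate inequality is in hand, the iterative extraction is a routine induction, and the truncation-to-polynomial reduction is bookkeeping rather than a conceptual issue.
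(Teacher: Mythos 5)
The paper does not prove this theorem; it is quoted verbatim as a citation of [BLP, Thm.~2.1] and used as a black box, so there is no in-paper argument to compare against. Your sketch is nonetheless a faithful reconstruction of the high-level structure of the Br\"and\'en--Leake--Pak proof: the trivial capacity upper bound, the reduction to polynomials by truncation, the iterated one-variable coefficient extraction paying a Stirling factor $\alpha^\alpha/(\alpha+1)^{\alpha+1}$ at each step, and the observation that the final variable can be handled by the naive bound $\cpc_\alpha(q) \geq [x^\alpha]q$ with no penalty --- which is exactly why any single factor may be omitted from the product.

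Two places are too casual to pass for a proof. First, the claim that products of these factors remain denormalized Lorentzian is not bookkeeping: the normalization operator $N\bigl(\sum_{\bm\gamma} c_{\bm\gamma} \bm{x}^{\bm\gamma}\bigr) = \sum_{\bm\gamma} \tfrac{c_{\bm\gamma}}{\bm\gamma!}\bm{x}^{\bm\gamma}$ is multiplicative only across disjoint variable sets, and your factors $\tfrac{1}{1-x_iy_j}$ share each $x_i$ with $n$ other factors and each $y_j$ with $m$. Closure of the denormalized Lorentzian class under products with overlapping variables is itself a theorem of the Lorentzian-polynomials literature that [BLP] relies on; you need to invoke it explicitly or prove it rather than assert it. Second, the one-variable extremal problem should be posed over the ultra-log-concave class (the univariate specialization of denormalized Lorentzian), not merely over log-concave sequences; it is under that stronger hypothesis that [BLP] establish the Stirling constant $\alpha^\alpha/(\alpha+1)^{\alpha+1}$ (achieved by the geometric sequence $c_k = q^k$), and it is not a priori clear that the same constant holds over the larger log-concave class.
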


Bounds are also achieved in \cite{BLP} for contingency tables with restricted entries, where then entries of a given matrix $M \in \Z_{\geq 0}^{m \times n}$ are bounded above entry-wise by a given matrix $K$. We care in this paper specifically about the case when $k_{ij} = +\infty$ for $i+j \leq n$ and $k_{ij} = 0$ otherwise, for which we have that the number of contingency tables counts the number of integer flows as in (\ref{eq: def injection flows to transportation matrix}). In this case, we define
\[
    \Phi'(\bm{x},\bm{y}) = \prod_{\substack{0 \leq i,j \leq n-1 \\ i+j \leq n}} \frac{1}{1-x_iy_j},
\]
and the bound is given as follows. Note that the authors of \cite{BLP} do not write their theorem in terms of flows explicitly, and so we translate their theorem here (in light of (\ref{eq: def injection flows to transportation matrix})) for the convenience of the reader.

\begin{theorem}[\cite{BLP}, Thm. 2.1] \label{thm:BLP-flow-bound}
    Given $\bm{N} = (N_0,N_1,\ldots,N_n) \in \Z^{n+1}$, define $s_i = \sum_{k=0}^i N_k$ for $i \in \{0,1,\ldots,n-1\}$ and let $\bm\alpha = (s_0,s_1,\ldots,s_{n-1})$ and $\bm\beta = (s_{n-1},\ldots,s_1,s_0)$. Then we have
    \[
        \cpc_{\bm\alpha,\bm\beta}(\Phi') \,\geq\, K_n(\bm{N}) \,\geq\, \max_{0 \leq i \leq n-1} \left\{\frac{(s_i+1)^{s_i+1}}{s_i^{s_i}}\right\} \, \left[\prod_{i=0}^{n-1} \frac{s_i^{s_i}}{(s_i+1)^{s_i+1}}\right]^2 \, \cpc_{\bm\alpha,\bm\beta}(\Phi').
    \]
\end{theorem}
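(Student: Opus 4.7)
The plan is to reduce the statement to a direct application of the restricted-entries version of \cite{BLP}, Theorem 2.1, via the integral equivalence established in Proposition \ref{prop: flow is face of trans poly}. The map $\phi$ from \eqref{eq: def injection flows to transportation matrix} identifies each integer flow in $\mathcal{F}_n(\bm{N}) \cap \Z^{\binom{n+1}{2}}$ with a contingency table in $\mathcal{T}(\bm\alpha,\bm\beta) \cap \Z^{n \times n}$ whose entries in positions $(i,j)$ with $i+j>n$ are forced to zero (the staircase of $0$'s in the bottom-right block of the matrix display). Consequently,
\[
    K_n(\bm{N}) \,=\, [\bm{x}^{\bm\alpha}\bm{y}^{\bm\beta}] \, \Phi'(\bm{x},\bm{y}),
\]
since $\Phi'(\bm{x},\bm{y})=\prod_{0 \le i,j \le n-1,\, i+j\le n}(1-x_iy_j)^{-1}$ is exactly the generating series obtained from the full transportation generating series by deleting the factors $(1-x_iy_j)^{-1}$ corresponding to the forced-zero positions.

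The upper bound $\cpc_{\bm\alpha,\bm\beta}(\Phi') \geq K_n(\bm{N})$ is then immediate from the elementary capacity-coefficient inequality, since $\Phi'$ has nonnegative coefficients. For the lower bound, I would invoke the restricted-entries version of \cite{BLP}, Theorem 2.1, which supplies the prefactor
\[
    \prod_{i=0}^{n-1} \frac{\alpha_i^{\alpha_i}}{(\alpha_i+1)^{\alpha_i+1}} \prod_{j=0}^{n-1} \frac{\beta_j^{\beta_j}}{(\beta_j+1)^{\beta_j+1}}
\]
with the option to drop any one of the factors. Since $\bm\beta$ is $\bm\alpha$ reversed and hence they share the multiset $\{s_0,\ldots,s_{n-1}\}$, the two products coalesce into $\bigl[\prod_{i=0}^{n-1} \tfrac{s_i^{s_i}}{(s_i+1)^{s_i+1}}\bigr]^2$; dropping the factor indexed by the $i$ that maximizes $\tfrac{(s_i+1)^{s_i+1}}{s_i^{s_i}}$ then yields the extra max-prefactor appearing in the statement.

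The only real content beyond invoking BLP is verifying that the forced-zero pattern produced by $\phi$ matches the hypotheses of the restricted version of their theorem and that $\Phi'$ is precisely the corresponding restricted generating series; both are transparent from \eqref{eq: def injection flows to transportation matrix}. I do not anticipate any substantive obstacle, as the heavy lifting (realizing $\Phi'$ as a limit of denormalized Lorentzian polynomials and proving the coefficient-versus-capacity bound for such polynomials) is already packaged inside the cited BLP result.
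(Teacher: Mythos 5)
Your proposal is correct and matches the paper's approach: the paper presents Theorem~\ref{thm:BLP-flow-bound} purely as a translation of the restricted-entries version of \cite{BLP}, Theorem 2.1 into the flow setting via the bijection $\phi$ of \eqref{eq: def injection flows to transportation matrix}, and offers no independent proof. Your verification that the forced-zero staircase makes $\Phi'$ the relevant restricted generating series, that $\bm\beta=\rev(\bm\alpha)$ collapses the two row/column products into a square, and that the dropped factor yields the max-prefactor, is exactly the unpacking the paper leaves to the reader.
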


In this paper, we will determine lower bounds for $\cpc_{\bm\alpha,\bm\beta}(\Phi')$ and combine this with the bound given by Theorem \ref{thm:BLP-flow-bound}. AThis will lead to our new bounds on flows.

\subsection{Convex analysis}

Given a function $f: \R^n \to (-\infty, +\infty]$, we define its domain $\mathcal{D} = \mathcal{D}(f)$ via
\[
    \mathcal{D} := \{\bm{x} \in \R^n : f(\bm{x}) < +\infty\}.
\]
We say $f$ is convex if $\mathcal{D}$ is convex and $f$ is convex on $\mathcal{D}$. The {\em convex conjugate} (or {\em Fenchel conjugate} or {\em Legendre transform}) of $f$ is also a convex function, defined as follows.

\begin{definition}
    Given a convex function $f$, its \textbf{convex conjugate} $f^*: \R^n \to (-\infty, +\infty]$ is a convex function defined via
    \[
        f^*(\bm{y}) := \sup_{\bm{x} \in \mathcal{D}} \left[\langle \bm{x}, \bm{y} \rangle - f(\bm{x})\right].
    \]
    We denote the domain of $f^*$ by $\mathcal{D}^* = \mathcal{D}(f^*)$.
\end{definition}

In order to give lower bounds on the capacity of a given polynomial or generating series, we use an idea already present in the work of Barvinok (e.g. Lemma 5 of \cite{Bar12}) and in Proposition 6.2 of \cite{BLP}: we convert the infimum in the definition of capacity (Definition \ref{def:cap}) into a supremum. We can then lower bound the supremum by simply evaluating the objective function at any particular chosen value of the domain. A proof sketch for a general version of this is given in \cite{BLP}, and we give a different and simpler proof in this paper based on the following classical result of convex analysis relating convex conjugates via the {\em infimal convolution}.

\begin{theorem}[\cite{rockafellar1970convex}, Thm. 16.4] \label{thm:conjugate_split} 
    Let $f$ be a convex function given as the sum $f = \sum_i f_i$ of convex functions with respective domains $\mathcal{D}_i$. If $\bigcap_i \relint(\mathcal{D}_i)$ is non-empty then
    \[
        f^*(\bm\alpha) = \inf_{\sum_i \bm\alpha_i = \bm\alpha} \sum_i f_i^*(\bm\alpha_i),
    \]
    where for each $\bm\alpha$ the infimum is attained. Note that the domain of optimization is over all choices of vectors $\bm\alpha_i \in \mathcal{D}_i$ such that $\sum_i \bm\alpha_i = \bm\alpha$.
\end{theorem}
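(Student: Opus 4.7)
The plan is to prove both inequalities, with the attainment of the infimum emerging as part of the argument. The easy direction, $f^*(\bm\alpha) \leq \inf_{\sum_i \bm\alpha_i = \bm\alpha} \sum_i f_i^*(\bm\alpha_i)$, is purely formal. For any $\bm{x}$ in the common domain $\bigcap_i \mathcal{D}_i$ and any splitting $\bm\alpha = \sum_i \bm\alpha_i$, the definitions give termwise
\[
    \langle \bm{x}, \bm\alpha \rangle - f(\bm{x}) \,=\, \sum_i \bigl[\langle \bm{x}, \bm\alpha_i \rangle - f_i(\bm{x})\bigr] \,\leq\, \sum_i f_i^*(\bm\alpha_i),
\]
and taking the supremum over $\bm{x}$ followed by the infimum over splittings produces the stated upper bound on $f^*(\bm\alpha)$.

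For the reverse inequality together with attainment, I would go through the subdifferential calculus. If $f^*(\bm\alpha) = +\infty$ then the other inequality is trivial, so assume $\bm\alpha \in \mathcal{D}(f^*)$. The first step is to show that the supremum defining $f^*(\bm\alpha)$ is attained at some $\bm{x}^*$; this uses closedness and properness of $f$, which follow from the corresponding properties of the $f_i$ together with the relative interior assumption. Attainment at $\bm{x}^*$ is equivalent to $\bm\alpha \in \partial f(\bm{x}^*)$. The key input is now the Moreau--Rockafellar sum rule: under $\bigcap_i \relint(\mathcal{D}_i) \neq \emptyset$, one has $\partial f(\bm{x}^*) = \sum_i \partial f_i(\bm{x}^*)$. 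So we may write $\bm\alpha = \sum_i \bm\alpha_i$ with $\bm\alpha_i \in \partial f_i(\bm{x}^*)$, and applying the Fenchel--Young equality $\langle \bm{x}^*, \bm\alpha_i \rangle = f_i(\bm{x}^*) + f_i^*(\bm\alpha_i)$ for each $i$, summing over $i$, and subtracting $f(\bm{x}^*)$ from both sides yields
\[
    \sum_i f_i^*(\bm\alpha_i) \,=\, \langle \bm{x}^*, \bm\alpha \rangle - f(\bm{x}^*) \,=\, f^*(\bm\alpha).
\]
Thus this particular splitting both matches $f^*(\bm\alpha)$ and realizes the infimum.

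The main obstacle is the Moreau--Rockafellar subdifferential sum rule, whose standard proof applies a separation theorem in $\R^{n+1}$ to the epigraph of one of the $f_i$ and an appropriately translated convex set built from the remaining $f_j$. The relative interior hypothesis is precisely what guarantees these convex sets can be properly separated, and it cannot be dropped in general. A plausible alternative route would be to study the infimal convolution $g(\bm\alpha) := \inf_{\sum_i \bm\alpha_i = \bm\alpha} \sum_i f_i^*(\bm\alpha_i)$ directly: a short calculation from the definitions shows $g^* = \sum_i f_i^{**} = f$, and the relative interior condition forces $g$ to be proper and lower semicontinuous, so the biconjugate theorem $g^{**} = g$ gives $g = f^*$. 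Even in that approach, attainment of the infimum would still require an argument along the same subdifferential lines.
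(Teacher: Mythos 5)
The paper does not prove this statement: it is a direct citation of Theorem 16.4 in Rockafellar's \emph{Convex Analysis}, used as a black box. So there is no internal proof to compare yours against, and what follows is an assessment of your argument on its own terms.

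Your easy direction is fine. The gap is in the hard direction, specifically in the sentence ``The first step is to show that the supremum defining $f^*(\bm\alpha)$ is attained at some $\bm{x}^*$; this uses closedness and properness of $f$.'' That claim is false: closedness and properness of a convex function do not guarantee that the supremum in the conjugate is attained, even when the conjugate is finite. A clean counterexample inside the hypotheses of the theorem is $f_1(x)=e^x$, $f_2(x)=0$ on $\R$, so $f=f_1+f_2=e^x$ and $\bigcap_i\relint\mathcal{D}_i=\R\neq\emptyset$. At $\alpha=0$ we have $f^*(0)=\sup_x(-e^x)=0<\infty$, but the supremum is not attained, so $\partial f(x^*)$ is empty for every $x^*$ and there is no point at which to invoke Moreau--Rockafellar. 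Yet the conclusion of the theorem is true here (the infimum is attained at $\alpha_1=\alpha_2=0$, using $f_1^*(0)=0$ and $f_2^*=\delta_{\{0\}}$), so the phenomenon you need is exactly the one your argument cannot reach. What your route actually proves is attainment for those $\bm\alpha$ with $\partial f^*(\bm\alpha)\neq\emptyset$, i.e.\ roughly $\bm\alpha\in\relint\mathcal{D}(f^*)$, not for all $\bm\alpha$ as the theorem asserts.

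The alternative you sketch in the last paragraph is closer to the standard argument, but as written it does not close the gap either: the identity $g^*=\sum_i f_i^{**}$ only yields $g^{**}=f^*$ (after handling closures), and you would still need to show both that $g$ is closed and that the infimum in $g$ is attained. In Rockafellar's development both facts come from a separation/relative-interior argument (essentially the same machinery that proves Fenchel's duality theorem, Theorem 31.1, or that underlies Theorem 16.3 via the diagonal embedding $\bm{x}\mapsto(\bm{x},\ldots,\bm{x})$), not from the subdifferential sum rule applied at a maximizer. If you want a self-contained proof rather than a citation, that separation step is the content you still need to supply.
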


Note that Theorem \ref{thm:conjugate_split} converts a supremum into an infimum, rather than the other way around. Since the $\log$ of capacity is the negation of a convex conjugate, Theorem \ref{thm:conjugate_split} then gives precisely what we need.

\section{A Dual Formulation for Capacity} \label{sec:dual-capacity}

As above, let $K_n(\bm{N})$ denote the number of integer flows on $k_{n+1}$ with netflow given by $\bm{N} \in \Z^{n+1}$. Counting such integer flows is equivalent to counting the integer matrices of the form given by (\ref{eq: def injection flows to transportation matrix}), where the row and columns sums are given by $\bm\alpha = (s_0,s_1,\ldots,s_{n-1})$ and $\bm\beta = (s_{n-1},\ldots,s_1,s_0)$ where $s_k = \sum_{j=0}^k N_j$. Thus by Theorem \ref{thm:BLP-flow-bound}, we have
\[
    \cpc_{\bm\alpha,\bm\beta}(\Phi') \,\geq\, K_n(\bm{N}) \,\geq\, \max_{0 \leq i \leq n-1} \left\{\frac{(s_i+1)^{s_i+1}}{s_i^{s_i}}\right\} \, \left[\prod_{i=0}^{n-1} \frac{s_i^{s_i}}{(s_i+1)^{s_i+1}}\right]^2 \, \cpc_{\bm\alpha,\bm\beta}(\Phi'),
\]
where
\[
    \Phi'(\bm{x},\bm{y}) = \prod_{i+j \leq n} \frac{1}{1-x_iy_j}
\]
with $0 \leq i,j \leq n-1$.

In this section, we will utilize Theorem \ref{thm:conjugate_split}  to convert the infimum of the above capacity expression into a supremum. Specifically, we will prove the following.

\begin{proposition} \label{prop:cap_sup}
    Let $\phi(\mathcal{F}_n(\bm{N}))$ be the image of $\mathcal{F}_n(\bm{N})$ in $\mathcal{T}(\bm\alpha,\bm\beta)$ as defined in (\ref{eq: def injection flows to transportation matrix}), and recall the definition of flow entropy $\mathcal{H}(\bm{f})$ from (\ref{eq:flow-entropy}). We have that
    \[
        \cpc_{\bm\alpha \, \bm\beta}\left(\prod_{i+j \leq n} \frac{1}{1-x_iy_j}\right) \,=\, \sup_{A \in \phi(\mathcal{F}_n(\bm{N}))} \prod_{i+j \leq n} \frac{(a_{ij}+1)^{a_{ij}+1}}{a_{ij}^{a_{ij}}} \,=\, \sup_{\bm{f} \in \mathcal{F}_n(\bm{N})} e^{\mathcal{H}(\bm{f})}.
    \]
\end{proposition}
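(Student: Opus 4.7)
The plan is to apply Theorem~\ref{thm:conjugate_split} to the log-capacity expression, which converts the infimum defining capacity into a supremum over matrices with prescribed marginals. Making the substitution $x_i = e^{u_i}$, $y_j = e^{v_j}$ in Definition~\ref{def:cap}, we have
\[
\log \cpc_{\bm\alpha, \bm\beta}(\Phi') \,=\, \inf_{\bm u, \bm v \in \R^n} \bigl[F(\bm u, \bm v) - \langle \bm u, \bm\alpha\rangle - \langle \bm v, \bm\beta\rangle\bigr] \,=\, -F^*(\bm\alpha, \bm\beta),
\]
where $F(\bm u, \bm v) := -\sum_{i+j \leq n} \log(1 - e^{u_i + v_j})$ is convex with domain $\{(\bm u, \bm v) : u_i + v_j < 0 \text{ whenever } i+j \leq n\}$. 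So it suffices to identify $-F^*(\bm\alpha, \bm\beta)$ with $\sup_{\bm f \in \mathcal{F}_n(\bm N)} \mathcal{H}(\bm f)$.

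The next step is to decompose $F = \sum_{i+j \leq n} F_{ij}$, where $F_{ij}(\bm u, \bm v) := g(u_i + v_j)$ with $g(t) := -\log(1-e^t)$. The relative interiors of the domains of the $F_{ij}$ have common intersection (take all $u_i, v_j$ sufficiently negative), so Theorem~\ref{thm:conjugate_split} applies and yields
\[
F^*(\bm\alpha, \bm\beta) \,=\, \inf \sum_{i+j \leq n} F_{ij}^*(\bm\alpha^{(ij)}, \bm\beta^{(ij)}),
\]
where the infimum ranges over all decompositions $\sum_{ij} \bm\alpha^{(ij)} = \bm\alpha$ and $\sum_{ij} \bm\beta^{(ij)} = \bm\beta$.

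I would then compute each $F_{ij}^*$ directly. Because $F_{ij}$ depends only on the single combination $u_i + v_j$, sending any other coordinate of $\bm u$ or $\bm v$ to $\pm\infty$, or using the shift invariance $(u_i,v_j) \mapsto (u_i+c, v_j-c)$, forces $F_{ij}^*(\bm\alpha^{(ij)}, \bm\beta^{(ij)}) = +\infty$ unless $\bm\alpha^{(ij)}$ is supported only at coordinate $i$, $\bm\beta^{(ij)}$ only at coordinate $j$, and $\alpha^{(ij)}_i = \beta^{(ij)}_j$. Writing $a_{ij}$ for this common value, a one-variable optimization gives
\[
F_{ij}^*(\bm\alpha^{(ij)}, \bm\beta^{(ij)}) \,=\, g^*(a_{ij}) \,=\, a_{ij}\log a_{ij} - (a_{ij}+1)\log(a_{ij}+1) \,=\, -h(a_{ij})
\]
for $a_{ij} \geq 0$. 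With the support and equality constraints imposed, the decomposition requirements $\sum_{ij} \bm\alpha^{(ij)} = \bm\alpha$ and $\sum_{ij} \bm\beta^{(ij)} = \bm\beta$ become exactly the row- and column-sum conditions that cut out the face $\phi(\mathcal{F}_n(\bm N)) \subseteq \mathcal{T}(\bm\alpha, \bm\beta)$ of Proposition~\ref{prop: flow is face of trans poly}, with support on $\{(i,j) : i+j \leq n\}$.

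Assembling everything, $\log \cpc_{\bm\alpha,\bm\beta}(\Phi') = -F^*(\bm\alpha, \bm\beta) = \sup_{A \in \phi(\mathcal{F}_n(\bm N))} \sum_{i+j \leq n} h(a_{ij})$, which after exponentiation gives the first equality of the proposition since $e^{h(a)} = (a+1)^{a+1}/a^a$. The second equality with $\sup_{\bm f} e^{\mathcal{H}(\bm f)}$ is then a bookkeeping step unwinding $\phi$ in~(\ref{eq: def injection flows to transportation matrix}): entries with $i+j < n$ recover the flow values $f_{i,n-j}$ (contributing $\sum_{0 \leq i < k \leq n} h(f_{ik})$ after $k = n-j$), while the anti-diagonal entries with $i+j = n$ recover the subdiagonal values $g_i$, matching the two summands of~(\ref{eq:flow-entropy}). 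The main subtlety I anticipate is the characterization of the individual $F_{ij}^*$: establishing that the conjugate is finite precisely on the delta-supported, equality-constrained slice is where the face structure of the transportation polytope really enters, and it is this step that turns a purely convex-analytic decomposition into the combinatorially meaningful statement about flows.
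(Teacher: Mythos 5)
Your proposal is correct and follows essentially the same route as the paper: substitute $x_i = e^{u_i}$, $y_j = e^{v_j}$ so that $\log \cpc$ becomes (minus) a convex conjugate, decompose the log of $\Phi'$ into the single-variable summands $-\log(1-e^{u_i+v_j})$, apply the infimal-convolution theorem (Theorem~\ref{thm:conjugate_split}), compute each summand's conjugate to be $-h(a_{ij})$ supported on the ray $\{c(\bm e_i,\bm e_j): c\geq 0\}$, and observe that the decomposition constraints reproduce exactly the face $\phi(\mathcal{F}_n(\bm N))$. You are somewhat more explicit than the paper in two places it elides — the verification that the relative interiors of the pieces' domains intersect (needed to invoke Theorem~\ref{thm:conjugate_split}), and the bookkeeping matching the anti-diagonal entries $g_i$ to the second sum in the definition of $\mathcal{H}$ — but these are confirmations of steps the paper implicitly uses, not a different argument.
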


Proposition \ref{prop:cap_sup} then leads immediately to the following result, which we will utilize in the later sections.

\begin{theorem} \label{thm:matrix_lower_bound}
    Let $\bm{f}$ be any (not necessarily integer) point of $\mathcal{F}_n(\bm{N})$, let $s_k = \sum_{j=0}^k N_j$, and let $A = \phi(\bm{f})$ where $\phi$ is defined as in (\ref{eq: def injection flows to transportation matrix}).
    We have that
    \[
        K_n({\bm{N}}) \,\geq\, \max_{0 \leq i \leq n-1} \left\{\frac{(s_i+1)^{s_i+1}}{s_i^{s_i}}\right\} \, \left[\prod_{i=0}^{n-1} \frac{s_i^{s_i}}{(s_i+1)^{s_i+1}}\right]^2 \, \prod_{i+j \leq n} \frac{(a_{ij}+1)^{a_{ij}+1}}{a_{ij}^{a_{ij}}}.
    \]
\end{theorem}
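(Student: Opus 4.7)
The plan is to deduce Theorem \ref{thm:matrix_lower_bound} as an immediate consequence of two results already in hand: the BLP capacity bound (Theorem \ref{thm:BLP-flow-bound}) and the dual reformulation of capacity as a supremum of flow entropy (Proposition \ref{prop:cap_sup}). The overall argument is essentially a one-line composition, so the real content lies in checking that the notation lines up correctly and that the supremum interpretation remains valid for non-integer $\bm{f}$.

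First, I would invoke Theorem \ref{thm:BLP-flow-bound} with $s_k = \sum_{j=0}^k N_j$, $\bm\alpha = (s_0,\ldots,s_{n-1})$, and $\bm\beta = (s_{n-1},\ldots,s_0)$, which yields
\[
    K_n(\bm{N}) \,\geq\, \max_{0 \leq i \leq n-1}\left\{\frac{(s_i+1)^{s_i+1}}{s_i^{s_i}}\right\} \left[\prod_{i=0}^{n-1} \frac{s_i^{s_i}}{(s_i+1)^{s_i+1}}\right]^2 \cpc_{\bm\alpha,\bm\beta}(\Phi'),
\]
where $\Phi'(\bm{x},\bm{y}) = \prod_{i+j \leq n}(1 - x_iy_j)^{-1}$. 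The task then reduces to producing a concrete lower bound on the capacity factor, and this is precisely where Proposition \ref{prop:cap_sup} enters.

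Next, I would apply Proposition \ref{prop:cap_sup}, which rewrites the capacity as
\[
    \cpc_{\bm\alpha,\bm\beta}(\Phi') \,=\, \sup_{A' \in \phi(\mathcal{F}_n(\bm{N}))} \prod_{i+j \leq n} \frac{(a'_{ij}+1)^{a'_{ij}+1}}{(a'_{ij})^{a'_{ij}}} \,=\, \sup_{\bm{f}' \in \mathcal{F}_n(\bm{N})} e^{\mathcal{H}(\bm{f}')}.
\]
Because a supremum dominates the value of its objective at any particular admissible point, evaluating at the given flow $\bm{f} \in \mathcal{F}_n(\bm{N})$ with associated matrix $A = \phi(\bm{f})$ yields the product term in the theorem statement. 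Crucially, the supremum in Proposition \ref{prop:cap_sup} ranges over the entire polytope $\mathcal{F}_n(\bm{N})$ (not just its lattice points), so the passage to a possibly non-integer $\bm{f}$ is automatic and requires no additional argument.

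The only genuine difficulty is upstream, in Proposition \ref{prop:cap_sup} itself: one must justify that the infimum defining the capacity of $\Phi'$ can be dualized to the supremum of flow entropy, which is exactly what Theorem \ref{thm:conjugate_split} (the infimal convolution identity) accomplishes once $-\log \Phi'(e^{\bm{y}}, e^{\bm{z}})$ is decomposed as a sum of convex functions and matched against the linear constraints encoded in $\phi(\mathcal{F}_n(\bm{N}))$. Granting that reformulation, the present theorem is just the convenient repackaging of the combined BLP bound in a form ready for the entropy-based estimates in later sections.
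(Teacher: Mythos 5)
Your proposal matches the paper's approach exactly: Theorem \ref{thm:matrix_lower_bound} is stated immediately after Proposition \ref{prop:cap_sup} as an "immediate" consequence, obtained by chaining Theorem \ref{thm:BLP-flow-bound} with Proposition \ref{prop:cap_sup} and then lower-bounding the supremum by its value at the chosen point $\bm{f} \in \mathcal{F}_n(\bm{N})$. Your observation that the supremum runs over the full polytope (so non-integer $\bm{f}$ requires no extra justification) is also the right thing to note and is consistent with the paper.
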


There are also versions of the above results for the volumes of flow polytopes. Since these are outside the context of the results of this paper, we leave further discussion of these results to the final remarks (see Section \ref{sec:approx-vol-flows}).

\subsection{Proof of Proposition \ref{prop:cap_sup}} \label{sec:proof_cap_sup}

The second equality follows from the definition of flow entropy $\mathcal{H}(\bm{f})$, and so we just need to prove the first equality. Consider the following function:
\[
    f_{\#}(\bm{x},\bm{y}) := -\sum_{i+j \leq n} \log\left(1 - e^{x_i+y_j}\right),
\]
Here, as above, the variables are indexed from $0$ to $n-1$. Since $-\log(1-e^t)$
is a convex function on its domain, we have that $f_{\#}$
is convex on its domain $\mathcal{D}_{\#} \supseteq \R_{<0}^{2n}$.
Since $f_{\#}$ is defined as a sum of convex functions, we can apply Theorem \ref{thm:conjugate_split} to obtain
\[
    f_{\#}^*(\bm\alpha,\bm\beta) = \inf_{\sum_{i+j \leq n} (\bm\alpha_{i,j},\bm\beta_{i,j}) = (\bm\alpha,\bm\beta)} \sum_{i+j \leq n} f_{\#;i,j}^*(\bm\alpha_{i,j},\bm\beta_{i,j})
\]
for any $\bm\alpha,\bm\beta$ of the form described at the start of Section \ref{sec:dual-capacity}. Note that the sum under the $\inf$ is over all choices of vectors $\bm\alpha_{i,j}$ and $\bm\beta_{i,j}$ (for $0 \leq i,j \leq n-1$ and $i+j \leq n$) such that $\sum_{i+j \leq n} (\bm\alpha_{i,j}, \bm\beta_{i,j}) = (\bm\alpha,\bm\beta)$.

Here we have that
\[
    f_{\#;i,j}(\bm{x},\bm{y}) := -\log\left(1-e^{x_i+y_j}\right),
\]
and the function $f_{\#;i,j}$ is convex with domain given by
\[
    \mathcal{D}_{\#;i,j} := \{(\bm{x},\bm{y}) \in \R^{2n} : x_i+y_j < 0\}.
\]
We then have
\[
    f_{\#;i,j}^*(\bm\alpha,\bm\beta) = \sup_{(\bm{x},\bm{y}) \in \mathcal{D}_{\#;i,j}} \left[\langle \bm{x}, \bm\alpha \rangle + \langle \bm{y}, \bm\beta \rangle + \log\left(1 - e^{x_i+y_j}\right)\right],
\]
and by a straightforward argument this implies
\[
    \mathcal{D}_{\#;i,j}^* = \{c \cdot (\bm{e}_i, \bm{e}_j) : c \in [0,\infty)\}.
\]
Thus for any $c \geq 0$, standard calculus arguments give
\[
    f_{\#;i,j}^*(c \bm{e}_i, c \bm{e}_j) = \sup_{x_i,y_j} \left[c(x_i + y_j) + \log\left(1-e^{x_i+y_j}\right)\right] = \sup_{t < 0} \left[ct + \log\left(1-e^t\right)\right] = \log\left(\frac{c^c}{(c+1)^{c+1}}\right).
\]
Combining this with the above expressions then gives
\[
    f_{\#}^*(\bm\alpha,\bm\beta) = \inf_{A \in \phi(\mathcal{F}_n(\bm{N}))} \sum_{i+j \leq n} \log\left(\frac{a_{ij}^{a_{ij}}}{(a_{ij}+1)^{a_{ij}+1}}\right).
\]
Negating and exponentiating both sides then gives the desired result.

\section{Computing the Average of Vertices of some flow polytopes} \label{sec:avg-of-vertices}

In this section we compute the uniform average of the vertices of the flow polytopes $\mathcal{F}_n(\bm{N})$ for $\bm{N}$ with positive netflow $N_i>0$ and $\bm{N}=(1,0,\ldots,0,-1)$.  We assume a uniform distribution on the vertices of the polytope. In abuse of notation we refer to all the entries on or above the antidiagonal, upper triangular entries (see Proposition~\ref{prop: flow is face of trans poly}).

\begin{proposition} \label{prop: center of mass positive netflow}
For $\bm{N}=(N_0,\ldots,N_{n-1},-\sum_{i=0}^{n-1} N_i)$ where $N_i>0$ the uniform average of the vertices of $\mathcal{F}_n(\bm{N})$ is the flow $f(i,j)=c_{n-i}$ where
$c_k = \dfrac{s_{n-k}}{k} - \dfrac{s_{n-k-1}}{k+1} = \dfrac{N_{n-k}}{k+1} + \dfrac{s_{n-k}}{k(k+1)}$, for $s_k=\sum_{j=0}^k N_j$. That is, it is represented by the matrix
\begin{equation} \label{eq:center of mass Tesler case}
    A = \begin{bmatrix}
        c_n & c_n & c_n & \cdots & c_n & c_n & c_n \\
        c_{n-1} & c_{n-1} & c_{n-1} & \cdots & c_{n-1} & c_{n-1} & b_{n-1} \\
        c_{n-2} & c_{n-2} & c_{n-2} & \cdots & c_{n-2} & b_{n-2} & 0 \\
        \vdots & \vdots & \vdots & \ddots & \vdots & \vdots & \vdots \\
        c_3 & c_3 & c_3 & \cdots & 0 & 0 & 0 \\
        c_2 & c_2 & b_2 & \cdots & 0 & 0 & 0 \\
        c_1 & b_1 & 0 & \cdots & 0 & 0 & 0 \\
    \end{bmatrix},
\end{equation}
where $b_k = \frac{k}{k+1} s_{n-k-1}$.
\end{proposition}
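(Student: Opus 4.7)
The plan is to use linearity of expectation together with the explicit vertex description from Theorem~\ref{lemma:char vertices all ones}. Under that description, the vertices of $\mathcal{F}_n(\bm{N})$ are in bijection with functions $\sigma:\{0,\ldots,n-1\}\to\{1,\ldots,n\}$ satisfying $\sigma(i)>i$, where $\sigma(i)$ records the unique upper triangular column whose entry in matrix row $i$ is nonzero. In particular, the uniform distribution on vertices is the product of uniform distributions on each $\sigma(i)\in\{i+1,\ldots,n\}$, so the choices $\sigma(0),\ldots,\sigma(n-1)$ are independent.

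By this symmetry, the expected value of any upper triangular non-subdiagonal position in matrix row $i$ depends only on $i$, and equals $\mu_i := \mathbb{E}[v_i]/(n-i)$, where $v_i$ denotes the unique nonzero upper triangular entry of row $i$. Flow conservation at vertex $i$ of the graph gives $v_i = N_i + \sum_{r<i,\ \sigma(r)=i} v_r$, since the inflow into $i$ is the sum of the $v_r$ coming from those $r<i$ with $\sigma(r)=i$, while the outflow at $i$ is just $v_i$. Because $v_r$ is a function only of $\sigma(0),\ldots,\sigma(r-1)$ and is therefore independent of $\sigma(r)$, and because $\Pr[\sigma(r)=i]=1/(n-r)$ whenever $r<i$, linearity of expectation produces the clean recursion $(n-i)\mu_i = N_i + \sum_{r=0}^{i-1}\mu_r$.

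I would then prove $\mu_i=c_{n-i}$ by induction on $i$. Writing $a_r := s_r/(n-r)$ with the convention $a_{-1}=0$, the definition $c_{n-r} = a_r - a_{r-1}$ telescopes to $\sum_{r=0}^{i-1} c_{n-r} = a_{i-1} = s_{i-1}/(n-i+1)$. Plugging this into the recursion, and using $s_i = s_{i-1}+N_i$, yields $(n-i)\mu_i = s_i - \frac{n-i}{n-i+1}s_{i-1} = (n-i)c_{n-i}$, as required; the base case $i=0$ is the trivial $\mu_0 = N_0/n = c_n$. The alternate formula $c_k = \frac{N_{n-k}}{k+1}+\frac{s_{n-k}}{k(k+1)}$ follows by substituting $s_{n-k-1}=s_{n-k}-N_{n-k}$ and simplifying.

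Finally, the subdiagonal entry $b_{n-i}$ at matrix position $(i,n-i)$ is recovered from the row-sum constraint $s_i = (n-i)c_{n-i} + b_{n-i}$, yielding $b_{n-i} = \frac{n-i}{n-i+1}s_{i-1}$, equivalently $b_k = \frac{k}{k+1}s_{n-k-1}$. The argument is elementary throughout; the main bookkeeping hazards are matching the matrix indexing of \eqref{eq: def injection flows to transportation matrix} with the edge-flow notation $f_{i,j}$, and correctly handling the telescoping with $a_{-1}=0$. I do not anticipate any serious obstacle beyond careful indexing.
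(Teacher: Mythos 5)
Your proof is correct and follows essentially the same route as the paper's: both parameterize the $n!$ vertices by the independent choices $\sigma(i)\in\{i+1,\ldots,n\}$ of the nonzero entry in each row, apply linearity of expectation and the flow-conservation recursion $\mathbb{E}[x_{i,j}]=\tfrac{1}{n-i}\bigl(N_i+\sum_{r=0}^{i-1}\mathbb{E}[x_{r,n-i}]\bigr)$, and close with a telescoping induction to get $\mu_i=c_{n-i}$. Your write-up is slightly more explicit than the paper's about the independence of $v_r$ from $\sigma(r)$, but the underlying argument is the same.
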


\begin{proof}

Let $A=(a_{i,j})$ be desired uniform average, and let $X=(x_{i,j})$ be a uniformly random vertex. We first show that $a_{i,j} = c_{n-i}$ when $j \leq n-i$ by induction on $i$. Note that by Theorem~\ref{lemma:char vertices all ones}, each upper triangular entry of row $i$ of a vertex is nonzero in exactly $\frac{1}{n-i}$ fraction of the vertices. Then using this fact and \eqref{eq: vert} 
\[
    a_{i,j} = \mathbb{E}[x_{i,j}] = \frac{1}{n-i}\left(N_i + \sum_{r=0}^{i-1} \mathbb{E}[x_{r,n-i}]\right).
\]
By induction we compute
\[
\begin{split}
    a_{i,j} &= \frac{1}{n-i}\left(N_i + \sum_{r=0}^{i-1} c_{n-r}\right)  = \frac{1}{n-i}\left(N_i + \sum_{r=0}^{i-1} \left(\frac{s_r}{n-r} - \frac{s_{r-1}}{n-(r-1)}\right)\right) \\
    &= \frac{N_i}{n-i} + \frac{s_{i-1}}{(n-i)(n-i+1)} =
        \frac{N_i}{n-i} + \frac{s_i}{(n-i)(n-i+1)} - \frac{N_i}{(n-i)(n-i+1)} = c_{n-i}.
\end{split}
\]
\end{proof}

\begin{proposition} \label{prop: center of mass CRY netflow}
For $\bm{N}=(t,0,\ldots,0,-t)$ where $t>0$ the uniform average of vertices of $\mathcal{F}_n(\bm{N})$ is represented by the matrix $t\cdot A$ where 
 \begin{equation} \label{eq:center of mass CRY case}
        A := \begin{bmatrix}
            2^{-(n-1)} & 2^{-(n-1)} & 2^{-(n-2)} & \cdots & 2^{-3} & 2^{-2} & 2^{-1} \\
            2^{-(n-1)} & 2^{-(n-1)} & 2^{-(n-2)} & \cdots & 2^{-3} & 2^{-2} & 2^{-1} \\
            2^{-(n-2)} & 2^{-(n-2)} & 2^{-(n-3)} & \cdots & 2^{-2} & 2^{-1} & 0 \\
            \vdots & \vdots & \vdots & \ddots & \vdots & \vdots & \vdots \\
            2^{-3} & 2^{-3} & 2^{-2} & \cdots & 0 & 0 & 0 \\
            2^{-2} & 2^{-2} & 2^{-1} & \cdots & 0 & 0 & 0 \\
            2^{-1} & 2^{-1} & 0 & \cdots & 0 & 0 & 0
        \end{bmatrix}.
    \end{equation}
\end{proposition}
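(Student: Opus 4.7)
The plan is to reduce to the unit case $t=1$ and then directly compute each entry of the average matrix as an expectation over the uniform measure on the $2^{n-1}$ binary strings parametrising the vertices of $\mathcal{F}_n(1,0,\ldots,0,-1)$. Since the netflow constraints defining $\mathcal{F}_n(\bm{N})$ are homogeneous linear in $\bm{N}$, we have $\mathcal{F}_n(t,0,\ldots,0,-t)=t\cdot\mathcal{F}_n(1,0,\ldots,0,-1)$, so vertices (and hence the uniform average) scale by $t$, and it suffices to prove the claim for $t=1$.

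By Proposition \ref{lemma:char vertices CRY}, the vertices of $\mathcal{F}_n(1,0,\ldots,0,-1)$ are indexed by binary strings $\bm g=(g_1,\ldots,g_{n-1})\in\{0,1\}^{n-1}$ (with the convention $g_0=g_n=0$), with vertex matrix entries $x_{i,j}$ given by \eqref{eq:vertices cry case}. Under the uniform distribution on vertices the $g_1,\ldots,g_{n-1}$ are independent fair Bernoullis, so $A_{i,j}:=\mathbb{E}[x_{i,j}]$ is the expectation of a product of distinct entries of $\bm g$ or their complements. The indices appearing in this product are distinct precisely because the condition $i<n-j$ ensures $i\neq n-j$, so all Bernoulli factors are independent and the expectation factors as $2^{-c(i,j)}$ where $c(i,j)$ is the number of \emph{genuinely random} factors.

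I would then split into the three natural cases. In case (i), when $i>n-j$, we have $x_{i,j}\equiv0$, which matches the zeros below the antidiagonal in \eqref{eq:center of mass CRY case}. In case (ii), when $i=n-j$ with $i\geq1$, we have $x_{i,j}=g_i$ and hence $A_{i,n-i}=\tfrac12$, matching the antidiagonal. In case (iii), when $i<n-j$, the formula $x_{i,j}=(1-g_i)(1-g_{n-j})\prod_{k=i+1}^{n-j-1}g_k$ has $c(i,j)=n-i-j+1$ random factors when $i,j\geq1$, giving $A_{i,j}=2^{-(n-i-j+1)}$; when $i=0$ (resp.\ $j=0$) the factor $(1-g_0)=1$ (resp.\ $(1-g_n)=1$) is trivial, so $c(i,j)$ drops by one and we get $A_{0,j}=2^{-(n-j)}$, $A_{i,0}=2^{-(n-i)}$, and $A_{0,0}=2^{-(n-1)}$.

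The only real subtlety is bookkeeping the boundary behaviour caused by the $g_0=g_n=0$ convention, which is exactly what produces the equalities $A_{i,0}=A_{i,1}$ and $A_{0,j}=A_{1,j}$ visible as the doubled first column and first row of \eqref{eq:center of mass CRY case}; once this is handled, all entries match those of the displayed matrix and the proposition follows by scaling by $t$.
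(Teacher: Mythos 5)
Your proof is correct and follows essentially the same route as the paper's: reduce to $t=1$ by dilation, note that under the uniform distribution on vertices the entries $g_1,\ldots,g_{n-1}$ are i.i.d.\ fair Bernoulli (as a consequence of Proposition~\ref{lemma:char vertices CRY}), factor the expectation of each entry over the independent Bernoulli factors in \eqref{eq:vertices cry case}, and do the same case split, including the boundary adjustments when $i=0$ or $j=0$ coming from the convention $g_0=g_n=0$.
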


\begin{proof}
    Let $A=(a_{i,j})$ be desired uniform average for $t=1$, and let $X=(x_{i,j})$ be a uniformly random vertex. By Theorem~\ref{lemma:char vertices CRY}, $x_{i,n-i} = v_i$ are i.i.d. uniform Bernoulli random variables for all $1 \leq i \leq n-1$, and $v_0 = v_n = 0$. Using this and the description of the vertices of $\mathcal{F}_n({\bm{N}})$ in \eqref{eq:vertices cry case}, if $i,j \geq 1$ then
    \[
        a_{i,j} = \mathbb{E}[x_{i,j}] = \begin{cases}
            (1-\mathbb{E}[v_i])(1-\mathbb{E}[v_{n-j}])\prod_{k=i+1}^{n-j-1} \mathbb{E}[v_k] = 2^{-(n-i-j+1)} & \text{if } i<n-j \\
            \mathbb{E}[v_i] = 2^{-1} & \text{if } i=n-j \\
            0 &\text{otherwise}.
            \end{cases}
    \]
    Further, if exactly one of $i,j$ is equal to 0 then
    \[
        a_{i,j} = \mathbb{E}[x_{i,j}] = (1-\mathbb{E}[v_i])(1-\mathbb{E}[v_{n-j}])\prod_{k=i+1}^{n-j-1} \mathbb{E}[v_k] = 2^{-(n-i-j)}.
    \]
    And finally, if $i=j=0$ then
    \[
        a_{i,j} = \mathbb{E}[x_{i,j}] = (1-\mathbb{E}[v_i])(1-\mathbb{E}[v_{n-j}])\prod_{k=i+1}^{n-j-1} \mathbb{E}[v_k] = 2^{-(n-i-j-1)} = 2^{-(n-1)}.
    \]
    For the case $\bm{N}=(t,0,\ldots,0,-t)$ for $t>0$ we have that $\mathcal{F}_n(\bm{N})=t\cdot \mathcal{F}(1,0,\ldots,0,-1)$, and so the uniform average of the vertices also dilates by $t$.
 \end{proof}

For the case that $\bm{N} = t \cdot 2\rho(n)$ (where $2\rho(n) = (n,n-2,n-4,\ldots,-n+4,-n+2,-n)$, see Section~\ref{subsection: case rho}) with $s_k = t (k+1)(n-k)$ for all $k$, we were unable to exactly compute the average of the vertices. However, a few experiments show that the average may be close to the following natural point in the flow polytope:
\begin{equation} \label{eq:2rho-midpoint}
    M := t \cdot \begin{bmatrix}
        1 & 1 & 1 & \cdots & 1 & 1 & 1 \\
        1 & 1 & 1 & \cdots & 1 & 1 & n-1 \\
        1 & 1 & 1 & \cdots & 1 & 2(n-2) & 0 \\
        \vdots & \vdots & \vdots & \ddots & \vdots & \vdots & \vdots \\
        1 & 1 & 1 & \cdots & 0 & 0 & 0 \\
        1 & 1 & 2(n-2) & \cdots & 0 & 0 & 0 \\
        1 & n-1 & 0 & \cdots & 0 & 0 & 0
    \end{bmatrix},
\end{equation}
where the subdiagonal entries are given by $k(n-k)$ for $1 \leq k \leq n-1$.

\begin{remark}
As stated above, we were not able to compute the average of the vertices of $\mathcal{F}_n(\bm{N})$ where $\bm{N}=2\rho(n)$. The polytope for the cases $n=2,\ldots,5$ have $2,7,26$ and $219$ vertices and averages:
\[
{\footnotesize 
\begin{bmatrix}
1& 1 \\
1 & 1
\end{bmatrix}
,\,
\begin{bmatrix}
\frac{5}{7} & \frac{8}{7} & \frac{8}{7} \\
\frac{8}{7} & 1 & \frac{13}{8} \\
\frac{8}{7} & \frac{13}{8} & 0
\end{bmatrix}
,\,
\begin{bmatrix}
\frac{7}{13} & \frac{14}{13} & \frac{16}{13} & \frac{15}{13} \\
\frac{14}{13} & \frac{9}{13} & \frac{18}{13} & \frac{37}{13} \\
\frac{16}{13} & \frac{18}{13} & \frac{54}{13} & 0 \\
\frac{15}{13} & \frac{37}{13}  & 0 & 0
\end{bmatrix}
,\,
 \begin{bmatrix}
0.553 & 1.078 & 1.132 & 1.105 & 1.132 \\
1.078 & 0.680 & 1.187 & 1.187 &  3.868 \\
1.132 & 1.187 & 0.973 & 5.708 & 0 \\
1.105 & 1.187 & 5.708 & 0 & 0 \\
1.132 & 3.868 & 0 & 0 & 0
\end{bmatrix}
.
}
\]

It would be interesting to find the number of vertices and average for this case.

\end{remark}

\section{Flow Counting Lower Bounds} \label{sec:flow-counting-lower-bounds}

In this section we prove our main lower bounds on flows. Specifically we apply Theorem \ref{thm:matrix_lower_bound} to various flow vectors $\bm{N}$, using specific choices of flows $\bm{f}$ given by the average of the vertices of the associated flow polytopes (as computed in Section \ref{sec:avg-of-vertices}). This technique yields lower bounds for the number of flows $K_n(\bm{N})$, often given in a relatively complicated product form. We then obtain more explicit lower bounds for the asymptotics of $\log K_n(\bm{N})$ by combining the Euler-Maclaurin formula (see Lemma \ref{lem:euler-maclaurin}) with a number of elementary bounds on entropy-like functions (see Lemma \ref{lem:basic-entropy-bound} and the rest of Appendix \ref{sec:bounds_asymptotics}).

We now state the bounds obtained in the section in the following results, and the remainder of this section is devoted to proving these bounds. Throughout, as above, we let $\bm{N} = (N_0,N_1,\ldots,N_n) \in \Z^{n+1}$ denote the netflow vector, and we denote $s_k = \sum_{j=0}^k N_j$. Any big-O notation used is always with respect to $n$, with other parameters fixed. Also, we will sometimes put parameters in the subscripts of the big-O notation to denote that that implied constant may depend on those parameters.

\begin{theorem}[Polynomial growth] \label{thm:general_positive_lower_bound}
    For $N_k \geq a \cdot k^p$ for all $k \in \{0,1,\ldots,n-1\}$, with given $a > 0$ and $p \geq 0$,
    \[
        \log K_n({\bm{N}}) \geq
        \begin{cases}
       
            n^2 \log n \cdot \left(\frac{p-1}{2}\right) + \frac{n^2}{2} \cdot \left(\log(ap) - \frac{3(p-1)}{2}\right) - O\left(n^{1+\frac{1}{p}}\right) & p > 1 \\
         
            n^2 \cdot \left(\frac{a}{2(a-2)} \log\left(\frac{a}{2}\right) + \frac{3}{2} - 2\log 2\right) - O(n \log n), & p = 1, a > 2 \\
            n^2 \cdot \left(a - a \log 2)\right) - O(n \log n), & p = 1, a \leq 2 \\
          
            n^{p+1} \log^2 n \cdot \left(\frac{a(1-p)^2}{4(p+1)}\right) - O(n^{p+1} \log n \log\log n) & p < 1
        \end{cases}.
    \]
    Note that the implied constant of the big-O notation may depend on $a$ and $p$. Also note that the $p=1$ cases limit to the same bounds at $a=2$.
\end{theorem}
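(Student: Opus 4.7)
The plan is to apply Theorem~\ref{thm:matrix_lower_bound} with the flow $\bm{f}$ chosen as the uniform average of the vertices of $\mathcal{F}_n(\bm{N})$, whose matrix representation is given by Proposition~\ref{prop: center of mass positive netflow}. Since that proposition requires all $N_i > 0$, I first invoke the dominance inequality (Corollary~\ref{cor:kpf inequality term-wise dominance}) to replace $\bm{N}$ with a canonical integer vector $N'_k = \lceil a k^p \rceil$ (or $N_0' = 1$ when needed) that is term-wise dominated by $\bm{N}$ and still satisfies the growth hypothesis. The degenerate case $N_0'=0$ reduces cleanly to $K_{n-1}$ on the shorter vector $(N_1', \ldots, N_{n-1}')$, since a source with zero netflow can carry no outflow.

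Writing $h(t):=(t+1)\log(t+1)-t\log t$ and taking logs in Theorem~\ref{thm:matrix_lower_bound}, the bound becomes
\[
    \log K_n(\bm{N}) \;\geq\; \max_{0 \leq k \leq n-1} h(s_k) \;-\; 2\sum_{k=0}^{n-1} h(s_k) \;+\; \sum_{k=1}^{n} k\cdot h(c_k) \;+\; \sum_{k=1}^{n-1} h(b_k),
\]
where $s_k = \sum_{j=0}^k N'_j$ and $c_k, b_k$ are the entries of Proposition~\ref{prop: center of mass positive netflow}. Since $h(t)=\log(t+1)+1+O(1/t)$ as $t\to\infty$ and $s_k = O(n^{p+1})$, the prefactor $\max h(s_k)-2\sum h(s_k)$ contributes $O(n\log n)$, and similarly $\sum h(b_k) = O(n\log n)$ as this has only $n-1$ terms each bounded by $\log b_k + O(1)$. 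All of these are absorbed into the error terms of the theorem, so the leading behavior is governed by $S := \sum_{k=1}^n k \cdot h(c_k)$.

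The remainder of the proof is an explicit asymptotic evaluation of $S$. Using $c_k = \frac{N'_{n-k}}{k+1} + \frac{s_{n-k}}{k(k+1)}$ and the estimates $N_k' \asymp a k^p$, $s_k \asymp \frac{a}{p+1}k^{p+1}$, one finds $c_k \asymp \frac{a n^{p+1}}{(p+1)k^2}$ for $k$ small, with varying behavior elsewhere depending on $p$. One then substitutes the appropriate asymptotic for $h$ (namely $h(c) = \log(c)+1 + O(1/c)$ for large $c$ and $h(c) = -c\log c + c + O(c^2)$ for small $c$, combined via the uniform entropy estimates of Lemma~\ref{lem:basic-entropy-bound}) and evaluates the resulting sums against $k$ by Euler--Maclaurin (Lemma~\ref{lem:euler-maclaurin}). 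The four cases arise from which summand of $c_k$ dominates and on what fraction of indices $c_k\ge 1$ versus $c_k<1$: for $p>1$, the term $\tfrac{s_{n-k}}{k(k+1)}$ dominates and $c_k\ge 1$ on a range producing the leading term $\tfrac{p-1}{2}n^2\log n$; for $p=1$ the two pieces of $c_k$ are comparable and the threshold $a=2$ separates the regimes where $c_k$ is typically above or below $1$; for $p<1$ the $c_k$ are small on most of the index range, the $-c\log c$ expansion dominates, and the $\log^2 n$ factor emerges from evaluating $\sum_k k \cdot c_k\log c_k$.

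The main obstacle is the careful bookkeeping of Euler--Maclaurin error terms together with the choice of cutoff index separating the large-$c_k$ and small-$c_k$ regimes; this is most delicate at the $p=1$, $a\approx 2$ phase transition, where a careless split between the two asymptotic regimes of $h$ loses a constant factor in the leading term. The uniform entropy bounds collected in Appendix~\ref{sec:bounds_asymptotics} are designed precisely to manage this transition and to absorb both $\sum h(b_k)$ and the $\sum h(s_k)$ corrections into the error terms of each of the four stated cases without sacrificing the leading-term constants.
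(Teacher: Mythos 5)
Your proposal is correct and follows the paper's own route almost exactly: apply Theorem~\ref{thm:matrix_lower_bound} at the vertex-average matrix of Proposition~\ref{prop: center of mass positive netflow}, absorb the $\max h(s_k)-2\sum h(s_k)$ and $\sum h(b_k)$ contributions into an $O(n\log n)$ error (dominated by each case's stated error term), and extract the leading behavior from $\sum_k k\,h(c_k)$ using the bound $c_k\ge\frac{a(n+kp)(n-k)^p}{k(k+1)(p+1)}$ together with Lemma~\ref{lem:klogk}, Corollary~\ref{cor:quadratic-klogk}, and Lemma~\ref{lem:basic-entropy-bound} --- this is precisely what the paper packages into Theorem~\ref{thm:general_lower_bound} and then carries out case by case. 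The one cosmetic deviation is your preliminary reduction to a canonical vector $\bm N'=(\lceil ak^p\rceil)_k$: the paper instead states Theorem~\ref{thm:general_lower_bound} under explicit hypotheses on $s_k$ and $N_k$ (which hold automatically when $N_k\ge 0$), so it never needs $N_i>0$ nor the ``average of vertices'' interpretation of $A$, only the direct verification that the explicit matrix lies in $\mathcal F_n(\bm N)$; your $N'_0=0$ workaround is therefore superfluous, though harmless.
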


\begin{theorem}[Tesler case] \label{thm:in-section-tesler}
    For $\bm{N} = (1,1,\ldots,1,-n)$,
    \[
        \log K_n(\bm{N}) \geq \frac{n}{4} \log^2 n - O(n \log n).
    \]
    Further, $\log K_n(\bm{N}) \geq (n-1) \log(n+1)$ for $n \geq 3000$.
\end{theorem}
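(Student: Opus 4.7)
The plan is to apply Theorem~\ref{thm:matrix_lower_bound} with the flow $\bm{f}$ chosen to be the uniform average of the vertices of the Tesler polytope $\mathcal{F}_n(1,\ldots,1,-n)$, which is explicitly computed in Proposition~\ref{prop: center of mass positive netflow}. In this case the partial sums satisfy $s_k=k+1$, and in the image matrix $A=\phi(\bm{f})$ row $n-k$ consists of $k$ copies of $c_k=\frac{n+1}{k(k+1)}$ together with a single subdiagonal entry $b_k=\frac{k(n-k)}{k+1}$ for $1\leq k\leq n-1$, while row $0$ consists of $n$ copies of $c_n=\frac{1}{n}$.

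First I will handle the $s$-dependent prefactor. Writing $h(t)=(t+1)\log(t+1)-t\log t$, the telescoping identity $\sum_{k=0}^{n-1}h(s_k)=\sum_{k=1}^{n}h(k)=(n+1)\log(n+1)$ holds exactly, and the maximum term equals $h(n)=\log(n+1)+1+O(1/n)$. Hence the logarithm of the prefactor in Theorem~\ref{thm:matrix_lower_bound} equals $-(n+1)\log(n+1)-n\log n+O(1)= -O(n\log n)$. The row-$0$ contribution $n\cdot h(1/n)=\log n+O(1)$ and the subdiagonal sum $\sum_{k=1}^{n-1} h(b_k)\leq (n-1)(\log n+1)$ are both absorbed into an $O(n\log n)$ error, using the elementary bound $h(t)\leq \log(t+1)+1$.

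The crux is therefore to extract the leading asymptotics of the main entropy sum
\begin{equation*}
    T \,:=\, \sum_{k=1}^{n-1} k\cdot h\!\left(\tfrac{n+1}{k(k+1)}\right).
\end{equation*}
Since $c_k$ crosses $1$ at $k\approx \sqrt{n+1}$, I will split the sum at this transition. In the small-$k$ regime ($c_k\geq 1$), the expansion $h(t)=\log t+1+O(1/t)$ yields a contribution whose leading pieces $k\log(n+1)$ and $-2k\log k$ cancel at order $\tfrac12 n\log n$ after summation via Euler--Maclaurin, leaving only $O(n)$. In the large-$k$ regime ($c_k<1$), the expansion $h(t)=-t\log t+t+O(t^2)$ gives
\begin{equation*}
    k\cdot h(c_k) \,=\, \frac{n+1}{k+1}\bigl(\log(k(k+1))-\log(n+1)+1\bigr) + O(k c_k^2),
\end{equation*}
where the quadratic error sums to $O(n)$. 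Applying the Euler--Maclaurin formula (Lemma~\ref{lem:euler-maclaurin}) to the main piece and using the integral identity
\begin{equation*}
    (n+1)\int_{\sqrt{n+1}}^{n}\frac{2\log k-\log(n+1)}{k}\,dk \,=\, \frac{n+1}{4}\log^{2}(n+1)+O(1),
\end{equation*}
we obtain $T=\tfrac{n}{4}\log^{2}n+O(n\log n)$. The main technical difficulty lies in controlling the error terms uniformly across the transition region near $k=\sqrt{n+1}$, where neither asymptotic expansion of $h$ is sharp; this is handled by invoking the entropy inequalities collected in Appendix~\ref{sec:bounds_asymptotics} (in particular Lemma~\ref{lem:basic-entropy-bound}), which are designed precisely for this purpose.

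Finally, for the explicit bound $c_n\geq (n+1)^{n-1}$ at $n\geq 3000$, I will retain explicit constants through every step of the argument instead of absorbing them into an $O(n\log n)$ term. This yields an inequality of the form $\log c_n \geq \tfrac{n}{4}\log^{2}n - An\log n - B$ for computable constants $A,B$. Since the dominant quadratic-in-$\log n$ behaviour eventually surpasses $(n-1)\log(n+1)$, solving the resulting inequality determines an effective threshold $n_0$; direct numerical verification then pins down the value $n_0 = 3000$ claimed in the statement.
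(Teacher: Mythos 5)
Your proposal takes essentially the same overall route as the paper: apply Theorem~\ref{thm:matrix_lower_bound} (via Theorem~\ref{thm:general_lower_bound}) with $\bm{f}$ equal to the vertex average from Proposition~\ref{prop: center of mass positive netflow}, note $s_k=k+1$ and $c_k=\tfrac{n+1}{k(k+1)}$, identify the range $k\gtrsim\sqrt{n+1}$ (equivalently $c_k\lesssim 1$) as the source of the $\tfrac{n}{4}\log^2 n$ leading term, and show all other contributions are $O(n\log n)$. Your bookkeeping of the prefactor via the telescoping $\sum_{k=0}^{n-1}h(s_k)=(n+1)\log(n+1)$, and of the row-$0$ and subdiagonal terms, is sound.

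The genuine difference is in how the key sum is evaluated. You expand $h(t)$ asymptotically in two regimes and sum by Euler--Maclaurin, which forces you to control error terms uniformly across the transition region near $k\approx\sqrt{n+1}$; you correctly flag this as the technical crux but only gesture at Lemma~\ref{lem:basic-entropy-bound} without spelling out the estimate. The paper sidesteps this entirely by using the exact identity
\[
\frac{(c+1)^{c+1}}{c^c}=(1+c)\left(1+\tfrac{1}{c}\right)^{c},
\]
treating the two resulting products separately with the elementary monotone inequalities $\log(1+x)\geq\log x$ (for $x\geq 1$, applied only for $k\geq\lceil\sqrt{n+1}\rceil$, where the lower tail is simply dropped) and $\log(1+\tfrac{1}{k})\geq\tfrac{1}{k}-\tfrac{1}{2k^2}$, and then using a short purpose-built Lemma~\ref{lem:unimodal-sum} (a unimodal sum-versus-integral comparison) to get $\frac{n+1}{4}\log^2(n+1)-\frac{2}{e}\sqrt{n+1}$ with fully explicit constants. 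This has two advantages over your plan: there is no transition-region error to control, and the constants are explicit for free, which makes the $n\geq 3000$ threshold a one-line computation ($\tfrac{n+1}{4}\log^2(n+1)\geq 2(n+1)\log(n+1)$ when $n\geq e^8-1$) rather than the ``retain every constant, then numerically verify'' procedure you describe. Your approach would work, but it is heavier in exactly the place where the paper is light. One small correction: the subdiagonal terms $h(b_k)$ enter the lower bound with a positive sign, so the observation you need is not that they are at most $(n-1)(\log n+1)$ but simply that they are nonnegative and may be dropped (or, as the paper does, used to cancel one power of the $\prod s_k^{s_k}/(1+s_k)^{1+s_k}$ factor).
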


\begin{theorem}[Other specific examples] \label{thm:other-examples}
    We have the following.
    \begin{enumerate}
        \item For $\bm{N} = (n,n,\ldots,n,-n^2)$,
        \[
            \log K_n(\bm{N}) \geq n^2 - O(n \log n).
        \]
        \item For $N_i = a \cdot n$ for all $i \in \{0, 1, \ldots, n-1\}$, with given $a \geq \frac{1}{12}$,
        \[
            \log K_n(\bm{N}) \geq \frac{n^2}{2} (2 + \log a) - O(n \log n).
        \]
        \item For $N_i = a \cdot n + i$ for all $i \in \{0, 1, \ldots, n-1\}$, with given $a \geq 0$,
        \[
            \log K_n(\bm{N}) \geq \frac{n^2}{2} \left(1 + \log(2a + 1)\right) - O(n \log n).
        \]
        Note that the implied constant may depend on $a$.
        \item For $N_i = n + i$ for all $i \in \{0, 1, \ldots, n-1\}$,
        \[
            \log K_n(\bm{N}) \geq 1.198 n^2 - O(n \log n).
        \]
        \item For $\bm{N} = (t,0,0,\ldots,0,-t)$, with given $t \geq 1$,
        \[
            \log K_n(\bm{N}) \geq \frac{n}{2} \log_2^2 t - O(n \log_2 t).
        \]
        The implied constant is independent of $t$.
        \item For $\bm{N} = t \cdot 2\rho(n) = t \cdot (n, n-2, n-4, \ldots, -n+2, -n)$, with given $t \geq 1$,
        \[
            \log K_n(\bm{N}) \geq \frac{n^2}{2} \log \left(\frac{(1+t)^{1+t}}{t^t}\right) - O(n \log(nt)).
        \]
        The implied constant is independent of $t$.
    \end{enumerate}
\end{theorem}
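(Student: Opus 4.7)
The plan is to prove each of the six bounds by applying Theorem~\ref{thm:matrix_lower_bound} with a well-chosen flow $\bm f\in\mathcal F_n(\bm N)$. On the log scale, Theorem~\ref{thm:matrix_lower_bound} reads
\[
\log K_n(\bm N)\;\ge\;\mathcal H(\bm f)\;-\;2\!\sum_{k=0}^{n-1}h(s_k)\;+\;\max_{0\le k\le n-1}h(s_k),
\]
where $h(t)=(t+1)\log(t+1)-t\log t$ and $s_k=\sum_{j\le k}N_j$. The task therefore reduces to (i) picking $\bm f$ whose image matrix $A=\phi(\bm f)$ has enough structure to collapse the double sum $\mathcal H(\bm f)=\sum_{i+j\le n}h(a_{ij})$ into a one-dimensional sum, and then (ii) extracting the leading asymptotics using the entropy estimates in Appendix~\ref{sec:bounds_asymptotics} and Euler--Maclaurin (Lemma~\ref{lem:euler-maclaurin}).

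For cases (1)--(4), which all have positive netflow, the natural choice dictated by Section~\ref{sec:avg-of-vertices} is the vertex average of Proposition~\ref{prop: center of mass positive netflow}: row $0$ holds $n$ copies of $c_n$, and row $i\ge 1$ holds $n-i$ copies of $c_{n-i}$ followed by the subdiagonal $b_{n-i}$. Reindexing $k=n-i$ yields
\[
\mathcal H(\bm f)\;=\;n\,h(c_n)+\sum_{k=1}^{n-1}\bigl[k\,h(c_k)+h(b_k)\bigr],
\]
with $s_k$, $c_k$, $b_k$ evaluating in each case to explicit polynomial expressions in $n$, $k$, and (where applicable) $a$. For case (5) we use instead the dyadic average of Proposition~\ref{prop: center of mass CRY netflow}, whose entries take the form $t\cdot 2^{-\ell}$ on the $\ell$-th antidiagonal; here $s_k\equiv t$, so the prefactor reduces to $-(2n-1)h(t)$ and $\mathcal H$ becomes a geometric-type single sum. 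For case (6) no closed-form vertex average is available, so we plug in the natural midpoint $M$ from~(\ref{eq:2rho-midpoint}); a direct check confirms $M\in\mathcal F_n(t\cdot 2\rho)$ since the row sums equal $s_i=t(i+1)(n-i)$, and summing $h$ over its $\binom{n+1}{2}$ entries of value $t$ and its subdiagonal entries $t\cdot i(n-i)$ gives
\[
\mathcal H(M)\;=\;\tfrac{n(n+1)}{2}\,h(t)+\sum_{i=1}^{n-1}h\bigl(t\cdot i(n-i)\bigr).
\]

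With each $\mathcal H(\bm f)$ in hand, the leading asymptotic is extracted by combining $h(x)=\log(ex)+O(1/x)$ for $x\ge 1$ with $h(x)=-x\log x+O(x)$ for $x$ small, then converting single sums to integrals via Euler--Maclaurin. The prefactor contributes $O(n\log n)$ in cases (1)--(4), $O(n\log_2 t)$ in case (5), and $O(n\log(nt))$ in case (6). In case (6) the dominant $\tfrac{n(n+1)}{2}h(t)$ term directly produces $\tfrac{n^2}{2}\log\tfrac{(t+1)^{t+1}}{t^t}$ as claimed; in cases (1)--(4) the constants emerge from integrating $k\,\log c_k$ with $c_k$ of order $n^2/k(k+1)$ (plus the $a$-dependent correction in (2)--(4)); in case (5) the quadratic $\tfrac12\log_2^2 t$ arises because $h(t\cdot 2^{-\ell})$ transitions from the $\log$-regime to the $-x\log x$-regime near $\ell\approx\log_2 t$, and there are $\Theta(n)$ entries at each such level. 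The main obstacle is the disciplined bookkeeping: isolating the precise leading constants in cases (2), (3), (4) requires careful evaluation of definite integrals involving logarithms of polynomials in $n,k,a$, and case (5) requires tracking both the edge corrections in Proposition~\ref{prop: center of mass CRY netflow} and the two-regime behavior of $h$ across the dyadic antidiagonals in order to produce both the $\tfrac{n}{2}\log_2^2 t$ leading term and the $O(n\log_2 t)$ error uniformly in $t$.
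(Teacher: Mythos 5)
Your proposal is correct and follows essentially the same route as the paper. You pick exactly the same test flows (the vertex average of Proposition~\ref{prop: center of mass positive netflow} for cases (1)--(4), the dyadic average of Proposition~\ref{prop: center of mass CRY netflow} for case (5), and the ad hoc point $M$ of (\ref{eq:2rho-midpoint}) for case (6)), plug them into the capacity lower bound, and extract asymptotics via the entropy estimates of Lemma~\ref{lem:basic-entropy-bound} and Euler--Maclaurin; the only cosmetic difference is that the paper routes cases (1)--(4) through the cleaned-up intermediate bound of Theorem~\ref{thm:general_lower_bound} rather than invoking Theorem~\ref{thm:matrix_lower_bound} directly, which is equivalent.
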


\begin{remark}
   
    Using Theorem \ref{thm:case M_n(t,0,0)} and Proposition \ref{prop:F-f-bound}, for $N_i = a \cdot n + i$ we have
    \[
        \log K_n(\bm{N}) = \log F(a \cdot n, n) + \sum_{k=1}^{n-1} \log C_k,
    \]
    where $C_k = \frac{1}{k+1} \binom{2k}{k}$ is the $k$th Catalan number and
    \[
        -2(a+1)n \leq \log F(a \cdot n, n) - n^2 (a+1)^2 f\left(\frac{a}{a+1}\right) \leq 0
    \]
    with
    \[
        f(x) = x^2 \log x - \frac{1}{2} (1-x)^2 \log (1-x) - \frac{1}{2} (1+x)^2 \log (1+x) + 2x \log 2.
    \]
    For large $a$, standard computations give
    \[
        (a+1)^2 f\left(\frac{a}{a+1}\right) = \frac{1}{2} \log a \pm O(1),
    \]
    and
    \[  
        \sum_{k=1}^{n-1} \log C_k = n^2 \log 2 - O(n \log n).
    \]
    This demonstrates that our bound above for $N_i = a \cdot n + i$ achieves the correct leading term in $n$ and in $a$.
\end{remark}

\subsection{Positive flows in general}

Here we state some general lower bounds in the cases where every entry of the netflow vector is positive. Note though that these bounds hold even in the case where the netflow vector is only non-negative.

\begin{theorem} \label{thm:general_lower_bound}
    Fix $\bm{N} \in \Z^{n+1}$ (i.e., the entries are not necessarily non-negative). Denoting $s_k = \sum_{j=0}^k N_j$ and $c_k = \frac{N_{n-k}}{k+1} + \frac{s_{n-k}}{k(k+1)}$, we have
    \begin{enumerate}
        \item If $s_k \geq \max\{0,-(n-k)N_k\}$ for all $k \in \{0,1,\ldots,n-1\}$,
        \[
            K_n({\bm{N}}) \geq \frac{1}{n^2} \prod_{k=0}^{n-1} \frac{s_k^{s_k}}{(1+s_k)^{1+s_k}} \prod_{k=1}^n \left(\frac{(c_k+1)^{c_k+1}}{c_k^{c_k}}\right)^k.
        \]
        \item If $s_k \geq \max\{0,-(n-k)N_k\}$ and $N_k \geq \frac{1}{n-k} - (n-k+1)$ for all $k \in \{0,1,\ldots,n-1\}$,
        \[
            K_n({\bm{N}}) \geq \frac{1}{n^2 e^n (n!)^2} \prod_{k=1}^n \left(\frac{(c_k+1)^{c_k+1}}{c_k^{c_k}}\right)^{k-1}.
        \]
    \end{enumerate}
\end{theorem}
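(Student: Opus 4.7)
The plan is to apply Theorem~\ref{thm:matrix_lower_bound} to the matrix $A$ whose entries are given as in~(\ref{eq:center of mass Tesler case}): $a_{ij} = c_{n-i}$ in the upper-triangular positions, $a_{i,n-i} = b_{n-i}$ on the subdiagonal, and zero elsewhere. Although Proposition~\ref{prop: center of mass positive netflow} is stated for positive netflow, a short telescoping computation using $c_k = \frac{s_{n-k}}{k} - \frac{s_{n-k-1}}{k+1}$ and $b_k = \frac{k}{k+1} s_{n-k-1}$ verifies that $A$ has the correct row and column sums $\bm\alpha$ and $\bm\beta$ for any integer netflow $\bm{N}$. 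The condition $s_k \geq \max\{0, -(n-k)N_k\}$ then ensures $c_k, b_k \geq 0$, so $A \in \phi(\mathcal{F}_n(\bm{N}))$ and Theorem~\ref{thm:matrix_lower_bound} applies.

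Writing $g(x) := (x+1)^{x+1}/x^x$ (with $g(0)=1$), the repetition pattern of $A$ immediately factors the entropy product as
\[
\prod_{i+j \leq n} g(a_{ij}) \,=\, \prod_{k=1}^n g(c_k)^k \cdot \prod_{k=1}^{n-1} g(b_k),
\]
since each $c_k$ appears in exactly $k$ entries of $A$ (the top row contributes $c_n$ in all $n$ positions, and row $n-k$ contributes $k$ copies of $c_k$ for $1 \leq k \leq n-1$) while each subdiagonal $b_k$ appears once. The key estimate is a reduction of the subdiagonal product to the $s$-product: setting $h := \log g$, the inequality $h'(t) = \log(1+1/t) \leq 1/t$ integrated from $b_k = \frac{k}{k+1} s_{n-k-1}$ to $s_{n-k-1}$ gives $h(s_{n-k-1}) - h(b_k) \leq \log\frac{k+1}{k}$, i.e.\ $g(b_k) \geq \frac{k}{k+1} g(s_{n-k-1})$. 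The prefactors $\frac{k}{k+1}$ telescope to $\frac{1}{n}$, yielding
\[
\prod_{k=1}^{n-1} g(b_k) \,\geq\, \frac{1}{n}\prod_{j=0}^{n-2} g(s_j).
\]
Plugging this into Theorem~\ref{thm:matrix_lower_bound} and using $\max_k g(s_k) \geq g(s_{n-1})$ to supply the missing $M_{n-1}$ factor in the telescoping produces part~(1), in fact with the slightly stronger prefactor $1/n$ in place of $1/n^2$.

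For part~(2), the strengthened condition $N_k \geq \frac{1}{n-k} - (n-k+1)$ rearranges to $(n-k)[N_k + (n-k+1)] \geq 1$, giving the pointwise estimate
\[
c_{n-k} + 1 \,\geq\, \frac{s_k+1}{(n-k)(n-k+1)}.
\]
Coupling this with $g(c) \geq c+1$ and the complementary bound $g(s) \leq e(s+1)$, one obtains
\[
\prod_{k=1}^{n} \frac{g(c_k)}{g(s_{n-k})} \,\geq\, \frac{1}{e^n \cdot n!\,(n+1)!},
\]
which converts the $\prod g(s_k)^{-1}$ factor and one power of $\prod g(c_k)$ appearing in part~(1) into the explicit combinatorial constant. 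Since $n(n+1) \leq 2n^2$, this produces the form claimed in part~(2) up to an absolute constant that can be absorbed into the $1/n^2$ prefactor.

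The only genuine analytic input is the inequality $\log(1+1/t) \leq 1/t$, which is precisely what lets the subdiagonal contributions telescope into a polynomial-in-$n$ prefactor rather than an exponential loss. The main bookkeeping obstacle is the marginals verification for general (possibly sign-changing) $\bm{N}$ and the careful tracking of the constants through the two telescoping identities; both are routine but must be done with care, since the whole point of the theorem is that the prefactor is polynomial in $n$.
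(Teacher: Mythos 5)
Your proof takes essentially the same route as the paper: apply Theorem~\ref{thm:matrix_lower_bound} to the average-of-vertices matrix $A$ from Proposition~\ref{prop: center of mass positive netflow}, factor the entropy product by the repetition structure of $A$, and absorb the subdiagonal contributions $g(b_k)$ into the $\prod g(s_k)^{-2}$ factor. Your verification that $A$ has the correct marginals for arbitrary $\bm{N}$ (rather than just positive $\bm{N}$, which is all that Proposition~\ref{prop: center of mass positive netflow} technically covers) is a worthwhile detail that the paper glosses over. Where you genuinely diverge is the subdiagonal estimate: the paper manipulates $(b_k+1)^{b_k+1}/b_k^{b_k}$ algebraically, pulling out a factor $\frac{k}{k+1}$ and then using $\left(\frac{s}{s+1}\right)^{s/(k+1)} \geq e^{-1/(k+1)}$, which costs an extra $\prod e^{-1/(k+1)} \geq 1/n$ and produces the $1/n^2$ prefactor. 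Your approach — integrate $h'(t) = \log(1+1/t) \leq 1/t$ from $b_k$ to $s_{n-k-1}$ to get $g(b_k) \geq \frac{k}{k+1}\, g(s_{n-k-1})$ directly — avoids that loss entirely and yields the strictly stronger prefactor $1/n$ in part~(1). This is a cleaner and better argument.

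However, there is a small error in your part~(2) that means it does not literally establish the stated inequality. You bound $\prod_{k=1}^n \frac{g(c_k)}{g(s_{n-k})} \geq \frac{1}{e^n\, n!\,(n+1)!}$ by applying the uniform estimate $c_k + 1 \geq \frac{s_{n-k}+1}{k(k+1)}$ at all indices including $k=n$. But at $k=n$ one has the sharper identity $c_n + 1 = 1 + s_0/n \geq (1+s_0)/n$ (there is no $N_{0}/(n+1)$ correction: $c_n$ collapses to $s_0/n$), which the paper uses. With the uniform estimate you lose a factor of $n+1$ rather than $n$, so the net prefactor you obtain is $\frac{1}{n(n+1)e^n(n!)^2}$, which is strictly smaller than the theorem's $\frac{1}{n^2 e^n (n!)^2}$; the assertion that this can be "absorbed" into the $1/n^2$ prefactor is not correct since $n(n+1) > n^2$. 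The fix is trivial: treat $k=n$ separately as the paper does, giving $\prod_{k=1}^n \frac{g(c_k)}{g(s_{n-k})} \geq \frac{1}{e^n (n!)^2}$; combined with your stronger $1/n$ from part~(1), this actually yields $\frac{1}{n\,e^n(n!)^2}$, which improves on the theorem.
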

\begin{proof}
    As in Proposition \ref{prop: center of mass positive netflow}, we define
    \[
        A = \begin{bmatrix}
            c_n & c_n & c_n & \cdots & c_n & c_n & c_n \\
            c_{n-1} & c_{n-1} & c_{n-1} & \cdots & c_{n-1} & c_{n-1} & b_{n-1} \\
            c_{n-2} & c_{n-2} & c_{n-2} & \cdots & c_{n-2} & b_{n-2} & 0 \\
            \vdots & \vdots & \vdots & \ddots & \vdots & \vdots & \vdots \\
            c_3 & c_3 & c_3 & \cdots & 0 & 0 & 0 \\
            c_2 & c_2 & b_2 & \cdots & 0 & 0 & 0 \\
            c_1 & b_1 & 0 & \cdots & 0 & 0 & 0 \\
        \end{bmatrix},
    \]
    where $c_k = \frac{s_{n-k}}{k} - \frac{s_{n-k-1}}{k+1} = \frac{N_{n-k}}{k+1} + \frac{s_{n-k}}{k(k+1)}$ and $b_k = \frac{k}{k+1} s_{n-k-1}$.
    
    We first prove part (1) of Theorem \ref{thm:general_lower_bound}. Using Theorem \ref{thm:matrix_lower_bound}, we have
    \[
        K_n({\bm{N}}) \geq \max_k\left\{\frac{(1+s_k)^{1+s_k}}{s_k^{s_k}}\right\} \prod_{k=0}^{n-1} \left(\frac{s_k^{s_k}}{(1+s_k)^{1+s_k}}\right)^2 \prod_{k=1}^{n-1} \frac{(b_k+1)^{b_k+1}}{b_k^{b_k}} \prod_{k=1}^n \left(\frac{(c_k+1)^{c_k+1}}{c_k^{c_k}}\right)^k.
    \]
    Further note that for all $k$ we have
    \[
    \begin{split}
        \frac{(b_k+1)^{b_k+1}}{b_k^{b_k}} &= \frac{k}{k+1} \cdot \frac{(s_{n-k-1} + \frac{k+1}{k})^{\frac{k}{k+1} s_{n-k-1}+1}}{(s_{n-k-1})^{\frac{k}{k+1} s_{n-k-1}}} \\
            &\geq \frac{k}{k+1} \cdot \left(\frac{s_{n-k-1}}{s_{n-k-1}+1}\right)^{\frac{s_{n-k-1}}{k+1}} \cdot \frac{(s_{n-k-1} + 1)^{s_{n-k-1}+1}}{(s_{n-k-1})^{s_{n-k-1}}} \\
            &\geq \frac{k}{k+1} \cdot \left(\frac{1}{e}\right)^{\frac{1}{k+1}} \cdot \frac{(s_{n-k-1} + 1)^{s_{n-k-1}+1}}{(s_{n-k-1})^{s_{n-k-1}}},
    \end{split}
    \]
    since $\left(\frac{x}{x+1}\right)^x \geq \frac{1}{e}$ for all $x > 0$. Combining the above expressions then gives
    \[
    \begin{split}
        K_n({\bm{N}}) &\geq \prod_{k=0}^{n-1} \frac{s_k^{s_k}}{(1+s_k)^{1+s_k}} \prod_{k=1}^{n-1} \left[\frac{k}{k+1} \left(\frac{1}{e}\right)^{\frac{1}{k+1}}\right] \prod_{k=1}^n \left(\frac{(c_k+1)^{c_k+1}}{c_k^{c_k}}\right)^k \\
            &\geq \frac{1}{n^2} \prod_{k=0}^{n-1} \frac{s_k^{s_k}}{(1+s_k)^{1+s_k}} \prod_{k=1}^n \left(\frac{(c_k+1)^{c_k+1}}{c_k^{c_k}}\right)^k,
    \end{split}
    \]
    using the fact that $\sum_{k=2}^n \frac{1}{k} \leq \int_1^n \frac{1}{x} dx = \log n$. This implies part (1) of Theorem \ref{thm:general_lower_bound}.
    
    We next prove part (2) of Theorem \ref{thm:general_lower_bound}, which is mainly useful in the case that $N_k \geq 0 \geq \frac{1}{n-k} - (n-k+1)$ for all $k = 0,1,\ldots,n-1$. We first have
    \[
        \frac{(c_k+1)^{c_k+1}}{c_k^{c_k}} \geq c_k+1 = \frac{N_{n-k}}{k+1} + \frac{s_{n-k}}{k(k+1)} + 1 \geq \frac{1}{k(k+1)}(1 + s_{n-k})
    \]
    for $k = 1,2,\ldots,n$, since we have assumed that $N_{n-k} \geq \frac{1}{k} - (k+1)$ in part (3). In addition, when $k=n$ we have
    \[
        \frac{(c_n+1)^{c_n+1}}{c_n^{c_n}} \geq c_n + 1 = 1 + \frac{N_0}{n+1} + \frac{s_0}{n(n+1)} = 1 + \frac{s_0}{n} \geq \frac{1}{n}(1+s_0).
    \]
    Since $\left(\frac{x}{1+x}\right)^x \geq \frac{1}{e}$ for all $x > 0$, we then further have
    \[
        \frac{s_k^{s_k}}{(1+s_k)^{1+s_k}} \geq \frac{1}{e(1+s_k)}.
    \]
    Applying part (1) of Theorem \ref{thm:general_lower_bound} then gives
    \[
    \begin{split}
        K_n({\bm{N}}) &\geq \frac{1}{n^2} \prod_{k=0}^{n-1} \frac{s_k^{s_k}}{(1+s_k)^{1+s_k}} \prod_{k=1}^n \left(\frac{(c_k+1)^{c_k+1}}{c_k^{c_k}}\right)^k \\
            &\geq \frac{1}{n^2 e^n} \cdot \frac{1}{n} \prod_{k=1}^{n-1} \frac{1}{k(k+1)} \prod_{k=1}^n \left(\frac{(c_k+1)^{c_k+1}}{c_k^{c_k}}\right)^{k-1} \\
            &\geq \frac{1}{n^2 e^n (n!)^2} \prod_{k=1}^n \left(\frac{(c_k+1)^{c_k+1}}{c_k^{c_k}}\right)^{k-1}.
    \end{split}
    \]
    This implies part (2) of Theorem \ref{thm:general_lower_bound}.
\end{proof}

\subsection{Polynomial growth}

As in Proposition \ref{prop: center of mass positive netflow}, we define
\[
    A = \begin{bmatrix}
        c_n & c_n & c_n & \cdots & c_n & c_n & c_n \\
        c_{n-1} & c_{n-1} & c_{n-1} & \cdots & c_{n-1} & c_{n-1} & b_{n-1} \\
        c_{n-2} & c_{n-2} & c_{n-2} & \cdots & c_{n-2} & b_{n-2} & 0 \\
        \vdots & \vdots & \vdots & \ddots & \vdots & \vdots & \vdots \\
        c_3 & c_3 & c_3 & \cdots & 0 & 0 & 0 \\
        c_2 & c_2 & b_2 & \cdots & 0 & 0 & 0 \\
        c_1 & b_1 & 0 & \cdots & 0 & 0 & 0 \\
    \end{bmatrix},
\]
where $c_k = \frac{s_{n-k}}{k} - \frac{s_{n-k-1}}{k+1} = \frac{N_{n-k}}{k+1} + \frac{s_{n-k}}{k(k+1)}$ and $b_k = \frac{k}{k+1} s_{n-k-1}$. Since $N_k \geq a \cdot k^p$ for all $k \in \{0,1,\ldots,n-1\}$, (\ref{eq:sum_p}) implies
\[
    c_k = \frac{N_{n-k}}{k+1} + \frac{s_{n-k}}{k(k+1)} \geq \frac{a(n-k)^p}{k+1} + \frac{a(n-k)^{p+1}}{k(k+1)(p+1)} = \frac{a(n + kp)(n-k)^p}{k(k+1)(p+1)}.  
\]

\subsubsection{The case of $p > 1$.}

Define $a=1$ and $b = \lfloor n - \left(\frac{n}{ae}\right)^{1/p} \rfloor$. Using Lemma \ref{lem:basic-entropy-bound}, we first have have
\[
    \log \prod_{k=1}^n \left(\frac{(c_k+1)^{c_k+1}}{c_k^{c_k}}\right)^{k-1} \geq \sum_{k=a}^b (k-1) \log (e \cdot c_k).
\]
Using Lemma \ref{lem:klogk}, we have
\[
    \sum_{k=a}^b k \log(n+kp) = \frac{n^2}{2} \left[\log n + \left(1-\frac{1}{p^2}\right)\log(p+1) + \log p + \frac{1}{p} - \frac{1}{2}\right] - \frac{n^{1+\frac{1}{p}} \log n}{(ae)^{\frac{1}{p}}} \pm O_{a,p}\left(n^{1+\frac{1}{p}}\right),
\]
and
\[
    \sum_{k=a}^b pk \log(n-k) = \frac{n^2}{2}\left[p\log n - \frac{3p}{2}\right] - \frac{n^{1+\frac{1}{p}} \log n}{(ae)^{\frac{1}{p}}} \pm O_{a,p}\left(n^{1+\frac{1}{p}}\right),
\]
and
\[
    \sum_{k=a}^b k \log k = \sum_{k=a}^b k \log(k+1) = \frac{n^2}{2}\left[\log n - \frac{1}{2}\right] - \frac{n^{1+\frac{1}{p}} \log n}{(ae)^{\frac{1}{p}}} \pm O_{a,p}\left(n^{1+\frac{1}{p}}\right),
\]
and
\[
    \sum_{k=a}^b k \log\left(\frac{ae}{p+1}\right) = \frac{n^2}{2} \log\left(\frac{ae}{p+1}\right) \pm O_{a,p}\left(n^{1+\frac{1}{p}}\right).
\]
Combining everything gives
\[
    \sum_{k=a}^b k \log \frac{ae(n + kp)(n-k)^p}{k(k+1)(p+1)} = \frac{n^2}{2} \left[(p-1)\left(\log n - \frac{3}{2}\right) + \log a + \log p + \frac{p - \log(p+1)}{p^2}\right] - O_{a,p}\left(n^{1+\frac{1}{p}}\right).
\]
Note that $p \geq \log(p+1)$. Further,
\[
    \sum_{k=a}^b \log \frac{ae(n + kp)(n-k)^p}{k(k+1)(p+1)} \leq \sum_{k=1}^n \log \left(aen^{1+p}\right) = O_{a,p}(n \log n).
\]
By Theorem \ref{thm:general_lower_bound} (2), we then have
\[
    \log K_n(\bm{N}) \geq \log \prod_{k=1}^n \left(\frac{(c_k+1)^{c_k+1}}{c_k^{c_k}}\right)^{k-1} - O(n \log n)
\]
which implies the desired result.

\subsubsection{The case of $p=1$.}

First,
\[
    c_k \geq \frac{a(n^2-k^2)}{2k(k+1)} =: c_k'
\]
is a lower bound on the possible values of $c_k$ which we will use throughout the $p=1$ case. Defining
\[
    S = \sum_{k=1}^n k \log(1+c_k') + \sum_{k=1}^n k c_k' \log\left(1 + (c_k')^{-1}\right),
\]
we have
\[
    \log \prod_{k=1}^n \left(\frac{(c_k+1)^{c_k+1}}{c_k^{c_k}}\right)^{k-1} \geq S - \sum_{k=1}^n \log(1+c_k') - \sum_{k=1}^n c_k' \log\left(1 + (c_k')^{-1}\right)
\]
since $\frac{(c_k+1)^{c_k+1}}{c_k^{c_k}}$ is increasing in $c_k$. Further note that since $\log(1+t) \leq t$, we have
\[
    \sum_{k=1}^n \log\left(1+c_k'\right) + \sum_{k=1}^n c_k' \log\left(1 + (c_k')^{-1}\right) \leq n \log((a+1)n^2) + n = O(n \log n).
\]
Theorem \ref{thm:general_lower_bound} (2) then implies
\[
    \log K_n(\bm{N}) \geq S - O(n \log n).
\]
We now split into two subcases: $a > 2$ and $a \leq 2$.

For $a > 2$, we have
\[
    1+c_k' \geq \frac{an^2-(a-2)k^2}{2k(k+1)} = \frac{a}{2}\left[\frac{n^2-(1-\frac{2}{a})k^2}{k(k+1)}\right]
\]
and
\[
    1 + \frac{1}{c_k'} \geq \frac{an^2-(a-2)k^2}{a(n^2-k^2)} = \frac{n^2-(1-\frac{2}{a})k^2}{n^2-k^2} \geq 1.
\]
We first use
\[
    kc_k' = \frac{a(n^2-k^2)}{2(k+1)} \geq \frac{a}{2}(n-k),
\]
which implies the following, where $\xi = \sqrt{1 - \frac{2}{a}}$ for $a > 2$:
\[
    S \geq \sum_{k=1}^n k \log \frac{a(n+\xi k)(n-\xi k)}{2k(k+1)} + \frac{a}{2}\sum_{k=1}^{n-1} (n-k) \log \frac{(n+\xi k)(n-\xi k)}{(n+k)(n-k)}.
\]

Using $\xi = \sqrt{1 - \frac{2}{a}}$ and Corollary \ref{cor:quadratic-klogk} gives
\[
    \sum_{k=1}^n k \log \frac{a(n+\xi k)(n-\xi k)}{2k(k+1)} \geq \frac{n^2}{2} \left(\frac{a}{a-2}\log \left(\frac{a}{2}\right)\right) - O_\xi(n).
\]
Next, a straightforward computation gives
\[
    \frac{a}{2}\sum_{k=1}^{n-1} (n-k) \log \frac{(n+\xi k)(n-\xi k)}{(n+k)(n-k)} \geq \sum_{k=1}^{n-1} (n-k) \log \frac{n^2}{(n+k)(n-k)} = \sum_{k=1}^n k \log \frac{n^2}{k(2n-k)},
\]
since this expression is increasing in $a$ because $x \cdot \log(1+x^{-1})$ is increasing for $x \geq 0$. Using Lemma \ref{lem:klogk} we have
\[
    \sum_{k=1}^n k\log k = \frac{n^2}{2} \log n - \frac{n^2}{4} + \frac{n}{2}\log n \pm O(n),
\]
and
\[
    \sum_{k=1}^n k\log (2n-k) = \frac{n^2}{2} \log n + 2n^2 \log 2 - \frac{9n^2}{4} + n^2 + \frac{n}{2} \log n \pm O(n),
\]
and
and
\[
    \sum_{k=1}^n k \log (n^2) = n^2 \log n + n \log n.
\]
This implies
\[
    \sum_{k=1}^n k \log \frac{n^2}{k(2n-k)} \geq n^2\left(\frac{1}{4} - 2\log 2 + \frac{9}{4} - 1\right) - O(n) \geq \frac{n^2}{2} (3 - 4\log 2) - O(n).
\]
Combining everything then implies
\[
    S \geq \frac{n^2}{2} \left(3 - 4\log 2 + \frac{a}{a-2} \log\left(\frac{a}{2}\right)\right) - O_a(n),
\]
which bounds $S$ in the case of $ a > 2$. Note that the constant in this expression in front of $n^2$ approaches $2 - \log 2$ as $a \to 2$, which aligns with the bound for $a \leq 2$ below.

For $a \leq 2$, we instead have
\[
    1+c_k' \geq \frac{an^2-(a-2)k^2}{2k(k+1)} \geq \frac{a}{2}\left[\frac{n^2}{k(k+1)}\right]
\]
and
\[
    1 + \frac{1}{c_k'} \geq \frac{an^2-(a-2)k^2}{a(n^2-k^2)} \geq \frac{n^2}{n^2-k^2} \geq 1,
\]
which, for $b=\lfloor \sqrt{\frac{a}{2}} \cdot n\rfloor$ implies
\[
\begin{split}
    S &\geq \sum_{k=1}^b k \log \frac{an^2}{2k(k+1)} + \frac{a}{2}\sum_{k=1}^{n-1} (n-k) \log \frac{n^2}{(n+k)(n-k)} \\
        &= \sum_{k=1}^b k \log \frac{an^2}{2k(k+1)} + \frac{a}{2}\sum_{k=1}^n k \log \frac{n^2}{k(2n-k)}.
\end{split}
\]
Using Lemma \ref{lem:klogk} we have
\[
    \sum_{k=1}^b k\log k = \sum_{k=1}^b k\log(k+1) = \frac{an^2}{4} \log n + \frac{an^2}{8} \log\left(\frac{a}{2}\right) - \frac{an^2}{8} \pm O_a(n \log n),
\]
and
\[
    \sum_{k=1}^b k \log\left(\frac{an^2}{2}\right) \geq \frac{an^2}{2} \log n + \frac{an^2}{4} \log\left(\frac{a}{2}\right) \pm O_a(n \log n).
\]
This and the above bounds imply
\[
    S \geq \frac{an^2}{4} + \frac{an^2}{2}\left(\frac{1}{4} - 2\log 2 + \frac{9}{4} - 1\right) - O_a(n \log n) = a(1-\log 2) n^2 - O_a(n \log n),
\]
which bounds $S$ in the case that $a \leq 2$.

Combining the above then finally gives
\[
    \log K_n(\bm{N}) \geq S - 2n \log n - O_a(n) =
    \begin{cases}
        \frac{n^2}{2} \left(\frac{a}{a-2} \log\left(\frac{a}{2}\right) + 3 - 4\log 2\right) - O_a(n \log n), & a > 2 \\
        \frac{n^2}{2} \left(2a - 2a \log 2)\right) - O_a(n \log n), & a \leq 2
    \end{cases}.
\]

\subsubsection{The case of $p < 1$.}

Since $(n-x)^p(n+xp)$ is decreasing in $x$ for $x \in (0,n)$, we have for $k \leq \epsilon_n \cdot n$ that
\[
    (n-k)^p(n+kp) \geq n(n-\epsilon_n n)^p = (1-\epsilon_n)^p n^{1+p},
\]
where $\epsilon_n := \frac{1}{\log n}$ for all $n$. Letting $A_n := \frac{a(1-\epsilon_n)^p n^{p+1}}{p+1}$ and $c_k' := \frac{A_n}{(k+1)^2}$, we then have in this case that
\[
    c_k \geq \frac{a(n-k)^p(n+kp)}{k(k+1)(p+1)} \geq \frac{A_n}{k(k+1)} \geq c_k'.
\]
For all $x > 0$, we have $\frac{(x+1)^{x+1}}{x^x} \geq \left(\frac{1}{x}\right)^x$ and $\frac{(x+1)^{x+1}}{x^x} \geq ex + 1$ by Lemma \ref{lem:basic-entropy-bound}. With this, we have
\[
\begin{split}
    \log \prod_{k=1}^n \left(\frac{(c_k'+1)^{c_k'+1}}{(c_k')^{c_k'}}\right)^{k+1} &\geq \log\left[\prod_{k=1}^{\left\lfloor\sqrt{A_n}\right\rfloor} \left(e \cdot c_k'\right)^{k+1} \prod_{k=\left\lfloor\sqrt{A_n}\right\rfloor+1}^{\left\lfloor\epsilon_n \cdot n\right\rfloor} \left(\frac{1}{c_k'}\right)^{c_k'(k+1)}\right] \\
        &\geq \sum_{k=1}^{\left\lfloor\sqrt{A_n}\right\rfloor} (k+1)\left[\log(e A_n) - 2\log(k+1)\right] + \sum_{k=\left\lfloor\sqrt{A_n}\right\rfloor+1}^{\left\lfloor\epsilon_n \cdot n\right\rfloor} \frac{A_n\left[2\log(k+1) - \log A_n\right]}{k+1}.
\end{split}
\]
Note that $\epsilon_n \cdot n \geq \sqrt{A_n} \geq 2$ for $n$ large enough. Thus we have
\[
    \sum_{k=1}^{\left\lfloor\sqrt{A_n}\right\rfloor} (k+1)\log(e A_n) = \frac{A_n \log A_n}{2} + \frac{A_n}{2} \pm O(\sqrt{A_n} \log A_n),
\]
and using Lemma \ref{lem:klogk}, we have
\[
    \sum_{k=1}^{\left\lfloor\sqrt{A_n}\right\rfloor} -2(k+1)\log(k+1) = -\frac{A_n \log A_n}{2} + \frac{A_n}{2} \pm O(\sqrt{A_n} \log A_n),
\]
and using Lemma \ref{lem:euler-maclaurin} (with odd parameter $p=1$), we have
\[
    \sum_{k=\left\lfloor\sqrt{A_n}\right\rfloor+1}^{\epsilon_n \cdot n} 2 A_n \frac{\log(k+1)}{k+1} = A_n \log^2(\epsilon_n \cdot n) - \frac{A_n \log^2 A_n}{4} \pm O(A_n),
\]
and
\[
    \sum_{k=\left\lfloor\sqrt{A_n}\right\rfloor+1}^{\epsilon_n \cdot n} -\frac{A_n \log A_n}{k+1} = -A_n \log A_n \log(\epsilon_n \cdot n) + \frac{A_n \log^2 A_n}{2} \pm O(A_n \log A_n).
\]
Combining the above then gives
\[
    \log \prod_{k=1}^n \left(\frac{(c_k'+1)^{c_k'+1}}{(c_k')^{c_k'}}\right)^{k+1} \geq A_n \log^2\left(\frac{\epsilon_n \cdot n}{\sqrt{A_n}}\right) - O(A_n \log A_n)
\]
Using Lemma \ref{lem:basic-entropy-bound}, we then further compute
\[
    -2 \log \prod_{k=1}^n \frac{(c_k'+1)^{c_k'+1}}{(c_k')^{c_k'}} \geq -2 \log \prod_{k=1}^n e \left(c_k' + \frac{1}{2}\right) \geq -2n \log(e A_n).
\]
Combining everything and using Theorem \ref{thm:general_lower_bound} (2) and the fact that $x \mapsto \frac{(x+1)^{x+1}}{x^x}$ is increasing for $x > 0$ then gives
\[
    \log K_n({\bm{N}}) \geq \frac{a(1-p)^2}{4(p+1)} n^{p+1} \log^2 n - O(a n^{p+1} \log n \log\log n),
\]
which is the desired result.

\subsection{The Tesler $(1,1,\ldots,1,-n)$ case}

This case fits into the polynomial growth case, but we bound it more specifically here due to its importance. In this case, we have
\[
    N_k = 1, \quad s_k = k+1, \quad c_k = \frac{n+1}{k(k+1)}.
\]
Thus by Theorem \ref{thm:general_lower_bound} (1) we have
\[
    K_n(\bm{N}) \geq \frac{1}{n^2(n+1)^{n+1}} \prod_{k=1}^n \left(1 + \frac{n+1}{k(k+1)}\right)^k \prod_{k=1}^n \left(1 + \frac{k(k+1)}{n+1}\right)^{\frac{n+1}{k+1}}
\]
Note further that
\[
    \sum_{k=\lceil \sqrt{n+1} \rceil - 1}^n \frac{n+1}{k+1} \log\left(1 + \frac{k(k+1)}{n+1}\right) \geq \sum_{k=\lceil \sqrt{n+1} \rceil}^{n+1} \frac{n+1}{k} \log\left(\frac{k^2}{n+1}\right).
\]
We now use the following lemma.

\begin{lemma} \label{lem:unimodal-sum}
    Let $f: (a,b) \to \R$ be unimodal, and let $S = (a,b) \cap \Z$. Then
    \[
        \sum_{k \in S} f(k) \geq \int_a^b f(t) \, dt - \max_{t \in [a,b]} f(t).
    \]
\end{lemma}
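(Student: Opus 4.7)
The plan is to partition $S$ into the integers lying on each side of a mode of $f$, apply the standard lower Riemann sum bound separately on each side (where $f$ is monotone), and verify that the residual ``gap'' interval around the mode has length at most one.

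Let $c \in [a,b]$ be a point where $f$ attains $M := \max_{t \in [a,b]} f(t)$. By unimodality $f$ is non-decreasing on $(a, c]$ and non-increasing on $[c, b)$. If $S = \emptyset$, then $(a,b)$ contains no integer so $b - a \leq 1$, hence $\int_a^b f(t)\, dt \leq (b-a) M \leq M$ and the inequality is immediate. Otherwise write $S = S_L \sqcup S_R$ with $S_L = S \cap (a, c]$ and $S_R = S \cap (c, b)$. For each $k \in S_L$, monotonicity of $f$ on $(a, k]$ gives $f(k) \geq f(t)$ for $t \in [k-1, k] \cap (a, b)$, so $f(k) \geq \int_{[k-1,k] \cap (a,b)} f(t) \, dt$. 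Writing $S_L = \{m_L, m_L+1, \ldots, n_L\}$ and using $m_L - 1 \leq a$ (since $m_L$ is the smallest integer exceeding $a$), the associated intervals telescope and yield
\[
\sum_{k \in S_L} f(k) \geq \int_a^{n_L} f(t)\, dt.
\]
A symmetric argument on the non-increasing side produces $\sum_{k \in S_R} f(k) \geq \int_{m_R}^{b} f(t)\, dt$ with $m_R = \min S_R$. Adding the two reduces the problem to showing that the gap integral $\int_{n_L}^{m_R} f(t)\, dt$ is bounded by $M$.

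The heart of the argument is then the observation that $m_R - n_L \leq 1$. With the convention above (so that $c$ lies in $S_L$ when $c \in \Z$), we have $n_L = \lfloor c \rfloor$ and $m_R = \lfloor c \rfloor + 1$ whenever both $S_L$ and $S_R$ are non-empty, giving a gap of length exactly one and hence $\int_{n_L}^{m_R} f \leq M$. The degenerate sub-cases in which $S_L$ or $S_R$ is empty are handled directly: emptiness of $S_L$ forces $\lfloor c \rfloor \leq a$, whence $c - a \leq 1$ and $\int_a^c f \leq M$, after which the $S_R$ estimate finishes the argument, and the mirror situation is analogous. The only real obstacle is this bookkeeping around the mode --- in particular the case where $c$ is itself an integer --- but once it is dispatched, everything else is a routine monotone--rectangle estimate justified by the unimodality hypothesis.
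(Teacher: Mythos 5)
Your proof is correct and takes essentially the same approach as the paper's: split $S$ at a mode of $f$, apply one-sided Riemann-sum estimates on the monotone pieces, and absorb the length-one gap around the mode by a single factor of $\max f$ (the paper assigns the mode to the right block and writes the gap loss directly as $-f(t_0)$, while you assign it to the left block and explicitly identify $m_R - n_L \le 1$; this is a bookkeeping difference, not a different argument).
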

\begin{proof}
    Let $t_0 \in [a,b]$ be such that $f(t_0)$ maximizes $f$ on $[a,b]$, and let $S_- := (a,t_0) \cap \Z$ and $S_+ := [t_0,b) \cap \Z$. Then,
    \[
        \sum_{k \in S_-} f(k) \geq \int_a^{\lceil t_0 \rceil} f(t) \, dt - f(t_0)
    \]
    and
    \[
        \sum_{k \in S_+} f(k) \geq \int_{\lceil t_0 \rceil}^b f(t) \, dt.
    \]
    Combining gives the desired result.
\end{proof}

Now consider the function $f(t): (\sqrt{n+1}, n+1) \to \R$, defined by
\[
    f(t) := \frac{n+1}{t} \log\left(\frac{t^2}{n+1}\right),
\]
which is unimodal with maximum $f(t_0) = \frac{2}{e} \sqrt{n+1}$ achieved at $t_0 = e \sqrt{n+1}$. Thus by Lemma \ref{lem:unimodal-sum}, we have
\[
    \sum_{k=\lceil \sqrt{n+1} \rceil}^{n+1} \frac{n+1}{k} \log\left(\frac{k^2}{n+1}\right) \geq \frac{n+1}{4} \log^2(n+1) - \frac{2}{e} \sqrt{n+1}.
\]
Note further that
\[
    \sum_{k=1}^n k \log\left(1 + \frac{n+1}{k(k+1)}\right) \geq \sum_{k=1}^n k \log\left(1 + \frac{1}{k}\right) \geq \sum_{k=1}^n k \left(\frac{1}{k} - \frac{1}{2k^2}\right) = n - \frac{1}{2} \sum_{k=1}^n \frac{1}{k}
\]
by standard Taylor series bounds. Standard harmonic series bounds then give
\[
    n - \frac{1}{2} \sum_{k=1}^n \frac{1}{k} \geq n - \frac{1}{2} \log n - 1.
\]
Combining everything gives
\[
    \log K_n(\bm{N}) \geq \frac{n+1}{4} \log^2(n+1) - (n+1) \log(n+1) + n - \frac{2}{e} \sqrt{n+1} - \frac{5}{2} \log(n) - 1.
\]
We now determine for which $n$ we have $\log K_n(\bm{N}) \geq (n-1) \log(n+1)$. First note that for $n \geq 4$ we have
\[
    2\log n + n - \frac{2}{e} \sqrt{n+1} - \frac{5}{2} \log(n) - 1 \geq 0,
\]
and thus in this case we have
\[
    \log K_n(\bm{N}) - (n-1) \log(n+1) \geq \frac{n+1}{4} \log^2(n+1) - 2(n+1) \log(n+1).
\]
A simple calculation then implies
\[
    \frac{n+1}{4} \log^2(n+1) - 2(n+1) \log(n+1) \geq 0
\]
for $n \geq e^8 - 1$, where $e^8 - 1 \leq 3000$.

\subsection{The $N_i = a \cdot n$ case}

We use the positive average of vertices from Proposition \ref{prop: center of mass positive netflow}, for which we have
\[
    N_k = a n, \quad s_k = a n(k+1), \quad c_k = \frac{a  n(n+1)}{k(k+1)} \geq \frac{a(n+1)^2}{(k+1)^2},
\]
which by Lemma \ref{lem:basic-entropy-bound} implies
\[
    \prod_{k=0}^{n-1} \frac{s_k^{s_k}}{(s_k+1)^{s_k+1}} \geq \prod_{k=0}^{n-1} \frac{2}{e}(2s_k + 1)^{-1} \geq \prod_{k=0}^{n-1} \frac{1}{ea(n+1)(k+1)} = \frac{1}{(ea)^n (n+1)^n n!}.
\]
Using Lemma \ref{lem:basic-entropy-bound} again, we then have
\[
    \frac{(c_k+1)^{c_k+1}}{c_k^{c_k}} \geq e \cdot c_k + 1 \geq ea \cdot \frac{(n+1)^2}{(k+1)^2},
\]
and thus
\[
    \prod_{k=1}^n \left(\frac{(c_k+1)^{c_k+1}}{c_k^{c_k}}\right)^k \geq \prod_{k=1}^n \left(ea \cdot \frac{(n+1)^2}{(k+1)^2}\right)^k = \left(ea(n+1)^2\right)^{\binom{n+1}{2}} \prod_{k=1}^n (k+1)^{-2k}.
\]
We then have
\[
    \binom{n+1}{2} \log\left(ea(n+1)^2\right) = (n+1)^2 \log(n+1) + \frac{(n+1)^2}{2} \log(ea) - (n+1) \log(n+1) - \frac{n+1}{2} \log(ea),
\]
and
\[
    -2 \sum_{k=1}^n k \log(k+1) \geq -(n+1)^2 \log(n+1) + \frac{(n+1)^2}{2} - (n+1) \log(n+1) - \frac{1}{6} \log(n+1) \pm O(1).
\]
Combining this and using Theorem \ref{thm:general_lower_bound} (1), we obtain
\[
    \log K_n(\bm{N}) \geq \frac{(n+1)^2}{2} (2 + \log(a)) - 4(n+1) \log(n+1) - O_a(n).
\]

\subsection{The $N_i = a \cdot n + i$ case}

We use the positive average of vertices from Proposition \ref{prop: center of mass positive netflow}, for which we have
\[
    c_k = \left(a + \frac{1}{2}\right) \cdot \frac{n(n+1)}{k(k+1)} - \frac{1}{2} \geq \left(a + \frac{1}{2}\right) \cdot \frac{(n+1)^2}{(k+1)^2} - \frac{1}{2} =: c_k'.
\]
Using Lemma \ref{lem:basic-entropy-bound}, we then have
\[
    \frac{(c_k'+1)^{c_k'+1}}{(c_k')^{c_k'}} \geq 2c_k' + 1 = (2 a + 1) \cdot \frac{(n+1)^2}{(k+1)^2},
\]
which implies
\[
\begin{split}
    \log \prod_{k=1}^n \left(\frac{(c_k'+1)^{c_k'+1}}{(c_k')^{c_k'}}\right)^{k-1} &\geq \sum_{k=1}^n 2(k-1) \log\left(\frac{n+1}{k+1}\right) + \sum_{k=1}^n (k-1) \log(2a + 1) \\
        &= \frac{n^2}{2} \left(1 + \log(2a + 1)\right) - O(n \log n)
\end{split}
\]

\subsection{The $N_i = n + i$ case}

The case fits into the more general $N_i = a \cdot n + i$ case, but we bound it more specifically here to compare it to Proposition \ref{prop:asympt n+delta}. We use the positive average of vertices from Proposition \ref{prop: center of mass positive netflow}, for which we have
\[
    c_k = \frac{3}{2} \cdot \frac{n(n+1)}{k(k+1)} - \frac{1}{2} \geq \frac{3}{2} \cdot \frac{(n+1)^2}{(k+1)^2} - \frac{1}{2} =: c_k'.
\]
Defining $\gamma := \frac{1}{6}$ and using Lemma \ref{lem:basic-entropy-bound}, we then have
\[
    \frac{(c_k'+1)^{c_k'+1}}{(c_k')^{c_k'}} \geq e\left(c_k' + \frac{1}{2} - \frac{1}{24c_k'}\right) \geq \frac{3e}{2} \left(\frac{(n+1)^4 - \gamma^2 (k+1)^4}{(n+1)^2(k+1)^2}\right)
\]
Thus we have
\[
    \log \prod_{k=1}^n \left(\frac{(c_k'+1)^{c_k'+1}}{(c_k')^{c_k'}}\right)^{k-1} \geq \sum_{k=1}^{n+1} (k-2) \left[\log\left(\frac{3e}{2(n+1)^2}\right) + \log((n+1)^2 + \gamma k^2) + \log\left(\frac{(n+1)^2 - \gamma k^2}{k^2}\right)\right].
\]
Since $f(t) = t \log((n+1)^2 + \gamma t^2)$ is increasing for $t > 0$, we have
\[
\begin{split}
    \sum_{k=1}^{n+1} k \log((n+1)^2 + \gamma k^2) &\geq \int_0^{n+1} t \log((n+1)^2 + \gamma t^2) \, dt \\
        &= \frac{1}{2\gamma} \left[((n+1)^2+\gamma t^2) (\log((n+1)^2+\gamma t^2) - 1)\right]_0^{n+1} \\   
        &\geq \frac{(n+1)^2}{2} \left[2 \log(n+1) - 1 + (1+\gamma^{-1}) \log(1+\gamma)\right]
\end{split}
\]
Further, using Corollary \ref{cor:quadratic-klogk} we have
\[
    \sum_{k=1}^{n+1} k \log\left(\frac{(n+1)^2 - \gamma k^2}{k^2}\right) \geq \frac{(n+1)^2}{2} (1-\gamma^{-1}) \log(1-\gamma) + \frac{n+1}{2} \log(1-\gamma) - \frac{1}{6} \log(n+1) \pm O_\gamma(1),
\]
and further we have
\[
    \sum_{k=1}^{n-1} k \log\left(\frac{3e}{2(n+1)^2}\right) = \left(\frac{(n+1)^2}{2} - \frac{3n+1}{2}\right) \left[1 + \log \frac{3}{2} - 2 \log(n+1)\right],
\]
and finally we also have
\[
    -2\sum_{k=1}^{n+1} \log\left(\frac{(n+1)^4 - \gamma^2 k^4}{k^2}\right) \geq -4(n+1) \log(n+1) - 4(n+1) + 2\log(n+1) + O(1).
\]
Combining everything gives
\[
    \log \prod_{k=1}^n \left(\frac{(c_k'+1)^{c_k'+1}}{(c_k')^{c_k'}}\right)^{k-1} \geq \frac{(n+1)^2}{2}\left[\log \frac{3}{2} + \log(1-\gamma^2) + \frac{1}{\gamma} \log\left(\frac{1+\gamma}{1-\gamma}\right)\right] - (n+1) \log(n+1) \pm O_\gamma(n).
\]
Since $\gamma = \frac{1}{6}$, this gives
\[
    \log K_n(\bm{N}) \geq 1.198 n^2 - O(n \log n).
\]
The coefficient of $n^2$ here is off from the correct coefficient given in Proposition \ref{prop:asympt n+delta} by about $0.1$.

\subsection{The $(t,0,0,\ldots,0,-t)$ case}

For the case that $\bm{N} = (t,0,\ldots,0,-t)$, which implies $s_k = t$ for all $k < n$, the average of the vertices is equal to 
\[
    M := t \cdot \begin{bmatrix}
        2^{-(n-1)} & 2^{-(n-1)} & 2^{-(n-2)} & \cdots & 2^{-3} & 2^{-2} & 2^{-1} \\
        2^{-(n-1)} & 2^{-(n-1)} & 2^{-(n-2)} & \cdots & 2^{-3} & 2^{-2} & 2^{-1} \\
        2^{-(n-2)} & 2^{-(n-2)} & 2^{-(n-3)} & \cdots & 2^{-2} & 2^{-1} & 0 \\
        \vdots & \vdots & \vdots & \ddots & \vdots & \vdots & \vdots \\
        2^{-3} & 2^{-3} & 2^{-2} & \cdots & 0 & 0 & 0 \\
        2^{-2} & 2^{-2} & 2^{-1} & \cdots & 0 & 0 & 0 \\
        2^{-1} & 2^{-1} & 0 & \cdots & 0 & 0 & 0
    \end{bmatrix},
\]
as seen in Proposition \ref{prop: center of mass CRY netflow}. Applying Theorem \ref{thm:matrix_lower_bound} gives
\[
    K_n(\bm{N}) \geq \left(\frac{t^t}{(1+t)^{1+t}}\right)^{2n-1} \cdot \frac{(t \cdot 2^{-(n-1)} + 1)^{t \cdot 2^{-(n-1)}+1}}{(t \cdot 2^{-(n-1)})^{t \cdot 2^{-(n-1)}}} \prod_{k=1}^{n-1} \left(\frac{(t \cdot 2^{-k} + 1)^{t \cdot 2^{-k}+1}}{(t \cdot 2^{-k})^{t \cdot 2^{-k}}}\right)^{n+2-k}.
\]
If $\log_2(et) \leq n-1$, we then have
\[
    \prod_{k=1}^{n-1} \left(\frac{(t \cdot 2^{-k} + 1)^{t \cdot 2^{-k}+1}}{(t \cdot 2^{-k})^{t \cdot 2^{-k}}}\right)^{n+2-k} \geq \prod_{k=1}^{n-1} \left(\frac{et}{2^k} + 1\right)^{n+2-k} \geq \prod_{k=1}^{\lfloor\log_2(et)\rfloor} 2^{(n+2-k)(\log_2(et) - k)},
\]
by Lemma \ref{lem:basic-entropy-bound}. We further have
\[
\begin{split}
    \sum_{k=1}^{\lfloor\log_2(et)\rfloor} (n+2-k)(\log_2(et) - k) &\geq (n+2) \left(\log_2^2(et) - \log_2(et)\right) - \left(n + 2 + \log_2(et)\right) \binom{\log_2(et) + 1}{2} \\
        &= \frac{n+2}{2} \left(\log_2^2(et) - 3\log_2(et)\right) - \frac{(\log_2(et) + 1) \log_2^2(et)}{2} \\
        &\geq \frac{n+2}{2} \cdot \log_2(et) \cdot \log_2\left(\frac{et}{8}\right) - \frac{1}{2} \log_2^3(et) - \frac{1}{2} \log_2^2(et),
\end{split}
\]
and using (\ref{eq:e_bounds}), we also have
\[
    \log_2\left(\left(\frac{t^t}{(1+t)^{1+t}}\right)^{2n-1}\right) \geq -(2n-1) \log_2(e(t+1)) \geq -2n \cdot \left(\log_2(et) + \frac{\log_2(e)}{t}\right) \geq -2n \cdot \log_2(et) - \frac{3n}{t}
\]
since $\log_2\left(1 + \frac{1}{t}\right) \leq \frac{\log_2(e)}{t}$. Combining everything then gives
\[
\begin{split}
    \log_2 K_n(\bm{N}) &\geq \frac{n+2}{2} \cdot \log_2(et) \cdot \log_2\left(\frac{et}{8}\right) - 2n \cdot \log_2(et) - \frac{3n}{t} - \frac{1}{2} \log_2^3(et) - \frac{1}{2} \log_2^2(et) \\
        &\geq \frac{n+2}{2} \cdot \log_2(et) \cdot \log_2\left(\frac{et}{128}\right) - \frac{3n}{t} - \frac{1}{2} \log_2^3(et) - \frac{1}{2} \log_2^2(et).
\end{split}
\]

\subsection{The $t\cdot 2\rho(n)$ case} \label{sec:rho-average}

For the case that $\bm{N} = t \cdot 2\rho(n)$ with $s_k = t (k+1)(n-k)$ for all $k$, we use the matrix described in (\ref{eq:2rho-midpoint}), given by
\[
    M := t \cdot \begin{bmatrix}
        1 & 1 & 1 & \cdots & 1 & 1 & 1 \\
        1 & 1 & 1 & \cdots & 1 & 1 & n-1 \\
        1 & 1 & 1 & \cdots & 1 & 2(n-2) & 0 \\
        \vdots & \vdots & \vdots & \ddots & \vdots & \vdots & \vdots \\
        1 & 1 & 1 & \cdots & 0 & 0 & 0 \\
        1 & 1 & 2(n-2) & \cdots & 0 & 0 & 0 \\
        1 & n-1 & 0 & \cdots & 0 & 0 & 0
    \end{bmatrix},
\]
where the subdiagonal entries are given by $k(n-k)$ for $1 \leq k \leq n-1$. Applying Theorem \ref{thm:matrix_lower_bound} gives
\[
    K_n(\bm{N}) \geq \max_k\left\{\frac{(1+s_k)^{1+s_k}}{s_k^{s_k}}\right\} \prod_{k=0}^{n-1} \left(\frac{s_k^{s_k}}{(1+s_k)^{1+s_k}}\right)^2 \left[\left(\frac{(1+t)^{1+t}}{t^t}\right)^{\binom{n+1}{2}} \prod_{k=1}^{n-1} \frac{(1+tk(n-k))^{1+tk(n-k)}}{(tk(n-k))^{tk(n-k)}}\right].
\]
To simplify this, note first that
\[
    \log \prod_{k=0}^{n-1} \frac{s_k^{s_k}}{(1+s_k)^{1+s_k}} \geq -2n \log n - n \log t - O(n)
\]

and
\[
    \log \prod_{k=1}^{n-1} \frac{(1+tk(n-k))^{1+tk(n-k)}}{(tk(n-k))^{tk(n-k)}} \geq 2(n-1)\log(n-1) + (n-1) \log t - O(n)
\]

and
\[
\log \max_k\left\{\frac{(1+s_k)^{1+s_k}}{s_k^{s_k}}\right\} \geq 2\log n + \log t - O(1)
\]
using Lemma \ref{lem:basic-entropy-bound} and Stirling's approximation. This implies
\[
\begin{split}
    \log K_n(\bm{N}) &\geq \binom{n+1}{2} \log\left(\frac{(1+t)^{1+t}}{t^t}\right) - 2n\log n - n \log t - O(n) \\
        &\geq \frac{n^2}{2} \log\left(\frac{(1+t)^{1+t}}{t^t}\right) - 2n\log n - O(n \log t).
\end{split}
\]

\subsection{Asymptotics via maximum flow entropy} \label{sec:asymptotics-max-flow-entropy}

In this section we finally prove Theorem \ref{thm:matrix-choice-asymptotics}. By Theorem \ref{thm:matrix_lower_bound}, we have
\[
    1 \geq \frac{\log K_n(\bm{N})}{\sup_{\bm{f} \in \mathcal{F}_n(\bm{N})} \mathcal{H}(\bm{f})} \geq 1 - \frac{2 \sum_{i=0}^{n-1} \log\left(\frac{(1 + s_i)^{1+s_i}}{s_i^{s_i}}\right)}{\sup_{\bm{f} \in \mathcal{F}_n(\bm{N})} \mathcal{H}(\bm{f})},
\]

where $s_i = \sum_{j=0}^i N_j$. Thus we only need to show that the denominator dominates the numerator in the right-most expression of the above inequality. By (\ref{eq:e_bounds}), we first have
\[
    2 \sum_{i=0}^{n-1} \log\left(\frac{(1 + s_i)^{1+s_i}}{s_i^{s_i}}\right) \leq 2n + 2 \sum_{i=0}^{n-1} \log(1 + s_i).
\]
By assumption we have that $N_i \leq a \cdot (i+1)^p$ for all $k$ for some fixed $a,p \geq 0$, which implies $s_i \leq a \cdot \frac{(i+2)^{p+1}}{p+1}$ for all $i$ by (\ref{eq:sum_p}). Thus
\[
    2 \sum_{i=0}^{n-1} \log\left(\frac{(1 + s_i)^{1+s_i}}{s_i^{s_i}}\right) \leq 2n + 2 \sum_{i=0}^{n-1} \log\left(1 + a \cdot \frac{(i+2)^{p+1}}{p+1}\right) \leq O_{a,p}(n) + 2 (p+1) \sum_{i=0}^{n-1} \log (i+2),
\]
which implies
\[
    2 \sum_{i=0}^{n-1} \log\left(\frac{(1 + s_i)^{1+s_i}}{s_i^{s_i}}\right) \leq O_{a,p}(n \log n).
\]
On the other hand, $N_i \geq 1$ for all $i$ implies
\[
    \sup_{\bm{f} \in \mathcal{F}_n(\bm{N})} \mathcal{H}(\bm{f}) \geq \log K_n(\bm{N}) \geq  \frac{n}{4} \log^2 n - O(n \log n)
\]
by Theorem \ref{thm:in-section-tesler} and Corollary \ref{cor:kpf inequality term-wise dominance}. This completes the proof.

\section{Bounds from the Lidskii lattice point formulas} \label{sec: lidskii}

In this section we use a known positive formula for $K_n(\bm{N})$ coming from the theory of flow polytopes to give bounds for $K_n(\bm{N})$ in different regimes than the rest of the paper. Specifically, we consider the case $\bm{N}=(t,0,\ldots,0,-t)$ where $t$ is much larger than $n$.

In the theory of flow polytopes, there is a positive formula for the number $K_n(\bm{N})$ of lattice of points  of $\mathcal{F}_n(\bm{N})$ called the {\em Lidskii formulas} due to Lidskii \cite{Lidskii} for the complete graph and for other graphs by Baldoni--Vergne \cite{BV} and Postnikov--Stanley \cite{Pitman_Stanley_1999}. See \cite{MM18,KMS} for proofs of these formulas via polyhedra subdivisions. Let  $\bm{\delta}:=(n-1,n-2,\ldots,1,0)$.

\begin{theorem}[{Lidskii formulas \cite[Proposition 39]{BV}}]
Let $\bm{N}$ $=(N_0,N_1,\ldots,N_{n-1},-\sum_i N_i)$ where each $N_i \in \mathbb{Z}_{\geq 0}$. Then the number $K_n(\bm{N})$ of lattice points of the flow polytope $\mathcal{F}_n(\bm{N})$ satisfy 
\begin{align}
    K_n{(\bm{N})} &= \sum_{\bm{j}} \binom{N_0+n-1}{j_0} \binom{N_1+n-2}{j_1}\cdots \binom{N_{n-1}}{j_{n-1}} \cdot K_n(\bm{j}-\bm{\delta}), \label{eq: lidskii lattice points1} 
\end{align}
where the sums are over weak compositions $\bm{j}=(j_0,j_1,\ldots,j_{n-1})$ of $\binom{n}{2}$.
\end{theorem}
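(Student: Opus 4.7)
The strategy is to apply the Lidskii formula to $\bm{N}=(t,0,\ldots,0,-t)$, which gives
\[
a_n(t) = \sum_{\bm{j}} \binom{t+n-1}{j_0} \prod_{i=1}^{n-1}\binom{n-1-i}{j_i}\, K_n(\bm{j}-\bm{\delta}),
\]
where the constraints $j_i \leq n-1-i$ for $i\geq 1$ (forced by the binomial factors, since $N_i=0$) leave only $(n-1)!$ compositions $\bm{j}$ that contribute, with $j_0 = \binom{n}{2}-\sum_{i\geq 1}j_i$ determined.

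For the lower bound I would keep only the single term $\bm{j}^\star=(\binom{n}{2},0,\ldots,0)$ and identify the factor $K_n(\bm{j}^\star-\bm{\delta}) = K_n(\binom{n-1}{2},-(n-2),\ldots,-1,0)$ as $\prod_{i=0}^{n-2} C_i$. This identification uses that flow reversal is an integral equivalence of flow polytopes (so one can replace the netflow by its reverse-and-negate) together with the collapse of vertices that carry zero netflow: after these operations the quantity becomes $K_{n-1}(0,1,2,\ldots,n-2,-\binom{n-1}{2})$, which equals $C_1\cdots C_{n-2}=\prod_{i=0}^{n-2} C_i$ by specializing $t=0$ in Theorem~\ref{thm:case M_n(t,0,0)} (with $n$ replaced by $n-1$). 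Every other term of the Lidskii sum is non-negative, so the lower bound follows at once.

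For the upper bound I would show that every individual term is bounded by $\binom{t+n-1}{\binom{n}{2}}\prod_{i=0}^{n-2} C_i$; combined with the term count $(n-1)!$ this yields the claimed estimate. The per-term bound splits into two parts. First, setting $k_i := (n-1-i)-j_i$ and applying the same reversal-and-collapse as in the lower bound, $K_n(\bm{j}-\bm{\delta})$ can be identified with $K_{n-2}$ of the netflow $(k_{n-2},\ldots,k_1,-\sum_{i\geq 1}k_i)$; comparing partial sums (using $k_{n-1-m}\leq m$ to conclude $k_{n-2}+\cdots+k_{n-1-m}\leq\binom{m+1}{2}$) and invoking Proposition~\ref{prop:kpf inequality dominance} gives $K_n(\bm{j}-\bm{\delta})\leq \prod_{i=0}^{n-2} C_i$. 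Second, the hypothesis $t\geq n^3/2$ is used to prove
\[
\binom{t+n-1}{j_0}\prod_{i=1}^{n-1}\binom{n-1-i}{j_i}\leq \binom{t+n-1}{\binom{n}{2}}.
\]
Setting $s:=\binom{n}{2}-j_0=\sum_{i\geq 1}j_i$, the crude estimate $\binom{m}{k}\leq m^k$ bounds the left binomial product by $n^s$, while the ratio of the two binomials in $t$ equals $\prod_{k=j_0+1}^{\binom{n}{2}}\tfrac{t+n-k}{k}$; a short numerical computation shows each factor here is at least $n$ under the hypothesis, since $t\geq n^3/2$ implies $t+n-k\geq nk$ for every $k\leq\binom{n}{2}$.

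The main obstacle is the numerical estimate in the second part: it is precisely the hypothesis $t\geq n^3/2$ that makes each factor $\tfrac{t+n-k}{k}$ at least $n$, and this is exactly what absorbs the combinatorial product $\prod_{i=1}^{n-1}\binom{n-1-i}{j_i}$. The dominance step for the $K_n$ factor is conceptually clean once one spots the reversal-and-collapse trick, and the term count $(n-1)!$ is immediate, so the whole argument hinges on pairing the crude upper bound $n^s$ for the binomial product against the matching lower bound $n^s$ for the quotient of two binomial coefficients in $t$.
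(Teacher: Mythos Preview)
Your proposal does not address the stated theorem. The statement you were given is the Lidskii formula itself, which the paper does not prove: it is quoted from \cite{BV} with no argument. What you have written instead \emph{applies} the Lidskii formula to establish the two-sided bound of Theorem~\ref{thm:bound kpf using Lidskii} (proved in the paper as Corollary~\ref{cor:bound kpf using Lidskii}). So as a proof of the Lidskii formula there is nothing to assess.

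If your intended target was Theorem~\ref{thm:bound kpf using Lidskii}, then your argument is correct and differs from the paper's in one substantive step. To show $K_n(\bm{j}-\bm{\delta}) \leq \prod_{i=0}^{n-2} C_i$ for every contributing $\bm{j}$, the paper invokes the mixed-volume interpretation (Theorem~\ref{cor: kpf are mixed volumes}): each $P_i$ is a translate of a face of $P_0$, so monotonicity of mixed volumes immediately gives $V(P_0^{j_0},\ldots,P_{n-1}^{j_{n-1}}) \leq V(P_0^{\binom{n}{2}})$. Your route---reversal, collapsing the two leading zero-netflow source vertices, and then the dominance inequality of Proposition~\ref{prop:kpf inequality dominance}---is more elementary in that it avoids mixed volumes entirely, at the cost of the explicit bookkeeping with the $k_i$ and the partial-sum check. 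The binomial estimate under $t\geq n^3/2$ and the term count $(n-1)!$ are handled essentially the same way in both arguments; the paper packages the count as Proposition~\ref{prop: number of js tCRY case} via inversion tables, but for the upper bound you only need the immediate overcount coming from $j_i\leq n-1-i$, as you observe.
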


The values $K_n(\bm{j}-\bm{\delta})$ of the Kostant partition function appearing on the formulas above are actually {\em mixed volumes} of certain flow polytopes.  
For $i=0,\ldots,n-1$, let 
\begin{equation} \label{eq:def Pi}
P_i:=\mathcal{F}_{n}(\underbrace{0,\ldots,0}_{i},1,0,\ldots,0,-1)
\end{equation}
For a composition $\bm{j}=(j_0,\ldots,j_{n-1})$, denote by $V(P_0^{j_0},\ldots,P_{n-1}^{j_{n-1}})$ the \emph{mixed volume} of $j_i$ copies of $P_i$.

\begin{theorem}[{\cite[Sec. 3.4]{BV}}] \label{cor: kpf are mixed volumes}
Let $\bm{N}$ $=(N_0,N_1,\ldots,N_{n-1},-\sum_i N_i)$ where each $N_i \in \mathbb{Z}_{\geq 0}$, then the flow polytope $\mathcal{F}_n(\bm{N})$ is the following Minkowski sum
\begin{equation} \label{eq: F_n as a Minkowski sum}
\mathcal{F}_n(\bm{N}) = N_0 P_0 + N_1 P_1 + \cdots + N_{n-1} P_{n-1},
\end{equation}
For a weak composition $\bm{j}=(j_0,\ldots,j_{n-1})$ of $\binom{n}{2}$ we have that 
\begin{equation}
K_n(\bm{j}-\bm{\delta}) = V(P_0^{j_0},\ldots,P_{n-1}^{j_{n-1}}).
\end{equation}
\end{theorem}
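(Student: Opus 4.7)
My plan is to prove the two claims in sequence: first the Minkowski sum identity, then deduce the mixed volume formula by comparing two polynomial expansions of $\vol(\mathcal{F}_n(\bm{N}))$.

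For the Minkowski sum equality $\mathcal{F}_n(\bm{N}) = \sum_i N_i P_i$, I would argue by double containment. The inclusion $\supseteq$ is immediate from linearity: if $\bm{f}_i \in P_i$ has netflow vector $\bm{e}_i - \bm{e}_n$, then $\sum_i N_i \bm{f}_i$ is nonnegative edge by edge and has netflow $\sum_i N_i(\bm{e}_i - \bm{e}_n) = \bm{N}$, so it lies in $\mathcal{F}_n(\bm{N})$. For the reverse inclusion, I would apply the classical flow decomposition theorem for DAGs to write any $\bm{f} \in \mathcal{F}_n(\bm{N})$ as a nonnegative combination of indicator vectors of directed paths from the positive-netflow vertices to the sink $n$. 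Grouping the path contributions by starting vertex $i$, the total coefficient on paths starting at $i$ is exactly $N_i$, so $\bm{f} = \sum_i N_i \bm{g}_i$ with each $\bm{g}_i$ a convex combination of unit path flows from $i$ to $n$, hence in $P_i$.

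For the mixed volume identification, I would extract the volume of $\mathcal{F}_n(\bm{N})$ in two different ways. By Ehrhart theory, $\vol(\mathcal{F}_n(\bm{N})) = \lim_{t \to \infty} K_n(t\bm{N})/t^{\binom{n}{2}}$ (using the normalization coming from the lattice on the affine span). Applying the Lidskii formula to $t\bm{N}$ and using $\binom{tN_k + n - 1 - k}{j_k} \sim (tN_k)^{j_k}/j_k!$ as $t \to \infty$, the leading term gives
\[
\vol(\mathcal{F}_n(\bm{N})) = \sum_{\bm{j}} \frac{K_n(\bm{j}-\bm{\delta})}{j_0!\cdots j_{n-1}!}\, N_0^{j_0}\cdots N_{n-1}^{j_{n-1}},
\]
where the sum runs over weak compositions $\bm{j}$ of $\binom{n}{2}$. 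On the other hand, by the Minkowski decomposition just proved and the standard polynomial expansion of volume of a Minkowski sum, with the normalized mixed volume convention $V(K,\ldots,K) = \binom{n}{2}!\,\vol(K)$,
\[
\vol\!\left(\sum_i N_i P_i\right) = \sum_{\bm{j}} \frac{V(P_0^{j_0},\ldots,P_{n-1}^{j_{n-1}})}{j_0!\cdots j_{n-1}!}\, N_0^{j_0}\cdots N_{n-1}^{j_{n-1}}.
\]
Both polynomials in $\bm{N}$ agree on all nonnegative integer inputs, so they agree as polynomials; matching coefficients gives $K_n(\bm{j}-\bm{\delta}) = V(P_0^{j_0},\ldots,P_{n-1}^{j_{n-1}})$.

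The main obstacle is bookkeeping around normalization and degeneracy. One must fix the normalization of volume and mixed volume consistently (using the lattice-normalized volume on the affine span of $\mathcal{F}_n(\bm{N})$ throughout), and one must handle weak compositions $\bm{j}$ for which the Minkowski sum $\sum j_i P_i$ fails to be full-dimensional—here the mixed volume vanishes, and this must be matched by checking that $K_n(\bm{j}-\bm{\delta})=0$ directly from the definition (no valid flow exists). Once these normalization and degeneracy issues are reconciled, the coefficient-matching argument gives the identification of Kostant partition values with mixed volumes of the building blocks $P_0,\ldots,P_{n-1}$.
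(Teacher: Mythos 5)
The paper cites this result to Baldoni--Vergne \S3.4 without reproducing a proof, so there is no in-paper argument to compare against; I will assess the proposal on its own merits.

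Your overall strategy is sound and, as far as I can tell, close to the standard route. A few points worth flagging:

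For the Minkowski inclusion $\mathcal{F}_n(\bm{N}) \subseteq \sum_i N_i P_i$, the flow-decomposition argument works, but the claim that the total weight of paths starting at $i$ is exactly $N_i$ deserves one more sentence of justification: since $k_{n+1}$ with edges oriented $i \to j$ for $i<j$ is a DAG with unique sink $n$, every path in the decomposition terminates at $n$, no path terminates at any $i < n$, and paths passing through $i$ contribute zero net flux at $i$; hence after all paths are extracted the residual netflow at $i$ is $N_i - \sum_{P \text{ starts at } i} c_P = 0$. You also implicitly need that $P_i$ is the convex hull of unit path flows from $i$ to $n$, which follows because $P_i$ is an integral $0/1$ flow polytope whose lattice points (hence vertices) are precisely these indicator vectors. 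When $N_i = 0$ the coefficient kills the term, so the undefined normalization of $\bm{g}_i$ in that case is harmless.

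For the mixed volume identity, your coefficient-matching argument is correct and efficient. One small remark: the ``degeneracy'' concern you raise in the final paragraph is actually unnecessary --- once you establish that the two polynomials in $(N_0,\ldots,N_{n-1})$ agree on $\mathbb{Z}_{\geq 0}^n$ and hence as polynomials, the identity $K_n(\bm{j}-\bm{\delta}) = V(P_0^{j_0},\ldots,P_{n-1}^{j_{n-1}})$ for every weak composition $\bm{j}$ of $\binom{n}{2}$ drops out by matching coefficients, with no separate treatment of degenerate $\bm{j}$. You do need to be a bit careful that when some $N_k = 0$ the binomial $\binom{tN_k + n-1-k}{j_k}$ is constant in $t$ rather than $\sim (tN_k)^{j_k}/j_k!$, but since $N_k^{j_k} = 0$ for $j_k > 0$ in that case the formula is still consistent. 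Finally, a logical dependency worth making explicit: your proof uses the Lidskii lattice-point formula as input, so this argument is only non-circular because that formula has an independent proof (via polyhedral subdivisions, as in \cite{MM18,KMS}) that does not itself rely on the mixed-volume interpretation. If you want the argument to stand alone, you should cite such a proof of Lidskii explicitly.
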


Since the Lidskii formula \eqref{eq: lidskii lattice points1} for $K_n(\bm{N})$ is nonnegative one could use it to bound $K_n(\bm{N})$. Let $\mathcal{S}^{+}_n(\bm{N})$ be the set of compositions $\bm{j}=(j_0,\ldots,j_{n-1})$ of $\binom{n}{2}$ with $\binom{N_0+n-1}{j_0} \binom{N_1+n-2}{j_1}\cdots \binom{N_{n-1}}{j_{n-1}} \cdot K_n(\bm{j}-\bm{\delta})>0$, and $S^+_n(\bm{N})=|\mathcal{S}^+_n(\bm{N})|$. Also, let 
\[
M_n(\bm{N}) := \max_{\bm{j}}\left\{\binom{N_0+n-1}{j_0} \binom{N_1+n-2}{j_1}\cdots \binom{N_{n-1}}{j_{n-1}} \cdot K_n(\bm{j}-\bm{\delta}) \mid \bm{j} \in \mathcal{S}^{+}_n(\bm{N}) \right\}.
\]

\begin{proposition} \label{prop: easy bounds lidskii lattice points}
    Let $\bm{N}$ $=(N_0,N_1,\ldots,N_{n-1},-\sum_i N_i)$ where each $N_i \in \mathbb{Z}_{\geq 0}$, then 
  \begin{equation} \label{eq:generic bounds from Lidskii}
S^+_n(\bm{N}) \cdot M_n(\bm{N}) \,\geq\, K_n({\bm{N}}) \,\geq\, M_n(\bm{N}).
\end{equation}  
\end{proposition}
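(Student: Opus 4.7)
The plan is to observe that Proposition~\ref{prop: easy bounds lidskii lattice points} is an immediate consequence of the Lidskii formula~\eqref{eq: lidskii lattice points1} together with the nonnegativity of its summands. Specifically, each term
\[
T(\bm{j}) := \binom{N_0+n-1}{j_0}\binom{N_1+n-2}{j_1}\cdots\binom{N_{n-1}}{j_{n-1}} \cdot K_n(\bm{j}-\bm{\delta})
\]
is nonnegative: the binomial factors are nonnegative since $N_i \geq 0$, and $K_n(\bm{j}-\bm{\delta})$ is nonnegative because it counts lattice points of a (possibly empty) flow polytope. By Theorem~\ref{cor: kpf are mixed volumes}, this last factor is in fact a mixed volume, which is another way to see its nonnegativity.

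For the lower bound, I would single out any composition $\bm{j}^\star$ achieving the maximum in the definition of $M_n(\bm{N})$ (which exists since the sum is finite) and discard every other term from~\eqref{eq: lidskii lattice points1}; since all discarded terms are $\geq 0$, this yields $K_n(\bm{N}) \geq T(\bm{j}^\star) = M_n(\bm{N})$. For the upper bound, I would note that the sum in~\eqref{eq: lidskii lattice points1} effectively ranges only over $\bm{j} \in \mathcal{S}^+_n(\bm{N})$ (the other compositions contribute zero), so
\[
K_n(\bm{N}) = \sum_{\bm{j} \in \mathcal{S}^+_n(\bm{N})} T(\bm{j}) \,\leq\, \sum_{\bm{j} \in \mathcal{S}^+_n(\bm{N})} M_n(\bm{N}) \,=\, S^+_n(\bm{N}) \cdot M_n(\bm{N}).
\]

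There is no real obstacle here; the proposition is essentially a bookkeeping statement about a sum of nonnegative terms. The only subtlety worth flagging in the write-up is confirming that $M_n(\bm{N})$ and $S^+_n(\bm{N})$ are well-defined and finite (the sum in~\eqref{eq: lidskii lattice points1} is over weak compositions of $\binom{n}{2}$ into $n$ parts, a finite set), and that the definition of $\mathcal{S}^+_n(\bm{N})$ correctly identifies the support of the Lidskii sum so that no positive contribution is missed in the upper bound and no spurious zero inflates the lower bound.
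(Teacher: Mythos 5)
Your proof is correct and takes essentially the same approach as the paper: the Lidskii formula \eqref{eq: lidskii lattice points1} expresses $K_n(\bm{N})$ as a finite sum of nonnegative terms, so it is bounded below by the largest term $M_n(\bm{N})$ and above by $S^+_n(\bm{N}) \cdot M_n(\bm{N})$. The paper states this in one sentence; your write-up merely makes the nonnegativity of each term and the finiteness of the sum explicit.
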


\begin{proof}
From \eqref{eq: lidskii lattice points1} the total, $K_n(\bm{N})$, is at least the term with the largest contribution and at most the product of such a term and the number of terms.
\end{proof}

The bounds in \eqref{eq:generic bounds from Lidskii} are not so precise in the sense that for a given $\bm{N}$  it is unclear how to determine the $\bm{j}$ that yields the maximum $M_n(\bm{N})$. Also, some of the terms of \eqref{eq: lidskii lattice points1} vanish. Next, we give a characterization of the compositions in $\mathcal{S}_n^+(\bm{N})$.

\begin{proposition} \label{prop:char positive compositions in lidskii}
A weak composition $\bm{j}=(j_0,j_1,\ldots,j_{n-1})$ of $\binom{n}{2}$ is in $\mathcal{S}^+_n(\bm{N})$ if and only if  $j_i \leq N_i+n-i-1$ for $i=0,\ldots,n-1$ and $\bm{j} \unrhd (n-1,n-2,\ldots,1,0)$. 
\end{proposition}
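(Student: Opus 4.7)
The statement factors into two independent positivity conditions, so the plan is to handle them separately and then recombine.

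First, I would observe that the product
\[
\binom{N_0+n-1}{j_0}\binom{N_1+n-2}{j_1}\cdots\binom{N_{n-1}}{j_{n-1}}\cdot K_n(\bm{j}-\bm{\delta})
\]
is positive iff (i) every binomial coefficient is positive and (ii) $K_n(\bm{j}-\bm{\delta})>0$. Since $\bm{j}$ is a weak composition, each $j_i\geq 0$, so condition (i) is equivalent to the upper bound $j_i\leq N_i+n-i-1$ for $i=0,\ldots,n-1$. This handles the first part of the characterization immediately.

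For condition (ii), the core claim is that $K_n(\bm{M})>0$ iff $\mathcal{F}_n(\bm{M})\neq\emptyset$, and the latter holds iff the prefix sums of $\bm{M}$ are non-negative. For the "lattice point exists when polytope is nonempty" direction, I would invoke the fact that the vertex-edge incidence matrix of a directed acyclic graph is totally unimodular, so $\mathcal{F}_n(\bm{M})$ is an integral polytope for integer $\bm{M}$ and hence contains a lattice point whenever it is nonempty. For the characterization of nonemptiness on the complete graph $k_{n+1}$, I would use a simple max-flow argument: the total flow out of the set of vertices $\{0,1,\ldots,k\}$ (into $\{k+1,\ldots,n\}$) equals $\sum_{i=0}^k M_i$, so nonemptiness forces $\sum_{i=0}^k M_i \geq 0$ for all $k\leq n-1$. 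Conversely, given this condition, a greedy construction (e.g.\ route the netflow $M_i$ from each $i$ directly to $n$, adjusting via the edges $(i,j)$ for $i<j<n$) produces a valid nonnegative flow on $k_{n+1}$.

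Finally I would substitute $\bm{M}=\bm{j}-\bm{\delta}$ and compute
\[
\sum_{i=0}^k (j_i - (n-1-i)) \,\geq\, 0 \quad\Longleftrightarrow\quad \sum_{i=0}^k j_i \,\geq\, \sum_{i=0}^k (n-1-i),
\]
which is exactly $\bm{j}\unrhd\bm{\delta}$ (note that since $\bm{j}$ is a composition of $\binom{n}{2}=|\bm{\delta}|$, the equality $\sum_i (j_i-(n-1-i))=0$ automatically holds, so no further constraint at $k=n-1$ is needed). Combining (i) and (ii) yields the claimed characterization. The only mildly subtle point is the integrality/nonemptiness equivalence in step two; everything else is bookkeeping.
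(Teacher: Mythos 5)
Your proof is correct, and it takes a genuinely different (and more self-contained) route for the second half. You agree with the paper that the upper bounds $j_i \leq N_i + n - i - 1$ come directly from positivity of the binomial factors. For the equivalence $K_n(\bm{j}-\bm{\delta})>0 \iff \bm{j}\unrhd\bm{\delta}$, however, the paper argues the forward implication by projecting $\mathcal{F}_n(\bm{j}-\bm{\delta})$ onto the Pitman--Stanley polytope $PS_n(\bm{j}-\bm{\delta})$ (Proposition~\ref{prop:projection to PS}), whose defining inequalities encode exactly the prefix-sum condition, and the converse by invoking the dominance monotonicity $K_n(\bm{j}-\bm{\delta}) \geq K_n(\bm{0}) = 1$ from Proposition~\ref{prop:kpf inequality dominance}. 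You instead reprove both directions from first principles: the cut inequality across $\{0,\dots,k\}$ versus $\{k+1,\dots,n\}$ for the necessity of nonnegative prefix sums, and total unimodularity of the DAG incidence matrix (so nonemptiness of the integral polytope yields a lattice point) together with an explicit construction for sufficiency. This buys you independence from Proposition~\ref{prop:projection to PS} and Proposition~\ref{prop:kpf inequality dominance}, at the cost of a slightly longer argument; the paper's version leans on machinery it has already built. One small remark: your ``greedy construction'' is a little vague because $M_i = j_i - (n-1-i)$ can be negative, so ``route $M_i$ directly to $n$'' needs care. The cleanest realization is the staircase flow $f_{i,i+1} = s_i := \sum_{\ell\leq i} M_\ell$ with all other $f_{ij}=0$; the prefix-sum condition $s_i\geq 0$ makes this a feasible (indeed integral) flow, so you can even skip the total-unimodularity invocation entirely. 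With that tightened, everything in your proposal is sound.
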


\begin{proof}
The first restriction comes from the binomial coefficients in the product $\binom{N_0+n-1}{j_0} \binom{N_1+n-2}{j_1}\cdots \binom{N_{n-1}}{j_{n-1}}$. 

Next, we show that $K_n(\bm{j}-\bm{\delta})>0$ if and only if $\bm{j} \unrhd \bm{\delta}$. For the forward implication, if $K_n(\bm{j}-\bm{\delta})>0$ then the polytope $\mathcal{F}_n(\bm{j} -\bm{\delta})$ is nonempty. By the projection in Proposition~\ref{prop:projection to PS}, the polytope $PS_n(\bm{j} -\bm{\delta})$ is also nonempty. By definition of this polytope, being nonempty  implies that $\bm{j}- \bm{\delta} \unrhd  \bm{0}$.

For the converse, given a composition $\bm{j}=(j_0,\ldots,j_{n-1})$ of $\binom{n}{2}$ satisfying $\bm{j} \unrhd \bm{\delta}$ then $\bm{j}-\bm{\delta} \unrhd \bm{0}$. Thus by Proposition~\ref{prop:kpf inequality dominance} we have that $K_n(\bm{j}-\bm{\delta})\geq K_n(\bm{0})=1$ as desired.
\end{proof}

Next we look at a specific case like $\bm{N}=(t,0,\ldots,0,-t)$ for large $t$. Recall that an \emph{inversion} of a permutation $w=w_1w_2\cdots w_n$ of $n$ is a pair $(i,j)$ with $i<j$ and $w_j>w_i$. Let $I_{n,k}$ ($J_{n,k}$) be the number of permutations of $\{1,2,\ldots,n\}$ with (at most) $k$ inversions \cite[\href{https://oeis.org/A008302}{A008302},\href{https://oeis.org/A161169}{A161169}]{oeis}. In particular $J_{n,t}=n!$ for $t\geq \binom{n}{2}$. From standard facts about permutation enumeration \cite[Prop. 1.4.6]{EC1} and ``generatingfunctionology", these numbers have the following generating function for fixed $n$.
\[
\sum_{k=0}^{\binom{n}{2}} I_{n,k} q^k = [n]_q!,\quad \sum_{k=0}^{\infty} J_{n,k} q^k = \frac{[n]_q!}{1-q},
\]
where $[n]_q!=(1+q)(1+q+q^2)\cdots (1+\cdots+q^{n-1})$ is the $q$-analogue of $n!$. Note that $I_{n,k}=I_{n,\binom{n}{2}-k}$.

\begin{proposition} \label{prop: number of js tCRY case}
Let $\bm{N}=(t,0,\ldots,0,-t)$, then $S^{+}_n(\bm{N})=J_{n-1,t}$. In particular, for $t\geq \binom{n-1}{2}$ we have that $S^{+}_n(\bm{N})=(n-1)!$.

\end{proposition}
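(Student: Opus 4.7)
The plan is to apply Proposition~\ref{prop:char positive compositions in lidskii} to write $\mathcal{S}^+_n(\bm{N})$ explicitly for $\bm{N}=(t,0,\ldots,0,-t)$, and then to recognize the resulting set as the set of inversion tables (Lehmer codes) of permutations of $[n-1]$ whose number of inversions is at most $t$. With $N_0=t$ and $N_1=\cdots=N_{n-1}=0$, Proposition~\ref{prop:char positive compositions in lidskii} says that $\bm{j}=(j_0,\ldots,j_{n-1})$ lies in $\mathcal{S}^+_n(\bm{N})$ iff
\[
j_0\le t+n-1,\qquad 0\le j_i\le n-i-1\;\;(1\le i\le n-1),\qquad \bm{j}\unrhd \bm\delta,
\]
where $\bm\delta=(n-1,n-2,\ldots,1,0)$, and $\sum_i j_i=\binom{n}{2}$.

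The key change of variables will be to set $a_i:=n-i-1-j_i$ for $i=1,\ldots,n-1$, so that $a_i\ge 0$ and $a_i\le n-i-1$. Since $\sum_{i=1}^{n-1}(n-i-1)=\binom{n-1}{2}$, the constraint $\sum_i j_i=\binom{n}{2}$ forces $j_0=(n-1)+A$ with $A:=\sum_{i=1}^{n-1}a_i$, and then the bound $j_0\le t+n-1$ becomes $A\le t$. The next step I would carry out is to verify that the dominance conditions $\sum_{k=0}^{i} j_k\ge \sum_{k=0}^{i}(n-k-1)$ reduce, after substitution, to $\sum_{k=1}^{i} a_k\le A$ for $i=1,\ldots,n-2$, which are automatic because all $a_k$ are non-negative (the case $i=0$ is $A\ge 0$, also automatic). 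Thus the dominance restriction disappears and
\[
\mathcal{S}^+_n(\bm{N})\;\longleftrightarrow\;\bigl\{(a_1,\ldots,a_{n-1}):0\le a_i\le n-i-1,\;\textstyle\sum_i a_i\le t\bigr\}.
\]

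Finally, the tuples $(a_1,\ldots,a_{n-1})$ with $0\le a_i\le (n-1)-i$ are precisely the Lehmer codes of permutations in $\mathfrak{S}_{n-1}$, under the standard bijection that sends a permutation to its inversion table, with $\sum_i a_i$ equal to the number of inversions. Therefore the count of such tuples with $\sum_i a_i\le t$ equals the number $J_{n-1,t}$ of permutations of $[n-1]$ with at most $t$ inversions, yielding $S^+_n(\bm{N})=J_{n-1,t}$. For $t\ge \binom{n-1}{2}$ every permutation of $[n-1]$ qualifies, giving $(n-1)!$.

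The only step where I anticipate needing to be careful is the bookkeeping that collapses the dominance inequalities to a single inequality $A\le t$; everything else is a direct substitution and the standard Lehmer-code correspondence. No serious obstacle is expected.
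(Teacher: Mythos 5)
Your proof is correct and takes essentially the same route as the paper: invoke Proposition~\ref{prop:char positive compositions in lidskii}, observe that the dominance condition $\bm{j}\unrhd\bm\delta$ is automatic given the upper bounds $j_i\le n-i-1$ (the paper phrases this via the order-reversing property of $\unrhd$; your explicit change of variables $a_i=n-i-1-j_i$ verifies the same thing), and identify the remaining tuples with inversion tables of $\mathfrak{S}_{n-1}$. Your version is a bit more streamlined since the substitution lets you sum over all $A\le t$ at once rather than fixing $j_0$ and summing counts $I_{n-1,k}$ as the paper does.
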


\begin{proof}
The number $S^{+}_n(\bm{N})$ counts  compositions $\bm{j}=(j_0,\ldots,j_{n-1})$ of $\binom{n}{2}$ satisfying 
\[
\bm{j} \unrhd \bm{\delta}, \text{ and } 
j_0 \leq t+n-1, \quad j_1 \leq n-2, \quad 
j_2 \leq n-3, \quad 
\ldots,\quad j_{n-1}\leq 0. 
\]
It suffices to show that there are $I_{n-1,\binom{n-1}{2}-t}=I_{n-1,t}$ such compositions with $j_0=t+n-1$ for $t=0,1,\ldots,\binom{n-1}{2}$. The first condition is implied by the others by the order reversing property of dominance order $\unrhd$. If $j_0=t+n-1$, then $(j_1,\ldots,j_{n-2})$ is a composition of $\binom{n-1}{2}$ satisfying $j_k \leq n-1-k$ for $k=1,\ldots,n-2$. Such compositions, viewed as inversion tables \cite[Prop. 1.3.12]{EC1}, are in bijection with permutations of $n-1$ with  $\binom{n-1}{2}-t$ inversions.
\end{proof}

Next, we determine the $M_n(\bm{N})$ for the case $\bm{N}=(t,0,\ldots,0,-t)$ for large enough $t$.

\begin{proposition} \label{prop: max j case tCRY}
   Let $\bm{N}=(t,0,\ldots,0,-t)$ for $t\geq n^3/2$ then  $M_n(\bm{N})=\binom{t+n-1}{\binom{n}{2}} \prod_{i=0}^{n-2} C_i$ which is achieved at $\bm{j}=(\binom{n}{2},0,\dots,0)$.
\end{proposition}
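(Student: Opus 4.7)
The plan is to prove the stated identity directly at $\bm{j}^{*} = (\binom{n}{2},0,\ldots,0)$ and then to show strict inequality $L(\bm{j}) < L(\bm{j}^{*})$ for every other $\bm{j}\in \mathcal{S}_n^{+}(\bm{N})$, where
\[
L(\bm{j}) \,:=\, \binom{t+n-1}{j_0}\prod_{i=1}^{n-1}\binom{n-1-i}{j_i}\cdot K_n(\bm{j}-\bm{\delta})
\]
denotes the generic summand of the Lidskii formula~\eqref{eq: lidskii lattice points1} for $\bm{N}=(t,0,\ldots,0,-t)$.

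For the value at $\bm{j}^{*}$: each middle binomial is $\binom{n-1-i}{0}=1$, so $L(\bm{j}^{*}) = \binom{t+n-1}{\binom{n}{2}}\cdot K_n(\bm{j}^{*}-\bm{\delta})$. I would then identify $K_n(\bm{j}^{*}-\bm{\delta}) = V(P_0^{\binom{n}{2}}) = \prod_{i=0}^{n-2} C_i$ by combining Theorem~\ref{cor: kpf are mixed volumes} with~\eqref{eq:prod cat case}; equivalently, by matching the leading coefficient in $t$ of the Lidskii expansion of $a_n(t)$ against the normalized volume of $\mathcal{F}_n(1,0,\ldots,0,-1)$, which is $\prod_{i=0}^{n-2} C_i$.

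For the inequality I propose a ``mass transport'' argument. Given $\bm{j}\ne\bm{j}^{*}$, the sum $\sum_{i\ge 1}j_i=\binom{n}{2}-j_0\ge 1$ so there is some $k\ge 1$ with $j_k\ge 1$; set $\bm{j}':=\bm{j}+\bm{e}_0-\bm{e}_k$. A direct check shows $\bm{j}'\in\mathcal{S}_n^{+}(\bm{N})$: the upper bounds from Proposition~\ref{prop:char positive compositions in lidskii} are clear, and the partial sums satisfy $\Sigma'_m=\Sigma_m+1$ for $m<k$ and $\Sigma'_m=\Sigma_m$ for $m\ge k$, so dominance over $\bm{\delta}$ is preserved. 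Iterating such moves drives $\bm{j}$ monotonically to $\bm{j}^{*}$ inside $\mathcal{S}_n^{+}(\bm{N})$, and it suffices to show $L(\bm{j}')>L(\bm{j})$ at each step.

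The key computation factors the ratio as
\[
\frac{L(\bm{j}')}{L(\bm{j})} \,=\, \frac{t+n-1-j_0}{j_0+1}\cdot\frac{j_k}{n-k-j_k}\cdot\frac{K_n(\bm{j}'-\bm{\delta})}{K_n(\bm{j}-\bm{\delta})}.
\]
The first factor is at least $(t+n-\binom{n}{2})/\binom{n}{2}\ge n$ using $j_0\le\binom{n}{2}-1$ and $t\ge n^3/2$; the second, in its worst case ($j_k=1$, $k=1$), equals $1/(n-2)$; and the third is $\ge 1$ by Proposition~\ref{prop:kpf inequality dominance} applied to $\bm{j}'-\bm{\delta}\unrhd\bm{j}-\bm{\delta}$. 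Multiplying gives $L(\bm{j}')/L(\bm{j})\ge n/(n-2)>1$ for $n\ge 3$, while the tiny cases $n\le 2$ are trivial since $\mathcal{S}_n^{+}(\bm{N})=\{\bm{j}^{*}\}$. The one subtle point I expect to be the main obstacle is that Proposition~\ref{prop:kpf inequality dominance} as stated requires non-negative coordinates, whereas $\bm{j}-\bm{\delta}$ typically has negative entries; however, inspecting its proof shows that it only uses $\epsilon_i\ge 0$ and the non-emptiness of $\mathcal{F}_n(\bm{M})$ (assured here by $\bm{j}\unrhd\bm{\delta}$), so it applies verbatim. The cubic threshold $t\ge n^3/2$ is essentially what makes factor~(I) dominate the reciprocal of factor~(II).
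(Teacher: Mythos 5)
Your proof is correct, and it reaches the statement by a genuinely different route for the key monotonicity step. Where the paper deduces $K_n(\bm{j}^*-\bm{\delta}) \geq K_n(\bm{j}-\bm{\delta})$ from the mixed-volume interpretation of Theorem~\ref{cor: kpf are mixed volumes} (since $P_i$ is a translate of a face of $P_0$ and mixed volumes are monotone), you instead use the dominance inequality of Proposition~\ref{prop:kpf inequality dominance}, fed into a local ``transport'' chain $\bm{j}\mapsto\bm{j}+\bm{e}_0-\bm{e}_k$ whose ratio you factor into three pieces. The binomial-ratio bookkeeping is essentially what the paper does too (they also bound a per-unit transfer ratio by $\frac{t+n-1-\binom{n}{2}}{\binom{n}{2}(n-2)}$ and raise it to the power $\binom{n}{2}-j_0$), but you interleave it with the $K_n$ step rather than separating the two. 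Your approach is more elementary in that it avoids mixed volumes entirely and uses only the injection behind Proposition~\ref{prop:kpf inequality dominance}; the paper's is more conceptual and ties into Theorem~\ref{cor: kpf are mixed volumes}. In fact, since $\bm{j}^*\unrhd\bm{j}$ holds directly for every $\bm{j}\in\mathcal{S}_n^+(\bm{N})$, you could apply the dominance inequality once to the endpoints rather than chaining it. Your caveat about negative coordinates in Proposition~\ref{prop:kpf inequality dominance} is well taken and correct: the injective proof only needs $\epsilon_i\geq 0$ and non-emptiness, and the paper itself already uses the proposition in exactly this negative-entry regime (in the proof of Proposition~\ref{prop:char positive compositions in lidskii}), so you are on solid ground.
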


\begin{proof}
Consider the Minkowski sum decomposition in \eqref{eq: F_n as a Minkowski sum} of $\mathcal{F}_n(\bm{N})$.  By the definition of $P_i$ in \eqref{eq:def Pi}, this polytope is a translation of the face $Q_i$ of $P_0$:
\[
Q_i = \{ (f) \in P_0 \mid f_{j,j+1}=1, \text{ for } j=0,\ldots,i-1\}.
\]
From the mixed volume interpretation of $K_{n}(\bm{j}-\bm{\delta})$ in Theorem~\ref{cor: kpf are mixed volumes}, since $P_i$ is a translation of $Q_i\subseteq P_i$, and the fact that mixed volumes are monotically increasing then
\[
K_{n}({\textstyle \binom{n}{2}},0,\dots,0) = V(P_0^{\binom{n}{2}}) \geq V(P_0^{j_0},\ldots,P_{n-1}^{j_{n-1}})=K_n(\bm{j}-\bm{\delta}),
\]
for compositions $\bm{j}$ in $\mathcal{S}^+_n(\bm{N})$.
For $\bm{j}=(\binom{n}{2},0,\dots,0)$ by \eqref{eq:prod cat case} and the symmetry of the Kostant partition function by reversing the flows, we have that
\begin{equation} \label{eq:inequality kpfs}
K_n\left({\textstyle \binom{n-1}{2}},-n+2,-n+1,\ldots,-1,0\right) =  \prod_{i=0}^{n-2} C_i.
\end{equation}
Next, for $t \geq \frac{n^3}{2}$  one can show that 
\begin{equation} \label{eq:inequality binomials}
\binom{t+n-1}{\binom{n}{2}} \geq \binom{t+n-1}{j_0} \binom{n-2}{j_1}\cdots \binom{0}{j_{n-1}}.
\end{equation}
Indeed, suppose first $\bm{j}' = \bm{j} + e_i$ for some $i$, then
    \[
        \binom{n-2}{j_1} \cdots \binom{0}{j_{n-1}} \geq \frac{1}{n-2} \binom{n-2}{j'_1} \cdots \binom{0}{j'_{n-1}}.
    \]
    Next suppose $\ell \leq L$ and $\ell' = \ell-k \geq 0$, then
    \[
        \binom{L}{\ell} \geq \binom{L}{\ell'} \cdot \left(\frac{L-\ell}{\ell}\right)^k.
    \]
    Thus for $t \geq \frac{n^3}{2}$ and for any composition $\bm{j}$ in $\mathcal{S}^+_n(\bm{N})$ we have
    \[
        \frac{\binom{t+n-1}{\binom{n}{2}} \binom{n-2}{0}\cdots \binom{0}{0}}{\binom{t+n-1}{j_0} \binom{n-2}{j_1}\cdots \binom{0}{j_{n-1}}} \geq \left(\frac{\frac{n^3}{2}+n-1-\binom{n}{2}}{\binom{n}{2} (n-2)}\right)^{\binom{n}{2} - j_0} \geq 1,
    \]
as desired.

Combining, both inequalities \eqref{eq:inequality kpfs} and \eqref{eq:inequality binomials} we obtain that $M_n(\bm{N})$ has the desired value at ${\bm j}=(\binom{n}{2},0,\ldots,0)$.
\end{proof}

Putting the previous results together gives the main result of this section: bounds for $K_n(t,0,\ldots,0,-t)$ for large values of $t$.

\begin{corollary} \label{cor:bound kpf using Lidskii}
For $t\geq n^3/2$ we have that 
\[
(n-1)!\cdot \binom{t+n-1}{\binom{n}{2}} \prod_{i=0}^{n-2} C_i \,\geq\,   K_n{(t,0,\ldots,0,-t)}  \,\geq\, \binom{t+n-1}{\binom{n}{2}} \prod_{i=0}^{n-2} C_i. 
\]
\end{corollary}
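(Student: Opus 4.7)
The plan is to derive this corollary as a direct consequence of the three preceding propositions of Section~\ref{sec: lidskii}, which together have separated the task into (i) a generic two-sided bound from the Lidskii formula and (ii) explicit evaluation of the two quantities $S^+_n(\bm{N})$ and $M_n(\bm{N})$ appearing in that bound, specialized to $\bm{N} = (t,0,\ldots,0,-t)$.

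First I would observe that the hypothesis $t \geq n^3/2$ already guarantees $t \geq \binom{n-1}{2}$ (which holds trivially for $n \geq 2$), so the second half of Proposition~\ref{prop: number of js tCRY case} applies and yields $S^+_n(\bm{N}) = (n-1)!$. Next I would invoke Proposition~\ref{prop: max j case tCRY}, whose own hypothesis is precisely $t \geq n^3/2$, to obtain
\[
M_n(\bm{N}) = \binom{t+n-1}{\binom{n}{2}} \prod_{i=0}^{n-2} C_i,
\]
with the maximum attained at $\bm{j} = (\binom{n}{2},0,\ldots,0)$. Finally I would feed both equalities into the generic sandwich
\[
S^+_n(\bm{N}) \cdot M_n(\bm{N}) \,\geq\, K_n(\bm{N}) \,\geq\, M_n(\bm{N})
\]
from Proposition~\ref{prop: easy bounds lidskii lattice points}, which reads off exactly the claimed inequalities.

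Since all the substantive work has been done in the preceding propositions, there is no real obstacle here; this corollary is essentially a bookkeeping step. If I had to identify the hardest ingredient being quietly used, it would be Proposition~\ref{prop: max j case tCRY}: establishing that $(\binom{n}{2},0,\ldots,0)$ is the dominant term of the Lidskii sum required both the monotonicity of mixed volumes under the Minkowski sum decomposition $\mathcal{F}_n(\bm{N}) = \sum_i N_i P_i$ to dispense with the factor $K_n(\bm{j}-\bm{\delta})$, and the quantitative comparison of multinomial products that pins down the threshold $t \geq n^3/2$. Having black-boxed those, the corollary itself is a one-line combination.
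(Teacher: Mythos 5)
Your proof is correct and follows the paper's own argument exactly: apply Proposition~\ref{prop: easy bounds lidskii lattice points} with $\bm{N}=(t,0,\ldots,0,-t)$ and substitute the values $S^+_n(\bm{N})=(n-1)!$ and $M_n(\bm{N})=\binom{t+n-1}{\binom{n}{2}}\prod_{i=0}^{n-2} C_i$ from Propositions~\ref{prop: number of js tCRY case} and~\ref{prop: max j case tCRY}. Your added observation that $t\geq n^3/2$ subsumes the threshold $t\geq\binom{n-1}{2}$ needed for $S^+_n(\bm{N})=(n-1)!$ is a correct and useful clarification.
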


\begin{proof}
The result follows by using Proposition~\ref{prop: easy bounds lidskii lattice points} for $\bm{N}=(t,0,\ldots,0,-t)$ and using both Propositions~\ref{prop: number of js tCRY case}, \ref{prop: max j case tCRY} to evaluate $M_n(t,0,\ldots,-t)$ and $S_n^+(t,0,\ldots,0,-t)$, respectively. 
\end{proof}

\begin{remark}
Note that the regime of $t$ in  Corollary~\ref{cor:bound kpf using Lidskii} is different from that of the rest of the paper where we assume that $t$ is constant with respect to $n$. It would be interesting to compare the upper bound above with the upper bound $F(t,n)=\prod_{1\leq i<j\leq n} \frac{2t+i+j-1}{i+j-1}$ in Proposition~\ref{prop: previous bounds CRY case}. Note that since in the regime $t\geq n^3/2$ the lower bound overwhelms $(n-1)!$, then the log of lower bound gives the correct asymptotics for $\log K_n(t,0,\ldots,0,-t)$.
\end{remark}

\section{Final remarks} \label{sec:final remarks}

\subsection{Integer flows of other graphs}

Let $G$ be a connected directed acyclic graph with $n+1$ vertices, let $\bm{N}=(N_0,\ldots,N_{n-1},-\sum_i N_i)$ with $N_i \in \mathbb{N}$ as before and denote by $K_G(\bm{N})$ the number of lattice points of  $\mathcal{F}_G(\bm{N})$. This number is also of interest for other graphs beyond the complete graph \cite{BV,MM18,Caracol}. It would be of interest to apply our methods and find bounds for $K_G(\bm{N})$. The polytope $\mathcal{F}_G(\bm{N})$ also projects to a face of a transportation polytope by zeroing out entries corresponding to missing edges in \eqref{eq: def injection flows to transportation matrix}. 

In the case when $\bm{N}=(1,0,\ldots,-1)$, since the associated flow polytope is integral and has no interior points then $K_G(1,0,\ldots,0,-1)$ is also the number of vertices of the polytope. The associated contingency tables $\phi(f_{ij})$ counted by $K_G(1,0,\ldots,0,-1)$ have marginals $\bm{\alpha}=\bm{\beta}=(1,\ldots,1)$ (see \eqref{eq: def injection flows to transportation matrix}), and so the entries of the tables are $0,1$.  In this case the associated polynomials are actually real stable, and thus stronger lower bounds are possible (see \cite{G15}).

There is also the following permanent and determinant formula for this number from \cite{juggling}.  

\begin{theorem}[{\cite[Thm. 6.16]{juggling}}]
\[
K_G(1,0,\ldots,0,-1) = \perm(M_G)=\det(N_G),
\]
where $M_G$ is the matrix with $(m_{ij})$ with $m_{i,i-1}=1$, $m_{ij}=1$ if $(i,j+1)$ is an edge of $G$ and $0$ otherwise. $N_G$ is the matrix $(n_{ij})$ with $n_{i,i-1}=-1$, $n_{ij}=1$ if $(i,j+1)$ is an edge of $G$ and $0$ otherwise.
\end{theorem}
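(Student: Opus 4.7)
The plan is to interpret all three quantities as counts of directed paths from $0$ to $n$ in the DAG $G$, with the key work being a bijection between such paths and the permutations contributing nonzero terms to the determinant and permanent expansions, plus a sign cancellation in the determinant case.

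First, I would identify $K_G(1, 0, \ldots, 0, -1)$ with the number of directed $0$-to-$n$ paths in $G$. Since $G$ is acyclic, any nonnegative integer flow with netflow $(1, 0, \ldots, 0, -1)$ decomposes as a single unit flow along a directed $0$-to-$n$ path (there are no cycles available to route any extra flow). Next, I would construct a bijection between such paths and the permutations $\sigma$ of $\{0, 1, \ldots, n-1\}$ with $\prod_i m_{i, \sigma(i)} = 1$ (equivalently, the permutations giving a nonzero term of $\det(N_G)$). Given a path $P \colon 0 = v_0 \to v_1 \to \cdots \to v_k = n$, define $\sigma_P(v_s) := v_{s+1} - 1$ for $s = 0, \ldots, k-1$ (picking up the edge entry $m_{v_s, v_{s+1}-1} = 1$) and $\sigma_P(i) := i - 1$ for $i$ off the path (picking up the subdiagonal entry). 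For the reverse direction, the shifted map $\tau(i) := \sigma(i) + 1$ is a bijection $\{0, \ldots, n-1\} \to \{1, \ldots, n\}$ that at each $i$ either equals $i$ or follows a $G$-edge; a degree count together with acyclicity of $G$ forces $\tau$ to decompose as a single $0$-to-$n$ path plus self-loops at the remaining interior vertices.

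For $\perm(M_G)$ the bijection immediately gives the path count, since every nonzero entry is $+1$. For $\det(N_G)$, I would verify that each admissible $\sigma_P$ contributes exactly $+1$ to the signed sum. The product $\prod_i n_{i, \sigma_P(i)}$ equals $(-1)^{n-k}$, since $k$ entries come from edges (contributing $+1$) and $n - k$ from the subdiagonal (contributing $-1$). To compute $\mathrm{sgn}(\sigma_P)$, I would trace the cycles of $\sigma_P$: starting from $v_s$, the orbit passes through $v_{s+1} - 1, v_{s+1} - 2, \ldots, v_s + 1$ before returning to $v_s$, producing a cycle of length $v_{s+1} - v_s$. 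Summing over $s = 0, \ldots, k-1$ gives exactly $k$ cycles of total length $n$, so $\mathrm{sgn}(\sigma_P) = (-1)^{n-k}$, which cancels with the weight product to yield $+1$. The hardest step will be this cycle decomposition, in particular handling degenerate cases uniformly (for example, $v_{s+1} = v_s + 1$ produces a length-one cycle, i.e.\ a fixed point, rather than a longer descending chain) and verifying that the bijection in step two is genuinely onto, so that no stray permutations contribute to the nonzero terms.
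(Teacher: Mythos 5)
The paper does not prove this theorem; it is imported verbatim as a citation to \cite[Thm. 6.16]{juggling}, with no proof given. So there is no "paper's proof" to compare against. Judged on its own, your argument is correct and complete: a nonnegative integer flow of value one on a DAG is a single unit path from $0$ to $n$; your map $P \mapsto \sigma_P$ (send on-path vertices $v_s \mapsto v_{s+1}-1$ and off-path vertices $i \mapsto i-1$) is a bijection onto the permutations of $\{0,\ldots,n-1\}$ contributing nonzero terms, with the reverse direction cleanly handled by the shift $\tau = \sigma + 1$ and the acyclicity argument ruling out nontrivial cycles off the $0$-to-$n$ trajectory; the permanent count is then immediate; and your cycle decomposition of $\sigma_P$ into $k$ cycles of lengths $v_{s+1}-v_s$ (summing to $n$) correctly gives $\mathrm{sgn}(\sigma_P)=(-1)^{n-k}$, which cancels the entry product $(-1)^{n-k}$ in $\det(N_G)$. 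One small point worth making explicit in a final write-up: the rows and columns of $M_G, N_G$ are indexed by $\{0,\ldots,n-1\}$, so the subdiagonal rule $\sigma(i)=i-1$ is only available for $i \geq 1$; this is consistent with your construction because $0 = v_0$ is always on the path, and in the converse direction because $\tau(0)=0$ is impossible as $0$ is outside the codomain $\{1,\ldots,n\}$, so row $0$ is forced to use an edge entry.
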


Note that this permanent formula for $K_G(1,0,\ldots,0,-1)$ means we can also apply Gurvits' original capacity-based lower bound in \cite{G06}.

\subsection{Other capacity lower bounds}

The bounds in our paper rely on lower bounding the capacity of a multivariate power series. Recently, \cite{GKL23} gave lower bounds on the capacity of real stable polynomials to further improve upon the approximation factor for the metric traveling salesman problem (after the breakthrough work of \cite{KKO21}). That paper lower bounds $\cpc_{\bm\alpha}(p)$ based on how close the value of $\nabla \log p(\bm{1})$ is to $\bm\alpha$. The techniques of that paper and of our paper are completely different, and as of now we know of no connection between these techniques other than the goal of lower bounding the capacity in order to explicitly lower bound coefficients. That said, it is an open problem whether or not the techniques of \cite{GKL23} can be generalized to apply to (denormalized) Lorentzian polynomials (see Section 9 of \cite{GL21b}).

\subsection{Phase transitions in the polynomial growth case} \label{sec:fin-rem-poly-growth}

Do the phase transitions observed in the lower bounds of Theorem \ref{thm:general_positive_lower_bound} represent the actual nature of the number of integer flows, or are they simply an artifact of the proof? Already for the Tesler case, the best known upper bound on $\log K_n(\bm{N})$ is $O(n^2)$, and so it is possible that the phase transitions of the lower bounds are misleading. Phase transitions for the related problem of counting and random contingency tables with certain given marginals have been observed in \cite{LP22,DLP20} (predicted by \cite{Bar10}), but no analogous results have been proven for integer flows in the polynomial growth cases. We leave it as an open problem to improve or find corresponding upper bounds in these cases.

\subsection{A flow version of Barvinok's question for contingency tables}

In \cite[Eq. 2.3]{Bar07}, Barvinok asks the question of the general log-concavity of the number of contingency tables in terms of the marginal vectors. Concretely, let $T_S(\bm\alpha,\bm\beta)$ be the number of non-negative integer matrices with row sums $\bm\alpha$ and column sums $\bm\beta$ and support (non-zero entries) $S$. If $(\bm\alpha,\bm\beta) = \sum_{i=1}^k c_i (\bm\alpha^{(i)},\bm\beta^{(i)})$ is a convex combination of non-negative integer vectors, then is it always true that
\[
    T_S(\bm\alpha,\bm\beta) \geq \prod_{i=1}^k \left[T_S(\bm\alpha^{(i)},\bm\beta^{(i)})\right]^{c_i}?
\]
A version of this question can be asked specifically for non-negative integer flows, which gives a special case of the above question. This special case can be explicitly asked as follows. If $\bm{N} = \sum_{i=1}^k c_i \bm{N}^{(i)}$ is a convex combination of integer vectors summing to $\bm{0}$, then is it always true that
\[
    K_n(\bm{N}) \geq \prod_{i=1}^k \left[K_n(\bm{N}^{(i)})\right]^{c_i}?
\]
Finally, a different but related question is given as follows. Given $\bm{N},\bm{M}$ such that $\bm{N} \unrhd \bm{M}$ (that is, that $\bm{N}$ domniates $\bm{M}$; see Section~\ref{sec: previous bounds}), is it always true that
\[
    K_n(\bm{N}+\bm{M}) \geq K_n(\bm{N}) \cdot K_n(\bm{M})?
\]
See \cite{Bar07} for other similar questions and results for contingency tables.

\subsection{Case of the $q$-analogue of the Kostant partition function}

The function $K_n(\bm{N})$ has a known $q$-analogue by Lusztig \cite{Lusztig-qanalogue} that we denote by $K_n(\bm{N},q)$ and is defined as follows,
\[
K_n(\bm{N},q) := \sum_{f \in \cF_n(\bm{N}) \cap \mathbb{Z}^{\binom{n+1}{2}}} q^{|f|},
\]
where $|f|=\sum_{i,j} f_{ij}$. Alternatively, $K_n(\bm{N},q)$ is the coefficient of $\bm{z}^{\bm{N}}$ in the generating function
\[
\sum_{\bm{N}} K_n(\bm{N},q) \bm{z}^{\bm{N}} \,=\, \prod_{0\leq i<j \leq n} \frac{1}{1-qz_iz_j^{-1}},
\]
or via (\ref{eq: def injection flows to transportation matrix}) as the coefficient of $\bm{x}^{\bm\alpha} \bm{y}^{\bm\beta}$ (where $\bm\alpha,\bm\beta$ are defined as in Theorem \ref{thm:BLP-flow-bound}) in the generating function
\[
    \Phi'(\bm{x},\bm{y},q) = \prod_{\substack{0 \leq i,j \leq n-1 \\ i+j \leq n-1}} \frac{1}{1 - q x_i y_j} \prod_{\substack{0 \leq i,j \leq n-1 \\ i+j = n}} \frac{1}{1 - x_i y_j}.
\]
For fixed $q > 0$, $K_n(\bm{N},q)$ can be bounded via capacity bounds on $\Phi'(\bm{x},\bm{y},q)$ in a way similar to that of the results of this paper (i.e, via Theorem \ref{thm:BLP-flow-bound}). On the other hand, it is not clear how to adapt the results of this paper to bound or approximate the coefficients of $K_n(\bm{N},q)$. More specifically, the expression $1/(1-xyz)$ does not fit well into the context of this paper since there is no obvious way to adjust it to have the necessary log-concavity properties.

\subsection{Approximating volumes of flow polytopes} \label{sec:approx-vol-flows}

Beyond bounding the number of integer flows, we can also bound the volume of $\phi(\mathcal{F}_n(\bm{N}))$.
To do this, we adapt results from \cite{BLP}.
In particular, we can emulate the proof of Theorems 8.1 and 8.2 of \cite{BLP} to achieve the following bound:
\begin{align}
    \vol(\phi(\mathcal{F}_n(\bm{N}))) \,\geq\, \frac{f(S,n,n)}{e^{2n-1}} \, \max_i \{s_i\} \prod_{i=0}^{n-1} \frac{1}{s_i^2} \, \cpc_{\bm\alpha\,\bm\beta}\left(\prod_{i+j \leq n} \frac{-1}{\log(x_iy_j)}\right),
\end{align}
where we define

$S = \{(i,j) \in \{0,1,\ldots,n-1\}^2 : i+j \leq n\}$ and $f(S,n,n)^2$ is the covolume of the lattice $\Z\langle S \rangle \cap \R\langle \phi(\mathcal{F}_n(\bm{N})) \rangle$.
According to Section 8 of \cite{BLP}, $f(S,n,n)^2$ counts the number of spanning trees of the bipartite graph with support given by $S$. In our setting, the number of such trees is $(n!)^2$ (see Appendix~\ref{sec:number of spanning trees}).

The following analogue of Proposition \ref{prop:cap_sup} then follows from essentially the same proof.

\begin{proposition} \label{prop:cap_sup_vol}
   
    Let $\phi(\mathcal{F}_n(\bm{N}))$ be the image of $\mathcal{F}_n(\bm{N})$ in $\mathcal{T}(\bm\alpha,\bm\beta)$ as defined in (\ref{eq: def injection flows to transportation matrix}). We have that
    \[
        \cpc_{\bm\alpha \, \bm\beta}\left(\prod_{i+j \leq n} \frac{-1}{\log(x_iy_j)}\right) \,=\, e^{\binom{n}{2} + 2n - 1} \, \sup_{A \in \phi(\mathcal{F}_n(\bm{N}))} \prod_{i+j \leq n} a_{ij}.
    \]
\end{proposition}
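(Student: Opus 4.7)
My plan is to mirror the proof of Proposition \ref{prop:cap_sup} given in Section \ref{sec:proof_cap_sup}, but replacing the convex kernel $-\log(1-e^{t})$ (suited to rational functions of the form $1/(1-x_iy_j)$) with the kernel $-\log(-t)$ (suited to the logarithmic factors $-1/\log(x_iy_j)$). Substituting $x_i = e^{u_i}$, $y_j = e^{v_j}$ and using Definition \ref{def:cap}, one gets
\[
    \log \cpc_{\bm\alpha,\bm\beta}(p) \,=\, \inf_{(\bm u, \bm v) \in \mathcal{D}} \bigl[g(\bm u,\bm v) - \langle \bm u, \bm\alpha\rangle - \langle \bm v, \bm\beta\rangle\bigr] \,=\, -g^*(\bm\alpha,\bm\beta),
\]
where $g(\bm u,\bm v) := -\sum_{i+j \leq n} \log(-(u_i+v_j))$ is convex on its domain $\mathcal{D} \supseteq \R_{<0}^{2n}$ (since $-\log(-t)$ is convex on $(-\infty,0)$).

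Next I would split $g = \sum_{i+j \leq n} g_{ij}$ with $g_{ij}(\bm u,\bm v) := -\log(-(u_i+v_j))$, and apply Theorem \ref{thm:conjugate_split} (the qualification is immediate, as $\R_{<0}^{2n}$ lies in the relative interior of every domain $\mathcal{D}_{ij}$). This gives $g^*(\bm\alpha,\bm\beta) = \inf \sum g_{ij}^*(\bm\alpha_{ij},\bm\beta_{ij})$ where the infimum runs over decompositions $\sum(\bm\alpha_{ij},\bm\beta_{ij}) = (\bm\alpha,\bm\beta)$.

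The conjugates $g_{ij}^*$ are then easy to compute in closed form. Since $g_{ij}$ depends only on $u_i+v_j$, a standard argument forces $\bm\alpha_{ij} = c\,\bm e_i$ and $\bm\beta_{ij} = c\,\bm e_j$ for some $c \geq 0$, and setting $s = -(u_i+v_j) > 0$ yields
\[
    g_{ij}^*(c\,\bm e_i, c\,\bm e_j) \,=\, \sup_{s>0}\bigl[-cs + \log s\bigr] \,=\, -1 - \log c
\]
(via the usual calculus optimization, giving the stationary point $s = 1/c$). Therefore the infimum over decompositions becomes exactly the infimum over matrices $A = (a_{ij})_{i+j \leq n}$ with the row/column sums prescribed by $(\bm\alpha,\bm\beta)$, i.e., over $A \in \phi(\mathcal{F}_n(\bm N))$, yielding
\[
    g^*(\bm\alpha,\bm\beta) \,=\, -N_S \,-\, \sup_{A \in \phi(\mathcal{F}_n(\bm N))} \sum_{i+j \leq n} \log a_{ij},
\]
where $N_S := \#\{(i,j) : 0 \leq i,j \leq n-1,\ i+j \leq n\}$.

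Finally, I would perform the (routine) count of $N_S$: summing $n$ entries in row $i=0$ and $n-i+1$ entries in each row $i \geq 1$ gives $N_S = n + \sum_{k=2}^n k = \binom{n}{2} + 2n - 1$. Negating and exponentiating then delivers the claimed identity. The main step that requires any care is the conjugate computation together with the verification that the constraint reproducing $(\bm\alpha,\bm\beta)$ is precisely the defining system for $\phi(\mathcal{F}_n(\bm N))$; every other step is parallel to Section \ref{sec:proof_cap_sup}, and I do not anticipate a genuine obstacle.
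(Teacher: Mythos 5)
Your proposal is correct and takes exactly the route the paper indicates: the paper states that Proposition \ref{prop:cap_sup_vol} follows from essentially the same proof as Proposition \ref{prop:cap_sup}, and you have carried out that convex-conjugate/infimal-convolution argument with the kernel $-\log(-t)$ in place of $-\log(1-e^t)$, including the correct count $N_S=\binom{n}{2}+2n-1$. The only small refinement worth noting is that here $g_{ij}^*(0,0)=+\infty$, so the conjugate domain is $c\in(0,\infty)$ rather than $[0,\infty)$ as in Proposition \ref{prop:cap_sup}; this does not affect the identity, since matrices with a zero entry contribute $-\infty$ to the supremum of $\sum\log a_{ij}$ and are therefore irrelevant on both sides.
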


This alternate expression and the above discussion allow us to produce concrete lower bounds for the volume of flow polytopes, which are analogous to our bounds on lattice points. If $\bm{f}$ is any (not necessarily integer) point of $\mathcal{F}_n(\bm{N})$ and $A = \phi(\bm{f})$ where $\phi$ is defined as in (\ref{eq: def injection flows to transportation matrix}), then the relative Euclidean volume of $\phi(\mathcal{F}_n(\bm{N}))$ can be bounded via
\begin{align}
    \vol(\phi(\mathcal{F}_n(\bm{N}))) \geq e^{\binom{n}{2}} n! \, \max_i \{s_i\} \prod_{i=0}^{n-1} \frac{1}{s_i^2} \prod_{i+j \leq n} a_{ij}.
\end{align}
Note that $\phi$ is an injective linear map, and thus bounds on the volume of $\mathcal{F}_n(\bm{N})$ can be obtained from the above volume bound. Further, we can use this to achieve specific volume bounds in a similar way to the flow counting bounds of Section \ref{sec:flow-counting-lower-bounds}. On the other hand, our vertex-averaging heuristic for choosing the matrix $A$ does not seem to work as well in the volume case.

Finally, note the difference in entropy functions within the supremums for counting and volume respectively in this paper is essentially the same as that of \cite{BH10} for counting and volume (see also \cite{Bar10}). These functions in these two cases are the entropy functions of the multivariate geometric and exponential distributions, respectively. And further, these distributions are entropy-maximizing distributions on the non-negative integer lattice and on the positive orthant, respectively.

\subsection{Projecting to a Pitman--Stanley polytope} \label{sec:PS polytope}

We settled Yip's conjecture (Conjecture~\ref{thm:Yip's conjecture}) for large enough $n$. However, Yip's original question was to find a projection from the polytope $\mathcal{F}_n(\bm{N})$ for $\bm{N}=(1,1\ldots,1,-n)$ and the classical permutahedron $\Pi_n$ that preserves lattice points of the latter. We were not able to find such a projection, however we were able to find the following projections of interest. 

For $\bm{a}=(a_1,\ldots,a_n) \in \mathbb{Z}^n$, let $PS_n(\bm{a})=\{(y_1,\ldots,y_n) \mid y_i\geq 0,  \sum_{i=1}^j y_i\leq \sum_{i=1}^j a_j, i=1,\ldots,n\}$ be the \emph{Pitman Stanley polytope} \cite{Pitman_Stanley_1999}. 

This polytope is a Minkowski sum of simplices \cite[Thm. 9]{Pitman_Stanley_1999} and is an example of a {\em generalized permutahedra}  \cite[Ex. 9.7]{AP}.

Baldoni--Vergne \cite[Ex. 16]{BV} showed that $PS_{n-1}(\bm{a})$ is integrally equivalent to a flow polytope of a graph $G_n$ with edges $\{(0,1),(1,2),\ldots,(n-2,n-1)\}\cup\{(0,n),(1,n),\ldots,(n-1,n)\}$ and netflow $(a_1,a_2,\ldots,a_n,-\sum_i a_i)$. Recall also that for $\bm{a}=\bm{1}$, then $PS_n(\bm{1})$ has $C_n$ lattice points and normalized volume $(n+1)^{n-1}$. The next result gives a projection between the flow polytope $\mathcal{F}_n(\bm{N})$ and the Pitman--Stanley polytope. The same projection appears in work of M\'eszaros--St. Dizier \cite[Thm. 4.9, Thm. 4.13, Ex. 4.17]{MStD} in the context of {\em saturated Newton polytopes} and generalied permutahedra\footnote{The projection considered by the authors \cite{MStD} allowed for other graphs $G$ other than the complete graph $k_{n+1}$ with a restricted netflow $\bm{N}$ depending on $G$.}.

\begin{proposition} \label{prop:projection to PS}
For $\bm{N}=(N_0,\ldots,N_{n-1},-\sum_{i=0}^{n-1} N_i)$ and $\bm{N}'=(N_0,\ldots,N_{n-2})$, the projection map $\pi:\mathcal{F}_n(\bm{N})\to \mathbb{R}^n$, $\bm{f}\mapsto (f_{0n},f_{1n},\ldots,f_{n-2,n})$ is a surjective map to $PS_{n-1}(\bm{N}')$  that preserves lattice points.     
\end{proposition}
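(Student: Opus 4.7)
\medskip

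\noindent\textbf{Proof proposal.} The plan is to verify containment, construct an explicit section, and then observe that the section preserves integrality. Throughout, I will use the reindexing $y_{i+1} = f_{i,n}$ for $i=0,\dots,n-2$, so that $\pi(\bm{f})=(y_1,\dots,y_{n-1})$.

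First, I would show that $\pi(\mathcal{F}_n(\bm{N})) \subseteq PS_{n-1}(\bm{N}')$. Fix $\bm{f}\in\mathcal{F}_n(\bm{N})$ and $j\in\{1,\dots,n-1\}$. Applying flow conservation at each vertex $0,1,\dots,j-1$ and summing (every edge internal to $\{0,\dots,j-1\}$ cancels, and there are no edges entering the set from outside since $G=k_{n+1}$ is oriented by the vertex ordering), we obtain
\[
\sum_{i=0}^{j-1}\sum_{k=j}^{n} f_{i,k} \;=\; \sum_{i=0}^{j-1} N_i.
\]
Since all $f_{i,k}\geq 0$, in particular
\[
\sum_{i=1}^{j} y_i \;=\; \sum_{i=0}^{j-1} f_{i,n} \;\leq\; \sum_{i=0}^{j-1} N_i \;=\; \sum_{i=1}^{j} N'_{i-1},
\]
which is exactly the defining inequality of $PS_{n-1}(\bm{N}')$.

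Next, I would construct a section $\sigma:PS_{n-1}(\bm{N}')\to\mathcal{F}_n(\bm{N})$ supported on the subgraph $G_n$ with edges $\{(i,i+1):0\le i\le n-1\}\cup\{(i,n):0\le i\le n-2\}$. Given $\bm{y}\in PS_{n-1}(\bm{N}')$, set $f_{i,n}:=y_{i+1}$ for $i=0,\dots,n-2$, set all other non-path, non-sink edges to $0$, and define the path edges recursively by flow conservation:
\[
f_{i,i+1} \;:=\; \sum_{k=0}^{i} N_k \;-\; \sum_{k=1}^{i+1} y_k \qquad (0\le i\le n-2),
\]
with $f_{n-1,n}$ then forced by conservation at vertex $n-1$. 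The Pitman--Stanley inequalities are exactly what ensures $f_{i,i+1}\geq 0$ for each $i$, and a direct computation shows $f_{n-1,n}=\sum_{k=0}^{n-1}N_k-\sum_{k=1}^{n-1}y_k\geq 0$ (using the $j=n-1$ inequality together with $N_{n-1}\geq 0$). By construction $\pi(\sigma(\bm{y}))=\bm{y}$, so $\pi$ is surjective onto $PS_{n-1}(\bm{N}')$.

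Finally, the lattice-point statement is immediate from the section: if $\bm{y}\in PS_{n-1}(\bm{N}')\cap\mathbb{Z}^{n-1}$, then every coordinate of $\sigma(\bm{y})$ is an integer, so $\sigma(\bm{y})$ is a lattice point of $\mathcal{F}_n(\bm{N})$ mapping to $\bm{y}$; conversely, $\pi$ clearly sends lattice points to lattice points. There is no real obstacle here: once one observes that summing flow conservation over initial segments produces exactly the Pitman--Stanley inequalities, both the containment and the section are forced, and integrality is preserved because the recursion defining $\sigma$ involves only integer differences.
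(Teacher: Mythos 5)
Your proof is correct and takes essentially the same approach as the paper: the containment $\pi(\mathcal{F}_n(\bm{N}))\subseteq PS_{n-1}(\bm{N}')$ is proved by the same summation of flow conservation over initial segments, and surjectivity (with preservation of lattice points) comes from lifting along the subgraph $G_n$ of $k_{n+1}$. The only difference is cosmetic: the paper invokes Baldoni--Vergne's integral equivalence $PS_{n-1}(\bm{N}')\cong\mathcal{F}_{G_n}(\bm{N})$ as a black box, whereas you write out the section on $G_n$ explicitly and verify its nonnegativity and integrality directly, which amounts to unpacking that citation.
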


\begin{proof}
 First we show that $\pi(\mathcal{F}_n(\bm{N}))\subset PS_{n-1}(
 \bm{N'})$. Given $\bm{f}$ in $\mathcal{F}_n(N)$,  since each flow in an edge is nonnegative, it suffices to check that $\sum_{i=0}^j f_{i,n} \leq \sum_{i=0}^j N_i$ for $j=0,\ldots,n-2$. Note that $\sum_{i=0}^j N_i$ equals the sum of the flows of the outgoing edges from vertices $\{0,\ldots,j\}$, i.e. all the flows on edges starting and ending in vertices $\{0,\ldots,j\}$ cancel. Thus
 \[
\sum_{i=0}^j N_i = \sum_{i=0}^j \sum_{k=j+1}^n f_{i,k} \geq \sum_{i=0}^j f_{i,n}.
 \]
To show the map is onto, we use the fact that $PS_{n-1}(\bm{N'})$ is integrally equivalent to $\mathcal{F}_{G_n}(\bm{N})$ and that $G_n$ is a subgraph of the complete graph $k_{n+1}$.  See Figure~\ref{fig:proj to PS}.  
\end{proof}

\begin{remark}
Restricting to the flows to the second to last vertex also gives a projection to the Pitman--Stanley polytope, see Figure~\ref{fig:symmetry proj to PS}.
\end{remark}

Lastly, restricting to the total outgoing flow of each vertex gives a projection to a parallelepiped.

\begin{proposition}
For $N=(N_0,\ldots,N_{n-1},-\sum_{i=0}^{n-1} N_i)$, the map $\pi'':\mathcal{F}(N) \to \mathbb{R}^{n-1}$, $\bm{f}\mapsto (x_1,\ldots,x_{n-1})$ where $x_i=\sum_{j=i+1}^n f_{ij}$ is a surjective map to the parallelepiped $[N_0,s_0]\times [N_1,s_1]\times [N_{n-1},s_{n-1}]$ where $s_i=\sum_{j=0}^i N_j$.
\end{proposition}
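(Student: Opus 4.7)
The plan is to prove the statement in two parts: first, establish the inclusion $\pi''(\mathcal{F}_n(\bm N)) \subseteq \prod_{i=1}^{n-1} [N_i, s_i]$, which follows by a short book-keeping argument with the netflow conservation laws; then, show surjectivity by reducing to the feasibility of a bipartite transportation problem and verifying Hall's condition. (Note that the product in the statement appears to be abbreviated with a missing $\cdots$; I interpret the target as the $(n-1)$-dimensional box above, consistent with the fact that $x_0$ is forced by netflow at vertex $0$.)

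For the inclusion, fix any $\bm f \in \mathcal{F}_n(\bm N)$ and set $x_i := \sum_{j=i+1}^n f_{ij}$. The netflow equation at vertex $i$ reads $x_i - \sum_{k<i} f_{ki} = N_i$, so $x_i = N_i + \sum_{k<i} f_{ki} \geq N_i$ by nonnegativity. For the upper bound, note that since every edge of $k_{n+1}$ goes from a smaller to a larger index, no edge enters the set $S := \{0,1,\ldots,i\}$ from outside; summing the netflow equations over $S$ therefore yields $s_i = \sum_{k \in S} N_k = \sum_{k \leq i,\, j > i} f_{kj} \geq \sum_{j>i} f_{ij} = x_i$, since the displayed sum includes all of $x_i$ as a subset of its terms.

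For surjectivity, fix $(x_1,\ldots,x_{n-1}) \in \prod_{i=1}^{n-1} [N_i, s_i]$ and set $x_0 := N_0$ (forced by the netflow constraint at vertex $0$). I need to exhibit nonnegative reals $(f_{ij})_{0 \leq i < j \leq n}$ satisfying $\sum_{j>i} f_{ij} = x_i$ for each $i$ and $\sum_{k<i} f_{ki} = x_i - N_i$ for each $i \geq 1$. This is precisely a bipartite transportation feasibility problem: sources $\{0,\ldots,n-1\}$ with supply $x_i$, sinks $\{1,\ldots,n\}$ with demand $d_j = x_j - N_j$ for $j < n$ and $d_n = \sum_i N_i$, and allowed edges $(i,j)$ with $i < j$. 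By max-flow/min-cut (equivalently, a Hall-type criterion for bipartite transportation), a feasible nonnegative flow exists iff for every sink subset $T$ the demand in $T$ does not exceed the supply available at $N(T)$. Since $N(T) = \{0,\ldots,(\max T)-1\}$ depends only on $\max T$, the worst case is $T = \{1,\ldots,k\}$, for which the condition simplifies to $\sum_{i=1}^k (x_i-N_i) \leq \sum_{i=0}^{k-1} x_i$, i.e.\ $x_k \leq s_k$ (for $k < n$), while the case $k = n$ is the global balance $\sum_i x_i = \sum_i N_i + \sum_{i \geq 1}(x_i - N_i)$ and holds automatically.

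The main obstacle is showing that the componentwise inequalities $N_i \leq x_i \leq s_i$ \emph{suffice} for feasibility of the transportation system; everything else is unwinding definitions. An alternative that avoids invoking Hall's theorem is to give an explicit northwest-corner sweep: process the excess demands $y_j = x_j - N_j$ in increasing order of $j$, and at each step route the required amount into vertex $j$ from the lowest-index source that still has remaining capacity on the edge $(i,n)$. A short induction on $j$ using the bounds $x_j \leq s_j$ shows this greedy construction never runs out of supply, producing a self-contained proof of surjectivity.
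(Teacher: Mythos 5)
Your proof is correct, and it actually supplies details that the paper glosses over or gets wrong. For the inclusion $\pi''(\mathcal{F}_n(\bm N)) \subseteq \prod_{i=1}^{n-1}[N_i,s_i]$, your cut argument (summing the netflow equations over $\{0,\dots,i\}$ and using that $k_{n+1}$ has no backward edges) is the right way to get $x_i \le s_i$. The paper instead asserts ``$f_{kj} \le N_k$,'' which is false in general — e.g.\ in $\mathcal{F}_3(1,1,1,-3)$ one vertex has $f_{23}=3 > 1 = N_2$ — so your version genuinely repairs a faulty step (the correct pointwise bound is $f_{kj} \le x_k \le s_k$, which is what your cut argument delivers). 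For surjectivity, the paper simply says ``it is straightforward to check that this map is onto,'' while you make it precise: fixing $x_0 = N_0$, recasting the preimage problem as a bipartite transportation feasibility question, observing that the demands $x_j - N_j$ are nonnegative (so only the prefixes $T = \{1,\dots,k\}$ need checking in Hall's condition), and reducing that check exactly to $x_k \le s_k$. The greedy northwest-corner alternative you sketch works too, and the one-line inductive invariant ($\sum_{i<j} x_i - \sum_{i<j} d_i = s_{j-1} \ge x_j - N_j$) closes it; you could tighten the phrase ``remaining capacity on the edge $(i,n)$'' to ``remaining unallocated supply at source $i$,'' but the idea is sound. You also correctly diagnose the dimensional typo in the statement (the map lands in $\mathbb{R}^{n-1}$, so the product should be over $i=1,\dots,n-1$, with the $[N_0,s_0]=\{N_0\}$ factor reflecting that $x_0$ is forced).
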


\begin{figure}
\begin{subfigure}[b]{0.45\textwidth}
    \centering
    \includegraphics[scale=0.8]{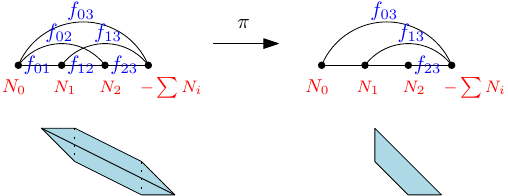}
    \caption{}
    \label{fig:proj to PS}
    \end{subfigure}
\begin{subfigure}[b]{0.5\textwidth}
     \raisebox{37pt}{\includegraphics[scale=0.8]{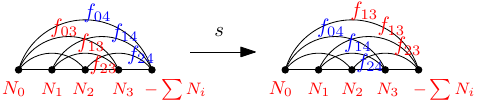}}
     \caption{}
      \label{fig:symmetry proj to PS}
\end{subfigure}
    \caption{(a) Projection from the flow polytope of the complete graph to the Pitman--Stanley polytope. 
    (b) Illustration symmetry map on $\mathcal{F}_n(N)$.
    }
    \label{fig:proj and symmetry to PS}
\end{figure}

\begin{proof}
From the netflow constraint on vertex $i$ we have that 
\[
N_i \leq \sum_{j=i+1}^n f_{ij} = N_i + \sum_{k=0}^i f_{ki}
\]
Since $N_i\geq 0$, the flow on each edge $(k,j)$ is at most the netflow $N_i$, that is $f_{kj} \leq N_k$. This shows that $\pi'(\mathcal{F}(N))\subseteq [N_0,s_0]\times [N_1,s_1]\times [N_{n-1},s_{n-1}]$. It is straightforward to check that this map is onto.
\end{proof}

\begin{remark}
The projections in this section do not give very strong lower bounds. For example for $\bm{N}=(1,1,\ldots,1,-n)$, they give the lower bounds $C_n$ and $n!$, respectively.   
\end{remark}

\section*{Acknowledgements}
Both authors acknowledge the support of the Natural Sciences and Engineering Research Council of Canada (NSERC), [funding reference number RGPIN-2023-03726 and RGPIN-2024-06246].

Cette recherche a été financée par le Conseil de recherches en sciences naturelles et en génie du Canada (CRSNG), [numéro de référence RGPIN-2023-03726 et RGPIN-2024-06246].

A. H. Morales was also partially supported by NSF grant DMS-2154019 and an FRQNT Team grant number 341288.

We thank the Institut Mittag-Leffler in
Djursholm, Sweden and the course of the program on Algebraic and Enumerative Combinatorics in Spring 2020 where the authors met. We thankfully acknowledge the support of the Swedish Research Council under grant no. 2016-06596, and thank Institut Mittag-Leffler for its hospitality. We thank Petter Br\"and\'en, Anne Dranowski, Leonid Gurvits, Joel Lewis, Jason O'Neill, Igor Pak, and Martha Yip for helpful comments and suggestions.

\bibliographystyle{amsalpha}
\bibliography{bibliography}

\newcommand{\etalchar}[1]{$^{#1}$}
\providecommand{\bysame}{\leavevmode\hbox to3em{\hrulefill}\thinspace}
\providecommand{\MR}{\relax\ifhmode\unskip\space\fi MR }
\providecommand{\MRhref}[2]{%
  \href{http://www.ams.org/mathscinet-getitem?mr=#1}{#2}
}
\providecommand{\href}[2]{#2}
\begin{thebibliography}{BGDH{\etalchar{+}}19}

\bibitem[AGH{\etalchar{+}}12]{AGHRS}
D.~Armstrong, A.~Garsia, J.~Haglund, B.~Rhoades, and B.~Sagan, \emph{Combinatorics of {T}esler matrices in the theory of parking functions and diagonal harmonics}, J. Comb. \textbf{3} (2012), no.~3, 451--494. \MR{3029443}

\bibitem[ALOGV24]{AGSVII}
Nima Anari, Kuikui Liu, Shayan Oveis~Gharan, and Cynthia Vinzant, \emph{Log-concave polynomials {II}: {H}igh-dimensional walks and an {FPRAS} for counting bases of a matroid}, Ann. of Math. (2) \textbf{199} (2024), no.~1, 259--299. \MR{4681146}

\bibitem[AOG17]{AO17}
Nima Anari and Shayan Oveis~Gharan, \emph{A generalization of permanent inequalities and applications in counting and optimization}, Proceedings of the 49th Annual ACM SIGACT Symposium on Theory of Computing, 2017, pp.~384--396.

\bibitem[AOGV21]{AGSVI}
Nima Anari, Shayan Oveis~Gharan, and Cynthia Vinzant, \emph{Log-concave polynomials, {I}: entropy and a deterministic approximation algorithm for counting bases of matroids}, Duke Math. J. \textbf{170} (2021), no.~16, 3459--3504. \MR{4332671}

\bibitem[Apo99]{EulerMaclaurin}
Tom~M Apostol, \emph{An elementary view of euler's summation formula}, The American Mathematical Monthly \textbf{106} (1999), no.~5, 409--418.

\bibitem[Bar07]{Bar07}
Alexander Barvinok, \emph{Brunn--minkowski inequalities for contingency tables and integer flows}, Advances in Mathematics \textbf{211} (2007), no.~1, 105--122.

\bibitem[Bar09]{Bar09}
\bysame, \emph{Asymptotic estimates for the number of contingency tables, integer flows, and volumes of transportation polytopes}, International Mathematics Research Notices \textbf{2009} (2009), no.~2, 348--385.

\bibitem[Bar10]{Bar10}
\bysame, \emph{What does a random contingency table look like?}, Combinatorics, Probability and Computing \textbf{19} (2010), no.~4, 517--539.

\bibitem[Bar12]{Bar12}
\bysame, \emph{Matrices with prescribed row and column sums}, Linear Algebra Appl. \textbf{436} (2012), no.~4, 820--844. \MR{2890890}

\bibitem[BBCV06]{BBCV}
M.~Welleda Baldoni, Matthias Beck, Charles Cochet, and Mich\`ele Vergne, \emph{Volume computation for polytopes and partition functions for classical root systems}, Discrete Comput. Geom. \textbf{35} (2006), no.~4, 551--595. \MR{2225674}

\bibitem[BBM24]{BBM}
Ivan Balashov, Constantine Bulavenko, and Yaroslav Molybog, \emph{Tesler matrices and lusztig data}, 2024.

\bibitem[BGDH{\etalchar{+}}19]{Caracol}
Carolina Benedetti, Rafael~S. Gonz\'{a}lez~D'Le\'{o}n, Christopher R.~H. Hanusa, Pamela~E. Harris, Apoorva Khare, Alejandro~H. Morales, and Martha Yip, \emph{A combinatorial model for computing volumes of flow polytopes}, Trans. Amer. Math. Soc. \textbf{372} (2019), no.~5, 3369--3404. \MR{3988614}

\bibitem[BH10]{BH10}
Alexander Barvinok and JA~Hartigan, \emph{Maximum entropy gaussian approximations for the number of integer points and volumes of polytopes}, Advances in Applied Mathematics \textbf{45} (2010), no.~2, 252--289.

\bibitem[BH20]{BH}
P.~Br\"{a}nd\'{e}n and J.~Huh, \emph{Lorentzian polynomials}, Ann. of Math. (2) \textbf{192} (2020), no.~3, 821--891.

\bibitem[BHH{\etalchar{+}}20]{juggling}
Carolina Benedetti, Christopher~RH Hanusa, Pamela~E Harris, Alejandro~H Morales, and Anthony Simpson, \emph{Kostant’s partition function and magic multiplex juggling sequences}, Annals of Combinatorics \textbf{24} (2020), no.~3, 439--473.

\bibitem[BLP23]{BLP}
Petter Br\"{a}nd\'{e}n, Jonathan Leake, and Igor Pak, \emph{Lower bounds for contingency tables via {L}orentzian polynomials}, Israel J. Math. \textbf{253} (2023), no.~1, 43--90. \MR{4575403}

\bibitem[BSDLV04]{B-SDLV}
W.~Baldoni-Silva, J.~A. De~Loera, and M.~Vergne, \emph{Counting integer flows in networks}, Found. Comput. Math. \textbf{4} (2004), no.~3, 277--314. \MR{2078665}

\bibitem[BSV04]{BVMorris}
V.~Baldoni-Silva and M.~Vergne, \emph{Morris identities and the total residue for a system of type $a_r$}, Noncommutative Harmonic Analysis, Springer, 2004, pp.~1--19.

\bibitem[BV08]{BV}
Welleda Baldoni and Mich\`ele Vergne, \emph{Kostant partitions functions and flow polytopes}, Transform. Groups \textbf{13} (2008), no.~3-4, 447--469. \MR{2452600}

\bibitem[CRY00]{CRY}
Clara~S. Chan, David~P. Robbins, and David~S. Yuen, \emph{On the volume of a certain polytope}, Experiment. Math. \textbf{9} (2000), no.~1, 91--99. \MR{1758803}

\bibitem[DLP20]{DLP20}
Samuel Dittmer, Hanbaek Lyu, and Igor Pak, \emph{Phase transition in random contingency tables with non-uniform margins}, Transactions of the American Mathematical Society \textbf{373} (2020), no.~12, 8313--8338.

\bibitem[EvW04]{Ehrenborg_vWilligenburg}
Richard Ehrenborg and Stephanie van Willigenburg, \emph{Enumerative properties of {F}errers graphs}, Discrete Comput. Geom. \textbf{32} (2004), no.~4, 481--492. \MR{2096744}

\bibitem[GKL24]{GKL23}
Leonid Gurvits, Nathan Klein, and Jonathan Leake, \emph{From trees to polynomials and back again: New capacity bounds with applications to tsp}, 2024.

\bibitem[GL21a]{GL21b}
Leonid Gurvits and Jonathan Leake, \emph{Capacity lower bounds via productization}, Proceedings of the 53rd Annual ACM SIGACT Symposium on Theory of Computing, 2021, pp.~847--858.

\bibitem[GL21b]{GL21}
\bysame, \emph{Counting matchings via capacity-preserving operators}, Combinatorics, Probability and Computing \textbf{30} (2021), no.~6, 956--981.

\bibitem[GMP21]{BMP}
Samuel~C. Gutekunst, Karola M\'{e}sz\'{a}ros, and T.~Kyle Petersen, \emph{Root cones and the resonance arrangement}, Electron. J. Combin. \textbf{28} (2021), no.~1, Paper No. 1.12, 39. \MR{4245245}

\bibitem[Gur06a]{G06}
Leonid Gurvits, \emph{Hyperbolic polynomials approach to van der waerden/schrijver-valiant like conjectures: sharper bounds, simpler proofs and algorithmic applications}, Proceedings of the thirty-eighth annual ACM symposium on Theory of computing, 2006, pp.~417--426.

\bibitem[Gur06b]{G06MD}
\bysame, \emph{The van der waerden conjecture for mixed discriminants}, Advances in Mathematics \textbf{200} (2006), no.~2, 435--454.

\bibitem[Gur09a]{G09}
\bysame, \emph{On multivariate newton-like inequalities}, Advances in Combinatorial Mathematics: Proceedings of the Waterloo Workshop in Computer Algebra 2008, Springer, 2009, pp.~61--78.

\bibitem[Gur09b]{G09MV}
\bysame, \emph{A polynomial-time algorithm to approximate the mixed volume within a simply exponential factor}, Discrete \& Computational Geometry \textbf{41} (2009), 533--555.

\bibitem[Gur15]{G15}
\bysame, \emph{Boolean matrices with prescribed row/column sums and stable homogeneous polynomials: Combinatorial and algorithmic applications}, Information and Computation \textbf{240} (2015), 42--55.

\bibitem[Hag08]{Hagbook}
James Haglund, \emph{The {$q$},{$t$}-{C}atalan numbers and the space of diagonal harmonics}, University Lecture Series, vol.~41, American Mathematical Society, Providence, RI, 2008, With an appendix on the combinatorics of Macdonald polynomials. \MR{2371044}

\bibitem[Hag11]{HagTesler}
J.~Haglund, \emph{A polynomial expression for the {H}ilbert series of the quotient ring of diagonal coinvariants}, Adv. Math. \textbf{227} (2011), no.~5, 2092--2106. \MR{2803796}

\bibitem[hs]{mathOverflow}
David~Stewart (https://mathoverflow.net/users/16185/david stewart), \emph{Kostant partition function: asymptotics and specifics}, MathOverflow, URL:https://mathoverflow.net/q/102647 (version: 2012-07-22).

\bibitem[Hum08]{Humphreys}
James~E. Humphreys, \emph{Representations of semisimple {L}ie algebras in the {BGG} category {$\mathcal{O}$}}, Graduate Studies in Mathematics, vol.~94, American Mathematical Society, Providence, RI, 2008. \MR{2428237}

\bibitem[Inc]{oeis}
OEIS~Foundation Inc., \emph{{The On-Line Encyclopedia of Integer Sequences}}, URL:http://oeis.org.

\bibitem[KKOG21]{KKO21}
Anna~R Karlin, Nathan Klein, and Shayan Oveis~Gharan, \emph{A (slightly) improved approximation algorithm for metric tsp}, Proceedings of the 53rd Annual ACM SIGACT Symposium on Theory of Computing, 2021, pp.~32--45.

\bibitem[KMS21]{KMS}
Kabir Kapoor, Karola M\'{e}sz\'{a}ros, and Linus Setiabrata, \emph{Counting integer points of flow polytopes}, Discrete Comput. Geom. \textbf{66} (2021), no.~2, 723--736. \MR{4292761}

\bibitem[Lid84]{Lidskii}
B.~V. Lidski\u{\i}, \emph{The {K}ostant function of the system of roots {$A\sb{n}$}}, Funktsional. Anal. i Prilozhen. \textbf{18} (1984), no.~1, 76--77. \MR{739099}

\bibitem[LP22]{LP22}
Hanbaek Lyu and Igor Pak, \emph{On the number of contingency tables and the independence heuristic}, Bulletin of the London Mathematical Society \textbf{54} (2022), no.~1, 242--255.

\bibitem[Lus83]{Lusztig-qanalogue}
George Lusztig, \emph{Singularities, character formulas, and a {$q$}-analog of weight multiplicities}, Analysis and topology on singular spaces, {II}, {III} ({L}uminy, 1981), Ast\'{e}risque, vol. 101-102, Soc. Math. France, Paris, 1983, pp.~208--229. \MR{737932}

\bibitem[M{\'e}s15]{Mproduct}
K.~M{\'e}sz{\'a}ros, \emph{Product formulas for volumes of flow polytopes}, Proc. Amer. Math. Soc. \textbf{143} (2015), no.~3, 937--954.

\bibitem[MM19]{MM18}
Karola M\'{e}sz\'{a}ros and Alejandro~H. Morales, \emph{Volumes and {E}hrhart polynomials of flow polytopes}, Math. Z. \textbf{293} (2019), no.~3-4, 1369--1401.

\bibitem[MMR17]{MMR}
Karola M\'{e}sz\'{a}ros, Alejandro~H. Morales, and Brendon Rhoades, \emph{The polytope of {T}esler matrices}, Selecta Math. (N.S.) \textbf{23} (2017), no.~1, 425--454.

\bibitem[MMS19]{MMS}
K.~M\'{e}sz\'{a}ros, A.~H. Morales, and J.~Striker, \emph{On flow polytopes, order polytopes, and certain faces of the alternating sign matrix polytope}, Discrete Comput. Geom. \textbf{62} (2019), no.~1, 128--163. \MR{3959924}

\bibitem[MPP19]{MPPSchub}
Alejandro~H. Morales, Igor Pak, and Greta Panova, \emph{Asymptotics of principal evaluations of {S}chubert polynomials for layered permutations}, Proc. Amer. Math. Soc. \textbf{147} (2019), no.~4, 1377--1389. \MR{3910405}

\bibitem[MS21]{MoralesShi}
Alejandro~H. Morales and William Shi, \emph{Refinements and {Symmetries} of the {Morris} identity for volumes of flow polytopes}, Comptes Rendus. Math\'ematique \textbf{359} (2021), no.~7, 823--851 (en).

\bibitem[MSD20]{MStD}
Karola M\'{e}sz\'{a}ros and Avery St.~Dizier, \emph{From generalized permutahedra to {G}rothendieck polynomials via flow polytopes}, Algebr. Comb. \textbf{3} (2020), no.~5, 1197--1229. \MR{4166815}

\bibitem[O'N15]{JON2}
Jason O'Neill, \emph{Exploration of {T}esler matrices}, 2015, Report for UCLA Department of Mathematics REU.

\bibitem[O'N18]{JON}
\bysame, \emph{On the poset and asymptotics of {T}esler matrices}, Electron. J. Combin. \textbf{25} (2018), no.~2, Paper No. 2.4, 27. \MR{3799422}

\bibitem[Pos09]{AP}
Alexander Postnikov, \emph{{Permutohedra, Associahedra, and Beyond}}, International Mathematics Research Notices \textbf{2009} (2009), no.~6, 1026--1106.

\bibitem[Pro90]{proctor}
Robert~A. Proctor, \emph{New symmetric plane partition identities from invariant theory work of de concini and procesi}, European Journal of Combinatorics \textbf{11} (1990), no.~3, 289--300.

\bibitem[Roc70]{rockafellar1970convex}
R.~Tyrrell Rockafellar, \emph{Convex analysis}, vol.~18, Princeton University Press, 1970.

\bibitem[SP02]{Pitman_Stanley_1999}
Richard~P. Stanley and Jim Pitman, \emph{A polytope related to empirical distributions, plane trees, parking functions, and the associahedron}, Discrete Comput. Geom. \textbf{27} (2002), no.~4, 603--634.

\bibitem[Sta12]{EC1}
Richard~P. Stanley, \emph{Enumerative combinatorics. {V}ol. 1}, 2 ed., Cambridge Studies in Advanced Mathematics, Cambridge University Press, Cambridge, 2012.

\bibitem[Stu95]{SturmVPF}
Bernd Sturmfels, \emph{On vector partition functions}, J. Combin. Theory Ser. A \textbf{72} (1995), no.~2, 302--309. \MR{1357776}

\bibitem[SV17]{SV17}
Damian Straszak and Nisheeth~K Vishnoi, \emph{Real stable polynomials and matroids: Optimization and counting}, Proceedings of the 49th Annual ACM SIGACT Symposium on Theory of Computing, 2017, pp.~370--383.

\bibitem[Zei99]{Z}
Doron Zeilberger, \emph{Proof of a conjecture of {C}han, {R}obbins, and {Y}uen}, Orthogonal polynomials: numerical and symbolic algorithms (Legan\'{e}s, 1998), vol.~9, 1999, pp.~147--148. \MR{1749805}

\bibitem[ZLF17]{ZhouLuFu}
Yue Zhou, Jia Lu, and Houshan Fu, \emph{Leading coefficients of morris type constant term identities}, Advances in Applied Mathematics \textbf{87} (2017), 24--42.

\end{thebibliography}

\appendix

\section{Summary of Basic Bounds and Asymptotics} \label{sec:bounds_asymptotics}

Throughout the arguments, we use a number of bounds and asymptotic expressions. We compile them here.

\begin{lemma} \label{lem:basic-entropy-bound}
    For all $t > 0$ we have
    \[
        e \left(t + \frac{1}{2}\right) \geq \frac{(t+1)^{t+1}}{t^t} \geq \max\left\{e \left(t + \frac{1}{2} - \frac{1}{24t}\right), \left(\frac{e}{t}\right)^t\right\}
    \]
    and
    \[
        \frac{(t+1)^{t+1}}{t^t} \geq et + 1.
    \]
    Note that this second bound is redundant with respect to the first bounds.
\end{lemma}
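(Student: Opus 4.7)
Write $f(t) := (t+1)^{t+1}/t^t$. The proof rests on the identity
\[
    \log f(t) - 1 \;=\; (t+1)\log(t+1) - t\log t - 1 \;=\; \int_t^{t+1} \log x\,dx,
\]
which follows from $(x\log x)' = 1 + \log x$. I plan to handle each of the four bounds separately, with most of the work anchored in this integral representation.

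For the upper bound $f(t) \leq e(t+1/2)$, I apply Jensen's inequality to the concave function $\log$ against the uniform distribution on $[t,t+1]$, whose mean is $t+1/2$: this gives $\int_t^{t+1}\log x\,dx \leq \log(t+1/2)$. For the elementary lower bound $f(t) \geq (e/t)^t$, taking logs reduces the claim to $(t+1)\log(t+1) \geq t$, whose difference vanishes at $t=0$ and has derivative $\log(t+1) \geq 0$.

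For the refined lower bound $f(t) \geq e(t + 1/2 - 1/(24t))$, I plan to Taylor-expand $\log x$ around $x = t + 1/2$ and integrate termwise over $[t,t+1]$. Odd powers of $x - (t+1/2)$ vanish by symmetry, so
\[
    \int_t^{t+1}\log x\,dx \;=\; \log(t+1/2) - S(t), \qquad S(t) := \sum_{m \geq 1}\frac{1}{2m(2m+1)(2t+1)^{2m}} \geq 0.
\]
Hence $f(t) = e(t+1/2)\,e^{-S(t)}$, and using $e^{-S} \geq 1 - S$ reduces the claim to $(t+1/2)\,S(t) \leq 1/(24t)$. I isolate the $m=1$ term, which contributes exactly $1/(12(2t+1))$ to the left-hand side and is bounded by $1/(24t)$ with explicit slack $1/(24t(2t+1))$; for the tail $\sum_{m \geq 2}$ I bound each summand using $4m(2m+1) \geq 40$, and a short algebraic comparison shows that the resulting geometric tail in $(2t+1)^{-2}$ fits inside the remaining slack for all $t > 0$. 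I expect this tail bookkeeping to be the most delicate (though elementary) part of the proof.

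Finally, the bound $f(t) \geq et+1$ is noted as redundant, and I plan to derive it by splitting at the threshold $t_\ast := e/(12(e-2))$. For $t \geq t_\ast$, direct rearrangement shows $e(t+1/2 - 1/(24t)) \geq et+1$, so the refined bound suffices. For $t \in (0, t_\ast]$, I will use $f(t) \geq (e/t)^t$ and verify $(e/t)^t \geq et+1$ on this interval. Setting $h(t) := (e/t)^t - (et+1)$, one has $h(0^+) = 0$ and $h'(t) = (e/t)^t(-\log t) - e$. Substituting $u = -\log t$ (so $t = e^{-u}$), $h'$ becomes the function $\psi(u) := u\,e^{(1+u)e^{-u}} - e$, and a direct calculation gives $\psi'(u) = e^{(1+u)e^{-u}}(1 - u^2 e^{-u})$. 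Since $u^2 e^{-u}$ attains its maximum value $4/e^2 < 1$ at $u=2$, $\psi'(u) > 0$ for all $u \geq 0$, so $h'$ is strictly decreasing in $t$ on $(0,1)$; this makes $h$ unimodal on that interval, and the claim follows from the direct check that $h(t_\ast) > 0$.
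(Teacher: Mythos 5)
Your proof is correct, and it takes a genuinely different route from the paper's. The paper proves each inequality by direct calculus on the difference: observe vanishing at a limit ($t \to 0^+$ or $t \to +\infty$), then compute one or two derivatives and argue a sign. You instead anchor everything in the identity $\log f(t) - 1 = \int_t^{t+1}\log x\,dx$, which makes the upper bound $f(t) \le e(t+\tfrac12)$ a one-line Jensen argument (concavity of $\log$ against the uniform measure on $[t,t+1]$), and turns the refined lower bound into a symmetric Taylor expansion around the midpoint $t+\tfrac12$ where the odd terms cancel and the leading correction $\tfrac{1}{12(2t+1)}$ visibly sits just inside $\tfrac{1}{24t}$; the geometric tail estimate with $4m(2m+1)\ge 40$ does close out the remaining slack for all $t>0$, as I checked. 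Your treatment of $f(t) \ge et+1$ is also different: rather than the paper's chain of derivative computations, you split at $t_\ast = e/(12(e-2))$, deduce the large-$t$ half from the refined bound, and handle the small-$t$ half by the substitution $u = -\log t$ showing $h'$ strictly decreasing (hence $h$ strictly concave), so positivity follows from $h(0^+)=0$ and a single numerical check at $t_\ast$. What your approach buys is transparency: the origin of the constants $\tfrac12$ and $\tfrac{1}{24}$ is visible rather than verified ex post by repeated differentiation. What it costs is the slight delicacy of the series bookkeeping, which the paper's approach avoids. One small point of presentation: you call $h$ "unimodal," but what you actually establish (strictly decreasing $h'$) is the stronger fact of strict concavity, and it is concavity (together with $h(0^+)=0$ and $h(t_\ast)>0$) that cleanly yields $h>0$ on $(0,t_\ast)$ via the two-point convexity inequality; you may want to phrase the conclusion that way.
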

\begin{proof}
    We first prove the upper bound, which is equivalent to
    \[
        f(t) = 1 + \log\left(t + \frac{1}{2}\right) - (t+1) \log(t+1) + t \log t \geq 0
    \]
    for all $t > 0$. This holds in the limit as $t \to +\infty$, and thus it is sufficient to show that
    \[
        f'(t) = \frac{1}{t + \frac{1}{2}} - \log\left(1 + \frac{1}{t}\right) \leq 0
    \]
    for all $t > 0$. This holds in the limit as $t \to +\infty$, and thus it is sufficient to show that
    \[
        f''(t) = \frac{1}{t^2 + t} - \frac{1}{t^2 + t + \frac{1}{4}} \geq 0
    \]
    for all $t > 0$. This is clear, which proves the desired bound.

    We next show that
    \[
        f(t) = (t+1)\log(t+1) - t \log t - 1 - \log\left(t + \frac{1}{2} - \frac{1}{24t}\right) \geq 0
    \]
    for all $t > 0$. This clearly holds for $t < \frac{1}{24}$, so we will now prove it for $t \geq \frac{1}{24}$. This also holds in the limit as $t \to +\infty$, and thus it is sufficient to show that
    \[
        f'(t) = \log\left(1 + \frac{1}{t}\right) - \frac{24t^2 + 1}{t(24t^2 + 12t - 1)} \leq 0
    \]
    for $t \geq \frac{1}{24}$. This holds in the limit as $t \to +\infty$, and thus it is sufficient to show that
    \[
        f''(t) = \frac{144 t^2 + 22 t - 1}{t^2 (t + 1) (24 t^2 + 12 t - 1)^2} \geq 0
    \]
    for $t \geq \frac{1}{24}$. The denominator is positive, and the quadratic numerator is positive at $t = \frac{1}{24}$ and negative at $t=0$. Thus the above inequality holds for $t \geq \frac{1}{24}$, proving the desired bound.

    We next show that
    \[
        (t+1) \log(t+1) - t \geq 0
    \]
    for $t > 0$. This holds in the limit as $t \to 0^+$, and thus it is sufficient to show that
    \[
        f'(t) = \log(t+1) \geq 0
    \]
    for all $t > 0$. This is immediate, which proves the desired bound.

    Finally we show that
    \[
        f(t) = \frac{(t+1)^{t+1}}{t^t} - et - 1 \geq 0
    \]
    for all $t > 0$. This holds in the limit at $t \to 0^+$, and thus it is sufficient to show that
    \[
        f'(t) = \log\left(1 + \frac{1}{t}\right) \cdot \frac{(t+1)^{t+1}}{t^t} - e \geq 0
    \]
    for $t > 0$. This holds in the limit as $t \to +\infty$, and thus it is sufficient to show that
    \[
        f''(t) = \left[\log^2\left(1 + \frac{1}{t}\right) + \frac{1}{t+1} - \frac{1}{t}\right] \frac{(t+1)^{t+1}}{t^t} \leq 0
    \]
    for all $t > 0$. This is equivalent to showing that
    \[
        g(s) = \log^2(1 + s) + \frac{s}{1+s} - s \leq 0
    \]
    for all $s = \frac{1}{t} > 0$. This holds for $s = 0$, and thus it is sufficient to show that
    \[
        g'(s) = \frac{2 \log(1+s) + \frac{1}{1+s} - (1+s)}{1+s} = \frac{h(s)}{1+s} \leq 0
    \]
    for all $s > 0$. Since $h(0) = 0$, it is thus sufficient to show that
    \[
        h'(s) = -\left(1 - \frac{1}{1+s}\right)^2 \leq 0
    \]
    for all $s > 0$. This is immediate, which proves the desired bound.
\end{proof}

\subsection*{Bounds on $e$.} For all $x > 0$ we have
\begin{equation} \label{eq:e_bounds} \tag{\ref*{sec:bounds_asymptotics}.B1}
    \left(\frac{x}{x+1}\right)^x \geq \frac{1}{e} \qquad \text{and} \qquad \left(\frac{x+1}{x}\right)^{x+1} \geq e.
\end{equation}

\subsection*{Bounds on product of Catalan numbers.}

This is from \cite[Lemma 7.4]{MoralesShi}.
\begin{equation} \label{eq:catalan} \tag{\ref*{sec:bounds_asymptotics}.B2}
    \log\left(\prod_{i=1}^{n-1} C_i\right) = n^2\log 2 -\frac32 \log n +O(n).
\end{equation}

\subsection*{Other asymptotic expression and bounds.} We recall various other asymptotic expressions and bounds. The bounds here are obtainable using simple integration approximation or a standard application of the Euler-Maclaurin formula (Lemma \ref{lem:euler-maclaurin}).
\begin{equation} \label{eq:stirling} \tag{\ref*{sec:bounds_asymptotics}.B3}
    \sqrt{2\pi n} \cdot \frac{n^n}{e^n} \leq n! \leq e\sqrt{n} \cdot \frac{n^n}{e^n}
\end{equation}
\begin{equation} \label{eq:sum_p} \tag{\ref*{sec:bounds_asymptotics}.B4}
    \frac{k^{p+1}}{p+1} \leq \sum_{j=0}^k j^p \leq \frac{(k+1)^{p+1}}{p+1} \qquad \text{for } p \geq 0
\end{equation}
\begin{equation} \label{eq:harmonic} \tag{\ref*{sec:bounds_asymptotics}.B5}
    \log(n+1) \leq \sum_{k=1}^n \frac{1}{k} \leq 1 + \log n \qquad \text{and} \qquad \sum_{k=1}^n \frac{1}{k} = \log n + O(1),
\end{equation}
\begin{align} \label{eq:klogk} \tag{\ref*{sec:bounds_asymptotics}.B6}
    \frac{n^2\log n}{2} - \frac{n^2}{4} + \frac{1}{4} \leq &\sum_{k=1}^n k \log k \leq \frac{(n+1)^2\log(n+1)}{2} - \frac{(n+1)^2}{4} + \frac{1}{4} \qquad \text{and} \\
        &\sum_{k=1}^n k \log k = \frac{n^2\log n}{2} - \frac{n^2}{4} + \frac{n\log n}{2} + \frac{\log n}{12} + O(1), \nonumber
\end{align}
\begin{align} \label{eq:logk_k} \tag{\ref*{sec:bounds_asymptotics}.B7}
    \frac{\log 2}{2} + \frac{\log^2 (n+1) - \log^2 3}{2} \leq &\sum_{k=1}^n \frac{\log k}{k} \leq \frac{\log 2}{2} + \frac{\log 3}{3} + \frac{\log^2 n - \log^2 3}{2} \qquad \text{and} \\
        &\sum_{k=1}^n \frac{\log k}{k} = \frac{\log^2 n}{2} + O(1). \nonumber
\end{align}


\subsection*{Euler-Maclaurin formula.}

All of the above bounds can be derived from the Euler-Maclaurin formula, stated below. We make heavy use of Lemma \ref{lem:klogk}, which is a straightforward corollary.

\begin{lemma}[Euler-Maclaurin formula; e.g. see \cite{EulerMaclaurin}] \label{lem:euler-maclaurin}
    Given integers $a < b$, a positive odd integer $p$, and a smooth function $f: [a,b] \to \R$ we have
    \[
        \left|\sum_{k=a}^b f(k) - \left[\int_a^b f(t)\, dt + \frac{f(b) + f(a)}{2} + \sum_{k=1}^{\frac{p-1}{2}} \frac{B_{2k}}{(2k)!} \left(f^{(2k-1)}(b) - f^{(2k-1)}(a)\right)\right]\right| \leq \frac{2 \cdot \zeta(p)}{(2\pi)^{p}} \int_a^b |f^{(p)}(t)| dt,
    \]
    where $B_k$ is the $k^\text{th}$ Bernoulli number and $\zeta$ is the Riemann zeta function.
\end{lemma}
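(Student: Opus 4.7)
The plan is to prove the Euler--Maclaurin formula by iterated integration by parts against the Bernoulli polynomials $B_k(x)$, using the recursion $B_k'(x) = k B_{k-1}(x)$ together with the vanishing identities $B_{2j+1}(0) = B_{2j+1}(1) = 0$ for all $j \geq 1$.

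First, on each unit interval $[k, k+1]$ with $a \leq k \leq b-1$, I would start from $B_0 \equiv 1$ and integrate by parts once using $B_1(t-k) = (t-k) - \tfrac{1}{2}$ as an antiderivative, obtaining
\[
    \int_k^{k+1} f(t)\,dt \;=\; \frac{f(k)+f(k+1)}{2} \;-\; \int_k^{k+1} f'(t)\, B_1(t-k)\,dt.
\]
Iterating integration by parts $p-1$ more times with successive antiderivatives $\tfrac{B_j(t-k)}{j!}$ produces intermediate boundary contributions $\tfrac{B_j}{j!}\bigl(f^{(j-1)}(k+1) - f^{(j-1)}(k)\bigr)$ at each step. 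Summing the resulting identity over $k = a, \ldots, b-1$ telescopes these boundary terms into $\tfrac{B_j}{j!}\bigl(f^{(j-1)}(b) - f^{(j-1)}(a)\bigr)$, while the pointwise $f$-values assemble into $\sum_{k=a}^b f(k) - \tfrac{1}{2}(f(a)+f(b))$.

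Next, since $p$ is odd and $B_{2j+1} = 0$ for $j \geq 1$, every odd-index correction term in the telescoped expansion vanishes, leaving exactly the stated main sum $\sum_{j=1}^{(p-1)/2} \tfrac{B_{2j}}{(2j)!}\bigl(f^{(2j-1)}(b) - f^{(2j-1)}(a)\bigr)$. The remainder after $p$ steps takes the form
\[
    R_p \;=\; \frac{(-1)^{p+1}}{p!} \int_a^b f^{(p)}(t)\, \widetilde{B}_p(t)\,dt,
\]
where $\widetilde{B}_p(t) := B_p(t - \lfloor t \rfloor)$ is the periodic Bernoulli function. To close, I would invoke the classical Fourier expansion $\widetilde{B}_p(t) = -\tfrac{p!}{(2\pi i)^p}\sum_{n\neq 0} n^{-p} e^{2\pi i n t}$, which yields $\|\widetilde{B}_p\|_\infty \leq \tfrac{2 \cdot p!}{(2\pi)^p}\,\zeta(p)$ by the triangle inequality (summing $2 \sum_{n\geq 1} n^{-p} = 2\zeta(p)$). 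Combining this with the trivial bound $|R_p| \leq \tfrac{1}{p!}\|\widetilde{B}_p\|_\infty \int_a^b |f^{(p)}(t)|\,dt$ produces the stated constant $\tfrac{2\zeta(p)}{(2\pi)^p}$ exactly.

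The main obstacle is the bookkeeping in the first step: carefully tracking signs, factorials, and the telescoping across unit subintervals, and then verifying that the restriction to odd $p$ causes all remaining odd-index Bernoulli contributions (beyond the $B_1$-term already absorbed into the trapezoidal endpoint average $\tfrac12(f(a)+f(b))$) to drop out. The Fourier estimate on $\widetilde{B}_p$ is standard but is the analytic ingredient responsible for the explicit constant $2\zeta(p)/(2\pi)^p$ on the right-hand side; everything else in the argument is algebraic.
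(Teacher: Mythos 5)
Your proposal is the standard classical proof of Euler--Maclaurin via iterated integration by parts against the Bernoulli polynomials, followed by the Fourier-series estimate $\|\widetilde{B}_p\|_\infty \leq \tfrac{2 p!\,\zeta(p)}{(2\pi)^p}$ on the periodized Bernoulli function. The paper itself gives no proof of this lemma --- it is stated as a citation to a reference --- so there is no distinct ``paper approach'' to compare against; you have supplied the proof the paper implicitly assumes. Your bookkeeping is correct: on each unit interval the first integration by parts produces the trapezoidal average, subsequent steps produce boundary terms $\tfrac{B_j}{j!}\bigl(f^{(j-1)}(k+1) - f^{(j-1)}(k)\bigr)$ (which telescope to $\tfrac{B_j}{j!}\bigl(f^{(j-1)}(b) - f^{(j-1)}(a)\bigr)$ for $j\geq 2$ since $\widetilde B_j$ is continuous and equals $B_j$ at integers), and the odd-index contributions for $j \geq 3$ vanish because $B_{2m+1}=0$ for $m\geq 1$ --- a fact that holds regardless of the parity of $p$, not \emph{because} $p$ is odd as your phrasing slightly suggests; the oddness of $p$ only fixes the stopping index so that $(-1)^{p+1}=+1$ in the remainder. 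One small thing worth noting: at $p=1$ the Fourier argument for $\widetilde B_1$ converges only conditionally and yields the constant $\tfrac{2\zeta(1)}{2\pi}=\infty$, so the inequality is vacuous there (this is harmless for the lemma as stated, but it does mean the $p=1$ invocation that appears later in the paper should really be replaced by a direct $\|\widetilde B_1\|_\infty = \tfrac12$ bound or by $p=3$). For $p\geq 3$ everything you wrote is correct and gives the stated constant exactly.
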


\begin{lemma} \label{lem:klogk}
    Let $a<b$ be integers, and suppose $c,d$ are real with $r := -\frac{d}{c}$ such that $ct+d > 0$ for all $t \in [a,b]$. Letting $\gamma := \min\{|b-r|,|a-r|\}$, we have
    \[
    \begin{split}
        \sum_{k=a}^b k \log(ck+d) &= \frac{b^2}{2} \log|b-r| - \frac{a^2}{2} \log|a-r| + \frac{r^2}{2} \log\left|\frac{a-r}{b-r}\right| \\
            &+ \frac{b^2 \log|c|}{2} - \frac{a^2\log|c|}{2} - \frac{(b+r)^2}{4} + \frac{(a+r)^2}{4} \\
            &+ \frac{b}{2} \log|b-r| + \frac{a}{2} \log|a-r| + \frac{(a+b)\log|c|}{2} - \frac{1}{12} \log\left|\frac{a-r}{b-r}\right| + O\left(\frac{|r|+1}{\gamma} + \frac{|r|}{\gamma^2}\right).
    \end{split}
    \]
\end{lemma}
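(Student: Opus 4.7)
The plan is to apply the Euler--Maclaurin formula (Lemma \ref{lem:euler-maclaurin}) to $f(t) = t \log(ct+d)$ on $[a,b]$, using $p = 3$ so that only the $B_2 = \tfrac{1}{6}$ correction survives before the error term. Since $ct+d > 0$ on $[a,b]$, the assumption forces either $r = -d/c < a$ or $r > b$, so $|t-r| \geq \gamma$ throughout $[a,b]$ and all the logs and powers below are well-defined. Writing $f(t) = t\log|c| + t\log|t-r|$ factors out the $\log|c|$ cleanly, and substituting $u = t-r$ in the remaining integral is what drives the computation.

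First I would compute $\int_a^b t\log(ct+d)\,dt$ exactly. The $t\log|c|$ piece gives $\tfrac{b^2-a^2}{2}\log|c|$. For the $t\log|t-r|$ piece I would write $t = (t-r) + r$ and integrate term by term, using the standard antiderivatives $\int u\log|u|\,du = \tfrac{u^2}{2}\log|u| - \tfrac{u^2}{4}$ and $\int \log|u|\,du = u\log|u| - u$. Combining the resulting $\tfrac{(t-r)^2}{2}\log|t-r|$ and $r(t-r)\log|t-r|$ terms uses the identity $\tfrac{(t-r)^2}{2} + r(t-r) = \tfrac{t^2 - r^2}{2}$, which is exactly what is needed to produce the target shape $\tfrac{b^2-r^2}{2}\log|b-r| - \tfrac{a^2-r^2}{2}\log|a-r|$; splitting off $\tfrac{r^2}{2}\log|b-r|$ and $\tfrac{r^2}{2}\log|a-r|$ then recovers the stated $\tfrac{r^2}{2}\log|(a-r)/(b-r)|$ term. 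The leftover polynomial pieces $-\tfrac{(t-r)^2}{4}$ at the endpoints and $-r(b-a)$ combine, via the identity $(b-r)^2 - (a-r)^2 = (b^2-a^2) - 2r(b-a)$, into $-\tfrac{(b+r)^2 - (a+r)^2}{4}$, matching the second line of the claim.

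Next I would add the endpoint correction $\tfrac{f(a)+f(b)}{2} = \tfrac{(a+b)\log|c|}{2} + \tfrac{b\log|b-r| + a\log|a-r|}{2}$, which is literally the remaining terms in the second line. Then the $B_2$ correction contributes $\tfrac{1}{12}(f'(b) - f'(a))$, where $f'(t) = \log|c| + \log|t-r| + 1 + \tfrac{r}{t-r}$. The $\log|c|$ and $1$ cancel in the difference, the log terms yield the desired $-\tfrac{1}{12}\log|(a-r)/(b-r)|$, and the stray $\tfrac{r}{b-r} - \tfrac{r}{a-r}$ is absorbed into the error since $|r/(b-r) - r/(a-r)| \leq 2|r|/\gamma$.

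Finally I would bound the remainder $\int_a^b |f'''(t)|\,dt$. A direct computation gives $f'''(t) = -\tfrac{1}{(t-r)^2} + \tfrac{2r}{(t-r)^3}$, so the integral is bounded by $\int_a^b \tfrac{dt}{(t-r)^2} + 2|r|\int_a^b \tfrac{dt}{|t-r|^3} = O(1/\gamma) + O(|r|/\gamma^2)$. Adding in the $O(|r|/\gamma)$ from the stray piece of the $B_2$ term yields the claimed error $O\bigl(\tfrac{|r|+1}{\gamma} + \tfrac{|r|}{\gamma^2}\bigr)$. I expect no real obstacle here; the only place requiring care is the bookkeeping that turns the natural $\tfrac{(b-r)^2 - (a-r)^2}{2}\log|\cdot|$ form of the integral into the slightly rearranged $\tfrac{b^2}{2}\log|b-r| - \tfrac{a^2}{2}\log|a-r| + \tfrac{r^2}{2}\log|(a-r)/(b-r)|$ form stated in the lemma, but this is a purely algebraic regrouping.
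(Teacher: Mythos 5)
Your proposal is correct and follows essentially the same route as the paper: apply the Euler--Maclaurin formula with $p=3$ to $f(t)=t\log(ct+d)$, split off the $\log|c|$, compute the integral via $t=(t-r)+r$, add the trapezoidal and $B_2$ corrections, and bound $\int_a^b|f'''|$ by $O(1/\gamma + |r|/\gamma^2)$ while absorbing the stray $\frac{r}{12(b-r)}-\frac{r}{12(a-r)}$ into the error. The algebraic regrouping you describe (using $\tfrac{(t-r)^2}{2}+r(t-r)=\tfrac{t^2-r^2}{2}$ and $(b\pm r)^2-(a\pm r)^2=(b^2-a^2)\pm 2r(b-a)$) is exactly the bookkeeping in the paper's antiderivative $\tfrac{(t^2-r^2)\log|t-r|}{2}+\tfrac{t^2\log|c|}{2}-\tfrac{(t+r)^2}{4}+\tfrac{r^2}{4}$, so no gap.
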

\begin{proof}
    We compute the approximation of the sum $\tilde{S}$, given by Lemma \ref{lem:euler-maclaurin} with $p=3$ and $f(t) = t \log(ct+d)$. We have
    \[
        \tilde{S} = \int_a^b f(t)\, dt + \frac{f(a)+f(b)}{2} + \frac{f'(b) - f'(a)}{12},
    \]
    where
    \[
        \int_a^b f(t)\, dt = \left[\frac{(t^2-r^2) \log|t-r|}{2} + \frac{t^2 \log|c|}{2} - \frac{(t+r)^2}{4} + \frac{r^2}{4}\right]_a^b,
    \]
    and
    \[
        \frac{f(b)+f(a)}{2} = \frac{b \log|b-r| + a \log|a-r| + (a+b)\log|c|}{2},
    \]
    and
    \[
        \frac{f'(b) - f'(a)}{12} = \frac{1}{12}\left[\log|t-r| + \log|ec| + \frac{r}{t-r}\right]_a^b = -\frac{1}{12} \log\left|\frac{a-r}{b-r}\right| + \frac{r}{12(b-r)} - \frac{r}{12(a-r)},
    \]
    and
    \[
        \int_a^b |f^{(3)}(t)|\, dt = \left[-\frac{1}{t-r}\right]_a^b + \left|\left[\frac{r}{(t-r)^2}\right]_a^b\right| = -\frac{1}{b-r} + \frac{1}{a-r} + \left|\frac{r}{(b-r)^2} - \frac{r}{(a-r)^2}\right|.
    \]
    We then have
    \[
        \frac{r}{12(b-r)} - \frac{r}{12(a-r)} + \frac{2 \zeta(3)}{(2\pi)^3} \int_a^b |f^{(3)}(t)|\, dt = O\left(\frac{|r|+1}{\gamma} + \frac{|r|}{\gamma^2}\right).
    \]
    Combining everything and simplifying yields the result.
\end{proof}

\begin{corollary} \label{cor:quadratic-klogk}
    For any fixed $\xi \in [0,1)$, we have
    \[
        \sum_{k=1}^n k \log \frac{n^2-\xi^2 k^2}{k^2} = \frac{n^2}{2} \left(\frac{1}{\xi^2} - 1\right) \log \left(\frac{1}{1-\xi^2}\right) + \frac{n}{2} \log(1-\xi^2) - \frac{1}{6} \log n \pm O_\xi(1),
    \]
    where $\lim_{\xi \to 0} \left(\frac{1}{\xi^2} - 1\right) \log \left(\frac{1}{1-\xi^2}\right) = 1$.
\end{corollary}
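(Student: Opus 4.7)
The plan is to reduce the sum to three applications of Lemma \ref{lem:klogk} via the factorization
\[
\log\frac{n^2 - \xi^2 k^2}{k^2} \;=\; \log(n + \xi k) + \log(n - \xi k) - 2\log k.
\]
I would compute $T_+ := \sum_{k=1}^n k\log(n + \xi k)$ by Lemma \ref{lem:klogk} with $c = \xi$, $d = n$ (so $r = -n/\xi$), then $T_- := \sum_{k=1}^n k\log(n - \xi k)$ with $c = -\xi$, $d = n$ (so $r = +n/\xi$), and finally use the standard asymptotic \eqref{eq:klogk} for $T_0 := \sum_{k=1}^n k\log k$. For $T_\pm$ one has $|r| = n/\xi$ and $\gamma = \min(|b-r|,|a-r|) \asymp_\xi n$, so the error $O((|r|+1)/\gamma + |r|/\gamma^2)$ in Lemma \ref{lem:klogk} is uniformly $O_\xi(1)$.

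The key simplifications come from pairing the $T_+$ and $T_-$ outputs. The dominant $\tfrac{b^2}{2}\log|b-r|$ pieces combine to $\tfrac{n^2}{2}\log[n^2(1-\xi^2)/\xi^2]$, and together with $\tfrac{b^2\log|c|}{2} = \tfrac{n^2}{2}\log\xi$ from each side contribute $n^2\log n + \tfrac{n^2}{2}\log(1-\xi^2)$; the $n^2\log n$ is then cancelled by $-2T_0$'s leading $-n^2\log n$. The $\tfrac{r^2}{2}\log|(a-r)/(b-r)|$ pieces combine to
\[
\tfrac{n^2}{2\xi^2}\log\!\frac{1-\xi^2/n^2}{1-\xi^2} \;=\; -\tfrac{n^2}{2\xi^2}\log(1-\xi^2) + O_\xi(1),
\]
and the careful cancellation of the $\pm\tfrac{n}{2\xi}$ linear-in-$n$ cross-terms from $+\xi$ and $-\xi$ is the point where one must be most attentive. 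Adding the $\tfrac{n^2}{2}\log(1-\xi^2)$ from before yields the leading term $\tfrac{n^2}{2}(1/\xi^2 - 1)\log\bigl(1/(1-\xi^2)\bigr)$.

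The remaining pieces shake out cleanly. The $-(b+r)^2/4 + (a+r)^2/4$ contributions sum to $-n^2/2 + 1/2$ across $T_+ + T_-$, cancelling the $+n^2/2$ from $-2T_0$ up to $O(1)$. The $\tfrac{b}{2}\log|b-r|$ rows in $T_\pm$ contribute $n\log n + \tfrac{n}{2}\log(1-\xi^2) - n\log\xi$, and this $-n\log\xi$ is killed by $+n\log\xi$ from $\tfrac{(a+b)\log|c|}{2}$, while $n\log n$ cancels against $-n\log n$ from $-2T_0$, leaving the stated $\tfrac{n}{2}\log(1-\xi^2)$. The $-\tfrac{a^2}{2}\log|a-r|$ and $\tfrac{a}{2}\log|a-r|$ pieces cancel within each of $T_+$ and $T_-$ because $a=1$ makes $a^2 = a$, and the $-\tfrac{1}{12}\log|(a-r)/(b-r)|$ piece is $O_\xi(1)$ as the ratio is bounded. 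The only surviving $\log n$ term is the Bernoulli correction $\tfrac{\log n}{12}$ in $T_0$, producing $-\tfrac{1}{6}\log n$ after multiplication by $-2$. The stated limit $(\xi^{-2}-1)\log(1/(1-\xi^2)) = 1 + O(\xi^2)$ as $\xi\to 0$ is immediate from Taylor expansion. The main obstacle is not conceptual but bookkeeping: tracking which $O(n)$ corrections from expanding $\log(1 \pm \xi/n)$ in the $r^2/2$ piece cancel between the two sides, versus which ones leave $O_\xi(1)$ residues.
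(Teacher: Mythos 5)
Your proposal is correct and follows essentially the same route as the paper's proof: both decompose the summand as $\log(n+\xi k)+\log(n-\xi k)-2\log k$ and apply Lemma~\ref{lem:klogk} to each piece, with identical cancellation bookkeeping. The only cosmetic difference is that you take $a=1$ in Lemma~\ref{lem:klogk} for the $T_\pm$ sums (so the $a$-dependent terms cancel pairwise since $a^2=a$) whereas the paper takes $a=0$ (so they vanish outright); since the $k=0$ summand is zero, the two choices give the same result.
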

\begin{proof}
    Using Lemma \ref{lem:klogk}, we compute (with parameters $a=0,b=n,c=-\xi,r=\frac{n}{\xi}$)
    \[
        \sum_{k=1}^n k \log (n - \xi k) = \frac{n^2}{2} \log n + \frac{n^2}{2} \left[\left(1 - \frac{1}{\xi^2}\right) \log (1-\xi) - \frac{1}{2} - \frac{1}{\xi}\right] + \frac{n}{2} \log n + \frac{n}{2} \log(1-\xi) \pm O_\xi(1),
    \]
    and (with parameters $a=0,b=n,c=\xi,r = -\frac{n}{\xi}$)
    \[
        \sum_{k=1}^n k \log (n + \xi k) = \frac{n^2}{2} \log n + \frac{n^2}{2} \left[\left(1 - \frac{1}{\xi^2}\right) \log (1+\xi) - \frac{1}{2} + \frac{1}{\xi}\right] + \frac{n}{2} \log n + \frac{n}{2} \log(1+\xi) \pm O_\xi(1),
    \]
    and (with parameters $a=1,b=n,c=1,r=0$)
    \[
        \sum_{k=1}^n k \log k = \frac{n^2}{2} \log n - \frac{n^2}{4} + \frac{n}{2} \log n + \frac{1}{12} \log n \pm O(1).
    \]
    This gives
    \[
        \sum_{k=1}^n k \log \frac{n^2-\xi^2 k^2}{k^2} = \frac{n^2}{2} \left(\frac{1}{\xi^2} - 1\right) \log \left(\frac{1}{1-\xi^2}\right) + \frac{n}{2} \log(1-\xi^2) - \frac{1}{6} \log n \pm O_\xi(1).
    \]
    Note that a straightforward argument gives the same expression in the limit when $\xi = 0$.
\end{proof}

\section{Number of spanning trees} \label{sec:number of spanning trees}

For a tree $T$ with vertex set $V$, let $m(T) = \prod_{v\in V} x_v^{deg_T(v)-1}$. let $t_G$ be the following multivariate sum over spanning trees.
\[
t_G(x_1,\ldots,x_n) = \sum_T m(T).
\]
Let $G(n)\subset K_{n,n}$ be the bipartite graph with edges $(i,n+j)$ if $i-j<2$. 

\begin{theorem}[{\cite[Thm. 2.1]{Ehrenborg_vWilligenburg}}]
Let $\lambda \subset m\times n$ with $\lambda_1=n$. Let $G(\lambda)$ be a bipartite graph with vertices $\{1,\ldots,m\} \cup \{m+1,\ldots,m+n\}$ and edges $(i,m+j)$ if $(i,j) \in [\lambda]$, then
\[
t_{G(\lambda)}(x_1,\ldots,x_m; y_1,\ldots,y_n) \,=\, \prod_{i=2}^m (x_1+\cdots+x_{\lambda_i}) \prod_{j=2}^n (y_1+\cdots+y_{\lambda'_j}),
\]
in particular $G(\lambda)$ has $\prod_i \lambda_i \prod_j \lambda'_j$ spanning trees.
\end{theorem}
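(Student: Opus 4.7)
The plan is to apply the weighted matrix-tree theorem. Assign to each edge $\{i, m+j\}$ of $G(\lambda)$ the symmetric weight $w(\{i,m+j\}) := x_i y_j$, and form the associated weighted Laplacian $L$ of size $(m+n) \times (m+n)$, whose diagonal reads $L_{ii} = x_i(y_1+\cdots+y_{\lambda_i})$ for $1 \leq i \leq m$ and $L_{m+j,m+j} = y_j(x_1+\cdots+x_{\lambda'_j})$ for $1 \leq j \leq n$, and whose off-diagonal entries are $L_{i,m+j} = L_{m+j,i} = -x_i y_j$ precisely when $(i,j)\in[\lambda]$. By the weighted matrix-tree theorem, the principal minor $\det(L^{(1,1)})$ obtained by deleting the first row and column equals $\sum_T \prod_{e \in T} w(e) = \sum_T \prod_i x_i^{\deg_T(i)} \prod_j y_j^{\deg_T(m+j)}$. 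Since every vertex of a spanning tree $T$ has positive degree and $m(T) = \prod_v v^{\deg_T(v)-1}$, this cofactor equals $\bigl(\prod_{i=1}^m x_i\prod_{j=1}^n y_j\bigr)\cdot t_{G(\lambda)}$, reducing the theorem to a closed-form evaluation of $\det(L^{(1,1)})$.

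To evaluate the cofactor, I would first scale by dividing row $i$ by $x_i$ (for $i=2,\ldots,m$) and row $m+j$ by $y_j$ (for $j=1,\ldots,n$), producing a matrix $M$ with diagonal entries $y_1+\cdots+y_{\lambda_i}$ and $x_1+\cdots+x_{\lambda'_j}$ and off-diagonal entries $M_{i,m+j}=-y_j$, $M_{m+j,i}=-x_i$ on $[\lambda]$, so that $\det(L^{(1,1)}) = \bigl(\prod_{i=2}^m x_i\prod_{j=1}^n y_j\bigr)\det(M)$. Writing $M$ in block form with diagonal blocks $D_Y, D_X$ and off-diagonal blocks $-Y, -X^T$, the Schur complement identity gives $\det(M) = \det(D_Y)\det(D_X - X^T D_Y^{-1} Y)$. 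The Schur complement is controlled by the nested staircase structure of $[\lambda]$: since $\lambda_1 \geq \cdots \geq \lambda_m$ and $\lambda'_1 \geq \cdots \geq \lambda'_n$, the support columns of $Y$ and support rows of $X$ form nested chains. In the rectangle case $\lambda=(n^m)$ this collapses the Schur complement to a rank-one perturbation of a scalar multiple of the identity, and the matrix determinant lemma then yields $\det(M) = x_1(y_1+\cdots+y_n)^{m-1}(x_1+\cdots+x_m)^{n-1}$, precisely the expected answer for $K_{m,n}$ up to the $x_1$ factor that cancels against the normalization. For general $\lambda$, I would iterate this rank-one reduction along the nesting so that each step peels off one factor $x_1+\cdots+x_{\lambda'_j}$ (or dually $y_1+\cdots+y_{\lambda_i}$) at a time, leaving the claimed factorization starting at $j=2$.

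Dividing the resulting identity for $\det(L^{(1,1)})$ by $\prod_v v$ yields the formula for $t_{G(\lambda)}$, and specializing $x_i=y_j=1$ gives the ``in particular'' count $\prod_{i=2}^m\lambda_i \prod_{j=2}^n\lambda'_j$ (which I read as the intended form of the concluding product in the statement, consistent with the known count $m^{n-1}n^{m-1}$ for $K_{m,n}$). The main obstacle will be the bookkeeping in the iterated Schur complement: correctly aligning the nested support pattern so that each rank-one reduction peels off exactly one factor with the correct indexing, and showing that the extra ``$x_1$'' factor appearing in $\det(M)$ always cancels against the normalization. As a fallback, I would consider induction on $|\lambda|$ by removing a corner box of $\lambda$ and applying a multivariate deletion-contraction identity tracking the edge weights $w(e) = x_iy_j$, and matching the factorization inductively to the smaller partition.
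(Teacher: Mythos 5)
This theorem is imported from Ehrenborg--van Willigenburg and the paper contains no proof of it, so there is no ``paper's own proof'' to match; I will assess your proposal on its own terms.

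The first stage of your plan is sound: the weighted matrix--tree theorem with symmetric edge weight $x_i y_j$, the identification $\sum_T \prod_{e\in T} w(e) = \bigl(\prod_i x_i \prod_j y_j\bigr) t_{G(\lambda)}$, and the normalization pulling $x_i$ (resp.\ $y_j$) out of the rows of $L^{(1,1)}$ are all correct, and your rectangle ($\lambda=(n^m)$) computation via the matrix determinant lemma is exactly right, with the spurious $x_1$ cancelling as you say. The gap is in the jump to general $\lambda$. For non-rectangular $\lambda$ the matrix $X^T D_Y^{-1} Y$ in the Schur complement is \emph{not} a rank-one perturbation of a scalar: it is a sum of rank-one pieces, one for each distinct value of $\lambda_i$, and $D_Y$ is no longer a scalar multiple of the identity. ``Iterate this rank-one reduction along the nesting so that each step peels off one factor'' names the right hope, but there is no argument showing that the $(n\times n)$-determinant $\det\bigl(D_X - X^T D_Y^{-1} Y\bigr)$ telescopes into the claimed product for a nested staircase support; you would need to exhibit a concrete elimination order and verify that each step removes exactly one factor $x_1+\cdots+x_{\lambda'_j}$, which is the whole content of the theorem. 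As written, the step is a placeholder for the proof rather than the proof. Your deletion--contraction fallback (remove a corner cell $(m,\lambda_m)$ of $[\lambda]$, track how the weighted spanning-tree polynomial transforms, and induct on $|\lambda|$) is the more promising elementary route and is closer to how the result is usually established, but that is also not carried out.

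A smaller issue you should flag: the displayed formula as printed does not type-check for general $m\neq n$, since $\lambda_i\leq n$ but $x$ only has $m$ variables (and dually $\lambda'_j\leq m$ but $y$ has $n$ variables). Your own Schur-complement setup naturally produces diagonal entries $y_1+\cdots+y_{\lambda_i}$ and $x_1+\cdots+x_{\lambda'_j}$, i.e.\ the formula with $x$ and $y$ interchanged relative to the statement; a quick check with $\lambda=(3,2)$, $m=2$, $n=3$ confirms that $t_{G(\lambda)}=x_1(x_1+x_2)(y_1+y_2)=(y_1+\cdots+y_{\lambda_2})(x_1+\cdots+x_{\lambda'_2})(x_1+\cdots+x_{\lambda'_3})$, not what the displayed formula would give. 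The discrepancy is invisible in the paper's application (the corollary has $m=n$ and a self-conjugate shape), but any complete proof would derive the corrected form, and it is worth noting that your normalization actually forces it. Your reading of the ``in particular'' count as $\prod_{i=2}^m\lambda_i\prod_{j=2}^n\lambda'_j$ is the right one.
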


\begin{corollary}
For the graph $G(n)$ we have that 
\[
t_{G(n)}(x_1,\ldots,x_n;y_1,\ldots,y_n) \,=\, (x_1+x_2)(x_1+x_2+x_3)\cdots (x_1+\cdots + x_n)\cdot (y_1+y_2)(y_1+y_2+y_3)\cdots (y_1+\cdots + y_n),
\]
in particular $G(n)$ has $(n!)^2$ spanning trees.
\end{corollary}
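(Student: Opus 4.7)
The plan is to identify $G(n)$ with the Ferrers bipartite graph $G(\lambda)$ for a specific self-conjugate Young diagram $\lambda$, and then directly apply the Ehrenborg--van Willigenburg theorem cited immediately above.

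To determine $\lambda$, note that the condition $i-j<2$ is equivalent to $j\geq i-1$, so the left-hand vertex $i\in\{1,\ldots,n\}$ has neighbourhood $\{n+j:\max(1,i-1)\leq j\leq n\}$ in $G(n)$. Reindexing the right-hand vertices via $j\mapsto n+1-j$ turns $G(n)$ into the Ferrers bipartite graph $G(\lambda)$ for
\[
\lambda\;=\;(n,\,n,\,n-1,\,n-2,\,\ldots,\,2),
\]
with $n$ parts. A direct computation of the conjugate (namely $\lambda'_1=n$, $\lambda'_2=n$, and $\lambda'_j=n-j+2$ for $j\geq 2$) shows $\lambda'=\lambda$.

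With $\lambda$ in hand, the cited theorem yields
\[
t_{G(\lambda)}(\bm{x};\bm{y})=\prod_{i=2}^n(x_1+\cdots+x_{\lambda_i})\cdot\prod_{j=2}^n(y_1+\cdots+y_{\lambda'_j}).
\]
Since the sequences $\{\lambda_i\}_{i=2}^n$ and $\{\lambda'_j\}_{j=2}^n$ both equal $(n,n-1,\ldots,2)$, each product collapses into the desired form $(x_1+x_2)(x_1+x_2+x_3)\cdots(x_1+\cdots+x_n)$, and the analogous statement holds for the $y$-product (after the harmless relabeling $y_j\mapsto y_{n+1-j}$ induced by the column reindexing, which does not affect the spanning tree count). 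Setting all $x_i=y_j=1$ finally gives $(2\cdot 3\cdots n)^2=(n!)^2$, which is the claimed spanning tree count. The only substantive step in the argument is the correct identification of $\lambda$; once this is done the corollary is an immediate consequence of the theorem, with no real obstacle beyond the bookkeeping of the vertex relabeling.
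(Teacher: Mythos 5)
Your proof is correct and is the intended one: the paper states the corollary without proof, so the only available route is precisely what you do — identify $G(n)$ with the Ferrers bipartite graph $G(\lambda)$ for $\lambda=(n,n,n-1,\ldots,2)$ (after reversing the right-hand labels), verify $\lambda'=\lambda$, and plug into the Ehrenborg--van Willigenburg formula. Your parenthetical about the relabeling $y_j\mapsto y_{n+1-j}$ is in fact a sharper observation than the paper makes: taken literally, the $y$-product in the corollary should run $(y_n+y_{n-1})\cdots(y_n+\cdots+y_1)$ rather than $(y_1+y_2)\cdots(y_1+\cdots+y_n)$, but as you say this permutation of the $y$'s does not change the spanning-tree count $(n!)^2$, which is the only part of the statement the paper subsequently uses.
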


\end{document}